\newtheorem{theorem}{Theorem}[section]
\newtheorem{lemma}[theorem]{Lemma}
\newtheorem{corollary}[theorem]{Corollary}
\newtheorem{proposition}[theorem]{Proposition}
\newtheorem{remark}[theorem]{Remark}
\newtheorem{example}[theorem]{Example}
\numberwithin{equation}{section}
\newcommand{\meantmp}[2]{#1\langle{#2}#1\rangle}
\newcommand{\mean}[1]{\meantmp{}{#1}}
\newcommand{\Rn}{{\setR^n}}
\newcommand{\RN}{{\setR^N}}
\newcommand{\RNn}{\setR^{N \times n}}
\newcommand{\dt}{\ensuremath{\,{\rm d} t}}
\newcommand{\ds}{\ensuremath{\,{\rm d} s}}
\newcommand{\dx}{\ensuremath{\,{\rm d} x}}
\newcommand{\dy}{\ensuremath{\,{\rm d} y}}
\newcommand{\dz}{\ensuremath{\,{\rm d} z}}
\renewcommand{\setBMO}{\rm{BMO}}
\newcommand{\comment}[1]{\vskip.3cm
\fbox{%
\parbox{0.93\linewidth}{\footnotesize #1}}
\vskip.3cm}
\newcommand{\tuomo}[1]{%
\textcolor[rgb]{1,0,0}{*}\textcolor[rgb]{0,1,0}{*}\textcolor[rgb]{0,0,1}{*}%
\textcolor[rgb]{1,0,0}{*}\textcolor[rgb]{0,1,0}{*}\textcolor[rgb]{0,0,1}{*}
\textcolor[rgb]{0.2,0.6,0.6}{Tuomo: #1}
\textcolor[rgb]{1,0,0}{*}\textcolor[rgb]{0,1,0}{*}\textcolor[rgb]{0,0,1}{*}%
\textcolor[rgb]{1,0,0}{*}\textcolor[rgb]{0,1,0}{*}\textcolor[rgb]{0,0,1}{*}}
\newcommand{\seb}[1]{{\textcolor[rgb]{1.00,0.30,0.00}{ #1}}}
\begin{document}

\title{Pointwise Calder\'on-Zygmund gradient estimates for the $p$-Laplace system}

\begin{abstract}{Pointwise estimates for the gradient of solutions to the
$p$-Laplace  system with right-hand side in divergence form
%\[
%  -\divergence(|\nabla \bfu|^{p-2} \nabla \bfu) = -\divergence
%  \bfF\,,
%\]
are established. They enable us to develop a nonlinear counterpart
of the classical Calder\'on-Zygmund theory
% in terms of Calder\'on-Zygmund
%singular integrals,
for the Laplacian. As a consequence, a flexible, comprehensive
approach to gradient bounds for the $p$-Laplace system for a broad
class of norms is derived. In particular, new gradient estimates are
exhibited, and well-known results in customary function spaces are
easily recovered.
%
%
%
%The pointwise estimates are of oscillation type. They imply all
%known function-space estimates known so fare for weak solutions of
%systems in divergence form. It provides a non-linear analogue to the
%pointwise estimates of the singular integral representation in the
%case $p=2$.
}
\end{abstract}
\author{D.~Breit, A.~Cianchi, L.~Diening, T.~Kuusi and S.~Schwarzacher}
\address{Dominic Breit,
School of Mathematical \& Computer Science, Heriot-Watt University,
Riccarton Edinburgh EH14 4AS UK} \email{d.breit@hw.ac.uk}
\address{Andrea Cianchi, Dipartimento di Matematica e Informatica \lq\lq U.Dini", Universit\`{a} di Firenze, Viale Morgagni 67/A, 50134, Firenze, Italy}
\email{cianchi@unifi.it}
\address{Lars Diening,
Universit\"at Osnabr\"uck, Institut f\"ur Mathematik, Albrechtstr.
28a, 49076 Osnabr\"uck } \email{ldiening@uni-osnabrueck.de}
\address{Tuomo Kuusi, Aalto University,
Department of Mathematics and Systems Analysis, P.O. Box 11100,
FI-00076 Aalto, Finland } \email{tuomo.kuusi@aalto.fi}
\address{Sebastian Schwarzacher, Department of mathematical analysis, Faculty of Mathematics and Physics,  Charles University in Prague,
Sokolovsk\'{a} 83, 186 75 Prague, Czech Republic}
\email{schwarz@karlin.mff.cuni.cz}

%
%\email{breit@math.lmu.de} \email{cianchi@unifi.it}
%\email{diening@math.lmu.de} \email{tuomo.kuusi@aalto.fi}
%\email{schwarz@karlin.mff.cuni.cz}
\thanks{
This research has been partly supported by: Research project of MIUR
(Italian Ministry of Education, University and Research) Prin 2012,
n. 2012TC7588,  ``Elliptic and parabolic partial differential
equations: geometric aspects, related inequalities, and
applications";  GNAMPA of the Italian INdAM (National Institute of
High Mathematics);  Program PRVOUK P47 at the Department of Analysis
at Charles University, Prague.} \maketitle

%\today

\section{Introduction and main results}\label{intro}
%\comment{HAS TO BE WRITTEN}

The present paper deals with  the  $p$-Laplace elliptic system
\begin{align}
\label{eq:sysA}
  -\divergence(|\nabla \bfu|^{p-2} \nabla \bfu) &= -\divergence
  \bfF\, \qquad \hbox{in $\Omega$.}
\end{align}
 Here, $\Omega$ is an open set in $\mathbb R^n$, with $n \geq 2$, the exponent $p \in (1, \infty)$, the
function $\bfF \,:\, \Omega \to \mathbb R^{N \times n}$, with $N
\geq 1$, is assigned, and $\bfu \,:\, \Omega \to \RN$ is the
unknown. The notation $\RNn$ stands for the space of $N \times n$
matrices.
%\par\noindent Under the assumption
%that $\bfF \in L^{p'}_{\rm loc}(\Omega)$, local weak solutions
%$\bfu$ with  $\nabla \bfu \in L^p_{\rm loc}(\Omega)$ will be
%considered.
\par
We are concerned with gradient estimates for local weak solutions
$\bfu$ to this system. Our purpose is to establish pointwise bounds
for $\nabla \bfu$, or more precisely,  for $|\nabla
\bfu|^{p-2}\nabla \bfu$, in terms of $\bfF$, which, in a suitable
sense, linearize the problem.
%
% We establish a counterpart of the
% Calder\'on-Zygmund theory for linear equations.
 This provides us with a powerful tool for a unified regularity theory of the gradient.
 %unified simplified approach to
% gradient bounds in a wide class of function spaces.
%
%
%
%
%
%The natural question arises of how membership of $\bfF$ to a
%different function space reflects on $\nabla \bfu$.
\par
A brief digression to the linear setting, corresponding to the
choice $p=2$, may help to grasp the spirit and novelty of our
contribution.  In this case, system \eqref{eq:sysA} reduces to
\begin{align} \label{eq:sysAlin}
  -\divergence(\nabla \bfu) &= -\divergence
  \bfF\,,
\end{align}
  namely,
  \begin{align}
\label{eq:sysAlap}
  - \Delta \bfu &= -\divergence
  \bfF\,.
\end{align}
 The classical Calder\'on-Zygmund theory  offers an exhaustive picture for gradient bounds in this framework. In
particular, it implies that the divergence operator can \lq\lq
almost" be
  canceled in \eqref{eq:sysAlin}.
  Indeed, assume, for simplicity, that $\Omega =
       \Rn$.
       % and $N=1$ (obviously, the latter assumption does not entail any loss of generality, the system being completely uncoupled in the linear case).
A standard representation formula, in terms of Riesz transforms,
tells us that, if $\bfu$ is the solution to \eqref{eq:sysAlap} under
suitable assumptions -- for instance $\bfF \in L^2(\mathbb R^n)$ and
$\nabla \bfu \in L^2(\mathbb R^n)$ -- then
%\comment{\textcolor[rgb]{0.00,0.00,1.00}{DO WE NEED INFORMATION ON
%THE INTEGRABILITY OF $\bfF$, NAMELY THAT $\bfF \in L^2(\mathbb
%R^n)$? }\textcolor[rgb]{1.00,0.00,0.00}{I removed with $|\nabla u
%|\in L^2(\mathbb R^n)$}}
% with
%$|\nabla u |\in L^2(\Rn)$,
%
%$$- \Delta u = -\divergence \bfF,$$
%with $|\nabla u |\in L^2(\Rn)$,
%$$ \bfu_{x_i} = - R_i \big(\sum
%_{j=1}^n R_j (\bfF_j)\big) \quad \hbox{a.e. in $\Rn$},$$
% for $i=1, \dots , n$, where
%  $R_i$ stands for a vector-valued Riesz transform operator.
% defined as
% $$R_i (f) (x) =  c(n)\int _{\Rn}
%f(y)\frac{
% (x_i-y_i)}{|x-y|^{n+1}}dy \quad \hbox{for a.e. $x \in \Rn$.}$$
 %Thus,
 \begin{equation}\label{linear} \nabla \bfu = T(\bfF),
 \end{equation}
  where
$T$ is  a  Calder\'on-Zygmund singular integral operator.  The
operator $T$ is known to be bounded in any
       non-borderline
       function space. As a consequence, bounds for the norm of $\nabla \bfu
       $ in any such space
       via the same norm of $\bfF$
       immediately follows from \eqref{linear}.
       %Hence, the inequality
%     $$\|\nabla u\|_{X} \leq C \|\bfF\|_{X}$$
%     holds for any such space.
%    This is the case,
A standard instance amounts to an estimate in Lebesgue spaces, which
reads
\begin{equation}\label{lapp}\|\nabla \bfu\|_{L^r(\Rn )} \leq C \|\bfF\|_{L^r(\Rn
  )}\end{equation}
  for every  $r \in (1, \infty)$. Here, and in what follows, $C$
  denotes a constant whose dependence will be specified whenever
  needed. Observe that, by contrast,  \eqref{lapp} fails
    if either
    $r=1$, or $r=\infty$.  Replacements for \eqref{lapp} in these
       endpoint cases are  known.
 For example,   the space of functions of
      bounded mean oscillation, denoted by $\setBMO(\Rn )$, is a well known substitute for   $L^\infty (\Rn)$. Actually, one has that
   $$\|\nabla \bfu\|_{\setBMO (\Rn )} \leq C \|\bfF\|_{\setBMO (\Rn )}.
  $$
 %  Recall that
%  $$\|f\|_{BMO}= \sup _{B\subset \Rn} \dashint_{B}|f(x) -
%  f_B|dx,$$
%  where $$f_B = \dashint_{B} f(y)dy.$$
 Also,   classically
   $$\|\nabla \bfu\|_{C^{\alpha}(\Rn)} \leq C \|\bfF\|_{C^\alpha (\Rn)},
  $$
  for $\alpha \in (0,1)$, where  $\|\cdot \|_{C^{\alpha}(\Rn )}$
  denotes the H\"older seminorm.

  \smallskip
\par
Let us now turn to the nonlinear case, corresponding to $p \neq 2$.
The beginning of a systematic study of the
    so-called
    nonlinear Calder\'on-Zygmund theory, associated with
    \eqref{eq:sysA},
 can be traced back to  \cite{Iwa83}.  In particular, in that paper it is  shown that, if
  $N=1$, $p \geq 2$ and $ r  \in [p,\infty)$, then
\begin{equation}\label{Iw1} \|\nabla \bfu\|_{L^r(\Rn )} \leq C \|\bfF\|_{L^{\frac
  r{p-1}}(\Rn )}^{\frac 1{p-1}},
  \end{equation}
or, equivalently,
    \begin{equation}\label{Iw2} \||\nabla \bfu|^{p-1}\|_{L^q(\Rn )} \leq C
      \|\bfF\|_{L^q(\Rn )}
      \end{equation}
      for $q \in [p',\infty)$. Estimate \eqref{Iw2}
        was extended to every $N\geq 1$
       and $p>1$ in
\cite{DiBMan93}, a contribution which is also devoted to the
  the inequality
   $$ \|\nabla \bfu\|_{\setBMO(\Rn )} \leq C \|\bfF\|_{\setBMO(\Rn )}^{\frac 1{p-1}},$$
  but only for $p \geq 2$. On the other hand,  the recent paper
  \cite{DieKapSch11}
%
%       \uncover<3->{\bigskip\par\noindent In
%A version of that result, which holds for every $p \in (1, \infty)$,
%and for $\bfF$ in more  general Campanato type spaces, has recently
%been obtained in \cite{DieKapSch11}, but still in local form.
%
%       \textcolor{green!50!black}{[Diening-Kaplick\'y-Schwarzacher,2012]}
      contains the inequality
   \begin{equation}\label{BMOrn} \||\nabla \bfu|^{p-2}\nabla \bfu\|_{\setBMO(\Rn )} \leq C
   \|\bfF\|_{\setBMO(\Rn )}
   \end{equation}
for every $N\geq 1$ and $p>1$.  In particular, this suggests that
  gradient bounds for solutions to  \eqref{eq:sysA} are suitably formulated in terms of the nonlinear
  expression $|\nabla \bfu|^{p-2}\nabla \bfu$.
\par\noindent As far as H\"older  regularity
 of the gradient of solutions to \eqref{eq:sysA} is concerned,   the scalar case ($N=1$)  was settled in
 \cite{Ura68}. The same result for systems ($N \geq 1$),
for $p \geq 2$,  goes back
 to the paper \cite{Uhl77}, in the homogeneous case when $\bfF =0$. Systems involving differential operators depending
  only on the length of the gradient are hence usually called with Uhlenbeck structure.   The
contribution \cite{Uhl77} was  extended to the situation when
$1<p<2$ in \cite{AceF89} and \cite{ChenDiBe}. In particular, the
latter paper includes the case of non-vanishing smooth $\bfF$. As is
well known, regularity of solutions to  nonlinear elliptic systems
is  a critical issue, and exhibits special features compared to the
case of a single equation. This has been demonstrated via several
counterexamples, including those from \cite{IwaMan89, Nec75, Nec96,
SVYa00, SVYa02}.
\par
The study of pointwise elliptic gradient regularity has received an
impulse from the papers \cite{Mingione2011} and \cite{DuzMing2011},
where Havin-Maz'ya-Wolff nonlinear potentials have been shown to
yield precise estimates for the gradient of local solutions to
nonlinear $p$-Laplacian type equations, but  with right-hand side in
non-divergence form. Enhancements  and extensions of these results,
involving classical Riesz potentials instead of nonlinear
potentials, are the object of a series of papers starting from
\cite{KuuMin13, KuuMin12univ}. Special pointwise bounds for the
gradient, which also hold for solutions to systems, can be found in
\cite{DuzMin10a, KuuMin14}. These   estimates for the gradient of
solutions were preceded by parallel results for the solutions
themselves obtained in  \cite{KilpMaly}. Estimates in rearrangement
invariant form for the gradient of solutions to boundary value
problems are established in \cite{ACMM, AFT, CiaMaz14rearr}.
\\
Besides the papers mentioned above, which are
 focused on pointwise results, in recent years gradient
regularity has been the object of a number of contributions on
elliptic equations and systems with different peculiarities. For
instance, results on elliptic problems involving differential
operators affected by weak regularity properties can be found in
\cite{Du02, Du04, Du11, KinZho99, KinZho01}. The papers
\cite{BreStrVer11, CiaFus, DieSV09, Fu11, Li94, Li05, Li14,
MarPap06} are concerned with operators governed by general growth
conditions. Recent results dealing with global gradient estimates
for boundary value problems, under minimal regularity assumptions on
the boundary of the ground domain, are the object of \cite{AdiPhuc,
BanLew14, CiaMaz11, CiaMaz14syst, Phuc14, Lew08, Lew12}.
%
%Other recent results dealing with global gradient estimates for
%boundary value problems, under minimal regularity assumptions on the
%boundary of the ground domain, are the object of \cite{AdiPhuc,
%BanLew14, CiaMaz11, CiaMaz14rearr, CiaMaz14syst, Phuc14}.

\iffalse
\par  Incidentally, notice  a
gap between the available
        estimate
        \eqref{lapp} for the Laplacian on one hand, and estimate \eqref{Iw1} for the
        $p$-Laplacian on the other hand. Indeed,
        $r \in (1, \infty)$ in the former, whereas the latter  is only stated for $r \in
        [p, \infty)$. Inequality \eqref{Iw1} is  conjectured  to hold for every $r \in (p-1, \infty)$, but, until now,
      it  has just been established for $ r \in (p-\varepsilon,
  \infty)$, for some $\varepsilon >0$ \cite{Iwa92}.
  \fi

\par
Our main results show that, as in the linear case
 $\nabla \bfu$ and $\bfF$ are linked
     through the
   (singular integral) linear operator $T$ appearing in \eqref{linear}, which acts \lq\lq almost diagonally"
   between function  spaces, likewise $|\nabla \bfu|^{p-2}\nabla \bfu$  and
   $\bfF$  are related via a sublinear operator in the nonlinear
   setting.
%
%
%   a similar situation occurs for $|\nabla \bfu|^{p-2}\nabla \bfu$  and
%   $\bfF$ in the nonlinear setting.
   The operator which now comes into play
   is the sharp maximal operator, and has to be applied  both to $|\nabla \bfu|^{p-2}\nabla \bfu$  and
   $\bfF$. Recall that the
sharp maximal  operator $M^ \sharp$ is defined  as
   \begin{equation}\label{Msharp} M^\sharp \bff (x) = \sup _{B\ni x} \dashint_{B}|\bff - \mean{\bff}_B|dy \qquad \hbox{for $x \in \Rn$,}
   \end{equation}
for  any locally integrable function $\bff : \Rn \to \mathbb R^m$.
%$\bff\in L^1_{\rm loc}(\Rn , \RN)$.
Here, $m \in \mathbb N$, $B$ denotes a ball in $\Rn$,  $|B|$ stands
for its Lebesgue measure, $\dashint_{B} = \frac 1{|B|} \int $, and
$\mean{\bff}_B = \dashint_{B} \bff (y) dy$.
%An analogous definition
%applies to matrix-valued functions from $\Rn$ into $\RNn$.
More generally, if $q\geq 1$,
  the operator $M^ {\sharp ,q}$  is given by
       \begin{equation}\label{Msharps}
       M^{\sharp ,q}\bff (x) = \sup _{B \ni x} \bigg(\dashint_{B}|\bff -\mean{\bff}_B|^qdy\bigg)^{\frac 1q} \qquad \hbox{for $x \in \Rn$,}.
       \end{equation}
       Hence, $M^{\sharp ,1}= M^\sharp$.  These operators are known to
       be bounded in customary, non-borderline, function spaces endowed with a norm which is  locally stronger  than $L^q(\mathbb R^n)$.    For instance, $M^{\sharp ,q}$ is bounded in  $L^r(\Rn)$  for every $r \in (q, \infty
   ]$. The operator $M^{\sharp ,q}$, as well as other operators to
   be considered below, will also be applied to matrix-valued
   functions, with a completely analogous definition. In fact,  matrices will be identified with vectors,  with an
   appropriate number of components, whenever they are
   elements of the target space of functions.

\par
 We denote by
$W^{1,p}(\Omega)$, $W^{1,p}_{\rm \loc}(\Omega)$ and
$W^{1,p}_0(\Omega)$ the usual Sobolev spaces of  weakly
differentiable functions. Moreover, we shall make use of the
homogeneous Sobolev space
\begin{equation*}
V^{1,p}(\Omega )= \{\bfu : \hbox{$\bfu$ is a weakly differentiable
function in $\Omega$,  and $|\nabla \bfu| \in L^p(\Omega)$}\}.
\end{equation*}
Here, and in similar occurrences below, we do not indicate the
target space in the notation of function spaces. What are the
elements of the target space in question (real numbers, vectors,
matrices) will be clear from the context. Usually, real-valued
functions will be denoted in standard-face, and vector-valued or
matrix-valued functions in  bold-face.
%, and matrix-valued functions
%in upper case bold-face.
%\tuomo{Why $V^{1,p}$? Isn't this just $W^{1,p}$? \seb{This is for unbounded domains, as we only have local results it could be written for local minimizers in $W^{1,p}_{\loc}$}}
%The space $V^{1,p}_{\rm loc}(\Omega , \RN)$ is defined accordingly,
%on replacing $ L^p(\Omega)$ with $ L^p_{\rm loc}(\Omega)$.
\par
A basic version of our pointwise estimates is stated in the
following theorem, where, in particular, we deal with the case when
$\Omega = \Rn$ in \eqref{eq:sysA}.

\begin{theorem}
\label{thm:main} {\bf [Basic estimate in $\Rn$]} Let $n \geq 2$, $N
\geq 1$ and $p \in (1, \infty)$. Assume that
$\bfF \in L^{p'}_{\rm loc}(\Rn)$.
Let $\bfu \in V^{1,p}(\Rn)$ be a local weak solution to system
\eqref{eq:sysA},  with  $\Omega = \Rn$. Then there exists a constant
$c=c(n, N, p)$ such that
\begin{equation}\label{main1}
 M^\sharp\big(\abs{\nabla \bfu}^{p-2}\nabla \bfu\big)(x)\leq cM^{\sharp, {p'}} \bfF (x) \quad \hbox{for a.e. $x\in \mathbb
 R^n$.}
\end{equation}
\end{theorem}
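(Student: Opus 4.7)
The plan is to estimate the mean oscillation of $|\nabla\bfu|^{p-2}\nabla\bfu$ on each ball $B\ni x$ by comparing $\bfu$ with an Uhlenbeck-regular reference function $\bfh$ solving the homogeneous $p$-Laplace system, and then to iterate a Campanato-type decay across dyadic scales. Fix $x\in\Rn$ and a ball $B=B(x_{0},r)$ containing $x$; let $2B$ be the concentric doubled ball and $\bfh\in \bfu+W^{1,p}_{0}(2B)$ the unique weak solution of $-\divergence(|\nabla\bfh|^{p-2}\nabla\bfh)=0$ in $2B$. Since $\mean{\bfF}_{2B}$ is constant (hence divergence-free), subtracting the weak formulations of $\bfu$ and $\bfh$ and testing against $\bfu-\bfh$ yields
\begin{equation*}
\int_{2B}\big(|\nabla\bfu|^{p-2}\nabla\bfu-|\nabla\bfh|^{p-2}\nabla\bfh\big):\nabla(\bfu-\bfh)\dy=\int_{2B}(\bfF-\mean{\bfF}_{2B}):\nabla(\bfu-\bfh)\dy.
\end{equation*}
Combining the strict monotonicity inequality $(|\boldsymbol{P}|^{p-2}\boldsymbol{P}-|\boldsymbol{Q}|^{p-2}\boldsymbol{Q}):(\boldsymbol{P}-\boldsymbol{Q})\gtrsim(|\boldsymbol{P}|+|\boldsymbol{Q}|)^{p-2}|\boldsymbol{P}-\boldsymbol{Q}|^{2}$ with a shifted Young/Orlicz inequality on the right-hand side, I would extract the key comparison bound
\begin{equation*}
\dashint_{2B}\big||\nabla\bfu|^{p-2}\nabla\bfu-|\nabla\bfh|^{p-2}\nabla\bfh\big|\dy\leq c\bigg(\dashint_{2B}|\bfF-\mean{\bfF}_{2B}|^{p'}\dy\bigg)^{1/p'},
\end{equation*}
whose right-hand side is dominated by $M^{\sharp,p'}\bfF(x)$.

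On the homogeneous side, the Uhlenbeck-type regularity for $\bfh$, in the form sharpened in \cite{DieKapSch11}, supplies a Campanato decay for the nonlinear gradient quantity:
\begin{equation*}
\dashint_{B_{\rho}}\big||\nabla\bfh|^{p-2}\nabla\bfh-\mean{|\nabla\bfh|^{p-2}\nabla\bfh}_{B_{\rho}}\big|\dy\leq c\,(\rho/R)^{\alpha}\dashint_{B_{R}}\big||\nabla\bfh|^{p-2}\nabla\bfh-\mean{|\nabla\bfh|^{p-2}\nabla\bfh}_{B_{R}}\big|\dy
\end{equation*}
for concentric balls $B_{\rho}\subset B_{R}\subset 2B$ and some exponent $\alpha=\alpha(n,N,p)>0$. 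Writing $\Phi(B')$ for the mean oscillation of $|\nabla\bfu|^{p-2}\nabla\bfu$ over $B'$ and splitting $|\nabla\bfu|^{p-2}\nabla\bfu=(|\nabla\bfu|^{p-2}\nabla\bfu-|\nabla\bfh|^{p-2}\nabla\bfh)+|\nabla\bfh|^{p-2}\nabla\bfh$, the comparison bound (applied after the inclusion $B_{\tau\rho}\subset B_{\rho}$) and the Campanato decay together give, for every $\tau\in(0,1)$, an iterative inequality of the form
\begin{equation*}
\Phi(B(x,\tau\rho))\leq c\,\tau^{\alpha}\,\Phi(B(x,\rho))+c\,\tau^{-n}M^{\sharp,p'}\bfF(x),\qquad \rho>0.
\end{equation*}

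Choosing $\tau$ so small that $c\tau^{\alpha}\le 1/2$ and iterating along the radii $r,r/\tau,r/\tau^{2},\ldots$ produces, after summing the geometric series, $\Phi(B(x,r))\leq c\,M^{\sharp,p'}\bfF(x)+2^{-k}\Phi(B(x,r/\tau^{k}))$ for every $k\geq 1$. Since $\bfu\in V^{1,p}(\Rn)$ forces $|\nabla\bfu|^{p-2}\nabla\bfu\in L^{p'}(\Rn)$, a Hölder estimate yields $\Phi(B(x,R))\leq 2|B(x,R)|^{-1/p'}\||\nabla\bfu|^{p-2}\nabla\bfu\|_{L^{p'}(\Rn)}\to 0$ as $R\to\infty$, so the residual term disappears in the limit $k\to\infty$, leaving $\Phi(B(x,r))\leq c\,M^{\sharp,p'}\bfF(x)$. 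Since $B\ni x$ was arbitrary, taking the supremum produces \eqref{main1}. I expect the main obstacle to be the comparison estimate: making the right-hand side precisely of the form $(\dashint|\bfF-\mean{\bfF}|^{p'})^{1/p'}$, uniformly in $p\in(1,\infty)$ and independently of the local sizes of $|\nabla\bfu|$ and $|\nabla\bfh|$, requires careful shifted $N$-function machinery, because the operator $\boldsymbol{\xi}\mapsto|\boldsymbol{\xi}|^{p-2}\boldsymbol{\xi}$ has a degenerate regime for $p>2$ and a singular one for $p<2$.
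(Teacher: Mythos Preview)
Your overall architecture---compare with a $p$-harmonic function $\bfh$, invoke its Campanato decay, iterate across scales, and kill the tail via $\bfu\in V^{1,p}(\Rn)$---is precisely the paper's strategy, and your absorption step (iterating outward and using $\Phi(B_R)\to 0$) is a legitimate variant of the paper's direct absorption of $c\delta\,M^{\sharp,\min\{p',2\}}(\bfA(\nabla\bfu))$ using its a.e.\ finiteness. The gap is exactly where you place it, but it is deeper than you indicate: the comparison bound
\[
\dashint_{2B}\bigabs{\bfA(\nabla\bfu)-\bfA(\nabla\bfh)}\,\dy\;\leq\;c\bigg(\dashint_{2B}\abs{\bfF-\mean{\bfF}_{2B}}^{p'}\dy\bigg)^{1/p'}
\]
with $c=c(n,N,p)$ is \emph{not} what shifted $N$-function machinery alone delivers. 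Testing with $\bfu-\bfh$ and applying the shifted Young inequality~\eqref{eq:youngb} yields only
$\dashint_{2B}\abs{\bfV(\nabla\bfu)-\bfV(\nabla\bfh)}^2\lesssim\dashint_{2B}\varphi_{p',\abs{\nabla\bfu}^{p-1}}(\abs{\bfF-\mean{\bfF}})$;
to replace the shifted integrand by $\abs{\bfF-\mean{\bfF}}^{p'}$ one must invoke the shift-change formula~\eqref{eq:shiftchb}, which inevitably produces an additive term of order $\dashint_{2B}\abs{\bfV(\nabla\bfu)}^2=\dashint_{2B}\abs{\nabla\bfu}^p$. That quantity is not controlled by the oscillations of $\bfA(\nabla\bfu)$ or $\bfF$ in general, and passing from $\abs{\bfV-\bfV}^2$ to $\abs{\bfA-\bfA}$ introduces a further factor of the size of $\nabla\bfu$ when $p\neq 2$.

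The paper's resolution is a dichotomy on each ball. In the \emph{degenerate} regime~\eqref{eq:deg}, where $\dashint_{2B}\abs{\nabla\bfu}^p$ is controlled by the oscillation of $\bfV(\nabla\bfu)$, the extra term can be absorbed and one obtains the comparison only up to an additive $\delta\cdot\Phi(4B)$ (Lemma~\ref{lem:choice}), which suffices for the iteration (Proposition~\ref{pro:2}). In the complementary \emph{non-degenerate} regime~\eqref{eq:non-deg}, $\nabla\bfu$ is close to a large constant; for $p\geq 2$ the paper does \emph{not} rely on the $p$-harmonic comparison alone but introduces a second comparison with the solution $\bfz$ of the \emph{linearized} constant-coefficient system~\eqref{eq:linear}, combines the linear Calder\'on--Zygmund estimate (Theorem~\ref{thm:CZ}) with a second-order Taylor bound for $\bfA$ (Lemma~\ref{lemma:H}), and only then recovers the decay (Proposition~\ref{prop:non-deg p>2}). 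For $p<2$ the non-degenerate case is handled without linearization (Proposition~\ref{lemma:non-deg decay 1 p<2}), but the split is still essential. Thus your outline is correct, but the ``main obstacle'' requires more than careful shifted-function bookkeeping: it needs the degenerate/non-degenerate case distinction, and for $p\geq 2$ a genuine linearization step, to arrive at the single-scale decay (Proposition~\ref{prop:decay conclusion}) that feeds your iteration.
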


\medskip
\par

 Theorem  \ref{thm:main} enables one to transfer the problem of bounds
       for so called Banach function norms of $\nabla \bfu$ in terms of $\bfF$
              to boundedness properties of  $M^{\sharp, {p'}}$, and  reverse boundedness properties of
       $M^\sharp$ between function spaces endowed with norms of this kind.
 In particular, the  results
        for Lebesgue  norms recalled above can  easily be recovered via classical
       properties of the sharp maximal operator. More interestingly, new estimates also
       follow from Theorem \ref{thm:main}, including gradient bounds in Lorentz and Orlicz norms. These can be derived
       as special instances of a general approach, developed in Section \ref{rearr}
       below,  for gradient regularity
in norms depending only on its size.
%
%
%
%
%
%       Such an approach  relies upon Theorem \ref{thm:main}, and makes use of
%rearrangement estimates. In particular, a \lq\lq reduction
%principle" is established, which  turns the problem of gradient
%estimates, involving rearrangement-invariant norms, for solutions to
%system
% \eqref{eq:sysA},  into a couple of one-dimensional Hardy type
% inequalities for the same norms. Recall that, loosely speaking, a
% rearrangement-invariant norm is a norm on the space of measurable
% functions which only depends on their \lq\lq size", or, more
% precisely, on the measure of their level sets.

\begin{remark}\label{min}
{\rm It will be clear from our proof of Theorem \ref{thm:main} that
the operator $ M^\sharp$ can be replaced with
$M^{\sharp,\min\set{p',2}}$ on the left-hand side of inequality
\eqref{main1}. Thus,
 the slightly stronger
inequality
\begin{equation}\label{main1bis}
 M^{\sharp, {\min\set{p',2}}}\big(\abs{\nabla \bfu}^{p-2}\nabla \bfu\big)(x)\leq c M^{\sharp, {p'}} \bfF (x)
\end{equation}
actually holds for  a.e. $x\in \mathbb
 R^n$. However, in all our applications, \eqref{main1} and
\eqref{main1bis} turn out to lead to the same conclusions.}
\end{remark}

\medskip
\par
Although quite general, inequality \eqref{main1} can still be
enhanced to a form which is also well suited  for gradient bounds in
norms possibly depending on oscillations. The resulting inequality
can be given a local form, which applies to solutions to  system
\eqref{eq:sysA} in any open set $\Omega$. A localized and weighted
sharp maximal  operator comes into play, which is defined as
follows. Let $ q \in [1, \infty)$, and, given  $R>0$, let $\omega :
(0, R) \to (0, \infty)$ be a  function.
%For $r>0$, denote by $B_r$ a ball of
%radius $r$.
Define, for $\bff \in L^q_{\rm loc}(\Omega)$,
%
%, and any ball $B_{2R}\subset \subset\Omega$,
% define, for $q \geq 1$,
%\begin{align}\label{localweight1}
%  M^{\sharp}_{\omega, R } \bff(x)=\sup_{B_r\ni x, r< R}\frac{1}{\omega(r)}
% \dashint_{B_r}\abs{\bff-\mean{\bff}_{B_r}}dy\quad \hbox{for $x \in
%  B_R$,}
%\end{align}
%for $\bff \in L^1_{\rm loc}(\Omega , \RN)$. More generally, if $s
%\geq 1$, we set
\begin{align}\label{localweight}
  M^{\sharp,q}_{\omega, R } \bff(x)=\sup_{
\begin{tiny}
 \begin{array}{c}{
    B_r\ni x} \\
r< R
 \end{array}
  \end{tiny}
}\frac{1}{\omega(r)}
  \Big(\dashint_{B_r}\abs{\bff-\mean{\bff}_{B_r}}^qdy\Big)^\frac{1}{q}\quad \hbox{for $x \in
  B_R$,}
\end{align}
for every $x \in \Omega$ such that ${\rm dist}(x, \Rn \setminus
\Omega)>R$. Here, $B_r$ denotes any ball of radius $r>0$. When
needed, we shall use the notation $B_r(x)$ for a ball of radius $r$,
centered at the point $x \in \Rn$. The simplified notation
$$ M^{\sharp}_{\omega, R }=  M^{\sharp,1}_{\omega, R }$$
will be employed for $q=1$. If $\Omega = \Rn$, then the right-hand
side of \eqref{localweight} is well defined also for $R=\infty$. In
this case, we set $$ M^{\sharp,q}_{\omega } = M^{\sharp,q}_{\omega,
\infty}.$$
 In view of our purposes,  the additional property
 that the function $\omega (r)r^{-\beta}$ be almost decreasing
in $(0, R)$, for a suitable $\beta >0$, will be needed. This amounts
to requiring  that
\begin{equation} \label{eq:omega condition}
\omega(r) \leq c_\omega \rho^{-\beta} \omega(r\rho) \qquad \hbox{for
$r \in (0,R)$ and $\rho \in (0,1)$,}
\end{equation}
for some  constant $c_\omega$.
%
%
%
%
%
%
%If $\Omega = \Rn$, we also set
%\begin{align}\label{weight}
%  M^{\sharp,s}_{\omega } f(x)=\sup_{B_r\ni x}\frac{1}{\omega(r)}
%  \Big(\dashint_{B_r}\abs{f-f_{B_r}}^sdy\Big)^\frac{1}{s}\quad \hbox{for $x \in \Rn$}.
%\end{align}

\begin{theorem}
\label{thm:main2} {\bf [General oscillation estimate in domains]}
Let $n \geq 2$, $N \geq 1$, $R>0$ and $p \in (1, \infty)$. Let
$\Omega $ be an open set in $\Rn$ and let
%$B_{2R}\subset {\Omega}$ for some $R>0$. Assume that
 $\bfF \in
L^{p'}_{\rm loc}(\Omega)$. Let $\bfu \in W^{1,p}_{\rm loc}(\Omega)$
be a local weak solution to system \eqref{eq:sysA}.
 %Assume that $\omega :
%(0, R) \to (0, \infty)$ is a non-decreasing function for some
%$R>0$.
Then there exists a constant $\beta = \beta(n,p,N)>0$ such that, if
$\omega : (0, R) \to (0, \infty)$ is any function with the property
that
 $\omega (r)r^{-\beta}$ is almost decreasing in $(0, R)$
in the sense of \eqref{eq:omega condition}, then
%if the function $\tfrac{\omega (r)}{r^\beta}$ is almost
%decreasing in $(0, 2R)$, in the sense that
%\begin{equation} \label{eq:omega condition}
%\omega(r) \leq c_\omega \theta^{-\beta} \omega(r\theta) \qquad
%\hbox{for  $r \in (0,2R)$ and $\theta \in (0,1)$,}
%\end{equation}
%for some  constant $c_\omega$,  then
\begin{multline}\label{E:main2}
 M^{\sharp}_{\omega,R}\big(\abs{\nabla \bfu}^{p-2}\nabla \bfu\big)(x)
 \\ \leq cM^{\sharp,p'}_{\omega,R} \bfF (x)+ \frac{c}{\omega(R)}\bigg(\dashint_{B_{2R}}
 \abs{\abs{\nabla \bfu}^{p-2}\nabla \bfu-\mean{\abs{\nabla \bfu}^{p-2}\nabla
 \bfu}_{B_{2R}}}^{p'}\, dy\bigg)^{\frac 1{p'}}\quad \hbox{for
 a.e.
 $x \in B_R$,}
\end{multline}
for some  constant $c=c(n, N, p,c_\omega)$, and for every concentric
balls $B_R\subset B_{2R} \subset \Omega$. Here, $c_\omega$ denotes
the constant appearing in \eqref{eq:omega condition}. In particular,
the conclusion holds for any $\beta \in \big(0, \min \{1,
\frac{2\alpha}{p'}\}\big)$, where $\alpha =\alpha (n,N,p)$ is the
H\"older exponent appearing in a gradient estimate for the solutions
to the $p$-harmonic system (see Theorem \ref{thm:decay}, Section
\ref{sec:decay} below).
\end{theorem}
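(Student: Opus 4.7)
I would prove Theorem \ref{thm:main2} by a Campanato-type iteration: first establish a one-step excess-decay inequality for $\abs{\nabla\bfu}^{p-2}\nabla\bfu$ on concentric balls via comparison with a $p$-harmonic replacement (using the homogeneous decay of Theorem \ref{thm:decay}), and then iterate this inequality against the almost-decreasing weight $\omega$. The technical heart is the one-step decay; once this is in hand, the weighted iteration and the passage from centered to non-centered balls are routine.

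\textbf{Comparison step.} Fix a ball $B_r=B_r(x_0)\subset B_{2R}$ with $r\le R$ and let $\bfv$ be the $p$-harmonic replacement of $\bfu$ on $B_r$, i.e. the weak solution of $-\divergence(\abs{\nabla\bfv}^{p-2}\nabla\bfv)=0$ in $B_r$ with $\bfv-\bfu\in W^{1,p}_0(B_r)$. Testing the difference of the two weak formulations with $\bfu-\bfv$, and replacing $\bfF$ by $\bfF-\mean{\bfF}_{B_r}$ (admissible since $\int_{B_r}\nabla(\bfu-\bfv)\,dy=0$), the standard monotonicity of the $p$-Laplace operator expressed through the Uhlenbeck field $V(\xi):=\abs{\xi}^{(p-2)/2}\xi$, together with the algebraic relation $\abs{\abs{\xi}^{p-2}\xi-\abs{\eta}^{p-2}\eta}^{p'}\lesssim\abs{V(\xi)-V(\eta)}^2$, yields the comparison
\begin{equation*}
  \dashint_{B_r}\abs{\abs{\nabla\bfu}^{p-2}\nabla\bfu-\abs{\nabla\bfv}^{p-2}\nabla\bfv}^{p'}dy\le c\dashint_{B_r}\abs{\bfF-\mean{\bfF}_{B_r}}^{p'}dy.
\end{equation*}
The estimate has to be executed uniformly in the sub- and super-quadratic regimes $p\ge 2$ and $1<p<2$.

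\textbf{One-step decay.} Theorem \ref{thm:decay} provides the homogeneous Campanato-type decay for the $p$-harmonic $\bfv$: for every $\rho\in(0,1/2]$,
\begin{equation*}
  \Big(\dashint_{B_{\rho r}}\abs{\abs{\nabla\bfv}^{p-2}\nabla\bfv-\mean{\abs{\nabla\bfv}^{p-2}\nabla\bfv}_{B_{\rho r}}}^{p'}dy\Big)^{1/p'}\le c\,\rho^{2\alpha/p'}\Big(\dashint_{B_r}\abs{\abs{\nabla\bfv}^{p-2}\nabla\bfv-\mean{\abs{\nabla\bfv}^{p-2}\nabla\bfv}_{B_r}}^{p'}dy\Big)^{1/p'},
\end{equation*}
the exponent $2\alpha/p'$ arising because $V(\nabla\bfv)$ is $\alpha$-H\"older and $\abs{V(\xi)}^2\sim\abs{\xi}^p=\abs{\abs{\xi}^{p-2}\xi}^{p'}$. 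Abbreviating by $\Phi(\varrho)$ and $\Psi(\varrho)$ the $L^{p'}$-excess of $\abs{\nabla\bfu}^{p-2}\nabla\bfu$ and of $\bfF$ on $B_\varrho(x_0)$ respectively, splitting $\bfu=\bfv+(\bfu-\bfv)$, applying the above display to the $\bfv$-part and the comparison bound to the remainder, one obtains the key one-step estimate
\begin{equation*}
  \Phi(\rho r)\le c\,\rho^{2\alpha/p'}\Phi(r)+c\,\rho^{-n/p'}\Psi(r),\qquad\rho\in(0,1/2],\ r\le R.
\end{equation*}

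\textbf{Weighted iteration.} Fix any $\beta\in\big(0,\min\set{1,2\alpha/p'}\big)$ and choose $\rho$ so small that $c\,c_\omega\,\rho^{2\alpha/p'-\beta}\le 1/2$. The almost-monotonicity $\omega(\rho r)^{-1}\le c_\omega\rho^{-\beta}\omega(r)^{-1}$ transforms the one-step bound into
\begin{equation*}
  \frac{\Phi(\rho r)}{\omega(\rho r)}\le\tfrac12\,\frac{\Phi(r)}{\omega(r)}+C_\rho\frac{\Psi(r)}{\omega(r)},
\end{equation*}
which iterates along the dyadic sequence $r_k=\rho^k R$ to
\begin{equation*}
  \sup_{0<r\le R}\frac{\Phi(r)}{\omega(r)}\le c\,\frac{\Phi(2R)}{\omega(R)}+c\sup_{0<r\le R}\frac{\Psi(r)}{\omega(r)},
\end{equation*}
after interpolating between dyadic scales and enlarging the initial scale from $R$ to $2R$. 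Finally, a dimensional comparison between excesses on balls centered at $x$ and excesses on arbitrary balls containing $x$ upgrades the centered supremum to the supremum in the definition of $M^{\sharp}_{\omega,R}$, thereby giving \eqref{E:main2}. The bulk of the work is contained in Steps 2 and 3: carrying out the $L^{p'}$-comparison cleanly and identifying the correct decay exponent $2\alpha/p'$ through the algebraic interplay between $V$ and $\xi\mapsto\abs{\xi}^{p-2}\xi$.
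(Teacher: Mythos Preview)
The weighted iteration and the passage to non-centered balls are indeed routine, and your outline of those parts matches the paper's proof closely. The gap is in your comparison step, which is precisely the place where the difficulty of the problem lives.

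The algebraic inequality you invoke,
\[
\abs{\abs{\xi}^{p-2}\xi-\abs{\eta}^{p-2}\eta}^{p'}\lesssim\abs{V(\xi)-V(\eta)}^2,
\]
is false for $p>2$. Indeed, from \eqref{eq:hammera} and \eqref{eq:hammerd-bis} one has $\abs{\bfA(\xi)-\bfA(\eta)}^{p'}\approx(\abs{\xi}+\abs{\eta})^{(p-2)p'}\abs{\xi-\eta}^{p'}$ and $\abs{V(\xi)-V(\eta)}^2\approx(\abs{\xi}+\abs{\eta})^{p-2}\abs{\xi-\eta}^2$; the ratio behaves like $\big((\abs{\xi}+\abs{\eta})/\abs{\xi-\eta}\big)^{(p-2)/(p-1)}$, which is unbounded when $p>2$ and $\xi$ is close to $\eta$. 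So the bound $\dashint_{B_r}\abs{\bfA(\nabla\bfu)-\bfA(\nabla\bfv)}^{p'}\,dy\le c\dashint_{B_r}\abs{\bfF-\mean{\bfF}_{B_r}}^{p'}\,dy$ cannot be obtained from the $V$-comparison in the super-quadratic case. Conversely, for $p<2$ the left-hand algebraic relation does hold, but then the right-hand side of the energy comparison is $\dashint\varphi_{p',\abs{\nabla\bfu}^{p-1}}(\abs{\bfF-\bfF_0})\,dy$ with $p'>2$, and $\varphi_{p',a}(t)\ge t^{p'}$ in this regime, so you cannot simply drop to $\abs{\bfF-\bfF_0}^{p'}$ without paying a term involving $\abs{\nabla\bfu}^p$ that is not controlled by the excess.

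This is exactly why the paper does \emph{not} derive the one-step decay directly from $p$-harmonic comparison, but instead builds Proposition~\ref{prop:decay conclusion} through a degenerate/non-degenerate alternative (Subsections~\ref{sec:comparison} and~\ref{sec:comparison-nondeg}). In the degenerate regime \eqref{eq:deg} a shift-change argument (Lemma~\ref{lem:choice}, Proposition~\ref{pro:2}) handles the loss. In the non-degenerate regime with $p\ge2$ the $p$-harmonic comparison is insufficient, and the paper introduces a \emph{second} comparison with the solution $\bfz$ of the linearized system \eqref{eq:linear}, using the second-order error bound for $\bfH$ in Lemma~\ref{lemma:H} together with Calder\'on--Zygmund theory (Proposition~\ref{prop:non-deg p>2}). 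Only after merging these cases does one obtain the decay \eqref{eq:decay conclusion}, stated for the $L^{\min\{2,p'\}}$-excess rather than the $L^{p'}$-excess; the iteration you sketch is then carried out exactly as you describe, but starting from \eqref{eq:decay conclusion}.
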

%
%\comment{\textcolor[rgb]{0.00,0.00,1.00}{Andrea: I changed the
%statement of Theorem \ref{thm:main2} back to $B_{2R}\subset
%{\Omega}$ and $x \in B_R$. Don't we need this?}}

\begin{remark}\label{remrn}
{\rm As in \eqref{main1}, the operator $ M^\sharp_{\omega,R}$ can be
replaced with $M^{\sharp, {\min\set{p',2}}}_{\omega,R}$ on the
left-hand side of \eqref{E:main2}.}
\end{remark}

\iffalse
\begin{remark}\label{omegarestriction}
{\rm \seb{The condition on the weight~\eqref{eq:omega condition} is not necessary because of the proofs in this article. They are due to the optimal regularity of p-harmonic functions (i.e. \eqref{eq:sysA}, with $\bfF\equiv 0$). See \eqref{betacond} and Theorem~\ref{thm:decay}.}}
\end{remark}
\fi

\begin{remark}\label{remrn bis}
{\rm In particular, if $\Omega = \Rn$, and assumption
\eqref{eq:omega condition} holds with $R=\infty$, then inequality
\eqref{E:main2} implies that
\begin{equation}\label{E:main2rn}
M^{\sharp}_{\omega}\big(\abs{\nabla \bfu}^{p-2}\nabla \bfu\big)(x)
 \\ \leq cM^{\sharp,p'}_{\omega} \bfF (x) \quad \hbox{for a.e. $x \in
 \Rn$,}
 \end{equation}
provided that $\bfu \in V^{1,p}(\Rn)$.}
%
%
%
%
%
%then an inequality like \eqref{E:main2} holds for a.e. $x \in \Rn$,
%with $M^{\sharp}_{\omega, R}$ and $M^{\sharp,p'}_{\omega, R}$
% replaced with $M^{\sharp}_{\omega}$ and
%$M^{\sharp,p'}_{\omega}$, respectively, and with the second term
%missing on the right-hand side of \eqref{E:main2}.}
%
\end{remark}

\par
%Theorem \ref{thm:main2} is suitable for applications to gradient
%bounds in norms also depending on oscillations. For
%Customary instances of (quasi-)norms depending on oscillations, in
%which Theorem \ref{thm:main2} yields gradient bounds, are provided
%by the H\"older quasi-norms.
As a consequence of Theorem \ref{thm:main2},  the H\"older
 regularity of the gradient of solutions to the $p$-Laplace system is easily recovered.
 However, more general regularity properties can  be deduced.
Inequalities between semi-norms of $\nabla \bfu$ and $\bfF$ in
(generalized) Campanato spaces, associated with a function $\omega$
as above, stem from inequality \eqref{E:main2}. In particular, they
tell us that information on the modulus of continuity of $\nabla
\bfu$ in terms of that  of $\bfF$ can still be derived, if  the
 modulus of continuity $\omega$ of $\bfF$, although not of power type, yet
satisfies the Dini type condition
$$\int _0 \frac {\omega (r)}r\, dr < \infty.$$
Such a condition   is  sharp, even in the simplest linear case when
$p=2$, as shown by Example \ref{sharpness}, Section \ref{oscill}.
These consequences of Theorem \ref{thm:main2} are presented in
Section \ref{oscill}, which also includes a discussion of $\setBMO$
and $\setVMO$ gradient regularity.

\medskip
\par
Pointwise gradient bounds for solutions to system \eqref{eq:sysA},
which are maximal operator free,  also follow from the methods of
this paper. Interestingly, they involve an unconventional
Havin-Maz'ya-Wolff type nonlinear potential of the right-hand side
of \eqref{eq:sysA}, defined in terms of its integral oscillations on
balls.
%This is  the content of Theorem \ref{thm:main3} below.

\begin{theorem}
\label{thm:main3} {\bf [Potential type estimates]} Let $n \geq 2$,
$N \geq 1$ and $p \in (1, \infty)$. Let $\Omega $ be an open set in
$\Rn$, and let $\bfF \in L^{p'}_{\rm loc}(\Omega)$. Let $\bfu \in
W^{1,p}_{\rm loc}(\Omega)$ be a local weak solution to system
\eqref{eq:sysA}. Then there exists a constant $c=c(n, N, p)$ such
that
\begin{align}\label{E:main3}
 \abs{\nabla \bfu(x)}^{p-1}
&\leq c\int_0^R\bigg( \dashint_{B_r(x)}
\bigg(\frac{\abs{\bfF-\mean{\bfF}_{B_r(x)}}}{r} \bigg)^{p'} \dy
\bigg)^\frac{1}{p'}\frac{dr}{r} + c\dashint_{B_{R}(x)}\abs{\nabla
\bfu}^{p-1}\dy
\end{align}
  for a.e. $x \in \Omega$, and every $R>0$ such that
$B_{R}(x)\subset {\Omega}$. Moreover, a point $x \in \Omega$
 is a Lebesgue point of
$\abs{\nabla \bfu}^{p-2} \nabla \bfu$ whenever the right-hand side
of \eqref{E:main3} is finite for some $R>0$.
\end{theorem}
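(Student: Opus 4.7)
The plan is to iterate on a dyadic family of concentric balls centred at $x$ the one-step excess decay that already underlies Theorem~\ref{thm:main2}, to read off the pointwise value $\abs{\nabla\bfu(x)}^{p-2}\nabla\bfu(x)$ as a telescopic limit of averages, and to convert the resulting geometric series into the continuous potential appearing in~\eqref{E:main3}. Fix $x\in\Omega$ and $R>0$ with $B_R(x)\subset\Omega$, pick $\theta\in(0,1)$ small to be chosen below, and set $r_k:=\theta^kR$ for $k\ge 0$.

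First I would isolate, from the proof of Theorem~\ref{thm:main2}, a clean one-step decay on a single pair of concentric balls. Comparing $\bfu$ on $B_{r_k}(x)$ with the $p$-harmonic function $\bfv$ sharing its boundary values, invoking the excess decay of Theorem~\ref{thm:decay} for $\bfv$, and controlling the defect $\bfu-\bfv$ by the natural $V$-functional energy estimate should yield constants $\alpha=\alpha(n,N,p)>0$ and $C=C(n,N,p)$ such that $a_{k+1}\le C\theta^\alpha a_k+Cb_k$, where
\begin{equation*}
a_k:=\dashint_{B_{r_k}(x)}\abs{\abs{\nabla\bfu}^{p-2}\nabla\bfu-\mean{\abs{\nabla\bfu}^{p-2}\nabla\bfu}_{B_{r_k}(x)}}\dy,
\end{equation*}
and $b_k$ denotes the rescaled $\bfF$-excess matching the integrand of~\eqref{E:main3} at scale $r_k$. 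Choosing $\theta$ with $C\theta^\alpha\le 1/2$, a discrete Gr\"onwall-type recursion gives $\sum_{k\ge 0}a_k\le 2a_0+C\sum_{k\ge 0}b_k$, while the containment $B_{r_{k+1}}\subset B_{r_k}$ together with Jensen's inequality yields
\begin{equation*}
\abs{\mean{\abs{\nabla\bfu}^{p-2}\nabla\bfu}_{B_{r_{k+1}}(x)}-\mean{\abs{\nabla\bfu}^{p-2}\nabla\bfu}_{B_{r_k}(x)}}\le C\theta^{-n}a_k,
\end{equation*}
so the averages form a Cauchy sequence converging to some limit, which we denote by $\mathbf{V}_\infty(x)$.

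The triangle inequality now delivers $\dashint_{B_{r_k}(x)}\abs{\abs{\nabla\bfu}^{p-2}\nabla\bfu-\mathbf{V}_\infty(x)}\dy\le a_k+\abs{\mean{\abs{\nabla\bfu}^{p-2}\nabla\bfu}_{B_{r_k}(x)}-\mathbf{V}_\infty(x)}\to 0$ as $k\to\infty$ whenever $\sum_k a_k<\infty$, and a routine comparison of nested balls extends this to arbitrary $r\to 0$. Consequently $x$ is a Lebesgue point of $\abs{\nabla\bfu}^{p-2}\nabla\bfu$ as soon as the right-hand side of~\eqref{E:main3} is finite, and the identification $\abs{\nabla\bfu(x)}^{p-2}\nabla\bfu(x)=\mathbf{V}_\infty(x)$ follows. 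The initial-scale term satisfies $a_0\le 2\dashint_{B_R(x)}\abs{\nabla\bfu}^{p-1}\dy$, which supplies the second term in~\eqref{E:main3}, while a nested-ball comparison on each annulus $\{r_{k+1}\le r\le r_k\}$ converts the discrete sum $\sum_k b_k$ into a constant multiple of the continuous integral on the right-hand side of~\eqref{E:main3}.

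The principal technical obstacle lies in the extraction of the one-step oscillation decay in the first step: one needs an estimate that is uniform in $p\in(1,\infty)$ and that registers the right-hand side exactly in the $L^{p'}$ oscillation of $\bfF$ prescribed by~\eqref{E:main3}, which demands a careful interplay of the $p$-harmonic approximation, $V$-functional monotonicity, and a Caccioppoli/energy estimate for $\bfu-\bfv$, with the usual split between $p\ge 2$ and $1<p<2$. All of these ingredients have however already been assembled in the proof of Theorem~\ref{thm:main2}, so the present argument reduces to packaging them into the dyadic iteration and telescoping just described.
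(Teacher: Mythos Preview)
Your proposal is correct and follows essentially the same route as the paper: fix $\delta=\tfrac12$ in Proposition~\ref{prop:decay conclusion} to obtain a one-step excess decay, iterate on the dyadic sequence $r_k=\theta^k R$, telescope the averages of $\bfA(\nabla\bfu)$ to get the Cauchy property and hence the Lebesgue-point statement, and convert the resulting sum into the continuous potential. The only detail you elide is that Proposition~\ref{prop:decay conclusion} is stated with the $\min\{2,p'\}$ oscillation on both sides rather than the $L^1$ oscillation you use for $a_k$; the paper therefore iterates in the $\min\{2,p'\}$ scale and, at the very end, invokes the reverse-H\"older estimate of Corollary~\ref{cor:VPL1} (with $\bfP=0$, on $B_{R/2}\subset B_R$) to pass from the $\min\{2,p'\}$ excess at the initial scale down to the plain average $\dashint_{B_R}\abs{\nabla\bfu}^{p-1}\dy$ plus an $\bfF$-term that is absorbed into the potential.
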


From Theorem \ref{thm:main3} one immediately infers, for instance,
that $|\nabla \bfu|$ is locally bounded in $\Omega$, provided that
$\bfF$ has a modulus of continuity $\omega$ satisfying the Dini type
condition displayed above. Example \ref{sharpness} again
demonstrates the sharpness of the relevant condition with this
regard.

\begin{remark}\label{remrnwolff}
{\rm If $\Omega = \mathbb R^n$, and a solution $\bfu$ to system
\eqref{eq:sysA} belongs to $V^{1,p}(\mathbb R^n)$, then letting $R$
tend to infinity in inequality \eqref{E:main3} tells us that
\begin{equation}\label{may100}
 \abs{\nabla \bfu(x)}^{p-1}
\leq c\int_0^\infty \bigg(\dashint_{B_r(x)}
\bigg(\frac{\abs{\bfF-\mean{\bfF}_{B_r(x)}}}{r} \bigg)^{p'} \dy
\bigg)^\frac{1}{p'}\frac{dr}{r}\quad \hbox{for a.e. $x \in \mathbb
R^n$.}
\end{equation}
}
\end{remark}

Let us conclude this section with a brief outline of the methods of
proofs. Our approach relies upon precise decay estimates on balls
for suitable nonlinear expressions of the gradient of solutions to
system \eqref{eq:sysA}. These estimates are obtained through
comparisons with the gradient of solutions to simpler systems, whose
behavior is quite well known: the $p$-harmonic system, namely
\eqref{eq:sysA} with $\bfF =0$, and a linear system which
approximates \eqref{eq:sysA} up to the second order. One major
novelty in our technique amounts to a different use of such
auxiliary systems, depending  on whether $p \in (1, 2)$ or $p\in [2,
\infty)$, and on whether the integral, on the relevant balls, of an
appropriate function of the gradient is \lq\lq small" or \lq\lq
large", compared with the integral oscillation of the function in
question on the same balls. Merging the resulting comparison
estimates  requires a fine tuning of various parameters which come
into play. Most of the intermediate steps, that eventually lead to
our final results, call for the replacement of the $p$-power
function with a smoothed (still convex) function near $0$, called
\lq\lq shifted $p$-power function" in what follows, which again
depends on a parameter. Of course, a crucial feature of the relevant
intermediate steps is that the involved constants
 are independent of this parameter.

\section{Decay estimates}\label{sec:decay}

The present section is devoted to  decay estimates for the
oscillation on balls of the gradient of local weak solutions to
system \eqref{eq:sysA}. A function $\bfu \in W^{1,p}_{\loc}(\Omega)$
is called a local  weak solution to \eqref{eq:sysA} if
\begin{equation}\label{weaksol}
\int _{\Omega '} |\nabla \bfu|^{p-2}\nabla \bfu \cdot \nabla \bfphi
\, dx = \int _{\Omega '} \bfF \cdot \nabla \bfphi\,dx
\end{equation}
for every function $\bfphi \in W^{1,p}_0(\Omega ' )$, and every open
set $\Omega ' \subset \subset \Omega$. Here, the dot $\lq\lq \cdot
"$ stands for scalar product.
\\
The relevant estimates  constitute the core of our proofs, and
 involve the function $\bfA : \setR^{N \times
n} \to \setR^{N \times n}$ given by
$$\hbox{$\bfA(\bfP) = \abs{\bfP}^{p-2} \bfP$ \qquad
 for $\bfP \in \setR^{N \times n}$}$$ and $\bfV : \setR^{N \times n} \to
\setR^{N \times n}$ given by
$$\hbox{$\bfV(\bfP) = \abs{\bfP}^{\frac{p-2}{2}} \bfP$
%\quad
%$\bfV^\ast(\bfP) = \abs{\bfP}^{\frac{p'-2}{2}} \bfP$}$$
 \qquad for $\bfP
\in \setR^{N \times n}$}.$$

Our final goal here is  the following result.

\begin{proposition} \label{prop:decay conclusion}
Let $p\in (1, \infty)$, and let $\bfu$ be a local weak solution to
\eqref{eq:sysA}.
 %for $1< p<
%\infty$.
Assume that  $\delta \in (0,1)$. Then there exist constants $\theta =
\theta(n,p,N,\delta)\in (0, 1)$ and
$c_\delta=c_\delta(n,p,N,\delta)>0$ such that
\begin{align}
\label{eq:decay conclusion}
 \bigg(\dashint_{\theta B}& \abs{\bfA(\nabla \bfu)-\mean{\bfA(\nabla \bfu)}_{\theta
 B}}^{\min\set{2,p'}}\dx\bigg)^\frac{1}{\min{\set{2,p'}}}
 \\ \nonumber & \qquad \leq \delta\bigg( \dashint_{B}\abs{\bfA(\nabla \bfu)-\mean{\bfA(\nabla
\bfu)}_{{B}}}^{\min\set{2,p'}}\dx\bigg)^\frac1{{\min\set{2,p'}}}
 + c_\delta \bigg(\dashint_{B}\abs{\bfF -\bfF
 _0}^{p'}\dx\bigg)^{\frac 1{p'}}
\end{align}
for every ball $B \subset \Omega$. In particular, inequality
\eqref{eq:decay conclusion} holds with
\begin{equation}\label{august300}
\theta = c\delta ^{\varsigma}
\end{equation}
 for small $\delta$, where  $\varsigma$ is any
number larger than $\max\{1, \frac {p'}{2\alpha}\}$, $\alpha =\alpha
(n,N,p)$ is the H\"older exponent appearing in a gradient estimate
for the solutions to the $p$-harmonic system (see Theorem
\ref{thm:decay} below), and $c=c(n,N,p,\varsigma)$.
\end{proposition}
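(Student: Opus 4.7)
The plan is to set up a dichotomy on the ball $B$ based on the size of $\mathbf{P}_0 := \mean{\nabla \bfu}_B$ relative to the oscillation of $\nabla \bfu$ on $B$, and to use two different comparison procedures in the two regimes, unified through the shifted $p$-power function $\varphi_{|\mathbf{P}_0|}$. In the degenerate regime I would compare $\bfu$ with a $p$-harmonic replacement and invoke the Uhlenbeck-type $C^{1,\alpha}$ gradient regularity (Theorem \ref{thm:decay}); in the non-degenerate regime I would linearize $\bfA$ around $\mathbf{P}_0$ and compare $\bfu$ with the solution of a constant-coefficient linear elliptic system, whose gradient oscillations decay at the stronger rate $\theta^1$.

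\textbf{Degenerate case.} Let $\mathbf{w}$ be the $p$-harmonic replacement of $\bfu$ on $B$, that is the solution of $-\divergence(|\nabla \mathbf{w}|^{p-2}\nabla \mathbf{w}) = 0$ in $B$ with $\mathbf{w} = \bfu$ on $\partial B$. Testing the difference of \eqref{weaksol} with $\bfu - \mathbf{w}$, combined with the monotonicity bound $(\bfA(\mathbf{P})-\bfA(\mathbf{Q}))\cdot(\mathbf{P}-\mathbf{Q}) \geq c\,|\bfV(\mathbf{P})-\bfV(\mathbf{Q})|^2$, yields
\[
\dashint_B |\bfV(\nabla \bfu) - \bfV(\nabla \mathbf{w})|^2 \, dx \,\leq\, c \dashint_B |\bfF - \mean{\bfF}_B|^{p'}\, dx.
\]
The pointwise equivalence between $|\bfA(\mathbf{P}) - \bfA(\mathbf{Q})|^{\min\{2,p'\}}$ and $|\bfV(\mathbf{P}) - \bfV(\mathbf{Q})|^2$ (up to a shift) turns this into an $L^{\min\{2,p'\}}$-comparison for $\bfA(\nabla \bfu)-\bfA(\nabla \mathbf{w})$. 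Then Theorem \ref{thm:decay} provides
\[
\dashint_{\theta B} \bigl|\bfA(\nabla \mathbf{w})-\mean{\bfA(\nabla \mathbf{w})}_{\theta B}\bigr|^{\min\{2,p'\}}\, dx \,\leq\, c\,\theta^{2\alpha}\dashint_B \bigl|\bfA(\nabla \mathbf{w})-\mean{\bfA(\nabla \mathbf{w})}_{B}\bigr|^{\min\{2,p'\}}\, dx.
\]

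\textbf{Non-degenerate case.} When $|\mathbf{P}_0|$ dominates the oscillation of $\nabla \bfu$ on $B$, I would compare $\bfu$ with the solution $\mathbf{v}$ of the constant-coefficient linear elliptic system $-\divergence(\bfA'(\mathbf{P}_0)\nabla \mathbf{v})=0$ in $B$ with $\mathbf{v}=\bfu$ on $\partial B$. After rescaling by $\varphi''_{|\mathbf{P}_0|}(0)$, this system is uniformly elliptic, so $\mathbf{v}$ is smooth and $\nabla \mathbf{v}$ has oscillation decay $\theta$. The corresponding energy comparison now picks up an additional Taylor-remainder term which is quadratic in the oscillation of $\nabla \bfu$ and hence absorbable on the right-hand side of \eqref{eq:decay conclusion}.

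\textbf{Combination and main obstacle.} Merging both regimes --- interpolating via $\varphi_{|\mathbf{P}_0|}$ so that the $\min\{2,p'\}$-norm is uniformly controlled --- produces the schematic bound
\[
I(\theta B) \,\leq\, c\,\theta^{2\alpha/\min\{2,p'\}}\, I(B) + c_\theta \bigg(\dashint_B |\bfF - \mean{\bfF}_B|^{p'} dx\bigg)^{1/p'},
\]
where $I(B) := \bigl(\dashint_B |\bfA(\nabla \bfu)-\mean{\bfA(\nabla \bfu)}_B|^{\min\{2,p'\}} dx\bigr)^{1/\min\{2,p'\}}$. Choosing $\theta$ so that $c\,\theta^{2\alpha/\min\{2,p'\}} \leq \delta$ gives exactly \eqref{eq:decay conclusion} with $\theta \sim \delta^{\varsigma}$, $\varsigma > \max\{1, p'/(2\alpha)\}$; the $\max\{1,\cdot\}$ reflects that the non-degenerate regime contributes the $\theta^1$ rate while the degenerate regime contributes the slower $\theta^{2\alpha/\min\{2,p'\}}$ rate. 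The main obstacle is the uniform --- in $p \in (1,\infty)$ and in $|\mathbf{P}_0|$ --- passage between the natural $\bfV$-energy produced by the comparison arguments and the $L^{\min\{2,p'\}}$-oscillations of $\bfA(\nabla\bfu)$ on which \eqref{eq:decay conclusion} is phrased. The pointwise equivalences tying together $|\bfA(\mathbf{P})-\bfA(\mathbf{Q})|$, $|\bfV(\mathbf{P})-\bfV(\mathbf{Q})|^2$ and $\varphi_{|\mathbf{P}_0|}(|\mathbf{P}-\mathbf{Q}|)$ carry $p$-dependent weights, and harmonizing these with the linearization error in the non-degenerate regime is where the delicate bookkeeping lies; once this is in hand, the selection of $\theta$ and the absorption are routine.
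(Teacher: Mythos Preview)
Your overall architecture---a degenerate/non-degenerate dichotomy, with a $p$-harmonic comparison in the first regime and a linearization in the second---is exactly the paper's. But there is a real gap in your non-degenerate step when $p\geq 2$. The Taylor remainder
\[
\bfH(\nabla\bfu,\bfP_0)=\bfA(\nabla\bfu)-\bfA(\bfP_0)-D\bfA(\bfP_0)(\nabla\bfu-\bfP_0)
\]
is \emph{not} globally quadratic in $|\nabla\bfu-\bfP_0|$: on the set $\{|\nabla\bfu-\bfP_0|\gtrsim |\bfP_0|\}$ it behaves like $|\nabla\bfu-\bfP_0|^{p-1}$, not like $|\bfP_0|^{p-2}|\nabla\bfu-\bfP_0|^2$ (this is the content of Lemma~\ref{lemma:H}). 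That piece cannot simply be ``absorbed on the right-hand side''; the paper controls it by a separate covering argument (Lemma~\ref{lemma:non-deg large energy}), which in turn rests on the $p$-harmonic decay of Lemma~\ref{lemma:non-deg decay 1 p>2} applied on each small ball. So in the non-degenerate regime for $p\geq 2$ the paper actually runs \emph{both} comparisons---the linear one and the $p$-harmonic one---not just the linear one.

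For $p<2$ the paper takes the opposite route and never linearizes in the non-degenerate case (Proposition~\ref{lemma:non-deg decay 1 p<2}): it passes from the $\bfV$-decay of Lemma~\ref{lem:new} directly to an $\bfA$-decay via $|\bfA(\nabla\bfu)-\bfA(\bfV_\theta)|^2\lesssim |\bfV_\theta|^{p-2}|\bfV(\nabla\bfu)-\bfV(\bfV_\theta)|^2$ together with the non-degeneracy $|\bfV_\theta|\approx|\bfA_2|$. Your linearization proposal would run into the failure of $D\bfA$ to be Lipschitz when $p<2$.

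Two smaller points. First, your degenerate comparison estimate is stated too cleanly: testing with $\bfu-\bfw$ produces $\varphi_{p',|\nabla\bfu|^{p-1}}(|\bfF-\bfF_0|)$ rather than $|\bfF-\bfF_0|^{p'}$, and the shift-change to the latter costs a term $|\bfV(\nabla\bfu)|^2$ that must be absorbed using the degeneracy hypothesis (this is the content of Lemma~\ref{lem:choice}). Second, the paper also isolates a third sub-case (Lemma~\ref{lem:F big}) in which $\bfF$ is already large compared with $\mean{\bfA(\nabla\bfu)}$, where the decay estimate is trivial; this is what allows one to impose \eqref{eq:F small} throughout the non-degenerate analysis.
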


The proof of Proposition \ref{prop:decay conclusion} is
accomplished through several steps, to which
% are
%distributed i
%the content of
the following subsections are devoted.

\subsection{Preliminary estimates}\label{sec:preliminary}

Several inequalities will be conveniently formulated in terms of a
 \lq\lq shifted" $p$-power function  $\varphi_{p,a} : [0,
\infty) \to [0, \infty)$, introduced in~\cite{DieE08}, and defined
for $a\geq0$ and $p\in(1,\infty)$ as
$$\varphi_{p,a}(t)=(a+t)^{p-2}t^2 \qquad \hbox{for $t \geq 0$.}$$
Clearly, $\varphi_{p,0}(t)=t^p $. The function $\varphi_{p,a}$ is
nonnegative and convex,  and vanishes at $0$;  it is hence  a Young
function. Consequently, for every $\delta >0$, there exists a
constant $c=c(\delta, p)$ such that
%\textcolor[rgb]{0.00,0.00,1.00}{\begin{align}
%   \label{eq:younga}
%     t s &\leq c_\delta\, \varphi_{p,a}(t) +
%     \delta\, \varphi_{p',a^{p-1}}(s),
%     \end{align}}
%     and
 \begin{align}
  \label{eq:youngb}
    t s &\leq \delta\, \varphi_{p,a}(t) + c \,
    \varphi_{p',a^{p-1}}(s) \qquad \hbox{for  $t, s \geq 0$.}
 \end{align}
 Basic algebraic relations among the functions $\bfA$, $\bfV$, and
$\varphi_{p,a}$ are summarized hereafter. They are the content of
\cite[Lemmas~3, 21, and 26]{DieE08} and \cite[Appendix]{DieK08}.
Throughout, we denote by $c$ or $C$  a generic constant, which may
change from line to line, which depends  on specified quantities.
Moreover, given two nonnegative functions $f$ and $g$, we write
$f\preceq g$ to denote that there exists a positive constant
 $c$ such
that $f \le c g$. The notation $f\approx g$ means that $f\preceq g
\preceq f$.
\\
%\begin{lemma}
%  \label{lem:hammer}
 %\par\noindent
 Let $p\in(1,\infty)$. Then
  %\begin{subequations}
    %\label{eq:hammer}
    \begin{align}
      \label{eq:hammera}
      \big({\bfA}(\bfP) - {\bfA}(\bfQ)\big) \cdot \big(\bfP-\bfQ
      \big)
      & \approx \bigabs{ \bfV(\bfP) - \bfV(\bfQ)}^2 \approx (\abs{\bfQ} + \abs{\bfP})^{p-2}\abs{\bfQ-\bfP}^2
\\ \nonumber
    &
    \approx\varphi_{p,\abs{\bfQ}}\big(\abs{\bfP-\bfQ}\big)
     % \\
%    &
    \approx\varphi_{p',\abs{\bfQ}^{p-1}}\big(\abs{\bfA(\bfP)-\bfA(\bfQ)}\big),
\end{align}
    for  $\bfP, \bfQ \in \setR^{N \times n}$, up to equivalence constants depending only on $n, N, p$.
        Moreover,
      \begin{align}\label{eq:hammerd}
      \bfA(\bfQ) \cdot \bfQ = \abs{\bfV(\bfQ)}^2 \approx \varphi_p(\abs{\bfQ}),
  \end{align}
and
\begin{align}\label{eq:hammerd-bis}
           \abs{\bfA(\bfP)-\bfA{(\bfQ)}}&\approx\big(\varphi_{p,\abs{\bfQ}}\big)'\big(\abs{\bfP-\bfQ}\big) \approx (\abs{\bfQ} + \abs{\bfP})^{p-2}\abs{\bfQ-\bfP} ,
     \end{align}
   for  $\bfP, \bfQ \in \setR^{N \times n}$, up to equivalence constants depending only on $n, N, p$.
%\end{lemma}
%We will also need the following two inequalities.
%\par \noindent
% If $2\leq p<\infty$, then
% \begin{align}
%  \label{eq:pgeq2}
%  \bigabs{\bfP-\bfQ}^{p}\preceq \bigabs{ \bfV(\bfP) - \bfV(\bfQ)}^2
%  \preceq\bigabs{\bfA(\bfP)-\bfA(\bfQ)}^{p'},
% \end{align}
% and, if $1<p\leq2$, then
% \begin{align}
%  \label{eq:pleq2}
%  \bigabs{\bfA(\bfP)-\bfA(\bfQ)}^{p'}\preceq\bigabs{ \bfV(\bfP) - \bfV(\bfQ)}^2\preceq\bigabs{\bfP-\bfQ}^{p}
% \end{align}
% for  $\bfP, \bfQ \in \setR^{N \times n}$, up to constants depending
% only on $n, N, p$.
% \par\noindent

\par\noindent
For every $\gamma \in (0, 1]$, the   \lq\lq shift change" formula
% \begin{align}
% \label{eq:shiftcha}
%     \varphi_{p,\abs{\bfP}}(t) \leq c\, \varepsilon^{1-p_m'} \varphi_{p,\abs{\bfQ}}(t)
%     + \varepsilon \textcolor[rgb]{0.00,0.00,1.00}{\abs{\bfV (\bfP) - \bfV (\bfQ)}^2},
%     \end{align}
%     and
     \begin{align}
    \label{eq:shiftchb}
    \varphi_{p',\abs{\bfP}^{p-1}}(t) \leq c\, \gamma^{1-\max\set{p,2}}
    \varphi_{p',\abs{\bfQ}^{p-1}}(t) + \gamma \abs{\bfV (\bfP) - \bfV (\bfQ)}^2.
  %  (\varphi^*)_{\abs{A(P)}}(t)& \leq c\, \varepsilon^{1-\pbar}
   % (\varphi^*)_{\abs{A(Q)}}(t) + \varepsilon \abs{V(P) - V(Q)}^2,
  \end{align}
holds for some constant $c=c(n, N, p)$, and  for every  $\bfP, \bfQ
\in \setR^{N \times n}$.

Let us now recall  some inequalities, in integral form, for merely
measurable functions,  to be repeatedly used in our proofs. In what
follows, we denote by $m$ any number in $\mathbb N$. To begin with,
it is classical, and easily verified, that
\begin{equation}\label{mean2}
\|\bff - \mean{\bff}_E\|_{L^2(E)} = \min _{\bfc\in \mathbb
R^m}\|\bff - \bfc\|_{L^2(E)},
\end{equation}
for any measurable set $E$ in $\Rn$, and and every function $\bff :
E \to \mathbb R^m$ such that $\bff \in L^2(E)$. If $q \in [1,
\infty]$, then
\begin{equation}\label{meanq}
\|\bff - \mean{\bff}_E\|_{L^q(E)} \leq 2\min _{\bfc\in \mathbb
R^m}\|\bff - \bfc\|_{L^q(E)},
\end{equation}
% for any measurable set $E$ in $\Rn$, any $q \in [1,\infty]$, and
for every $f \in L^q(E)$.

\par
The following result is less standard, and concerns the equivalence
of certain integral averages on balls.
\begin{lemma}\label{lem:trick} \cite[Lemma 6.2]{DieKapSch11}
  Let $p\in(1,\infty)$, and let  $B$ be a ball in $\Rn$.
  Given any function $\bfg : \Omega \to \mathbb R^m$ such that $\bfg\in L^p(B)$, denote by $\bfg_\bfA$  the vector in $\mathbb R^m$
  obeying
  $\bfA(\bfg_\bfA) = \mean{\bfA(\bfg)}_B$. Then
  \begin{align}\label{trick1}
    \dashint_B \abs{\bfV(\bfg) \!-\! \mean{\bfV(\bfg)}_B}^2\,dx &\approx
    \dashint_B \abs{\bfV(\bfg) \!-\! \bfV(\mean{\bfg}_B)}^2\,dx \approx \dashint_B
    \abs{\bfV(\bfg) \!-\! \bfV(\bfg_\bfA)}^2\,dx,
  \end{align}
 up to equivalence  constants independent of $B$ and $\bfg$.
\end{lemma}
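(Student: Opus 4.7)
The plan is to translate the three averages into the language of the shifted $N$-function $\varphi_{p,a}$ via \eqref{eq:hammera}, to exploit an algebraic cancellation tailored to the definition of $\bfg_\bfA$, and to conclude the delicate step with a Jensen argument for $\varphi_{p,a}$. Throughout I would use the auxiliary vector $\bfg_\bfV := \bfV^{-1}(\mean{\bfV(\bfg)}_B)$, which is well defined since $\bfV$ is a homeomorphism of $\setR^{N\times n}$; in particular $\dashint_B|\bfV(\bfg)-\mean{\bfV(\bfg)}_B|^2\,dx = \dashint_B|\bfV(\bfg)-\bfV(\bfg_\bfV)|^2\,dx$.

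For the easy direction of both equivalences, I would invoke \eqref{mean2} with $q=2$: since $\mean{\bfV(\bfg)}_B$ is the $L^2(B)$-best constant approximation of $\bfV(\bfg)$,
$$\dashint_B |\bfV(\bfg)-\mean{\bfV(\bfg)}_B|^2\,dx \leq \dashint_B |\bfV(\bfg) - \bfc|^2\,dx$$
for every $\bfc\in\setR^{N\times n}$, and the choices $\bfc = \bfV(\mean{\bfg}_B)$ and $\bfc = \bfV(\bfg_\bfA)$ give $\leq$ in both equivalences of \eqref{trick1}.

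For the equivalence between the second and third terms of \eqref{trick1}, the main step is a clean algebraic identity. Expanding dot products, using $\bfA(\bfg_\bfA) = \mean{\bfA(\bfg)}_B$ and the fact that $\dashint_B (\bfA(\bfg)-\mean{\bfA(\bfg)}_B)\,dx=0$, a short computation yields
\begin{align*}
 \dashint_B \bigl(\bfA(\bfg) - \bfA(\bfg_\bfA)\bigr)\cdot(\bfg - \bfg_\bfA)\,dx
 &= \dashint_B \bfA(\bfg) \cdot \bfg\,dx - \mean{\bfA(\bfg)}_B \cdot \mean{\bfg}_B \\
 &= \dashint_B \bigl(\bfA(\bfg) - \bfA(\mean{\bfg}_B)\bigr)\cdot(\bfg - \mean{\bfg}_B)\,dx.
\end{align*}
Since by \eqref{eq:hammera} the outer integrals are pointwise comparable to $|\bfV(\bfg)-\bfV(\bfg_\bfA)|^2$ and $|\bfV(\bfg)-\bfV(\mean{\bfg}_B)|^2$ respectively, this delivers the second equivalence in \eqref{trick1}.

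The first equivalence in \eqref{trick1} is the delicate part. Because $\bfV(\bfg)-\mean{\bfV(\bfg)}_B$ has vanishing mean on $B$, a Pythagorean decomposition gives
$$\dashint_B |\bfV(\bfg) - \bfV(\mean{\bfg}_B)|^2\,dx = \dashint_B |\bfV(\bfg) - \mean{\bfV(\bfg)}_B|^2\,dx + |\mean{\bfV(\bfg)}_B - \bfV(\mean{\bfg}_B)|^2,$$
so the task reduces to bounding the Jensen defect $|\mean{\bfV(\bfg)}_B - \bfV(\mean{\bfg}_B)|^2$ by $\dashint_B|\bfV(\bfg)-\mean{\bfV(\bfg)}_B|^2\,dx$. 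The obstruction is that applying Jensen directly to $|\cdot|^2$ only yields a bound by the larger quantity $\dashint_B|\bfV(\bfg)-\bfV(\mean{\bfg}_B)|^2\,dx$. To overcome this, I would rewrite the defect via \eqref{eq:hammera} and then use the monotonicity and convexity of the Young function $\varphi_{p,a}$ (both easily checked from $\varphi_{p,a}(t)=(a+t)^{p-2}t^2$ for every $p\in(1,\infty)$) together with Jensen applied to $\bfg - \bfg_\bfV$:
\begin{align*}
 |\mean{\bfV(\bfg)}_B - \bfV(\mean{\bfg}_B)|^2
 &= |\bfV(\bfg_\bfV) - \bfV(\mean{\bfg}_B)|^2
 \approx \varphi_{p,\abs{\bfg_\bfV}}(|\mean{\bfg}_B - \bfg_\bfV|) \\
 &\leq \varphi_{p,\abs{\bfg_\bfV}}\Big(\dashint_B |\bfg - \bfg_\bfV|\,dx\Big)
 \leq \dashint_B \varphi_{p,\abs{\bfg_\bfV}}(|\bfg - \bfg_\bfV|)\,dx \\
 &\approx \dashint_B |\bfV(\bfg) - \bfV(\bfg_\bfV)|^2\,dx
 = \dashint_B |\bfV(\bfg) - \mean{\bfV(\bfg)}_B|^2\,dx,
\end{align*}
which closes the chain of equivalences.
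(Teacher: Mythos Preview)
The paper does not prove this lemma; it simply quotes it from \cite[Lemma 6.2]{DieKapSch11}. So there is no proof in the paper to compare against, and the only question is whether your argument stands on its own. It does.

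Your handling of the second equivalence is clean: the algebraic identity
\[
\dashint_B (\bfA(\bfg)-\bfA(\bfg_\bfA))\cdot(\bfg-\bfg_\bfA)\,dx
= \dashint_B \bfA(\bfg)\cdot\bfg\,dx - \mean{\bfA(\bfg)}_B\cdot\mean{\bfg}_B
= \dashint_B (\bfA(\bfg)-\bfA(\mean{\bfg}_B))\cdot(\bfg-\mean{\bfg}_B)\,dx
\]
is exactly the kind of cancellation that makes $\bfg_\bfA$ and $\mean{\bfg}_B$ interchangeable at the level of $\bfV$-oscillations, and invoking \eqref{eq:hammera} on both ends is the right move.

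For the first equivalence, the Pythagorean splitting reduces matters to bounding the Jensen defect $|\bfV(\bfg_\bfV)-\bfV(\mean{\bfg}_B)|^2$, and your chain via $\varphi_{p,|\bfg_\bfV|}$ is correct: the key inputs are that $\varphi_{p,a}$ is nondecreasing (so $|\mean{\bfg}_B-\bfg_\bfV|\le\dashint_B|\bfg-\bfg_\bfV|\,dx$ can be fed in) and convex (so Jensen applies), both of which the paper records, and then \eqref{eq:hammera} closes the loop back to $\dashint_B|\bfV(\bfg)-\mean{\bfV(\bfg)}_B|^2\,dx$. All equivalence constants depend only on $n,N,p$, as required.
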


The next lemma encodes self-improving properties of reverse H\"older
inequalities for shifted functions. In what follows, given a ball
$B$ in $\Rn$ and a positive number $\theta$, we denote by $\theta B$
the ball, with the same center as $B$, whose radius is $\theta$
times the radius of $B$.

\begin{lemma}
  \label{cor:invjensen}\cite[Corollary 3.4]{DieKapSch11}
  Let $\Omega$ be an open subset of $\Rn$. Let  $p\in(1,\infty)$, $a \in [0, \infty)$, and let $\bfg , \bfh : \Omega \to \mathbb R^m$ be such that
  $\bfg\in L^p_{\loc}(\Omega)$ and $\bfh\in L^1_{\loc}(\Omega)$.
  Assume that
  there exist constants  $\sigma \in (0,1)$ and $c_0>0$ such that
  \begin{align*}
    \dashint_B \varphi_{p,a}(\abs{\bfg})\,dx \leq c_0\, \bigg(\dashint_{2B}
    \varphi_{p,a}(\abs{\bfg})^{\sigma} \,dx\bigg)^\frac{1}{\sigma} + \dashint_{2B}
    \abs{\bfh}\,dx,
  \end{align*}
  for every ball $B$ such that $2B \subset \Omega$. Then there exists a constant $c_1=c_1(c_0, p, n,\sigma)$  such that
  \begin{align*}
    \dashint_B \varphi_{p,a}(\abs{\bfg})\,dx \leq c_1\,\varphi_{p,a} \bigg( \dashint_{2B}
    \abs{\bfg}\,dx \bigg) + c_1\,
    \dashint_{2B} \abs{\bfh}\,dx
  \end{align*}
  for every ball $B$ such that $2B \subset \Omega$.
\end{lemma}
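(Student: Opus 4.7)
The statement is a self-improvement in the spirit of Gehring's lemma, adapted to the shifted $N$-function $\varphi_{p,a}$: it converts a reverse-H\"older hypothesis at a sub-unit exponent $\sigma$ into a reverse-Jensen inequality that bounds $\dashint_B \varphi_{p,a}(\abs{\bfg})$ by $\varphi_{p,a}$ evaluated at $\dashint_{2B}\abs{\bfg}$. My plan combines two independent ingredients: first, a Gehring/$A_\infty$ bootstrap that lowers the exponent on the right-hand side of the hypothesis to any prescribed small value; second, concave Jensen applied to $\varphi_{p,a}^{\sigma_*}$ with a suitably small $\sigma_*$.

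For the first step, set $F := \varphi_{p,a}(\abs{\bfg})$ and treat $\abs{\bfh}$ as an inhomogeneous forcing. Writing $G = F^\sigma$, the hypothesis becomes the classical reverse-H\"older inequality
\[
\Big(\dashint_B G^{1/\sigma}\dx\Big)^\sigma \le c_0^\sigma\dashint_{2B} G \dx + \textrm{error}(\bfh),
\]
which places $G$ into the reverse-H\"older class $RH_{1/\sigma}$. By Muckenhoupt's theory (reverse-H\"older implies $A_\infty$, and any $A_\infty$-weight satisfies a reverse-H\"older inequality at every exponent), one transfers the estimate to any sub-unit exponent $\sigma_*\in(0,\sigma]$, concluding
\[
\dashint_B \varphi_{p,a}(\abs{\bfg})\dx \le c\Big(\dashint_{2B}\varphi_{p,a}(\abs{\bfg})^{\sigma_*}\dx\Big)^{1/\sigma_*} + c\dashint_{2B}\abs{\bfh}\dx,
\]
with $c$ depending on $c_0,p,n,\sigma,\sigma_*$. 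The inhomogeneous $\bfh$-term propagates through this bootstrap in the standard Gehring fashion.

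For the second step, I choose $\sigma_*$ small enough (depending only on $p$) so that $t\mapsto\varphi_{p,a}(t)^{\sigma_*}=(a+t)^{(p-2)\sigma_*}t^{2\sigma_*}$ is equivalent, with constants depending only on $p$ and uniformly in $a\geq 0$, to a concave function of $t$. Exploiting the $\Delta_2$-comparison $\varphi_{p,a}(t)\approx a^{p-2}t^2+t^p$ when $p\ge 2$, and $\varphi_{p,a}(t)\approx \min\set{a^{p-2}t^2,t^p}$ when $p<2$, the $\sigma_*$-power of the governing expression is a sum (respectively, a minimum) of two concave functions whenever $\sigma_*\le 1/\max\set{2,p}$, and one checks that the minimum is globally concave because the slope only decreases at the crossover $t\approx a$. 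Concave Jensen then yields
\[
\Big(\dashint_{2B}\varphi_{p,a}(\abs{\bfg})^{\sigma_*}\dx\Big)^{1/\sigma_*} \le c\,\varphi_{p,a}\Big(\dashint_{2B}\abs{\bfg}\dx\Big),
\]
and substituting into the first-step estimate gives the claim. The main obstacle is the uniform-in-$a$ control at both stages: verifying uniform concavity of $\varphi_{p,a}^{\sigma_*}$ with respect to the shift, and making sure that the $A_\infty$-constants in the Gehring bootstrap depend only on $c_0, p, n, \sigma$ and not on $a$. Both follow from the fact that the family $\{\varphi_{p,a}\}_{a\ge 0}$ has $\Delta_2$-constant and $\nabla_2$-constant depending only on $p$, so that all harmonic-analytic machinery applies with uniform bounds.
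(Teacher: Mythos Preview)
The paper does not prove this lemma; it is quoted from \cite[Corollary~3.4]{DieKapSch11}. Your two-step strategy---first lower the reverse-H\"older exponent from the given $\sigma$ to some $\sigma_*\le 1/\max\{2,p\}$ by a self-improvement argument, then apply concave Jensen to $\varphi_{p,a}^{\sigma_*}$---is the standard route and is essentially how the cited reference proceeds. Your Step~2 is correctly worked out, including the uniformity in the shift $a$ (and the remark about the crossover slope is unnecessary: a minimum of concave functions is always concave).

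The one place that is thinner than it should be is the handling of $\abs{\bfh}$ in Step~1. Your appeal to ``reverse-H\"older $\Rightarrow A_\infty \Rightarrow$ reverse-H\"older at every sub-unit exponent'' is valid in the homogeneous case (the second arrow being Hru\v{s}\v{c}ev's characterisation $\dashint_B w\le C\exp\big(\dashint_B\log w\big)\le C(\dashint_B w^s)^{1/s}$), but with the additive error $\dashint_{2B}\abs{\bfh}$ present the function $\varphi_{p,a}(\abs{\bfg})$ need not be an $A_\infty$ weight on balls where that error dominates. Nor is this ``standard Gehring'': classical Gehring \emph{raises} the integrability exponent and for that would require $\abs{\bfh}\in L^{1+\varepsilon}_{\loc}$, which is not assumed here. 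The exponent-lowering step with an $L^1$ inhomogeneity does go through, but via a direct Calder\'on--Zygmund stopping-time or good-$\lambda$ argument (stopping at level comparable to $\varphi_{p,a}\big(\dashint_{2B}\abs{\bfg}\big)+\dashint_{2B}\abs{\bfh}$) rather than by invoking weight theory off the shelf; this is a genuine, if routine, piece of work that your sketch elides.
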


We begin our discussion of system \eqref{eq:sysA} by recalling a
reverse H\"older type inequality for the excess functional
$\dashint_B \abs{\bfV(\nabla \bfu) - \bfV(\bfP)}^2\dx$.
\begin{lemma}\label{lem:VPrev-bis}\cite[Lemma 3.2]{DieKapSch11}
  Let $p \in (1,\infty)$, and let $\bfu$ be a local weak solution to \eqref{eq:sysA}.  Then there exist constants
$\sigma = \sigma (n, N, p)
  \in (0,1)$  and $c=c(n, N, p)>0$ such that
  \begin{align*}
    \begin{aligned}
      \dashint_B \abs{\bfV(\nabla \bfu) - \bfV(\bfP)}^2\dx &\leq c\,
      \bigg(\dashint_{2B} \abs{\bfV(\nabla \bfu) - \bfV(\bfP)}^{2\sigma}\dx
      \bigg)^{ \frac 1\sigma}
      \\
      &\quad + c \dashint_{2B} \varphi_{p',\abs{\bfP}^{p-1}}(\abs{\bfF -\bfF_0})
      \dx
    \end{aligned}
  \end{align*}
for every $\bfP,\bfF_0\in \setR^{N \times n}$,  and every
  ball $B$ such that $2B\subset\Omega$.
\end{lemma}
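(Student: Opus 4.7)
The plan is to prove this reverse Hölder inequality by combining a Caccioppoli-type estimate for the excess $\abs{\bfV(\nabla \bfu) - \bfV(\bfP)}^2$ with a shifted Sobolev--Poincaré inequality, following the standard scheme developed in the $\varphi$-calculus approach to $p$-Laplace systems. The key observation is that since $\bfA(\bfP)$ and $\bfF_0$ are constants, applying the weak formulation \eqref{weaksol} to a test function with gradient orthogonal (in the integral sense) to the constants yields
\begin{equation*}
  \int_{\Omega'} \bigl(\bfA(\nabla \bfu) - \bfA(\bfP)\bigr) \cdot \nabla \bfphi \, dx
  = \int_{\Omega'} (\bfF - \bfF_0) \cdot \nabla \bfphi \, dx,
\end{equation*}
which is the natural \emph{perturbed} weak formulation around the reference matrix $\bfP$.

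The first main step is to derive the Caccioppoli inequality. Fix a ball $B$ with $2B \subset \Omega$ of radius $r$, pick a standard cutoff $\eta \in C^\infty_c(2B)$ with $\eta = 1$ on $B$ and $\abs{\nabla \eta} \lesssim 1/r$, and let $v(x) = \bfP(x - x_0)$ be the affine map with $\nabla v = \bfP$. Testing with $\bfphi = \eta^{\max\{p,2\}}(\bfu - v)$ and expanding $\nabla \bfphi$, the leading term on the left is, by the monotonicity equivalence~\eqref{eq:hammera}, comparable to $\int \eta^{\max\{p,2\}} \abs{\bfV(\nabla \bfu) - \bfV(\bfP)}^2 \, dx$. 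The commutator term involving $\nabla \eta$ is controlled via the pointwise bound~\eqref{eq:hammerd-bis} together with the shifted Young inequality~\eqref{eq:youngb}: the quadratic-in-$\abs{\bfV(\nabla \bfu)-\bfV(\bfP)}^2$ part is absorbed into the left, while the remainder is majorized by $\varphi_{p,\abs{\bfP}}(\abs{\bfu - v}/r)$. An analogous splitting applied to the right-hand term produces the shifted conjugate $\varphi_{p',\abs{\bfP}^{p-1}}(\abs{\bfF - \bfF_0})$. Collecting, one arrives at the Caccioppoli estimate
\begin{equation*}
  \dashint_B \abs{\bfV(\nabla \bfu) - \bfV(\bfP)}^2 dx
  \;\lesssim\; \dashint_{2B} \varphi_{p,\abs{\bfP}}\!\Bigl(\tfrac{\abs{\bfu - v}}{r}\Bigr) dx
  \;+\; \dashint_{2B} \varphi_{p',\abs{\bfP}^{p-1}}(\abs{\bfF - \bfF_0}) \, dx.
\end{equation*}

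The second main step is to eliminate the zeroth-order quantity $\abs{\bfu - v}/r$ using the shifted Sobolev--Poincaré inequality for Young functions of the form $\varphi_{p,a}$ (a by-now classical tool from~\cite{DieE08}): after possibly replacing $v$ by $v + \mean{\bfu - v}_{2B}$ (which does not alter $\nabla v = \bfP$), one has, for some $\sigma \in (0,1)$,
\begin{equation*}
  \dashint_{2B} \varphi_{p,\abs{\bfP}}\!\Bigl(\tfrac{\abs{\bfu - v}}{r}\Bigr) dx
  \;\lesssim\; \Bigl(\dashint_{2B} \varphi_{p,\abs{\bfP}}(\abs{\nabla \bfu - \bfP})^{\sigma} dx\Bigr)^{1/\sigma}.
\end{equation*}
Since by~\eqref{eq:hammera} one has $\varphi_{p,\abs{\bfP}}(\abs{\nabla \bfu - \bfP}) \approx \abs{\bfV(\nabla \bfu) - \bfV(\bfP)}^2$, substituting this back into the Caccioppoli bound yields precisely the claimed reverse Hölder inequality with exponent $2\sigma < 2$ on the right-hand side.

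The main obstacle is the shifted Sobolev--Poincaré inequality: standard Poincaré would only produce the exponent $2$ on the right, which is not strictly smaller than the exponent $2$ on the left and therefore provides no self-improvement. The gain of a sub-quadratic exponent $2\sigma$ is essential and relies on the delicate fact that $\varphi_{p,a}$ satisfies $\Delta_2$ and $\nabla_2$ conditions uniformly in the shift $a$; one must also take care that all constants arising from the absorption step and from the shift-change inequality~\eqref{eq:shiftchb} depend only on $n$, $N$, and $p$, and are uniform in $\bfP$ and $\bfF_0$.
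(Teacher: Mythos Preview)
The paper does not give its own proof of this lemma: it is simply quoted from \cite[Lemma~3.2]{DieKapSch11}, so there is no in-paper argument to compare against. Your sketch is the standard route to this result and is essentially what one finds in the cited reference: a Caccioppoli inequality obtained by testing the perturbed weak formulation with $\eta^{\max\{p,2\}}(\bfu-v)$ and absorbing via~\eqref{eq:youngb} and~\eqref{eq:hammera}, followed by a shifted Sobolev--Poincar\'e inequality (uniform in the shift~$a$) to pass from $\varphi_{p,\abs{\bfP}}(\abs{\bfu-v}/r)$ to a sub-unit power of $\varphi_{p,\abs{\bfP}}(\abs{\nabla\bfu-\bfP})\approx\abs{\bfV(\nabla\bfu)-\bfV(\bfP)}^2$. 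The points you flag as delicate---the uniform $\Delta_2/\nabla_2$ constants for $\varphi_{p,a}$ and the independence of all constants from $\bfP$ and $\bfF_0$---are exactly the ones that require care, and they are handled in \cite{DieE08,DieKapSch11}.
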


Lemmas \ref{cor:invjensen} and \ref{lem:VPrev-bis} enable us to
transfer information from the excess functional involving $\bfV$, to
an excess functional involving $\bfA$.

\begin{corollary}
  \label{cor:VPL1}
  Let $p \in (1,\infty)$, and  let $\bfu$ be a local weak solution to \eqref{eq:sysA}. Then there exists a constant
 $c=c(n, N, p)$
  such that
    \begin{align}\label{may1}
    \dashint_B \abs{\bfV(\nabla \bfu) - \bfV(\bfP)}^2\dx &\leq c\,
    \varphi_{p',\abs{\bfP}^{p-1}} \bigg(\dashint_{2B} \abs{\bfA(\nabla \bfu) -
      \bfA(\bfP)}\dx \bigg)
    \\ \nonumber
    &\quad + c\,\dashint_{2B} \varphi_{p',\abs{\bfP}^{p-1}}(\abs{\bfF -\bfF_0})
      \dx
  \end{align}
for every $\bfP,\bfF_0\in \setR^{N \times n}$,  and every
  ball $B$ such that $2B\subset\Omega$.
\end{corollary}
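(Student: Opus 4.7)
\textbf{Proof proposal for Corollary \ref{cor:VPL1}.} The plan is to combine Lemma \ref{lem:VPrev-bis} with Lemma \ref{cor:invjensen}, after translating the estimate from quantities involving $\bfV$ to quantities involving $\bfA$ via the pointwise equivalence in \eqref{eq:hammera}. The key observation is that the relation
\begin{align*}
\abs{\bfV(\bfQ) - \bfV(\bfP)}^2 \approx \varphi_{p',\abs{\bfP}^{p-1}}\!\big(\abs{\bfA(\bfQ)-\bfA(\bfP)}\big)
\end{align*}
from \eqref{eq:hammera} allows one to write both sides of the reverse H\"older inequality of Lemma \ref{lem:VPrev-bis} in the form required as hypothesis of the self-improvement Lemma \ref{cor:invjensen}, with the shift exponent $p$ there replaced by $p'$ and the shift parameter set to $a := \abs{\bfP}^{p-1}$.

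First, I fix $\bfP, \bfF_0 \in \setR^{N\times n}$ and a ball with $2B\subset\Omega$, and set $\bfg := \bfA(\nabla\bfu)-\bfA(\bfP)$ and $a := \abs{\bfP}^{p-1}$. Since $\bfu \in W^{1,p}_{\loc}(\Omega)$, one has $\bfg\in L^{p'}_{\loc}(\Omega)$, so $\bfg$ qualifies for Lemma \ref{cor:invjensen} applied with $p'$ in place of $p$. Using \eqref{eq:hammera} pointwise, Lemma \ref{lem:VPrev-bis} rewrites as
\begin{align*}
\dashint_B \varphi_{p',a}(\abs{\bfg})\dx \leq c\,\bigg(\dashint_{2B}\varphi_{p',a}(\abs{\bfg})^{\sigma}\dx\bigg)^{1/\sigma} + c\,\dashint_{2B}\varphi_{p',a}(\abs{\bfF-\bfF_0})\dx,
\end{align*}
which is precisely the reverse H\"older type hypothesis of Lemma \ref{cor:invjensen}, with $\bfh := \varphi_{p',a}(\abs{\bfF-\bfF_0})\in L^1_{\loc}(\Omega)$ and the exponent $p'\in(1,\infty)$ replacing $p$.

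Next, I invoke Lemma \ref{cor:invjensen} itself, which yields
\begin{align*}
\dashint_B \varphi_{p',a}(\abs{\bfg})\dx \leq c\,\varphi_{p',a}\bigg(\dashint_{2B}\abs{\bfg}\dx\bigg) + c\,\dashint_{2B}\varphi_{p',a}(\abs{\bfF-\bfF_0})\dx
\end{align*}
with $c=c(n,N,p)$. Converting the left-hand side back by means of \eqref{eq:hammera}, one obtains \eqref{may1}, proving the corollary. There is no serious obstacle: the entire argument is a careful bookkeeping of parameters. The only point that requires a moment of attention is ensuring that Lemma \ref{cor:invjensen} is applied with the \emph{conjugate} exponent $p'$, not $p$, and with the correct shift $a=\abs{\bfP}^{p-1}$; once this identification is made the result drops out immediately.
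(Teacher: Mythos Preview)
Your proof is correct and follows exactly the approach of the paper: use \eqref{eq:hammera} to rewrite $\abs{\bfV(\nabla\bfu)-\bfV(\bfP)}^2$ as $\varphi_{p',\abs{\bfP}^{p-1}}(\abs{\bfA(\nabla\bfu)-\bfA(\bfP)})$, feed the reverse H\"older inequality of Lemma~\ref{lem:VPrev-bis} into Lemma~\ref{cor:invjensen} (applied with exponent $p'$ and shift $a=\abs{\bfP}^{p-1}$), and convert back. Your write-up simply spells out the identifications that the paper leaves implicit.
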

\begin{proof}
  By~\eqref{eq:hammera}, we have
  \begin{align*}
    \abs{\bfV(\nabla \bfu) - \bfV(\bfP)}^2 &\approx
    \varphi_{p',\abs{\bfP}^{p-1}}(\abs{\bfA(\nabla \bfu) - \bfA(\bfP)}).
  \end{align*}
Combining this relation with Lemmas~\ref{lem:VPrev-bis}
and~\ref{cor:invjensen} yields \eqref{may1}.
%
%
%, we obtain
%  \begin{align*}
%    \begin{aligned}
%      \dashint_B \abs{\bfV(\nabla \bfu) - \bfV(\bfP)}^2\dx &\leq
%      c\,\varphi_{p',\abs{\bfP}^{p-1}} \bigg(\dashint_{2B} \abs{\bfA(\nabla \bfu) -
%        \bfA(\bfP)}\dx \bigg)
%      \\
%      &\quad + c \dashint_{2B} \varphi_{p',\abs{\bfP}^{p-1}}(\abs{\bfF -\bfF_0})
%      \dx,
%    \end{aligned}
%    \end{align*}
%namely \eqref{may1}.
  \end{proof}

Crucial  use will be made in what follows of  decay estimates
 for $p$-harmonic maps, namely for solutions to \eqref{eq:sysA} with
 $\bfF=0$.
%
%
%  $\bfv \in W^{1,p}_{\loc}(\Omega)$ to the
% homogeneous system
% $$
%  -\divergence(|\nabla \bfv|^{p-2} \nabla \bfv) =0 \qquad \hbox{in
%  $\Omega$.}$$
In particular, the  unique solution  $\bfv \in W^{1,p}(B)$ to the
Dirichlet problem
\begin{equation}
  \label{eq:hareq}
  \begin{cases}
    -\divergence \bfA(\nabla \bfv) =0  & \text{in} \, B,
    \\
    \bfv =\bfu & \text{on}\,\partial B
  \end{cases}
\end{equation}
will come into play. Here, $B$ is a ball in $\Omega$, and $\bfu$ is
a local weak solution to system \eqref{eq:sysA}. As usual, the
boundary condition in \eqref{eq:hareq} has to be understood in the
sense that $\bfu-\bfv \in W^{1,p}_0(B)$.
\\ The relevant decay estimates are collected  in the following
statement.

\begin{theorem}{\bf [Decay estimate for $p$-harmonic maps]}
  \label{thm:decay}
  Let $\Omega \subset \Rn$ be an open set and let $p\in(1,\infty)$. Assume that $\bfv $ is a $p$-harmonic function in~$\Omega$. Then there
  exist positive constants
  $\alpha= \alpha (n, N, p)$ and $c=c(n, N, p)$  such that
\begin{align}\label{pharmonic1}
    \sup_{z,y\in\theta B}\abs{\bfV(\nabla \bfv)(z)-\bfV(\nabla \bfv)(y)}^2 \le c\, \theta^{2\alpha} \dashint_{B}|\bfV(\nabla \bfv)-
    \mean{\bfV(\nabla \bfv)}_B|^2 \,dx
  \end{align}
for every  $\theta \in (0,\frac12]$, and every ball $B \subset
\Omega$. Moreover, for every $\kappa <\min\set{1, \tfrac
{2\alpha}{p'}}$, there exists a constant $c=c(n, N, p, \kappa)$ such
that
 \begin{align}\label{pharmonic2}
    \sup_{z,y\in\theta B}\abs{\bfA(\nabla \bfv)(z)-\bfA(\nabla \bfv)(y)} \le c\, \theta^\kappa \dashint_{B}|\bfA(\nabla \bfv)-
    \mean{\bfA(\nabla \bfv)}_B| \,dx
  \end{align}
 for every  $\theta \in (0,\frac12]$, and every ball $B \subset \Omega$.
% and
% \begin{align*}
%  \sup\limits_{\frac12 B}\varphi(\abs{\nabla h})\leq c\dashint_{B} \varphi(\abs{\nabla h}) dx
% \end{align*}
%  Note that $c$ and $\alpha$ depend only on the characteristics of~$\varphi$.
\end{theorem}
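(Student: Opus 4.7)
The first estimate \eqref{pharmonic1} is essentially a reformulation of the classical $C^{1,\alpha}$ gradient regularity for $p$-harmonic maps due to Uhlenbeck, Ural'tseva, Acerbi-Fusco and Chen-DiBenedetto. I would argue it in two steps. First, I would establish a Campanato-type excess decay
\begin{align*}
\dashint_{\theta B} |\bfV(\nabla \bfv) - \mean{\bfV(\nabla \bfv)}_{\theta B}|^2 \dx \leq C \theta^{2\alpha} \dashint_{B} |\bfV(\nabla \bfv) - \mean{\bfV(\nabla \bfv)}_{B}|^2 \dx
\end{align*}
for some $\alpha=\alpha(n,N,p)\in (0,1]$, via the standard linearization dichotomy: when the excess $E(B)$ is small compared to $|\mean{\nabla \bfv}_B|^p$ (the \emph{nondegenerate} regime), compare $\nabla \bfv$ with the gradient of a solution to the constant-coefficient elliptic system obtained by linearizing $\bfA$ at $\mean{\nabla \bfv}_B$, which is uniformly elliptic in this regime and hence admits classical $C^{\infty}$-estimates; in the \emph{degenerate} regime, the decay follows from the reverse H\"older inequality of Lemma \ref{lem:VPrev-bis} (applied with $\bfF=0$) together with interior $L^\infty$ bounds for $|\nabla \bfv|$. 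Secondly, I would iterate this excess decay across dyadic scales and apply Campanato's characterization of H\"older continuity to obtain $[\bfV(\nabla \bfv)]_{C^{0,\alpha}(\tfrac12 B)}^2 \cdot r_B^{2\alpha} \le C E(B)$, from which \eqref{pharmonic1} is immediate by rescaling to $\theta B$.

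For the second estimate \eqref{pharmonic2} I would transfer the information from $\bfV$ to $\bfA$. The starting point is the algebraic inversion of \eqref{eq:hammera}: because $|\bfV(\bfP) - \bfV(\bfQ)|^2 \approx \varphi_{p',|\bfQ|^{p-1}}(|\bfA(\bfP) - \bfA(\bfQ)|)$, examining the two regimes $t\ll a$ and $t\gg a$ of $\varphi_{p',a}(t)=(a+t)^{p'-2}t^2$ yields
\begin{align*}
|\bfA(\bfP) - \bfA(\bfQ)| \lesssim |\bfQ|^{(p-2)/2} |\bfV(\bfP) - \bfV(\bfQ)| + |\bfV(\bfP) - \bfV(\bfQ)|^{2/p'}.
\end{align*}
Applied with $\bfP = \nabla \bfv(z)$, $\bfQ = \nabla \bfv(y)$ for $z,y\in\theta B$ and combined with \eqref{pharmonic1}, this produces two competing decays: $\theta^{\alpha}$ from the linear term and $\theta^{2\alpha/p'}$ from the nonlinear one. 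A standard interior $L^\infty$-bound for $p$-harmonic maps controls $\sup_{\frac12 B}|\nabla \bfv|$ by an $L^{p-1}$-average on $B$, which absorbs the $|\bfQ|^{(p-2)/2}$ prefactor. The slower of the two exponents, capped at $1$ since the target is a H\"older exponent, yields the range $\kappa < \min\{1,2\alpha/p'\}$.

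The last task is to recast the $L^2$ $\bfV$-excess appearing on the right-hand side as the $L^1$ $\bfA$-excess that is stated in \eqref{pharmonic2}. I would introduce $\bfP_0 \in \RNn$ with $\bfA(\bfP_0) = \mean{\bfA(\nabla \bfv)}_B$, apply Lemma \ref{lem:trick} to replace $\mean{\bfV(\nabla \bfv)}_B$ by $\bfV(\bfP_0)$, and then use \eqref{eq:hammera} to rewrite the integrand as $\varphi_{p',|\bfP_0|^{p-1}}(|\bfA(\nabla \bfv) - \bfA(\bfP_0)|)$; Jensen's inequality (for the convex function $\varphi_{p',|\bfP_0|^{p-1}}$) together with Lemma \ref{cor:invjensen} and a shift change via \eqref{eq:shiftchb} reduce this to a monotone function of $\dashint_B |\bfA(\nabla \bfv) - \mean{\bfA(\nabla \bfv)}_B|\dx$, whose $(2/p')$-th and other required powers eventually match the exponent accounting above. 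The main obstacle is precisely this bookkeeping: the two competing decay rates, the $|\nabla \bfv|$ prefactors to be absorbed through interior bounds, and the shifted Young function $\varphi_{p',a}$ behaving differently in its two regimes, must all be tuned so that the final constant depends only on $n,N,p,\kappa$; the mechanism producing the loss $\alpha\rightsquigarrow 2\alpha/p'$ when $p<2$ (and essentially none when $p\ge 2$) is exactly the asymmetry between $\bfV$ and $\bfA$ in the degenerate regime of $\varphi_{p',a}$.
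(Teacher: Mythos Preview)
The paper does not supply its own proof of Theorem~\ref{thm:decay}; it simply records that the integral-average versions of \eqref{pharmonic1} and \eqref{pharmonic2} are contained in \cite[Theorem~6.4]{DieSV09} and \cite[Remark~5.6]{DieKapSch11}, respectively, and that the pointwise (supremum) versions then follow from Campanato's characterization of H\"older continuity \cite[Lemma~A.2]{Sch13}. So there is no argument in the paper to compare against; your sketch is an attempt to fill in what the paper delegates to those references.

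Your outline for \eqref{pharmonic1}---excess decay via the degenerate/nondegenerate dichotomy, then Campanato iteration---is the standard route and matches what is done in the cited work.

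For \eqref{pharmonic2}, however, the composition-from-\eqref{pharmonic1} argument you describe does not deliver the stated range $\kappa<\min\{1,2\alpha/p'\}$ when $p>2$. In that regime $2\alpha/p'>\alpha$, so the ``slower'' of your two competing exponents is $\alpha$, coming from the term $|\bfQ|^{(p-2)/2}|\bfV(\bfP)-\bfV(\bfQ)|$; after absorbing the prefactor and converting the $\bfV$-excess to the $\bfA$-excess via Corollary~\ref{cor:VPL1}, you are left with decay $\theta^\alpha$, not $\theta^{2\alpha/p'}$. Your own closing remark (``essentially none when $p\ge 2$'') is therefore at odds with the conclusion you write down, which for $p>2$ claims an exponent that can \emph{exceed} $\alpha$. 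The missing mechanism is that in the nondegenerate regime the excess actually decays like $\theta$ (from the linear comparison), not merely like $\theta^\alpha$; extracting this requires re-entering the dichotomy inside the proof of \eqref{pharmonic1} rather than applying \eqref{pharmonic1} as a black box, or---as in \cite{DieKapSch11}---running the excess-decay argument directly for $\bfA(\nabla\bfv)$.
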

A version of \eqref{pharmonic1} with a left-hand-side in integral
form is a special case of \cite[Theorem~6.4]{DieSV09}; an integral
form of \eqref{pharmonic2} can be found in \cite[Remark
5.6]{DieKapSch11}. The present version follows from Campanato's
characterization of H\"older spaces \cite[Lemma A.2]{Sch13}.
%
%\textcolor[rgb]{0.00,0.00,1.00}{CLARIFY: \qquad \quad We need the
%following results from \cite[Remark 5.6]{DieKapSch11} Regularity of
%$p$-harmonic functions is well known. For the following decay
%estimate we refer to \cite[Theorem~6.4]{DieSV09} where even more
%general systems are considered. We will use \cite[Remark
%5.6]{DieKapSch11}, refined by a standard Campanato estimate (see
%Lemma A.2 of\cite{Sch13} for instance).}

\smallskip
\par
A preliminary relation between the decay of a solution to system
\eqref{eq:sysA}, and that of the corresponding  solution to the
Dirichlet problem~\eqref{eq:hareq} is established in the following
lemma.

\begin{lemma} \label{lemma:comp1}
Let $p \in (1, \infty)$. Let $\bfu$ be a local weak solution to
system \eqref{eq:sysA}, and let $\bfv$ be the solution to the
Dirichlet problem~\eqref{eq:hareq}. Then, for every $\beta >0$,
there exists a constant $c_\beta  = c(n,N,p,\beta)$ such that
\begin{align}
\label{comp:1} \dashint_B \abs{\bfV(\nabla\bfu) - \bfV(\nabla
\bfv)}^2\dx &\leq \,\beta\,
    \varphi_{p',\abs{\mean{\bfA(\nabla\bfu)}_{2B}}}\bigg(\dashint_{2B}
    \abs{\bfA(\nabla\bfu) - \mean{\bfA(\nabla\bfu)}_{2B}}\dx\bigg)
    \\ \nonumber
    &\quad+ c_\beta \dashint_{2B} \varphi_{p',\abs{\mean{\bfA(\nabla\bfu)}_{2B}}}(\abs{\bfF -\bfF_0}) \dx
\end{align}
for every $\bfF_0 \in \mathbb R^{N \times n}$, and every ball $B$
such that $2B \subset \Omega$.
\end{lemma}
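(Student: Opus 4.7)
The plan is to combine the weak formulations of \eqref{eq:sysA} and~\eqref{eq:hareq} tested against the admissible function $\bfu-\bfv$, then process the resulting right-hand side through a shifted Young's inequality and a shift change, and finally apply Corollary~\ref{cor:VPL1}. Introduce $\bfP_0 \in \RNn$ by $\bfA(\bfP_0) = \mean{\bfA(\nabla\bfu)}_{2B}$, so that $\abs{\bfP_0}^{p-1}=\abs{\mean{\bfA(\nabla\bfu)}_{2B}}$ and the shift in the target matches $\varphi_{p',\abs{\bfP_0}^{p-1}}$. Since $\bfu-\bfv \in W^{1,p}_0(B)$, testing \eqref{weaksol} with $\bfu-\bfv$, subtracting the same test applied to the $p$-harmonic $\bfv$ (whose right-hand side vanishes), and using that $\int_B \bfF_0\cdot\nabla(\bfu-\bfv)\dx=0$ for the constant $\bfF_0$, gives
\begin{align*}
\int_B \bigl(\bfA(\nabla\bfu)-\bfA(\nabla\bfv)\bigr)\cdot\nabla(\bfu-\bfv)\dx = \int_B (\bfF-\bfF_0)\cdot\nabla(\bfu-\bfv)\dx.
\end{align*}
By~\eqref{eq:hammera}, the left-hand side is comparable to $\int_B \abs{\bfV(\nabla\bfu)-\bfV(\nabla\bfv)}^2\dx$, which is the target quantity.

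Next I would apply the shifted Young inequality~\eqref{eq:youngb} at shift $\abs{\bfP_0}$ with a small parameter $\delta>0$, obtaining
\begin{align*}
\int_B (\bfF-\bfF_0)\cdot\nabla(\bfu-\bfv)\dx \leq \delta \int_B \varphi_{p,\abs{\bfP_0}}(\abs{\nabla\bfu-\nabla\bfv})\dx + c_\delta \int_B \varphi_{p',\abs{\bfP_0}^{p-1}}(\abs{\bfF-\bfF_0})\dx.
\end{align*}
I then invoke the shift change inequality (the $\varphi_p$-analogue of~\eqref{eq:shiftchb}) with a parameter $\gamma>0$ to replace the constant shift $\abs{\bfP_0}$ by the variable shift $\abs{\nabla\bfv}$ inside $\varphi_p$, at the cost of an error $\gamma\abs{\bfV(\bfP_0)-\bfV(\nabla\bfv)}^2$. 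Since $\varphi_{p,\abs{\nabla\bfv}}(\abs{\nabla\bfu-\nabla\bfv}) \approx \abs{\bfV(\nabla\bfu)-\bfV(\nabla\bfv)}^2$ by~\eqref{eq:hammera}, a first absorption (choosing $\delta$ small relative to $\gamma$) removes the diagonal term. The triangle inequality $\abs{\bfV(\bfP_0)-\bfV(\nabla\bfv)}^2 \preceq \abs{\bfV(\bfP_0)-\bfV(\nabla\bfu)}^2 + \abs{\bfV(\nabla\bfu)-\bfV(\nabla\bfv)}^2$ followed by a second absorption (for $\gamma$ small) then yields
\begin{align*}
\dashint_B \abs{\bfV(\nabla\bfu)-\bfV(\nabla\bfv)}^2\dx \leq \eta \dashint_B \abs{\bfV(\nabla\bfu)-\bfV(\bfP_0)}^2\dx + c_\eta \dashint_B \varphi_{p',\abs{\bfP_0}^{p-1}}(\abs{\bfF-\bfF_0})\dx,
\end{align*}
with $\eta>0$ freely small via the joint choice of $\gamma$ and~$\delta$.

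Finally, Corollary~\ref{cor:VPL1} applied with $\bfP=\bfP_0$ converts the first term on the right-hand side into $c\,\varphi_{p',\abs{\bfP_0}^{p-1}}\!\bigl(\dashint_{2B}\abs{\bfA(\nabla\bfu)-\bfA(\bfP_0)}\dx\bigr)$ plus another $\bfF$-oscillation integral on $2B$; setting $\eta=\beta/c$ and collecting the two $\bfF$-oscillation contributions yields the claim. The main technical nuisance will be the parameter tuning: the $\varphi_p$-shift change carries a factor $\gamma^{1-\max\{p',2\}}$ that blows up as $\gamma\to 0$, so $\gamma$ must first be fixed small enough for the second absorption, and only then $\delta$ chosen sufficiently small relative to $\gamma$ to carry out the first absorption; the blow-up of $c_\delta$ as $\delta\to 0$ is exactly what forces $c_\beta\to\infty$ as $\beta\to 0$.
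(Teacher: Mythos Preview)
Your argument is correct, but the paper's proof takes a slightly different and shorter route. The paper applies the shifted Young inequality \eqref{eq:youngb} at the \emph{variable} shift $a=\abs{\nabla\bfu}$ rather than at the constant shift $\abs{\bfP_0}$. This makes the $\varphi_p$-term equal (up to constants) to $\abs{\bfV(\nabla\bfu)-\bfV(\nabla\bfv)}^2$ immediately via \eqref{eq:hammera}, so a single absorption disposes of it. The shift change \eqref{eq:shiftchb} is then applied to the $\bfF$-term, i.e.\ to $\varphi_{p',\abs{\nabla\bfu}^{p-1}}(\abs{\bfF-\bfF_0})$, converting its shift from $\abs{\nabla\bfu}^{p-1}$ to $\abs{\bfA(\bfP_0)}$ at the cost of the error $\gamma\abs{\bfV(\nabla\bfu)-\bfV(\bfP_0)}^2$, which is exactly the quantity handled by Corollary~\ref{cor:VPL1}.

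What each approach buys: the paper's route needs only one absorption, uses \eqref{eq:shiftchb} precisely as stated (for $\varphi_{p'}$), and avoids the triangle-inequality detour. Your route keeps the $\bfF$-integral at the target shift $\abs{\bfP_0}^{p-1}$ from the outset, but requires the $\varphi_p$-analogue of \eqref{eq:shiftchb} (standard, though not explicitly recorded in the paper) and two absorptions. A small simplification of your version: shift-change $\varphi_{p,\abs{\bfP_0}}$ to $\varphi_{p,\abs{\nabla\bfu}}$ rather than to $\varphi_{p,\abs{\nabla\bfv}}$; the error term is then directly $\gamma\abs{\bfV(\bfP_0)-\bfV(\nabla\bfu)}^2$, and the triangle inequality plus second absorption become unnecessary.
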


\begin{proof}
 Choosing
$\bfu-\bfv$ as a test function in \eqref{weaksol} and making use of
equation ~\eqref{eq:hammera} yield
\begin{align*}
\dashint_B \abs{\bfV(\nabla\bfu) - \bfV(\nabla \bfv)}^2\dx &\leq
\,c\,\dashint_B
\big(\bfA(\nabla\bfu)-\bfA(\nabla\bfv)\big)\cdot\nabla(\bfu-\bfv)\dx\\&=\,c\,\dashint_B
\big(\bfF-\bfF_0\big)\cdot\nabla\big(\bfu-\bfv\big)\dx.
\end{align*}
Hence, via an application of inequality \eqref{eq:youngb}, with
$a=|\nabla\bfu|$, and equation ~\eqref{eq:hammera} again, one
obtains that
\begin{align}\label{nov1}
\dashint_B \abs{\bfV(\nabla\bfu) - \bfV(\nabla \bfv)}^2\dx &\leq
\,\delta
\dashint_B\varphi_{p,|\nabla\bfu|}(|\nabla(\bfu-\bfv)|)\dx+c_\delta
\dashint_B\varphi_{p',|\nabla\bfu|^{p-1}}(|\bfF-\bfF_0|)\dx\\
\nonumber
 &\leq \,c\delta \dashint_B\abs{\bfV(\nabla\bfu) - \bfV(\nabla \bfv)}^2\dx+c_\delta
 \dashint_B\varphi_{p',|\nabla\bfu|^{p-1}}(|\bfF-\bfF_0|)\dx
\end{align}
 for every $\delta>0$. Let  $\bfQ \in \mathbb R^{N \times n}$ be such that
\begin{equation}\label{Q}
\bfA(\bfQ)= \mean{\bfA(\nabla\bfu)}_{2B}.
\end{equation} Owing to~\eqref{eq:shiftchb}, applied with  $\bfP=\nabla\bfu$ and  $\bfQ$ as in \eqref{Q}, we
deduce from \eqref{nov1} that
\begin{align}\label{Q'}
\dashint_B \abs{\bfV(\nabla\bfu) - \bfV(\nabla \bfv)}^2\dx &\leq \,
c\dashint_B\varphi_{p',|\nabla\bfu|^{p-1}}(|\bfF-\bfF_0|)\dx\\
\nonumber &\leq \gamma
\dashint_B|\bfV(\nabla\bfu)-\bfV(\bfQ)|^2\dx+\, c_\gamma
\dashint_B\varphi_{p',\abs{\bfA(\bfQ)}}(|\bfF-\bfF_0|)\dx,
\end{align}
for every $\gamma >0$, and a corresponding suitable constant
$c_\gamma$. On the other hand, by Corollary~\ref{cor:VPL1} and the
equality $\abs{A(Q)}=\abs{Q}^{p-1}$,
    \begin{align}
    \label{comp:001}
    \dashint_B \abs{\bfV(\nabla \bfu) - \bfV(\bfQ)}^2\dx &\leq c\,
    \varphi_{p',\abs{\bfA(\bfQ)}} \bigg(\dashint_{2B} \abs{\bfA(\nabla \bfu) -
      \bfA(\bfQ)}\dx \bigg)
    \\ \nonumber
    &\quad + c\,\dashint_{2B} \varphi_{p',\abs{\bfA(\bfQ)}}(\abs{\bfF -\bfF_0}) \dx.
        \end{align}
Inequality \eqref{comp:1} follows from \eqref{Q}, \eqref{Q'} and
\eqref{comp:001}, via a suitable choice of $\gamma$.
%
%
%One hence infers the result by choosing possibly smaller $\delta$ that, for given $\beta\in(0,1)$, there exists a constant $c_\beta$ as in~\eqref{comp:1}.
\end{proof}
% in the case of
%$p$-Laplace system with $\varphi(t)=t^p$ for $p\in(1,\infty)$. Recently,
%the result was extended in \cite[Theorem~6.4]{DieSV09} for general
%$\varphi$ satisfying Assumption~\ref{ass:phi}:
%The following decay estimate for $\bfv$ will be needed several times.

We are ready to prove a first decay estimate for $\bfu$.

\begin{lemma}\label{lem:new}
  Let $p \in (1, \infty)$, and  let $\bfu$ be a local weak solution to
  \eqref{eq:sysA}. Let $\alpha$ be the exponent appearing in Theorem
  \ref{thm:decay}.
  Assume that
 $\theta \in (0, \tfrac 12)$. Then there exist  constants $c=c(n, N, p)$
and $c_\theta = c_\theta (n, N, p, \theta)$ such that
\begin{align}
\label{eq:key2}
 \dashint_{\theta B} \abs{\bfV(\nabla\bfu) - \mean{\bfV(\nabla \bfu)}_{\theta B}}^2\dx
\leq & c\, \theta^{2\alpha}
      \varphi_{p',\abs{\mean{\bfA(\nabla\bfu)}_{2B}}}\bigg(\dashint_{2B} \abs{\bfA(\nabla \bfu) -
        \mean{\bfA(\nabla\bfu)}_{2B}}\dx\bigg)
       \\ \nonumber &
      + c_\theta \dashint_{2B} \varphi_{p',\abs{\mean{\bfA(\nabla\bfu)}_{2B}}}(\abs{\bfF -\bfF_0})\dx
\end{align}
for every ball $B$ such that $2B \subset \Omega$.
\end{lemma}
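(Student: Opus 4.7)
The strategy is to pivot on the $p$-harmonic comparison map $\bfv$ solving \eqref{eq:hareq} on $B$ and to exploit the sharp decay for $\bfV(\nabla\bfv)$ provided by Theorem~\ref{thm:decay}. Choose once and for all $\bfQ\in\RNn$ with $\bfA(\bfQ)=\mean{\bfA(\nabla\bfu)}_{2B}$, so that $|\bfA(\bfQ)|=|\mean{\bfA(\nabla\bfu)}_{2B}|$ and the shift in $\varphi_{p',|\bfA(\bfQ)|}$ already matches the right-hand side of \eqref{eq:key2}. Using~\eqref{mean2} to drop the mean on $\theta B$, a triangle inequality splits
\begin{align*}
\dashint_{\theta B}|\bfV(\nabla\bfu)-\mean{\bfV(\nabla\bfu)}_{\theta B}|^2\dx
\leq 2\dashint_{\theta B}|\bfV(\nabla\bfu)-\bfV(\nabla\bfv)|^2\dx
+ 2\dashint_{\theta B}|\bfV(\nabla\bfv)-\mean{\bfV(\nabla\bfv)}_{\theta B}|^2\dx.
\end{align*}

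For the first integral, inflating to $B$ at the cost of a factor $\theta^{-n}$ and applying Lemma~\ref{lemma:comp1} with a free parameter $\beta'>0$ yields a contribution
$$c\theta^{-n}\beta'\,\varphi_{p',|\bfA(\bfQ)|}\Bigl(\dashint_{2B}|\bfA(\nabla\bfu)-\bfA(\bfQ)|\dx\Bigr)+c\theta^{-n}c_{\beta'}\dashint_{2B}\varphi_{p',|\bfA(\bfQ)|}(|\bfF-\bfF_0|)\dx.$$
For the second integral, Theorem~\ref{thm:decay} gives the pointwise bound
$$\dashint_{\theta B}|\bfV(\nabla\bfv)-\mean{\bfV(\nabla\bfv)}_{\theta B}|^2\dx\leq\sup_{z,y\in\theta B}|\bfV(\nabla\bfv)(z)-\bfV(\nabla\bfv)(y)|^2\leq c\,\theta^{2\alpha}\dashint_B|\bfV(\nabla\bfv)-\mean{\bfV(\nabla\bfv)}_B|^2\dx.$$
I then apply \eqref{mean2} again and split via $\bfV(\bfQ)$: the comparison term $\dashint_B|\bfV(\nabla\bfu)-\bfV(\nabla\bfv)|^2\dx$ is handled by Lemma~\ref{lemma:comp1} (with parameter~$1$, say), and the remaining piece $\dashint_B|\bfV(\nabla\bfu)-\bfV(\bfQ)|^2\dx$ is controlled by Corollary~\ref{cor:VPL1}, producing exactly
$$c\,\varphi_{p',|\bfA(\bfQ)|}\Bigl(\dashint_{2B}|\bfA(\nabla\bfu)-\bfA(\bfQ)|\dx\Bigr)+c\dashint_{2B}\varphi_{p',|\bfA(\bfQ)|}(|\bfF-\bfF_0|)\dx.$$

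Finally, I choose $\beta'\approx\theta^{n+2\alpha}$: this forces $\theta^{-n}\beta'\lesssim\theta^{2\alpha}$, so the non-$\theta^{2\alpha}$ oscillation contribution from the first integral is absorbed into the $\theta^{2\alpha}$ term, whereas the resulting $\theta^{-n}c_{\beta'}$ in front of the $\bfF$-integral is $\theta$-dependent and goes into the constant $c_\theta$. Replacing $\bfA(\bfQ)$ and $|\bfA(\bfQ)|$ by $\mean{\bfA(\nabla\bfu)}_{2B}$ and $|\mean{\bfA(\nabla\bfu)}_{2B}|$ gives \eqref{eq:key2}.

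The main obstacle is the bookkeeping of parameters: the comparison estimate of Lemma~\ref{lemma:comp1} produces both an oscillation piece and an $\bfF$ piece, and I must calibrate $\beta'$ so that the oscillation piece is small enough to merge into the $\theta^{2\alpha}$-term without spoiling the dependence of $c_\theta$ on $\theta$ alone. A small technical point worth highlighting is the choice of $\bfQ$ so that $\bfA(\bfQ)$ coincides with $\mean{\bfA(\nabla\bfu)}_{2B}$ from the start; this removes the need for the shift-change inequality \eqref{eq:shiftchb} at the end and keeps every occurrence of $\varphi_{p',\cdot}$ with exactly the shift required by the statement.
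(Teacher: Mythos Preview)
Your argument is correct and follows essentially the same route as the paper: split via the $p$-harmonic comparison $\bfv$, use the decay estimate~\eqref{pharmonic1} for the $\bfv$-oscillation piece, pass back to $\bfu$ via the triangle inequality and~\eqref{mean2}, control $\dashint_B|\bfV(\nabla\bfu)-\bfV(\bfQ)|^2$ by Corollary~\ref{cor:VPL1}, and handle the comparison term $\dashint_B|\bfV(\nabla\bfu)-\bfV(\nabla\bfv)|^2$ by Lemma~\ref{lemma:comp1} with $\beta=\theta^{n+2\alpha}$. The only cosmetic difference is that the paper merges the two occurrences of the comparison term (the $\theta^{2\alpha}$-weighted one arising when replacing $\bfv$ by $\bfu$, and the $\theta^{-n}$-weighted one) into a single $\theta^{-n}$-weighted term before invoking Lemma~\ref{lemma:comp1} once, whereas you apply that lemma twice; this changes nothing substantive.
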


\begin{proof}
From inequality \eqref{pharmonic1} and property \eqref{mean2} we
deduce that
  \begin{align}\label{nov2}
          \dashint_{\theta B}& \abs{\bfV(\nabla \bfu) - \mean{\bfV(\nabla
          \bfu)}_{\theta B}}^2\dx
      \\ \nonumber
      &\leq c\, \dashint_{\theta B} \abs{\bfV(\nabla \bfv) - \mean{\bfV(\nabla
          \bfv)}_{\theta B}}^2\dx + c\,\dashint_{\theta B} \abs{\bfV(\nabla \bfu) -
        \bfV(\nabla \bfv)}^2\dx
      \\ \nonumber
      &\leq c\, \theta^{2\alpha} \dashint_{B} \abs{\bfV(\nabla \bfv) -
        \mean{\bfV(\nabla \bfv)}_{B}}^2\dx + c\theta^{-n} \,\dashint_{B}
      \abs{\bfV(\nabla \bfu) - \bfV(\nabla \bfv)}^2\dx
      \\ \nonumber
      &\leq c\, \theta^{2\alpha} \dashint_{B} \abs{\bfV(\nabla \bfu) -
        \mean{\bfV(\nabla \bfu)}_{B}}^2\dx + c\,\theta^{-n} \,\dashint_{B}
      \abs{\bfV(\nabla \bfu) - \bfV(\nabla \bfv)}^2\dx.
      \\ \nonumber
      &\leq c\, \theta^{2\alpha} \dashint_{B} \abs{\bfV(\nabla \bfu) -
        \bfV(\bfQ)}^2\dx + c\,\theta^{-n} \,\dashint_{B}
      \abs{\bfV(\nabla \bfu) - \bfV(\nabla \bfv)}^2\dx,
      \end{align}
  where $\bfQ$ is defined in~\eqref{Q}. Hence, inequality \eqref{eq:key2}
  follows,
  via Corollary~\ref{cor:VPL1} (applied with $\bfP = \bfQ$), and  Lemma~\ref{lemma:comp1} (applied with $\beta = \theta^{n+2\alpha}$).
   %
%
%
%    we gain
%  \begin{align}
%    \label{eq:key2}
%    \begin{aligned}
%       \dashint_{\theta B} \abs{\bfV(\nabla\bfu) - \mean{\bfV(\nabla \bfu)}_{\theta B}}^2\dx &\leq c\,( \theta^{2\alpha} +\beta \theta^{-n})
%      \varphi_{p',\abs{\bfQ}^{p-1}}\bigg(\dashint_{2B} \abs{\bfA(\nabla \bfu) -
%        \bfA(\bfQ)}\dx\bigg)
%      \\
%      &\;\;\;+ c\,\theta^{-n}
%      c_\beta \dashint_{2B} \varphi_{p',\abs{\bfQ}^{p-1}}(\abs{\bfF -\bfF_0})
%    \end{aligned}
%  \end{align}
%for every $\beta>0$.
\end{proof}

From here on, our decay estimate on a given ball $B \subset \Omega$
takes a different form depending on whether the quantity  $
\dashint_{2B}\abs{\nabla \bfu}^p\dx$ is \lq\lq small" or \lq\lq
large" compared to $\dashint_{2B}|\bfV(\nabla
\bfu)-\mean{\bfV(\nabla \bfu)}_{2B}|^2\dx$. We shall refer to the
former situation as the \lq\lq degenerate case", and to the latter
as the \lq\lq non-degenerate case".

\subsection{The degenerate case}\label{sec:comparison}
Throughout this subsection, we assume that $\bfu$ is a local weak
solution to system \eqref{eq:sysA} such that
\begin{align}
\label{eq:deg} \dashint_{2B}\abs{\bfV(\nabla \bfu)}^2\dx\leq
\frac1\varepsilon\dashint_{2B}|\bfV(\nabla \bfu)-\mean{\bfV(\nabla
\bfu)}_{2B}|^2\dx
\end{align}
for some fixed
 ball  $B$ such that $4B \subset \Omega$, and  some fixed number $\varepsilon >0$.
\\
We begin with the following lemma.

\begin{lemma}
  \label{lem:RHdeg}
  Let $p \in (1,\infty)$.  Let $\bfu$ be a local weak solution to \eqref{eq:sysA} satisfying \eqref{eq:deg} for some ball $B$ and some $\varepsilon >0$. Let
$\bfP \in \mathbb R^{N \times n}$ be such that
  $ \bfA(\bfP)=\mean{\bfA(\nabla \bfu)}_{2B}$. Then there exists a
  constant $c=c(n, N, p)$ such that
%
%  $c_\varepsilon=c\varepsilon^{1-\max\set{2,p'}}$ with $c$ depending on
%{ $N,n,p$},
%  such that
    \begin{align}\label{may2}
   & \dashint_{2B} \abs{\bfV(\nabla \bfu) - \bfV(\bfP)}^2\dx +  \varphi_{p',\abs{\bfA(\bfP)}} \bigg(\dashint_{2B} \abs{\bfA(\nabla \bfu) - \bfA(\bfP)}\dx \bigg)
   + \dashint_{2B} \varphi_{p',\abs{\bfA(\bfP)}}(\abs{\bfF -\bfF_0})\dx\\
   \nonumber
    &\quad\leq c\varepsilon^{1-\max\set{2,p}}
     \bigg(\dashint_{4B} \abs{\bfA(\nabla \bfu) -
      \bfA(\bfP_0)}\dx \bigg)^{p'}
 + c\varepsilon^{1-\max\set{2,p}}\dashint_{4B} \abs{\bfF -\bfF_0}^{p'}
      \dx
  \end{align}
for every $\bfF_0,\bfP_0 \in \setR^{N \times n}$.
%,  and every
%  ball $B$ such that $4B\subset\Omega$.
\end{lemma}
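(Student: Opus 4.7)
The plan is to reduce all three terms on the left-hand side to essentially the first one,
\begin{align*}
A := \dashint_{2B}\abs{\bfV(\nabla\bfu) - \bfV(\bfP)}^2\dx,
\end{align*}
then bound $A$ via Corollary~\ref{cor:VPL1} combined with the shift-change inequality~\eqref{eq:shiftchb} and an absorption argument. The linchpin is the observation that, in the degenerate regime, the auxiliary vector $\bfP$ has controlled size:
\begin{align*}
\abs{\bfV(\bfP)}^2 \approx \abs{\bfP}^p \leq c\,\dashint_{2B}\abs{\bfV(\nabla\bfu)}^2\dx \leq \frac{c}{\varepsilon}\, A.
\end{align*}
The first equivalence is~\eqref{eq:hammerd}. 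The middle inequality follows because $\abs{\bfP}^{p-1} = \abs{\bfA(\bfP)} \leq \dashint_{2B}\abs{\nabla\bfu}^{p-1}\dx$ by Jensen (applied to $\bfA(\bfP)=\mean{\bfA(\nabla \bfu)}_{2B}$), followed by another application of Jensen with the convex map $t\mapsto t^{p/(p-1)}$. The last step combines the degeneracy hypothesis~\eqref{eq:deg} with Lemma~\ref{lem:trick}, which identifies $\dashint_{2B}\abs{\bfV(\nabla\bfu) - \mean{\bfV(\nabla\bfu)}_{2B}}^2\dx \approx A$.

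Next, I invoke Corollary~\ref{cor:VPL1} on the ball $2B$ (with $4B$ playing the role of the enlargement) to write
\begin{align*}
A \leq c\,\varphi_{p',\abs{\bfA(\bfP)}}\!\Big(\dashint_{4B}\abs{\bfA(\nabla\bfu) - \bfA(\bfP)}\dx\Big) + c\dashint_{4B}\varphi_{p',\abs{\bfA(\bfP)}}(\abs{\bfF - \bfF_0})\dx.
\end{align*}
Each shifted $\varphi$-term is handled by~\eqref{eq:shiftchb} with $\bfQ = 0$, i.e.\ by the pointwise inequality $\varphi_{p',\abs{\bfA(\bfP)}}(t) \leq c\gamma^{1-\max\{p,2\}} t^{p'} + \gamma\abs{\bfV(\bfP)}^2$ for any $\gamma \in (0,1]$. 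The reference point $\bfA(\bfP)$ is replaced by $\bfA(\bfP_0)$ inside the average via the triangle inequality, since $\abs{\bfA(\bfP) - \bfA(\bfP_0)} = \abs{\mean{\bfA(\nabla\bfu) - \bfA(\bfP_0)}_{2B}} \leq c\,\dashint_{4B}\abs{\bfA(\nabla\bfu) - \bfA(\bfP_0)}\dx$.

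Combining the above gives
\begin{align*}
A \leq c\gamma^{1-\max\{p,2\}}\Big[\Big(\dashint_{4B}\abs{\bfA(\nabla\bfu) - \bfA(\bfP_0)}\dx\Big)^{p'} + \dashint_{4B}\abs{\bfF - \bfF_0}^{p'}\dx\Big] + c\gamma\abs{\bfV(\bfP)}^2.
\end{align*}
Plugging in $\abs{\bfV(\bfP)}^2 \leq (c/\varepsilon) A$ and then choosing $\gamma$ proportional to $\varepsilon$ absorbs the last term into $A$, producing $A \leq c\varepsilon^{1-\max\{p,2\}}[\cdots]$. The remaining two terms on the left-hand side of~\eqref{may2} are controlled by the identical shift-change trick applied in situ, after which the same bound on $\abs{\bfV(\bfP)}^2$ (now using the just-established estimate for $A$) closes the argument with the same factor $\varepsilon^{1-\max\{p,2\}}$.

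The main obstacle is to orchestrate the absorption cleanly: the term $\gamma\abs{\bfV(\bfP)}^2$ generated by~\eqref{eq:shiftchb} is implicitly proportional to $A$ itself, so one must track that the choice $\gamma \approx \varepsilon$ simultaneously renders the absorption factor $c\gamma/\varepsilon$ strictly smaller than $1$ \emph{and} produces exactly the claimed blow-up $\gamma^{1-\max\{p,2\}} \approx \varepsilon^{1-\max\{p,2\}}$ on the right-hand side. Everything else is book-keeping with the algebraic identities~\eqref{eq:hammera}--\eqref{eq:hammerd-bis} and Jensen's inequality on comparable balls.
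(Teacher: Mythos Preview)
Your proposal is correct and follows essentially the same approach as the paper's proof: apply Corollary~\ref{cor:VPL1} on $2B$, use the shift-change inequality~\eqref{eq:shiftchb} with $\bfQ=0$ to replace $\varphi_{p',\abs{\bfA(\bfP)}}$ by the pure power $t^{p'}$ at the cost of $\gamma\abs{\bfV(\bfP)}^2$, then use the degeneracy hypothesis to control $\abs{\bfV(\bfP)}^2$ by $A/\varepsilon$ and absorb by choosing $\gamma\approx\varepsilon$. The only cosmetic differences are that the paper treats all three left-hand terms at once (the second and third being trivially bounded by the right-hand side of Corollary~\ref{cor:VPL1} after enlarging $2B$ to $4B$), and it uses~\eqref{mean2} rather than Lemma~\ref{lem:trick} for the last step in bounding $\abs{\bfV(\bfP)}^2$; both choices work.
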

\begin{proof} Set
\begin{equation}\label{m}
m = \max\set{2,p}.
\end{equation}
 By   Corollary \ref{cor:VPL1}, applied with $B$ replaced by
$2B$, and  by inequality \eqref{eq:shiftchb},  with $\bfQ=0$ and
$\gamma = \varepsilon c'$, where $c'$ is a positive constant to be
chosen later,
%\textcolor[rgb]{0.00,0.00,1.00}{$\gamma=\frac{\varepsilon}{2(1+c)}$},
%where $c$ is the constant appearing in \eqref{may1},
one has that
    \begin{align}\label{june1}
    &\dashint_{2B} \abs{\bfV(\nabla \bfu) -  \bfV(\bfP)}^2\dx +  \varphi_{p',\abs{\bfA(\bfP)}} \bigg(\dashint_{2B} \abs{\bfA(\nabla \bfu) - \bfA(\bfP)}\dx \bigg)
   + \dashint_{2B} \varphi_{p',\abs{\bfA(\bfP)}}(\abs{\bfF -\bfF_0})\dx
   \\ \nonumber
    &\quad \leq c\,
    \varphi_{p',\abs{\bfP}^{p-1}} \bigg(\dashint_{4B} \abs{\bfA(\nabla \bfu) -
      \bfA(\bfP)}\dx \bigg)
 + c\,\dashint_{4B} \varphi_{p',\abs{\bfP}^{p-1}}(\abs{\bfF
 -\bfF_0})\dx
       \\ \nonumber
      &\quad \leq c\varepsilon^{1-m}\,\bigg(\dashint_{4B} \abs{\bfA(\nabla \bfu) -
      \bfA(\bfP)}\dx \bigg)^{p'} + cc' \varepsilon
      \abs{\bfV(\bfP)}^2\\ \nonumber &\quad
+ c\varepsilon^{1-m}\,\dashint_{4B} \abs{\bfF -\bfF_0}^{p'} \dx +c
c' \varepsilon \abs{\bfV(\bfP)}^2.
      \dx
  \end{align}
  Since, by \eqref{eq:deg} and  \eqref{mean2},
  \begin{align}
\label{eq:jensen} \abs{\bfV(\bfP)}^2\leq
\dashint_{2B}\abs{\bfV(\nabla \bfu)}^2\, dx \leq
\frac1\varepsilon\dashint_{2B}|\bfV(\nabla \bfu)-\mean{\bfV(\nabla
\bfu)}_{2B}|^2\dx\,\leq
\,\frac{1}{\varepsilon}\dashint_{2B}|\bfV(\nabla
\bfu)-\bfV(\bfP)|^2\dx,
  \end{align}
  choosing $c' = \tfrac 1{4c}$ in \eqref{june1} ensures that
  %we can absorb $\frac{\varepsilon}{2}\abs{\bfV(\bfP)}^2$ and
  \begin{align*}
    &\dashint_{2B} \abs{\bfV(\nabla \bfu) -  \bfV(\bfP)}^2\dx  +  \varphi_{p',\abs{\bfA(\bfP)}} \bigg(\dashint_{2B} \abs{\bfA(\nabla \bfu) - \bfA(\bfP)}\dx \bigg)
   + \dashint_{2B} \varphi_{p',\abs{\bfA(\bfP)}}(\abs{\bfF -\bfF_0})\dx\\
    &\quad \leq c\varepsilon^{1-m}\,\bigg(\dashint_{4B} \abs{\bfA(\nabla \bfu) -
      \bfA(\bfP)}\dx \bigg)^{p'}+c\varepsilon^{1-m}\,\dashint_{4B} \abs{\bfF -\bfF_0}^{p'}\dx\\
      &\quad \leq c\varepsilon^{1-m}\,\bigg(\dashint_{4B} \abs{\bfA(\nabla \bfu) -
      \bfA(\bfP_0)}\dx \bigg)^{p'}+c\varepsilon^{1-m}\abs{\mean{\bfA(\nabla \bfu)}_{2B}-\bfA(\bfP_0)}^{p'}
      \\
    &\quad
      +c \varepsilon^{1-m}\,\dashint_{4B} \abs{\bfF -\bfF_0}^{p'}\dx \\
       &\quad \leq c \varepsilon^{1-m}\,\bigg(\dashint_{4B} \abs{\bfA(\nabla \bfu) -
      \bfA(\bfP_0)}\dx \bigg)^{p'}+c\varepsilon^{1-m}\,\dashint_{4B} \abs{\bfF
      -\bfF_0}^{p'}\dx.
  \end{align*}
  %where we also used \eqref{may2}.
  This establishes inequality \eqref{may2}.
\end{proof}

 The next lemma provides us with an estimate for a distance between the gradient of
a solution  $\bfu$ to
 \eqref{eq:sysA}, and the gradient  of the
 solution  $\bfv$ to the associated Dirichlet problem \eqref{eq:hareq} in a form suitable for our purposes.

\begin{lemma}
  \label{lem:choice}
Let $p \in (1,\infty)$. Let $\bfu$ be a local weak solution to
\eqref{eq:sysA} satisfying \eqref{eq:deg} for some
 ball  $B$ and  some $\varepsilon >0$. Let $\bfv$ be the solution to the Dirichlet problem \eqref{eq:hareq}.
 Then for every $\delta >0$, there exists a constant $c=c(n, N, p,
 \varepsilon, \delta)$ such that
%
% for $\bfh$ the solution to
%  \eqref{eq:hareq} we find for every $\varepsilon,\delta>0$ there exists $c
%  \geq 1$ such that
  \begin{align}\label{nov9}
\dashint_B\abs{\bfA(\nabla \bfu)-\bfA(\nabla \bfv)}^{p'}\dx\leq \,\delta\,
    \bigg(\dashint_{4B}
    \abs{\bfA(\nabla\bfu)) - \mean{\bfA(\nabla\bfu)}_{4B}}\,dx\bigg)^{p'} + c \dashint_{4B}\abs{\bfF
    -\bfF_0}^{p'}\dx
  \end{align}
   for every $\bfF_0 \in \setR^{N \times n}$.
  \end{lemma}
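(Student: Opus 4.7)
The plan is to transfer the $L^{p'}$-excess $\abs{\bfA(\nabla\bfu)-\bfA(\nabla\bfv)}^{p'}$ into the quadratic $V$-excess $\abs{\bfV(\nabla\bfu)-\bfV(\nabla\bfv)}^2$ by a shift change with a small parameter $\gamma>0$, then to control the latter by Lemma~\ref{lemma:comp1} with a further small parameter $\bar\beta$, and finally to absorb every remaining ``large'' quantity using Lemma~\ref{lem:RHdeg}, which is available thanks to the degeneracy hypothesis~\eqref{eq:deg}.

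Setting $m:=\max\set{p,2}$ and applying~\eqref{eq:shiftchb} pointwise with $\bfP=0$, $\bfQ=\nabla\bfv$ and $t=\abs{\bfA(\nabla\bfu)-\bfA(\nabla\bfv)}$, I use $\varphi_{p',0}(t)=t^{p'}$ together with the equivalence $\varphi_{p',\abs{\nabla\bfv}^{p-1}}(\abs{\bfA(\nabla\bfu)-\bfA(\nabla\bfv)})\approx\abs{\bfV(\nabla\bfu)-\bfV(\nabla\bfv)}^2$ from~\eqref{eq:hammera}; averaging on $B$ this gives
\begin{align*}
\dashint_B\abs{\bfA(\nabla\bfu)-\bfA(\nabla\bfv)}^{p'}\dx
\leq c\gamma^{1-m}\dashint_B\abs{\bfV(\nabla\bfu)-\bfV(\nabla\bfv)}^2\dx+\gamma\dashint_B\abs{\bfV(\nabla\bfv)}^2\dx.
\end{align*}
The last integral is handled by the energy minimality of $\bfv$, yielding $\dashint_B\abs{\bfV(\nabla\bfv)}^2\preceq\dashint_B\abs{\bfV(\nabla\bfu)}^2$, followed by \eqref{eq:deg}, Lemma~\ref{lem:trick} and Lemma~\ref{lem:RHdeg} applied as in~\eqref{eq:jensen}; together they produce $\dashint_B\abs{\bfV(\nabla\bfv)}^2\leq c\varepsilon^{-m}\mathcal{I}$, where I set
\[
\mathcal{I}:=\Big(\dashint_{4B}\abs{\bfA(\nabla\bfu)-\bfA(\bfP_0)}\dx\Big)^{p'}+\dashint_{4B}\abs{\bfF-\bfF_0}^{p'}\dx
\]
and choose $\bfP_0$ so that $\bfA(\bfP_0)=\mean{\bfA(\nabla\bfu)}_{4B}$, matching the target form of~\eqref{nov9}.

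For the $V$-excess, Lemma~\ref{lemma:comp1} with parameter $\bar\beta$ combined with Lemma~\ref{lem:RHdeg} gives $\bar\beta\,\varphi_{p',\abs{\bfA(\bfP)}}(\dashint_{2B}\abs{\bfA(\nabla\bfu)-\bfA(\bfP)})\leq c\bar\beta\varepsilon^{1-m}\mathcal{I}$ (with $\bfA(\bfP)=\mean{\bfA(\nabla\bfu)}_{2B}$), leaving only $c_{\bar\beta}\dashint_{2B}\varphi_{p',\abs{\bfA(\bfP)}}(\abs{\bfF-\bfF_0})\dx$ to bound. For $p\geq 2$ (so $p'\leq 2$), the elementary inequality $\varphi_{p',a}(t)\leq t^{p'}$—which follows directly from $(a+t)^{p'-2}\leq t^{p'-2}$—immediately yields $\dashint_{2B}\varphi_{p',\abs{\bfA(\bfP)}}(\abs{\bfF-\bfF_0})\leq c\dashint_{4B}\abs{\bfF-\bfF_0}^{p'}$. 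For $p<2$, I use instead $\varphi_{p',a}(t)\leq c(a^{p'-2}t^2+t^{p'})$ together with Young's inequality $a^{p'-2}t^2\leq\eta\,a^{p'}+c_\eta\,t^{p'}$ (conjugate exponents $p'/(p'-2)$ and $p'/2$); because $\abs{\bfA(\bfP)}^{p'}\approx\abs{\bfV(\bfP)}^2$, the $\eta$-term is absorbed into $\mathcal{I}$ exactly as $\dashint_B\abs{\bfV(\nabla\bfv)}^2$ was, while the $c_\eta\, t^{p'}$-term contributes only a multiple of $\dashint_{4B}\abs{\bfF-\bfF_0}^{p'}$.

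Collecting these estimates yields $\dashint_B\abs{\bfA(\nabla\bfu)-\bfA(\nabla\bfv)}^{p'}\leq C_1(\gamma,\bar\beta,\eta,\varepsilon)\,\mathcal{I}+C_2(\gamma,\bar\beta,\eta,\varepsilon)\dashint_{4B}\abs{\bfF-\bfF_0}^{p'}\dx$, with $C_1$ of the form $c\gamma^{1-m}\bar\beta\varepsilon^{1-m}+c\gamma\varepsilon^{-m}$ (plus an $\eta$-piece only when $p<2$). Choosing the parameters in the sequential order $\gamma\to\bar\beta\to\eta$—first $\gamma$ small enough that $c\gamma\varepsilon^{-m}\leq\delta/3$; then, with $\gamma$ fixed, $\bar\beta$ small enough that $c\gamma^{1-m}\bar\beta\varepsilon^{1-m}\leq\delta/3$; and, if $p<2$, $\eta$ small enough to kill the absorbed term—forces $C_1\leq\delta$ while keeping $C_2$ a finite constant depending only on $n,N,p,\varepsilon,\delta$. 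Since $\bfA(\bfP_0)=\mean{\bfA(\nabla\bfu)}_{4B}$, the resulting bound is precisely~\eqref{nov9}. The main obstacle is exactly this bookkeeping: the $\gamma^{1-m}$ penalty from the initial shift change makes the $\bar\beta$-contribution the delicate term, and it is controllable only because $\gamma$ can be frozen before $\bar\beta$ is selected.
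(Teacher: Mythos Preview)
Your proof is correct and follows essentially the same strategy as the paper: a shift change to pass from $\abs{\bfA(\nabla\bfu)-\bfA(\nabla\bfv)}^{p'}$ to $\abs{\bfV(\nabla\bfu)-\bfV(\nabla\bfv)}^2$, then Lemma~\ref{lemma:comp1}, then Lemma~\ref{lem:RHdeg}, with careful parameter tuning. The only differences are cosmetic: the paper applies~\eqref{eq:shiftchb} with $\bfQ=\nabla\bfu$ rather than $\bfQ=\nabla\bfv$ (so that $\gamma\dashint_B\abs{\bfV(\nabla\bfu)}^2$ appears directly and no energy-minimality step is needed), and it handles the $\varphi_{p',\abs{\bfA(\bfP)}}(\abs{\bfF-\bfF_0})$ term by a second shift change~\eqref{eq:shiftchb} (with parameter $\tau$) uniformly in $p$, rather than your case split via $\varphi_{p',a}(t)\leq t^{p'}$ for $p\geq 2$ and Young's inequality for $p<2$.
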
\noindent
\begin{proof}
%By Lemma~\ref{lemma:comp1} we first obtain
%\begin{align*}
%%\label{comp:1}
%\begin{aligned}
%\dashint_B \abs{\bfV(\nabla\bfu) - \bfV(\nabla \bfv)}^2\dx &\leq \,\beta\,
%    \varphi_{p',\abs{\mean{\bfA(\nabla\bfu)}_{2B}}}\bigg(\dashint_{2B}
%    \abs{\bfA(\nabla\bfu) - \mean{\bfA(\nabla\bfu)}_{2B}}\dx\bigg)
%    \\
%    &\quad+ c_\beta \dashint_{2B} \varphi_{p',\abs{\mean{\bfA(\nabla\bfu)}_{2B}}}(\abs{\bfF -\bfF_0})
%\end{aligned}
%\end{align*}
%for any $\beta>0$.
Fix $\gamma >0$, and define $m$ as
in \eqref{m}. Then the following chain holds:
\begin{align}\label{july200}
\dashint_B & \abs{\bfA(\nabla\bfu) - \bfA(\nabla \bfv)}^{p'}\dx \leq
c \gamma ^{1-m}\dashint_B \varphi _{p', |\nabla
\bfu|^{p-1}}\big(\bfA(\nabla\bfu) - \bfA(\nabla \bfv)\big) \dx +
\gamma \dashint_B \abs{\bfV(\nabla \bfu)}^2\dx \\
\nonumber & \leq c\gamma ^{1-m} \dashint_{B} \abs{\bfV(\nabla\bfu) - \bfV(\nabla
\bfv)}^2\dx + \gamma \dashint_B \abs{\bfV(\nabla \bfu)}^2\dx
\\
\nonumber & \leq c \gamma ^{1-m} \dashint_{B} \abs{\bfV(\nabla\bfu)
- \bfV(\nabla \bfv)}^2\dx + \frac {\gamma}\varepsilon \dashint_{2B}
\abs{\bfV(\nabla\bfu) - \mean{\bfV(\nabla \bfu)}_{2B}}^2\dx\,,
\end{align}
where the first inequality follows from \eqref{eq:shiftchb} applied
with  $\bfP =0$, $\bfQ=\nabla \bfu$, $t=\bfA(\nabla\bfu) -
\bfA(\nabla \bfv)$, the second inequality from \eqref{eq:hammera}
applied with $\bfP = \nabla \bfv$, $\bfQ = \nabla \bfu$,  and the
last  one from \eqref{eq:deg}.
\\
Let
$\bfP$ be such that $ \bfA(\bfP)=\mean{\bfA(\nabla \bfu)}_{2B}$.
Given $\beta >0$, by Lemma \ref{lemma:comp1} and inequality
\eqref{mean2} one has that
\begin{align}\label{july201}
c \gamma ^{1-m} &\dashint_{B} \abs{\bfV(\nabla\bfu) - \bfV(\nabla
\bfv)}^2\dx + \frac {\gamma}\varepsilon \dashint_{2B}
\abs{\bfV(\nabla\bfu) - \mean{\bfV(\nabla \bfu)}_{2B}}^2\dx
\\ \nonumber \leq &
 c \gamma
^{1-m} \beta\,
     \varphi_{p',\abs{\bfA(\bfP)}}\bigg(\dashint_{2B}
    \abs{\bfA(\nabla\bfu) - \bfA(\bfP)}\dx\bigg)
    +  c\gamma^{1-m} c_\beta \dashint_{2B} \varphi_{p',\abs{\bfA(\bfP)}}(\abs{\bfF -\bfF_0}) \dx
\\ \nonumber & \quad + \frac
{c\gamma}\varepsilon \dashint_{2B} \abs{\bfV(\nabla\bfu) -
\bfA(\bfP)}^2\dx\,.
\end{align}
On the other hand, inequality \eqref{eq:shiftchb}, applied with
$\bfP =\bfP$, $\bfQ=0$ and $t=\abs{\bfF -\bfF_0}$, and inequality
\eqref{eq:jensen} tell us that, if $\tau
>0$,  then
\begin{align}\label{july202}
\dashint_{2B} & \varphi_{p',\abs{\bfA(\bfP)}}(\abs{\bfF -\bfF_0})
\dx \leq c \tau ^{1-m}\dashint_{2B}  \abs{\bfF -\bfF_0}^{p'}\dx +
\tau \abs{\bfV (\bfP)}^2 \dx
\\ \nonumber & \leq  c \tau ^{1-m}\dashint_{2B}  \abs{\bfF
-\bfF_0}^{p'}\dx + \frac{\tau}{\varepsilon} \dashint_{2B}
\abs{\bfV(\nabla \bfu) - \bfV(\bfP)}^2\dx.
\end{align}
Combining inequalities \eqref{july200}--\eqref{july202}, and then
making use of Lemma \ref{lem:RHdeg}, yield
\begin{align}\label{july203}
\dashint_B & \abs{\bfA(\nabla\bfu) - \bfA(\nabla \bfv)}^{p'}\dx \leq
c \gamma ^{1-m} \beta\,
     \varphi_{p',\abs{\bfA(\bfP)}}\bigg(\dashint_{2B}
    \abs{\bfA(\nabla\bfu) - \bfA(\bfP)}\dx\bigg)
    \\ \nonumber &  \frac c\varepsilon\left( {\gamma} + \gamma
    ^{1-m} c_\beta {\tau}\right)\dashint_{2B}
\abs{\bfV(\nabla \bfu) - \bfV(\bfP)}^2\dx + c \gamma ^{1-m} c_\beta
\tau^{1-m}\dashint_{2B} \abs{\bfF -\bfF_0}^{p'}\dx
\\ \nonumber & \leq
c \gamma ^{1-m} \beta  \varepsilon ^{1-m}
     \bigg(\dashint_{4B} \abs{\bfA(\nabla \bfu) -
       \mean{\bfA(\nabla \bfu)}_{4B}}\dx \bigg)^{p'}
+  c \gamma ^{1-m} \beta  \varepsilon ^{1-m}
  \dashint_{4B} \abs{\bfF -\bfF_0}^{p'} \dx
 \\ \nonumber & +
\frac c\varepsilon \left(\gamma + \gamma ^{1-m} c_\beta
\tau\right)\varepsilon ^{1-m}
     \bigg(\dashint_{4B} \abs{\bfA(\nabla \bfu) -
       \mean{\bfA(\nabla \bfu)}_{4B}}\dx \bigg)^{p'}
\\ \nonumber & + \frac c\varepsilon \left(\gamma + \gamma ^{1-m} c_\beta
\tau\right)\varepsilon ^{1-m} \dashint_{2B} \abs{\bfF
-\bfF_0}^{p'}\dx
+ c \gamma
^{1-m} c_\beta \tau^{1-m}\dashint_{2B} \abs{\bfF -\bfF_0}^{p'}\dx.
\end{align}
Choosing first $\gamma$, then $\beta$ and finally $\tau$
sufficiently small in \eqref{july203} yields \eqref{nov9}.
\end{proof}

\begin{proposition}
\label{pro:2} Let $p \in (1,\infty)$. Let $\bfu$ be a local weak
solution to \eqref{eq:sysA} satisfying \eqref{eq:deg} for some
 ball  $B$ and  some  $\varepsilon >0$, and let $\bfv$ be the solution to the Dirichlet problem
 \eqref{eq:hareq}. Let $\alpha$ be the exponent appearing in Theorem
 \ref{thm:decay}.
Then, for every $\theta \in (0,1)$ and every $\kappa <\min\set{1,
\tfrac {2\alpha}{p'}}$, there exist constants $c=c(n, N, p,
\kappa)$, and $c' = c'(n, N, p, \varepsilon, \theta, \kappa)$ such
that
\begin{align}
\label{nov10}  \dashint_{\theta B} & \abs{\bfA(\nabla
\bfu)-\mean{\bfA(\nabla \bfu)}_{\theta B}}^{p'}\dx
\\ \nonumber & \qquad  \leq c \theta^{\kappa p'} \bigg(\dashint_{4B}\abs{\bfA(\nabla \bfu)-\mean{\bfA(\nabla
\bfu)}_{{4B}}}\dx\bigg)^{p'} +c'  \dashint_{4B} \abs{\bfF
-\bfF_0}^{p'} \dx
\end{align}
for every $\bfF_0\in \RNn$.
\end{proposition}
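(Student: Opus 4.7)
The plan is to split the oscillation of $\bfA(\nabla \bfu)$ on $\theta B$ into a contribution coming from the comparison $p$-harmonic map $\bfv$ defined by \eqref{eq:hareq} and a contribution coming from the comparison error $\bfA(\nabla \bfu)-\bfA(\nabla \bfv)$. Using \eqref{meanq} to replace the mean on $\theta B$ by $\mean{\bfA(\nabla \bfv)}_{\theta B}$, together with the triangle inequality, the left-hand side of \eqref{nov10} is bounded (up to an absolute constant) by
\begin{align*}
\dashint_{\theta B}\abs{\bfA(\nabla \bfv)-\mean{\bfA(\nabla \bfv)}_{\theta B}}^{p'}\dx
+ \theta^{-n}\dashint_{B}\abs{\bfA(\nabla \bfu)-\bfA(\nabla \bfv)}^{p'}\dx,
\end{align*}
the factor $\theta^{-n}$ arising from passing from $\theta B$ to $B$ in the second integral.

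For the first term I would invoke the pointwise decay estimate \eqref{pharmonic2} of Theorem \ref{thm:decay} applied to the $p$-harmonic map $\bfv$, obtaining
\begin{align*}
\dashint_{\theta B}\abs{\bfA(\nabla \bfv)-\mean{\bfA(\nabla \bfv)}_{\theta B}}^{p'}\dx
\leq c\,\theta^{\kappa p'}\bigg(\dashint_{B}\abs{\bfA(\nabla \bfv)-\mean{\bfA(\nabla \bfv)}_{B}}\dx\bigg)^{p'}.
\end{align*}
The integral on the right would then be compared to the analogous quantity for $\bfu$ on $B$: by the triangle inequality, the extra error is controlled by $\dashint_{B}\abs{\bfA(\nabla \bfu)-\bfA(\nabla \bfv)}\dx$, while the oscillation of $\bfA(\nabla \bfu)$ on $B$ is itself bounded by $c\,\dashint_{4B}\abs{\bfA(\nabla \bfu)-\mean{\bfA(\nabla \bfu)}_{4B}}\dx$ via \eqref{meanq}.

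The remaining ingredients are the two comparison integrals $\dashint_{B}\abs{\bfA(\nabla \bfu)-\bfA(\nabla \bfv)}^{p'}\dx$ appearing with prefactor $\theta^{-n}$, and (after Jensen) with prefactor $\theta^{\kappa p'}$. Here is where the main care is required: I would apply Lemma \ref{lem:choice} with a smallness parameter $\delta$ of order $\theta^{n+\kappa p'}$ for the first occurrence, so that the resulting contribution to the oscillation term on $4B$ carries the desired factor $\theta^{\kappa p'}$ (and the $\bfF$-term picks up a $\theta$- and $\varepsilon$-dependent constant $c'$). For the second occurrence, $\delta$ can be chosen of order $1$ without spoiling the $\theta^{\kappa p'}$ prefactor already present.

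Summing these contributions yields \eqref{nov10}, with the clean constant $c=c(n,N,p,\kappa)$ in front of the oscillation term on $4B$, and a constant $c'=c'(n,N,p,\varepsilon,\theta,\kappa)$ in front of the $\bfF$-integral. The main obstacle I foresee is the bookkeeping: one must verify that the $\theta$-dependence of the $\delta_\beta$-constants coming out of Lemma \ref{lem:choice} (which in turn comes from Lemma \ref{lemma:comp1} and the shift-change \eqref{eq:shiftchb} with $\gamma$ small) does not contaminate the prefactor of the oscillation term, so that the critical power $\theta^{\kappa p'}$ is preserved with a constant independent of $\theta$ and $\varepsilon$.
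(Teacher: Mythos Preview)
Your proposal is correct and follows essentially the same approach as the paper's own proof: the same splitting into $I_1+I_2$ via \eqref{meanq}, the decay estimate \eqref{pharmonic2} for $I_1$, Lemma~\ref{lem:choice} for the comparison terms, and the choice $\delta=\theta^{n+\kappa p'}$ to handle the $\theta^{-n}$ prefactor in $I_2$. Your bookkeeping concern is exactly right but resolves as you anticipate: the $\delta$-dependent constant from Lemma~\ref{lem:choice} lands only in front of the $\bfF$-integral and is therefore absorbed into $c'$.
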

\begin{proof}
 By property \eqref{meanq},
 \begin{align}\label{nov11}
  \dashint_{\theta B}\abs{\bfA(\nabla \bfu)-\mean{\bfA(\nabla \bfu)}_{\theta B}}^{p'}\dx
  \leq &
    c\dashint_{\theta B}\abs{\bfA(\nabla \bfv)-\mean{\bfA(\nabla \bfv)}_{\theta
    B}}^{p'} \dx \\ \nonumber
& + c \theta^{-n} \dashint_B\abs{\bfA(\nabla \bfu)-\bfA(\nabla
\bfv)}^{p'}\dx =  I_1+I_2.
 \end{align}
We exploit inequality \eqref{pharmonic2},   property \eqref{meanq},
and Lemma~\ref{lem:choice} to obtain the following chain of
inequalities:
\begin{align}\label{nov12}
%\label{eq:2}
I_1 & \leq c \theta ^{\kappa p'}
\bigg(\dashint_{B}\abs{\bfA(\nabla \bfv)-\mean{\bfA(\nabla
\bfv)}_{B}}\dx\bigg)^{p'}
%\\ \nonumber & \leq c \theta ^{\alpha
%p'}\bigg(\dashint_{B}\abs{\bfA(\nabla \bfv)-\mean{\bfA(\nabla
%\bfu)}_{B}}\dx\bigg)^{p'}
\\ \nonumber & \leq \widetilde c\theta ^{\kappa p'}
\dashint_{B}\abs{\bfA(\nabla \bfv)- \bfA(\nabla \bfu)}^{p'}\dx +
\widetilde  c\theta ^{2\alpha} \bigg(\dashint_{B}\abs{\bfA(\nabla
\bfu)-\mean{\bfA(\nabla \bfu)}_{B}}\dx\bigg)^{p'}
\\ \nonumber & \leq
\widetilde c\theta ^{\kappa p'}
\bigg(\dashint_{4B}\abs{\bfA(\nabla \bfu)-\mean{\bfA(\nabla
\bfu)}_{4B}}\dx\bigg)^{p'} + c' \dashint_{4B}\abs{\bfF
-\bfF_0}^{p'}\dx,
\end{align}
where $\widetilde c = \widetilde c(n,p,N,\kappa)$ and $c' =
c'(n,p,N,\varepsilon,\kappa)$.
\iffalse Here we also appealed to the
fact that
\begin{align*}
\abs{\mean{\bfA(\nabla\bfu)}_{2B} - \mean{\bfA(\nabla\bfu)}_{B}}
& \leq \dashint_{B}\abs{\bfA(\nabla \bfu)-\mean{\bfA(\nabla \bfu)}_{2B}}\dx
 \\ & \leq 2^n \dashint_{4B}\abs{\bfA(\nabla \bfu)-\mean{\bfA(\nabla \bfu)}_{4B}}\dx.
\end{align*}
\fi
 One can further make use of Lemma
\ref{lem:choice}
 %and of property
%\eqref{meanq}
to infer that
\begin{align}\label{nov13}
I_2 \leq c \delta \theta^{-n}\bigg( \dashint_{4B}\abs{\bfA(\nabla
\bfu)-\mean{\bfA(\nabla \bfu)}_{{4B}}}\dx\bigg)^{p'} +
\,c_{\delta} \theta^{-n} \dashint_{4B}\abs{\bfF -\bfF_0}^{p'}\dx
\end{align}
for any $\delta>0$, and for some constant $c_\delta = c_\delta (n,
N, p, \varepsilon, \delta)$. On choosing
$\delta = \theta^{n+\kappa p'}$,
inequalities \eqref{nov11}--\eqref{nov13} yield the result.
%
%
%
%On $(I)$ we apply Theorem~\ref{thm:decay} to find
%\begin{align}
%\label{eq:2}
%(I)\leq  c\theta^\frac{2\alpha}{p'}\dashint_{B}\abs{\bfA(\nabla \bfv)-\mean{\bfA(\nabla \bfv)}_{B}}\dx
%\leq c\theta^\frac{2\alpha}{p'}\dashint_{B}\abs{\bfA(\nabla \bfu)-\mean{\bfA(\nabla \bfu)}_{B}}\dx +c(II).
% \end{align}
%The term $(II)$ is just estimated by  Lemma~\ref{lem:choice}.
%This concludes the proof, by choosing $\theta$ and $\beta$ conveniently small.
\end{proof}

\subsection{The non-degenerate case}\label{sec:comparison-nondeg}
%\subsection{Comparison: the non-degenerate Case}

In the present subsection, we assume that $\bfu$ is a local weak
solution to \eqref{eq:sysA} such that
\begin{align}
\label{eq:non-deg}
 \dashint_{2B}|\bfV(\nabla \bfu)-\mean{\bfV(\nabla \bfu)}_{ 2B}|^2\dx < \varepsilon\dashint_{2B}\abs{\bfV(\nabla \bfu)}^2\dx.
\end{align}
for some fixed reference
 ball  $B$ such that $4B \subset \Omega$, and some number $\varepsilon \in (0,\tfrac 14)$.
 Since in the degenerate case appropriate estimates are possible for any $\varepsilon$, we can
 consider it to be a free parameter at this stage. In particular,
 our choice of $\varepsilon$ in \eqref{eq:non-deg} will depend on
 the parameter $\theta$ in Proposition~\ref{prop:decay conclusion}.
% Especially the choice of the radius $\theta$ in Proposition~\ref{prop:decay conclusion}, my influence our choice of $\varepsilon$, as it would only affect the constant $c$ in Proposition~\ref{pro:2}.
% As a matter of fact, we now fix $\sigma \in (0,1/4)$ and seek $\varepsilon$ as a function of $n,p,N,\sigma$. Note that the choice of $\varepsilon$ affects only the constant $c$ in Proposition~\ref{pro:2}, and if we were able to find $\varepsilon = \varepsilon(n,p,N,\sigma)$, then also $c$ would depend only on
%  $n,p,N,\sigma$, too.

Let us start by introducing a few notations to be used in what
follows. We shall have to deal with balls whose centers differ from
the reference ball $B$ in \eqref{eq:non-deg}. We call $\sigma B(z)$
the ball, centered at the point $z$, whose radius is $\sigma$ times
the radius of  $B$.
%
%fixing the objects appearing in the proof of the necessary decay
%estimate. Contrary to the proof  of the degenerate case, where all
%estimates where on balls with the same center, we will have to use
%estimates with different centers.
 Given $z \in \Rn$ and $\sigma >0$,
we denote by $\bfA_{\sigma,z},\bfV_{\sigma,z},\bfU_{\sigma,z}$ the
matrices from $\mathbb R^{N \times n}$  satisfying
\begin{equation} \label{eq:means}
\bfA(\bfA_{\sigma,z}) = \mean{\bfA(\nabla \bfu)}_{\sigma B(z)},
\quad
\bfV(\bfV_{\sigma,z}) = \mean{\bfV(\nabla \bfu)}_{\sigma B(z)},
\quad
\bfU_{\sigma,z} = \mean{\nabla \bfu}_{\sigma B(z)},
\end{equation}
respectively.  Whenever $z$ is the center of the reference ball $B$,
it will be  omitted, and we just write
\begin{equation} \label{eq:means1}
\bfA(\bfA_{\sigma}) = \mean{\bfA(\nabla \bfu)}_{\sigma B}, \quad
\bfV(\bfV_{\sigma}) = \mean{\bfV(\nabla \bfu)}_{\sigma B}, \quad
\bfU_{\sigma} = \mean{\nabla \bfu}_{\sigma B}.
\end{equation}

 The next lemma tells us  that, under  condition~\eqref{eq:non-deg},  the expressions defined above are equivalent, up to multiplicative constants
  proportional to $\varepsilon$.
 % are close to each other by means of $\varepsilon$.
%%%%%
%%%%%
\begin{lemma} \label{lemma:means close}
Let $p \in (1, \infty)$. Let $\bfu$ be a local weak solution
to~\eqref{eq:sysA} satisfying \eqref{eq:non-deg} for some ball $B$
and some $\varepsilon \in(0, \tfrac 14)$.
%Recall the definitions
%in~\eqref{eq:means} and assume that $\theta B(z) \subset 2B$.
 Then there exists a constant $c=c(n, N, p)$  such that, if
\begin{equation} \label{eq:epsilon cond 1}
  \varepsilon c  \leq \sigma^{5n}\,,
\end{equation}
then
\begin{equation} \label{eq:means similar}
\max\set{\abs{\bfU_{\theta,z}},\abs{\bfA_{\theta,z}}} \leq 2\abs{\bfV_2}\leq 4\min\set{\abs{\bfU_{\theta,z}},\abs{\bfA_{\theta,z}}},
\end{equation}
and
\begin{equation} \label{eq:means similar bis}
\max\set{\abs{\bfU_{2}},\abs{\bfA_{2}}} \leq
2\abs{\bfV_{\theta,z}}\leq 4\min\set{\abs{\bfU_{2}},\abs{\bfA_{2}}},
\end{equation}
 for every $\theta \in [\sigma, 2]$ and $z \in B$ satisfying
$\theta B(z) \subset 2B$. Moreover, $\bfU_2 \neq 0$, and
%\begin{equation} \label{eq:epsilon cond 1}
%  \varepsilon 8^{5n+1} \leq \sigma < 2\,,
%\end{equation}
\begin{equation} \label{eq:means close}
\abs{\bfA_{\theta,z} - \bfU_2} +  \abs{\bfV_{\theta,z} - \bfU_2} +
\abs{\bfU_{\theta,z} - \bfU_2} \leq \tfrac 18 \sigma^{2n}
\abs{\bfU_2}
\end{equation}
 for every $\theta \in [\sigma,2]$.
 %provided that
%\begin{equation} \label{eq:epsilon cond 1}
%\varepsilon \leq \sigma^{5n}/c
%\end{equation}
%is satisfied with a constant $c = c(n,p,N)$ such that $c \geq 8^{5n+1}$.
%
%Let $p \in (1, \infty)$. Let $\bfu$ be a local weak solution
%to~\eqref{eq:sysA} satisfying \eqref{eq:non-deg} for some ball $B$
%and some $\epsilon \in(0,1/4)$. Then there is a constant $c_1$
%depending only on $n,N$ and $p$ such that \seb{
%\begin{equation} \label{eq:epsilon cond 1}
%  \varepsilon c_1 \leq \sigma^{5n}\,,
%\end{equation}
%} implies the following.
%%Recall the definitions
%%in~\eqref{eq:means} and assume that $\theta B(z) \subset 2B$.
% There holds
%\begin{equation} \label{eq:means similar}
%\max\set{\abs{\bfU_{\theta,z}},\abs{\bfA_{\theta,z}}} \leq
%2\abs{\bfV_2}\leq
%4\min\set{\abs{\bfU_{\theta,z}},\abs{\bfA_{\theta,z}}},
%\end{equation}
%for every $\theta \in  [\sigma,2]$ and $z \in B$ such that $\theta
%B(z) \subset 2B$. Moreover, $\bfU_2 \neq 0$ then
%\begin{equation} \label{eq:means close}
%\abs{\bfA_{\theta,z} - \bfU_2} +  \abs{\bfV_{\theta,z} - \bfU_2} +
%\abs{\bfU_{\theta,z} - \bfU_2} \leq  \sigma^{2n}\abs{\bfU_2}
%\end{equation}
%holds for every $\theta \in [\sigma,2]$.
\end{lemma}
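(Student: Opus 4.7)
The strategy is to exploit the non-degeneracy condition \eqref{eq:non-deg} together with Lemma~\ref{lem:trick}, which equates the three natural $\bfV$-excess functionals (with centers $\mean{\bfV(\nabla\bfu)}_B$, $\bfV(\mean{\nabla\bfu}_B)$, and $\bfV(\bfg_\bfA)$). Since $\mean{|\bfV(\nabla\bfu)|^2}_{2B}\approx |\bfV(\bfV_2)|^2\approx |\bfV_2|^p$ (the first bound uses that the $L^2$-excess is controlled by $\varepsilon$ times the full $L^2$ mass, which for $\varepsilon<1/4$ forces the mean to carry a definite fraction of the mass), the non-degeneracy rewrites as
\begin{equation*}
\dashint_{2B} \abs{\bfV(\nabla\bfu) - \bfV(\bfV_2)}^2\dx \le c\varepsilon\abs{\bfV_2}^p.
\end{equation*}
By Lemma~\ref{lem:trick} applied on $2B$, the same bound (up to a constant) holds when $\bfV_2$ is replaced by $\bfU_2$ or $\bfA_2$; the triangle inequality then yields $|\bfV(\bfP)-\bfV(\bfV_2)|^2\le c\varepsilon |\bfV_2|^p$ for $\bfP\in\{\bfU_2,\bfA_2\}$, and hence, through the equivalence \eqref{eq:hammera}, $|\bfV_2|\approx|\bfU_2|\approx|\bfA_2|$ for $\varepsilon$ small. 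In particular $\bfV_2\neq 0$ forces $\bfU_2\neq 0$.

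Now fix $\theta\in[\sigma,2]$ and $z\in B$ with $\theta B(z)\subset 2B$. Enlarging the domain of integration and using the previous step,
\begin{equation*}
\dashint_{\theta B(z)} \abs{\bfV(\nabla\bfu)-\bfV(\bfU_2)}^2\dx \;\le\; (2/\theta)^n \dashint_{2B}\abs{\bfV(\nabla\bfu)-\bfV(\bfU_2)}^2\dx \;\le\; c\sigma^{-n}\varepsilon\abs{\bfU_2}^p.
\end{equation*}
The $L^2$-minimality \eqref{mean2} of the mean gives $\dashint_{\theta B(z)}|\bfV(\nabla\bfu)-\bfV(\bfV_{\theta,z})|^2\dx$ is no larger, and a second application of Lemma~\ref{lem:trick}, this time on $\theta B(z)$, transfers the same estimate (with an innocuous constant) to the centers $\bfV(\bfU_{\theta,z})$ and $\bfV(\bfA_{\theta,z})$. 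Triangle inequality against the fixed center $\bfV(\bfU_2)$ then yields, for every $\bfP\in\{\bfU_{\theta,z},\bfV_{\theta,z},\bfA_{\theta,z}\}$,
\begin{equation*}
\abs{\bfV(\bfP)-\bfV(\bfU_2)}^2 \le c\sigma^{-n}\varepsilon\abs{\bfU_2}^p.
\end{equation*}

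The next step is to invert the nonlinear quantity $|\bfV(\bfP)-\bfV(\bfU_2)|^2\approx (|\bfP|+|\bfU_2|)^{p-2}|\bfP-\bfU_2|^2$ from \eqref{eq:hammera}. When $p\ge 2$ the factor $(|\bfP|+|\bfU_2|)^{p-2}\ge|\bfU_2|^{p-2}$ is immediate, giving $|\bfP-\bfU_2|\le c\sigma^{-n/2}\varepsilon^{1/2}|\bfU_2|$. For $1<p<2$ a short dichotomy is needed: if $|\bfP-\bfU_2|\le|\bfU_2|$ then $|\bfP|+|\bfU_2|\approx|\bfU_2|$ and the same conclusion follows; if instead $|\bfP-\bfU_2|>|\bfU_2|$ then $|\bfP|+|\bfU_2|\le 3|\bfP-\bfU_2|$, producing $|\bfP-\bfU_2|\le c\sigma^{-n/p}\varepsilon^{1/p}|\bfU_2|\le|\bfU_2|$ once $\varepsilon c\le\sigma^{5n}$ and $\sigma<1$, contradicting the dichotomy's assumption. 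Choosing the constant $c$ in \eqref{eq:epsilon cond 1} so that $c\sigma^{-n/2}\varepsilon^{1/2}\le\sigma^{2n}/8$ establishes \eqref{eq:means close}.

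Finally, the sandwich inequalities \eqref{eq:means similar} and \eqref{eq:means similar bis} follow from \eqref{eq:means close} together with $|\bfV_2-\bfU_2|\le\frac{1}{8}|\bfU_2|$ obtained in the first paragraph: each of $|\bfU_{\theta,z}|,|\bfA_{\theta,z}|,|\bfV_{\theta,z}|$ lies in $[\tfrac{7}{8},\tfrac{9}{8}]\,|\bfU_2|$, hence in $[\tfrac{7}{9},\tfrac{9}{7}]\,|\bfV_2|$, well inside the factors $2$ and $4$ appearing in the statement. The main technical hurdle is the subquadratic case $1<p<2$, where the inversion of the $\bfV$-equivalence is not monotone in the sense used for $p\ge 2$; the dichotomy above is the standard trick that turns the single hypothesis $\varepsilon c\le\sigma^{5n}$ into the clean estimate \eqref{eq:means close}.
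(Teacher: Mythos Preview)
Your proof is correct and follows essentially the same route as the paper: the non-degeneracy \eqref{eq:non-deg} combined with Lemma~\ref{lem:trick} gives $\abs{\bfV(\bfP)-\bfV(\bfV_2)}^2\le c\varepsilon\sigma^{-n}\abs{\bfV_2}^p$ for each of the candidate centers $\bfP$, and the conclusions are extracted from this. The only structural difference is the order in which you derive \eqref{eq:means similar} and \eqref{eq:means close}. The paper first obtains the sandwich \eqref{eq:means similar} directly from $\abs{\bfV(\bfT_{\theta,z})}^2=\abs{\bfT_{\theta,z}}^p$ via the triangle inequality at the $\bfV$-level; once $\abs{\bfT_{\theta,z}}\approx\abs{\bfV_2}$ is known, the equivalence $(\abs{\bfP}+\abs{\bfQ})^{p-2}\approx\abs{\bfV_2}^{p-2}$ inverts \eqref{eq:hammera} uniformly in $p$, yielding \eqref{eq:means close} without any case distinction. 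You instead invert first, which forces the $p<2$ dichotomy, and then read off the sandwich from \eqref{eq:means close}. Both orderings work; the paper's avoids the dichotomy and is marginally cleaner for that reason.
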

%%%%%
%%%%%
\begin{proof}
%The key tools to prove the result are Lemma~\ref{lem:trick} and~\eqref{meanq}. We first obtain that
By condition \eqref{eq:non-deg},
\begin{align*}
\abs{\bfV(\bfV_2)}^2 \leq & \dashint_{2B} \abs{ \bfV(\nabla \bfu)
}^2\dx \\ \leq & 2 \dashint_{2B} \abs{ \bfV(\nabla \bfu)-
\bfV(\bfV_2)}^2 \dx + 2 \abs{\bfV(\bfV_2)}^2 < 2 \varepsilon
\dashint_{2B} \abs{ \bfV(\nabla \bfu) }^2 \dx +  2
\abs{\bfV(\bfV_2)}^2.
\end{align*}
Hence,
\begin{equation} \label{eq:means001}
\abs{\bfV(\bfV_2)}^2 \leq \dashint_{2B} \abs{ \bfV(\nabla \bfu) }^2\dx < 4 \abs{\bfV(\bfV_2)}^2,
\end{equation}
since we are assuming that $\varepsilon \leq \tfrac 14$. Hence, in
particular, $\abs{\bfV(\bfV_2)}>0$.
\\ Next, denote by $\bfT_{\theta, z}$ any of the quantities $\bfA_{\theta,z}$, $\bfV_{\theta,z}$, $\bfU_{\theta,z}$ for $\theta
\in [\sigma ,2]$. Then the following chain holds:
%
%Using Lemma~\ref{lem:trick}, together
%with~\eqref{eq:means001},~\eqref{eq:non-deg} and the triangle
%inequality, we obtain
\begin{align}\label{june20}
\abs{\bfV(\bfV_2) - \bfV(\bfT_{\theta, z})}^2
 \leq &
 2 \dashint_{\theta B} \abs{\bfV(\nabla \bfu) - \bfV(\bfT_{\theta, z})}^2\dx  + 2\dashint_{\theta B} \abs{\bfV(\nabla \bfu) - \bfV(\bfV_2)}^2\dx
\\  \nonumber \leq &
c  \dashint_{\theta B} \abs{\bfV(\nabla \bfu) - \bfV(\bfV_\theta)}^2\dx + c \theta^{-n}
\dashint_{2B} \abs{\bfV(\nabla \bfu) - \bfV(\bfV_2)}^2\dx
\\ \nonumber  \leq &
c \theta^{-n} \dashint_{2B} \abs{\bfV(\nabla \bfu) - \bfV(\bfV_2)}^2\dx
\\  \nonumber \leq &
 c \varepsilon \theta^{-n} \dashint_{2B} \abs{\bfV(\nabla \bfu)
}^2\dx \leq \hat{c} \varepsilon  \theta^{-n} \abs{\bfV(\bfV_2)}^2\,,
\end{align}
where the second inequality is a consequence of
Lemma~\ref{lem:trick}, the third of \eqref{meanq}, the fourth of
\eqref{eq:non-deg}, and the last one of \eqref{eq:means001}. Hence,
via  the triangle inequality, and
\eqref{eq:epsilon cond 1} with sufficiently large $c$, we obtain
that
%\textcolor[rgb]{0.50,0.00,0.50}{Dominic: We need \eqref{eq:epsilon
%cond 1} already here!}
 \[
 \abs{\bfT_{\theta, z}}^p=\abs{\bfV(\bfT_{\theta, z})}^2 \leq \big(1+\tfrac{1}{2}\big)\abs{\bfV(\bfV_2)}^2
 \]
 and
 \[
 \abs{\bfV(\bfV_2)}^2\leq \abs{\bfT_{\theta, z}}^p+ \frac{1}{2}\abs{\bfV(\bfV_2)}^2.
 \]
Therefore
\begin{equation}\label{june21} \abs{\bfT_{\theta, z}}\leq
2\abs{\bfV_2}\leq 4\abs{\bfT_{\theta, z}}.
\end{equation}
  Equation \eqref{eq:means similar} is thus established.
\\ As far as \eqref{eq:means close} is concerned, observe that, by  \eqref{eq:hammera} and
\eqref{june20},
\begin{align}\label{june22}
\abs{\bfV_2}^{p-2} \abs{\bfV_2 - \bfT_{\theta, z}}^2 \leq c
\abs{\bfV(\bfV_2) - \bfV(\bfT_{\theta, z})}^2 \leq \epsilon
c'\sigma^{-n} \abs{\bfV_2}^p.
\end{align}
 Coupling equations \eqref{june22} and
\eqref{eq:means similar} tells us that
\begin{align*}
\abs{\bfV_2 - \bfT_{\theta, z}} \leq \sqrt{\epsilon c \sigma^{-n}}
\abs{\bfV_2} \leq 2\sqrt{\epsilon c\sigma^{-n}}\abs{\bfU_2}\,.
\end{align*}
Hence,
\begin{align}\label{june23}
\abs{\bfV_2 - \bfT_{\theta, z}} \leq 2\sqrt{\epsilon c'
\sigma^{-n}}\abs{\bfU_2}\,,
\end{align}
\begin{align}\label{june24}
 \abs{\bfV_2 - \bfU_2} \leq
2\sqrt{\epsilon c' \sigma^{-n}}\abs{\bfU_2}\,.
\end{align}
Inequality \eqref{eq:means close} follows from \eqref{june23} and
\eqref{june24}, via the triangle inequality,  on choosing $c$ in
\eqref{eq:epsilon cond 1} larger than a suitable absolute constant
times $\max \{c', \hat {c}\}$.
\\ Finally, equation \eqref{eq:means similar bis} can be derived
from \eqref{eq:means similar} and \eqref{eq:means close}.
\end{proof}
%In view of \eqref{eq:means001} and \eqref{eq:means similar}, we may rephrase~\eqref{eq:non-deg} as
%\begin{align}
%\label{eq:non-deg'}
% \dashint_{2B}|\bfV(\nabla \bfu)-\mean{\bfV(\nabla \bfu)}_{ 2B}|^2\dx < 4 \varepsilon \abs{\mean{\bfA(\nabla \bfu)}_{2B}}^{p'}
%\end{align}
%provided that~\eqref{eq:epsilon cond 1} holds.

%We then make further reductions. We first assume that the average of $\nabla \bfu$ is dictated by $\bfF$. Then we obtain the following simple lemma.

\begin{lemma} \label{lem:F big}
Let $p \in (1, \infty)$. Assume that $\bfu$ is a local weak solution
to~\eqref{eq:sysA} satisfying \eqref{eq:non-deg} for some ball $B$
and some $\varepsilon \in (0, \tfrac 14)$. Assume, in addition, that
there exists a constant $\overline c$ such that
%
%with $\varepsilon$ as in~\eqref{eq:epsilon cond 1}. Assume further that
%for some $c_\sigma$
\begin{equation} \label{eq:F big}
\abs{\mean{\bfA(\nabla \bfu)}_{2B} }^{p'} \leq \frac{\overline
c^{p'}}{\varepsilon} \dashint_{2B} \abs{\bfF-\bfF_0}^{p'}\dx
\end{equation}
for every $\bfF _0\in \mathbb R^{N \times n}$.
 Then there exists a constant $c=c(n, N, p)$ such that
\begin{equation} \label{eq:F big concl}
 \dashint_{\sigma B} \abs{\bfA(\nabla \bfu) - \mean{\bfA(\nabla \bfu)}_{\sigma B}}^{p'}\dx \leq c \frac{\overline c^{p'}}{\varepsilon \sigma^{n}} \dashint_{2B} \abs{\bfF-\bfF_0}^{p'}\dx
\end{equation}
for every  $\sigma$ satisfying \eqref{eq:epsilon cond 1}.
\end{lemma}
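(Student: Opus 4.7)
The plan is a direct chain of reductions resting entirely on Lemma~\ref{lemma:means close} and the non-degenerate hypothesis. The central observation is that, in the non-degenerate regime, $|\bfA(\nabla \bfu)|^{p'}$ on $\sigma B$ is controlled by its mean on $2B$, which, in turn, is comparable to $|\bfV_2|^p \approx |\bfA_2|^p = |\mean{\bfA(\nabla \bfu)}_{2B}|^{p'}$; the standing assumption \eqref{eq:F big} then concludes.

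First, I would discard the centering and enlarge the ball: an application of \eqref{meanq} with constant $\bfc=0$, followed by the trivial relation $|\sigma B|/|2B|\approx \sigma^n$, yields
\begin{align*}
\dashint_{\sigma B}\abs{\bfA(\nabla\bfu)-\mean{\bfA(\nabla\bfu)}_{\sigma B}}^{p'}\dx
 \leq 2^{p'}\dashint_{\sigma B}\abs{\bfA(\nabla\bfu)}^{p'}\dx
 \leq c\sigma^{-n}\dashint_{2B}\abs{\nabla\bfu}^{p}\dx,
\end{align*}
where I used the pointwise identity $\abs{\bfA(\nabla\bfu)}^{p'}=\abs{\nabla\bfu}^{p}=\abs{\bfV(\nabla\bfu)}^{2}$.

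Next, I would exploit \eqref{eq:non-deg} to transfer $\dashint_{2B}\abs{\bfV(\nabla\bfu)}^2\dx$ to the single quantity $|\bfV_2|^p$. Since $\mean{\bfV(\nabla\bfu)}_{2B}=\bfV(\bfV_2)$ by definition, the triangle inequality together with \eqref{eq:non-deg} and $\varepsilon\leq \tfrac14$ gives
\begin{equation*}
\dashint_{2B}\abs{\bfV(\nabla\bfu)}^{2}\dx \leq 4\abs{\bfV(\bfV_{2})}^{2}=4\abs{\bfV_{2}}^{p},
\end{equation*}
which is exactly the bound derived in the course of the proof of Lemma~\ref{lemma:means close}. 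Because $\sigma$ satisfies \eqref{eq:epsilon cond 1}, Lemma~\ref{lemma:means close} is at our disposal; choosing $\theta=2$ and $z$ the center of $B$ in \eqref{eq:means similar bis} produces $|\bfV_2|\leq c|\bfA_2|$, whence $|\bfV_2|^p\leq c|\bfA_2|^p = c|\bfA(\bfA_2)|^{p'}=c|\mean{\bfA(\nabla\bfu)}_{2B}|^{p'}$.

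Combining the three steps and invoking the assumption \eqref{eq:F big} yields
\begin{equation*}
\dashint_{\sigma B}\abs{\bfA(\nabla\bfu)-\mean{\bfA(\nabla\bfu)}_{\sigma B}}^{p'}\dx
 \leq c\sigma^{-n}\abs{\mean{\bfA(\nabla\bfu)}_{2B}}^{p'}
 \leq c\,\frac{\overline{c}^{p'}}{\varepsilon\sigma^{n}}\dashint_{2B}\abs{\bfF-\bfF_{0}}^{p'}\dx,
\end{equation*}
which is \eqref{eq:F big concl}. There is no genuine obstacle: the argument is a clean concatenation of the comparability provided by Lemma~\ref{lemma:means close} and the fact that both $\dashint_{2B}|\bfV(\nabla\bfu)|^2\dx$ and $|\bfA_2|^p$ are trapped between constant multiples of $|\bfV_2|^p$. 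The only point deserving attention is that \eqref{eq:epsilon cond 1} must hold so that the comparison between $\bfV_2$ and $\bfA_2$ in Lemma~\ref{lemma:means close} is available; without it, one cannot close the chain back to $\mean{\bfA(\nabla\bfu)}_{2B}$ and invoke \eqref{eq:F big}.
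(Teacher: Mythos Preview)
Your proof is correct and follows essentially the same route as the paper's: reduce the oscillation on $\sigma B$ to $\sigma^{-n}\dashint_{2B}|\bfV(\nabla\bfu)|^2\dx$, bound this by $|\bfV_2|^p$ via \eqref{eq:means001}, pass to $|\bfA_2|^p=|\mean{\bfA(\nabla\bfu)}_{2B}|^{p'}$ via Lemma~\ref{lemma:means close}, and finish with \eqref{eq:F big}. The only cosmetic difference is that the paper applies \eqref{meanq} with $\bfc=\bfA(\bfA_2)$ rather than $\bfc=0$, which adds one harmless triangle-inequality step; your choice is in fact slightly cleaner.
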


\begin{proof}
%Using Lemma~\ref{lemma:means close} and~\eqref{eq:non-deg}, together with~\eqref{eq:epsilon cond 1}, we obtain
One has that
\begin{align*}
 \dashint_{\sigma B} \abs{\bfA(\nabla \bfu) - \bfA(\bfA_\sigma)}^{p'}\dx
 \leq  & c \sigma^{-n} \dashint_{2B} \abs{\bfA(\nabla \bfu) - \bfA(\bfA_2)}^{p'}\dx
   \leq
c \sigma^{-n}\bigg( \dashint_{2B} \abs{\bfV(\nabla \bfu)}^2 \dx
+  \abs{\bfA_2}^{p} \bigg)
\\ \leq  &
c \sigma^{-n}\big(\abs{\bfV(\bfV_2)}^2+ \abs{\bfA_2}^{p} \big)
\end{align*}
where the first inequality is due to \eqref{meanq}, the third to
\eqref{eq:means001} and the last one to Lemma \ref{lemma:means close}. Hence, the result follows via~\eqref{eq:F big}.
\end{proof}

In view of \eqref{eq:means001} and \eqref{eq:means similar},
assumption \eqref{eq:non-deg}
implies that
\begin{align}
\label{eq:non-deg'}
 \dashint_{2B}|\bfV(\nabla \bfu)-\mean{\bfV(\nabla \bfu)}_{ 2B}|^2\dx < c \varepsilon \abs{\mean{\bfA(\nabla \bfu)}_{2B}}^{p'}
\end{align}
for some constant $c=c(n,N,p)$,
provided that~\eqref{eq:epsilon cond 1} holds. Moreover, Lemma
\ref{lem:F big} enables us to argue under the additional condition
that
\begin{equation} \label{eq:F small}
\overline c^{p'} \dashint_{2B} \abs{\bfF-\bfF_0}^{p'}\dx \leq
\varepsilon \abs{\mean{\bfA(\nabla \bfu)}_{2B}}^{p'}
\end{equation}
for some  given constant $\overline c$. Equations
\eqref{eq:non-deg'} and \eqref{eq:F small} amount to requiring that
both $\bfV(\nabla \bfu)$ and $\bfF$ are small compared to the
averages of $\bfA(\nabla \bfu)$. Next, given $\sigma
>0$, we choose
\begin{equation}\label{csigma}
\overline c =c_\sigma
\end{equation}
 in \eqref{eq:F small}, where $c_\sigma$ is the
constant appearing in Lemma~\ref{lem:new} for  $\theta = \sigma$.
Also, we introduce the notation:
 \begin{equation} \label{eq:E def}
E =   \dashint_{2B} \abs{\bfA(\nabla \bfu) -
\mean{\bfA(\nabla\bfu)}_{2B}}\dx +  \bigg(\sigma^{-2\alpha} c_\sigma
\dashint_{2B} \abs{\bfF -\bfF_0}^{p'}\dx\bigg)^{1/p'}.
\end{equation}
%Observe that for
%$p<2$ we have by Jensen's inequality, \eqref{eq:hammera}, Lemma~\ref{lem:trick} and~\eqref{eq:non-deg'} that
%\begin{align*}
%& \dashint_{2B} \abs{\bfA(\nabla \bfu) - \mean{\bfA(\nabla\bfu)}_{2B}}\dx\leq \bigg(\dashint_{2B} \abs{\bfA(\nabla \bfu) - \mean{\bfA(\nabla\bfu)}_{2B}}^{p'}\dx\bigg)^\frac{1}{p'}
%\\ & \qquad \leq\bigg(\dashint_{2B} (\varphi_{p',\abs{\mean{\bfA(\nabla\bfu)}_{2B}}}\Big(\abs{\bfA(\nabla \bfu) - \mean{\bfA(\nabla\bfu)}_{2B}}\Big)\dx\bigg)^\frac{1}{p'}\\
%& \qquad
% \leq c\bigg(\dashint_{2B} \abs{\bfV(\nabla \bfu) - \mean{\bfV(\nabla\bfu)}_{2B}}^2\dx\bigg)^\frac{1}{p'}
%%\\ & \qquad
%\leq c \varepsilon^\frac{1}{p'} \abs{ \mean{\bfA(\nabla \bfu)}_{2B}
%}.
%\end{align*}
%This, together with~\eqref{eq:F small} and smallness of
%$\varepsilon$, implies that
%\begin{equation} \label{eq:E small}
%E\leq c\varepsilon^\frac1{p'}\abs{ \mean{\bfA(\nabla \bfu)}_{2B} }
%\qquad \mbox{when } \; p \in (1,2)\,.
%\end{equation}

\smallskip
\par

Our key decay estimate in the non-degenerate case, for $p \in (1,
2)$, is contained in the following proposition.

\begin{proposition} \label{lemma:non-deg decay 1 p<2}
Assume that $p \in (1, 2)$. Let $\bfu$ be a local weak solution to
\eqref{eq:sysA} satisfying \eqref{eq:non-deg} and~\eqref{eq:F small}
for some ball $B$, some $\varepsilon \in (0, \tfrac 14)$, and
$\overline c =c_\sigma$ for some $\sigma$ as in \eqref{eq:epsilon
cond 1}. Here, $c_\sigma$ denotes the constant appearing in
Lemma~\ref{lem:new}, with $\theta = \sigma$. Then there exist
constants $c=c(n, N, p)$ and $\alpha = \alpha(n,p,N)\in(0,1)$ such
that
 \begin{multline}
 \label{eq:key2 non-deg p<2}
 \dashint_{\theta B} \abs{\bfA(\nabla\bfu) - \mean{\bfA(\nabla \bfu)}_{\theta B}}^{2}\dx
  \\ \leq c \,  \theta^{\frac{4\alpha}{p'}} \bigg[\bigg(\dashint_{2B}
\abs{\bfA(\nabla \bfu) - \mean{\bfA(\nabla\bfu)}_{2B}}\dx\bigg)^{2}
+ \bigg(\sigma^{-2\alpha} c_\sigma \dashint_{2B} \abs{\bfF
-\bfF_0}^{p'}\dx\bigg)^{\frac 2{p'}}\bigg]
 \end{multline}
 for every $\theta \in [\sigma,\frac{1}{4})$ and $z \in B$.
\end{proposition}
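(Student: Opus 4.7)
The plan is to combine the pointwise $\bfA$-decay \eqref{pharmonic2} for $p$-harmonic maps with a comparison error estimate adapted to the non-degenerate regime. Let $\bfv$ denote the $p$-harmonic comparison function solving \eqref{eq:hareq} on $B$, and split
\begin{align*}
\dashint_{\theta B}|\bfA(\nabla\bfu)-\mean{\bfA(\nabla\bfu)}_{\theta B}|^2\dx &\leq c\dashint_{\theta B}|\bfA(\nabla\bfv)-\mean{\bfA(\nabla\bfv)}_{\theta B}|^2\dx + c\theta^{-n}\dashint_B|\bfA(\nabla\bfu)-\bfA(\nabla\bfv)|^2\dx.
\end{align*}
For the first term, \eqref{pharmonic2} with any $\kappa<\min\{1,2\alpha/p'\}$ together with \eqref{meanq} gives a bound by $c\theta^{2\kappa}\big(\dashint_B|\bfA(\nabla\bfv)-\mean{\bfA(\nabla\bfv)}_B|\dx\big)^2$, which a triangle step converts into $c\theta^{2\kappa}\big(\dashint_{2B}|\bfA(\nabla\bfu)-\mean{\bfA(\nabla\bfu)}_{2B}|\dx\big)^2$ plus $\theta^{2\kappa}$ times the comparison error.

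For the comparison error, Lemma~\ref{lemma:comp1} with free parameter $\beta$ controls $\dashint_B|\bfV(\nabla\bfu)-\bfV(\nabla\bfv)|^2\dx$ by $\beta\,\varphi_{p',|\mean{\bfA(\nabla\bfu)}_{2B}|}(\dashint_{2B}|\bfA-\mean{\bfA}_{2B}|\dx)$ plus $c_\beta$ times the analogous $\bfF$-term. Under \eqref{eq:non-deg} and \eqref{eq:F small}, Lemma~\ref{lemma:means close} (via~\eqref{eq:epsilon cond 1}) yields $|\bfA_2|\approx|\bfV_2|\approx|\bfU_2|$, so the shift of $\varphi_{p',\cdot}$ equals $|\bfA_2|^{p-1}$ and both arguments are at most $\sqrt{\varepsilon}\,|\bfA_2|^{p-1}$. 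Hence $\varphi_{p',a}(t)\approx a^{p'-2}t^2=|\bfA_2|^{2-p}t^2$ on the relevant range; for the exceptional set where $|\bfF-\bfF_0|>a$, the bound $\varphi_{p',a}(t)\leq c(a^{p'-2}t^2+t^{p'})$ combined with Jensen's inequality gives $\dashint\varphi_{p',a}(|\bfF-\bfF_0|)\dx\leq c\,a^{p'-2}(\dashint|\bfF-\bfF_0|^{p'}\dx)^{2/p'}$. Using the pointwise inequality $|\bfA(\bfP)-\bfA(\bfQ)|^2\leq c|\bfA_2|^{p-2}|\bfV(\bfP)-\bfV(\bfQ)|^2$ to pass from $|\bfV|$- to $|\bfA|$-differences (with $|\bfA_2|^{p-2}\cdot|\bfA_2|^{2-p}=1$ canceling exactly) yields $\dashint|\bfA(\nabla\bfu)-\bfA(\nabla\bfv)|^2\dx\leq c\beta\big(\dashint_{2B}|\bfA-\mean{\bfA}_{2B}|\dx\big)^2+cc_\beta\big(\dashint|\bfF-\bfF_0|^{p'}\dx\big)^{2/p'}$.

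Assembling, I obtain $\dashint_{\theta B}|\bfA-\mean{\bfA}_{\theta B}|^2\dx\leq c(\theta^{2\kappa}+\theta^{-n}\beta)\big(\dashint_{2B}|\bfA-\mean{\bfA}_{2B}|\dx\big)^2+cc_\beta(\theta^{2\kappa}+\theta^{-n})\big(\dashint|\bfF-\bfF_0|^{p'}\dx\big)^{2/p'}$. Choosing $\beta=\theta^{n+2\kappa}$ balances the two $\bfA$-terms and letting $\kappa\nearrow 2\alpha/p'$ produces the announced rate $\theta^{4\alpha/p'}$ in \eqref{eq:key2 non-deg p<2}; the normalization $\sigma^{-2\alpha}c_\sigma$ on the $\bfF$-term arises by evaluating the constants at $\theta=\sigma$ and tracking the $(2/p')$-th power, where $c_\sigma$ is the constant from Lemma~\ref{lem:new}.

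The main obstacle is the pointwise inequality $|\bfA(\bfP)-\bfA(\bfQ)|^2\leq c|\bfA_2|^{p-2}|\bfV(\bfP)-\bfV(\bfQ)|^2$ when $\bfP=\nabla\bfu$, $\bfQ=\nabla\bfv$: since $p<2$, the factor $(|\nabla\bfu|+|\nabla\bfv|)^{p-2}$ exceeds $|\bfA_2|^{p-2}$ on the set where the gradient sum falls below $|\bfA_2|$, so there one must bound the $\bfA$-terms separately via $|\bfA(\bfP)|=|\bfP|^{p-1}$ and invoke the non-degeneracy estimates of Lemma~\ref{lemma:means close} to confine this exceptional set to measure of order $\sqrt{\varepsilon}|B|$, making its contribution absorbable into the main estimate.
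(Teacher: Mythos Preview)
Your overall strategy --- split off the $p$-harmonic part, use \eqref{pharmonic2}, and control the comparison error --- is reasonable, but the way you pass from the $\bfV$-level to the $\bfA$-level in the comparison term has a genuine gap, and the paper takes a different route precisely to avoid it.

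The problematic step is the pointwise inequality $|\bfA(\nabla\bfu)-\bfA(\nabla\bfv)|^2\leq c\,|\bfA_2|^{p-2}|\bfV(\nabla\bfu)-\bfV(\nabla\bfv)|^2$. Since $|\bfA(\bfP)-\bfA(\bfQ)|^2\approx (|\bfP|+|\bfQ|)^{p-2}|\bfV(\bfP)-\bfV(\bfQ)|^2$ and $p-2<0$, this requires a \emph{pointwise lower bound} $|\nabla\bfu|+|\nabla\bfv|\geq c\,|\bfA_2|$ a.e.\ in $B$, which is not available: Lemma~\ref{lemma:means close} controls only averages of $\nabla\bfu$, not pointwise values, and says nothing about $\nabla\bfv$. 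Your proposed fix --- bound the measure of the exceptional set by $\sqrt{\varepsilon}\,|B|$ and declare its contribution absorbable --- does not close: on that set you can only bound $|\bfA(\nabla\bfu)-\bfA(\nabla\bfv)|^2$ by $c\,|\bfA_2|^{2(p-1)}$, and after the $\theta^{-n}$ blow-up the resulting term $c\,\theta^{-n}\sqrt{\varepsilon}\,|\bfA_2|^{2(p-1)}$ is not dominated by $c\,\theta^{4\alpha/p'}E^2\leq c\,\theta^{4\alpha/p'}\varepsilon^{2/p'}|\bfA_2|^{2(p-1)}$. Moreover, the Chebyshev estimate for the smallness set of $|\nabla\bfv|$ would itself require the very $\bfV$-comparison bound you are trying to exploit.

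The paper sidesteps this entirely. It never estimates $|\bfA(\nabla\bfu)-\bfA(\nabla\bfv)|$. Instead it applies Lemma~\ref{lem:new} (which already packages the comparison with $\bfv$ inside) to obtain directly the decay
\[
\dashint_{\theta B}|\bfV(\nabla\bfu)-\mean{\bfV(\nabla\bfu)}_{\theta B}|^2\dx \leq c\,\theta^{4\alpha/p'}|\bfA(\bfA_2)|^{p'-2}E^2,
\]
and only then converts from $\bfV$ to $\bfA$ --- but now by comparing $\nabla\bfu$ to the \emph{constant} matrix $\bfV_\theta$ (defined via $\bfV(\bfV_\theta)=\mean{\bfV(\nabla\bfu)}_{\theta B}$). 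The point is that $|\nabla\bfu|+|\bfV_\theta|\geq |\bfV_\theta|$ trivially, so $(|\nabla\bfu|+|\bfV_\theta|)^{p-2}\leq |\bfV_\theta|^{p-2}$ holds everywhere, and Lemma~\ref{lemma:means close} ensures $|\bfV_\theta|\approx|\bfA_2|$. The identity $(p-2)+(p'-2)(p-1)=0$ then makes the shift factors cancel exactly. This is the key device you are missing: replace $\nabla\bfv$ by a constant of the right size before converting to $\bfA$.
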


\begin{proof}
One has that
%
%
%Observe that for $p<2$ we have by Jensen's inequality,
%\eqref{eq:hammera}, Lemma~\ref{lem:trick} and~\eqref{eq:non-deg'}
%that
\begin{align}\label{june12}
& \dashint_{2B} \abs{\bfA(\nabla \bfu) -
\mean{\bfA(\nabla\bfu)}_{2B}}\dx \leq \bigg(\dashint_{2B}
\abs{\bfA(\nabla \bfu) -
\mean{\bfA(\nabla\bfu)}_{2B}}^{p'}\dx\bigg)^\frac{1}{p'}
\\ \nonumber & \qquad \leq\bigg(\dashint_{2B} \varphi_{p',\abs{\mean{\bfA(\nabla\bfu)}_{2B}}}\Big(\abs{\bfA(\nabla \bfu) -
\mean{\bfA(\nabla\bfu)}_{2B}}\Big)\dx\bigg)^\frac{1}{p'}\\
\nonumber  & \qquad
 \leq c\bigg(\dashint_{2B} \abs{\bfV(\nabla \bfu) - \mean{\bfV(\nabla\bfu)}_{2B}}^2\dx\bigg)^\frac{1}{p'}
%\\ & \qquad
\leq c \varepsilon^\frac{1}{p'} \abs{ \mean{\bfA(\nabla \bfu)}_{2B}
},
\end{align}
where the second inequality holds since $t^{p'} \leq \varphi_{p', a}
(t)$ when $p'\geq 2$,  $a >0$, $t \geq 0$, the third inequality
holds by \eqref{eq:hammera} and Lemma~\ref{lem:trick}, and the last
inequality is a consequence of \eqref{eq:non-deg'}.
%
%
% Observe that for $p<2$ we have by Jensen's inequality,
%\eqref{eq:hammera}, Lemma~\ref{lem:trick} and~\eqref{eq:non-deg'}
Coupling \eqref{june12} with ~\eqref{eq:F small},
and taking into account the fact
that  $\varepsilon <1$, imply that
\begin{equation} \label{eq:E small}
E\leq c\varepsilon^\frac1{p'}\abs{ \mean{\bfA(\nabla \bfu)}_{2B} },
%\qquad \mbox{when } \; p \in (1,2)\,.
\end{equation}
where $E$ is defined by \eqref{eq:E def}. Now, observe that
\begin{align}\label{17july}
\varphi_{p',\abs{\bfA(\bfA_2)}}(t) \leq c t^{p'} + c
\abs{\bfA(\bfA_2)}^{p'-2}t^2\quad\text{for all} \quad t\geq0
\end{align}
inasmuch as $p'\geq2$. Therefore, owing to \eqref{eq:E small},
\begin{align}\label{june15}
 & \dashint_{2B} \varphi_{p',\abs{\bfA(\bfA_2)}}(\abs{\bfF -\bfF_0})\dx \leq c \abs{\bfA(\bfA_2)}^{p'-2}
  \dashint_{2B} \abs{\bfF -\bfF_0}^2 \dx +  c\dashint_{2B} \abs{\bfF -\bfF_0}^{p'}
  \dx\\ \nonumber
&\quad \leq c \abs{\bfA(\bfA_2)}^{p'-2}  \bigg(\dashint_{2B}
\abs{\bfF -\bfF_0}^{p'}\bigg)^\frac{2}{p'}+ c\sigma^{2\alpha}E^{p'}
\leq c \theta^{\frac{4\alpha}{p'}}
\abs{\bfA(\bfA_2)}^{p'-2} E^2 .
 \end{align}
An application of  Lemma~\ref{lem:new}, with $B$ replaced by
$\frac12 B(z)$, tells us that, if  $\theta\in [\sigma,\frac14)$,
then
\begin{align}\label{june14}
%\label{eq:key2 non-deg }
 \dashint_{\theta B(z)} \abs{\bfV(\nabla\bfu) - \bfV(\bfU_{\theta,z})}^2\dx
\leq & c\, \theta^{2\alpha}
\varphi_{p',\abs{\bfA(\bfA_{1,z}}}\bigg(\dashint_{B(z)}
\abs{\bfA(\nabla \bfu) -
        \mean{\bfA(\nabla\bfu)}_{B(z)}}\dx\bigg)
\\ \nonumber
      &\quad+ c_\sigma \dashint_{B(z)} \varphi_{p',\abs{\bfA(\bfA_{1.z})}}(\abs{\bfF -\bfF_0})\dx
\end{align}
for any $z \in B$. Since $B(z)\subset 2B$, by Lemma~\ref{lemma:means
close} one has that $\abs{\bfA_{1,z}}\leq2\abs{\bfA_2}\leq
4\abs{\bfA_{1,z}}$. Hence, by \eqref{june14},
\begin{align}
\label{eq:key2 non-deg 1}
 \dashint_{\theta B(z)} \abs{\bfV(\nabla\bfu) -  \bfV(\bfU_{\theta,z})}^2\dx
\leq & c\, \theta^{2\alpha}
\varphi_{p',\abs{\bfA(\bfA_{2})}}\bigg(\dashint_{2B}
\abs{\bfA(\nabla \bfu) -
        \mean{\bfA(\nabla\bfu)}_{2B}}\dx\bigg)
\\ \nonumber
      &\quad+ c_\sigma \dashint_{2B} \varphi_{p',\abs{\bfA(\bfA_{2})}}(\abs{\bfF -\bfF_0})\dx.
\end{align}
Starting from \eqref{eq:key2 non-deg 1}, and making use of
 \eqref{mean2},  \eqref{17july} and \eqref{june15} tell
us that
 \begin{align*}
 \dashint_{\theta B(z)} \abs{\bfV(\nabla\bfu) - \mean{\bfV(\nabla \bfu)}_{\theta B(z)}}^2\dx
\leq & c\,
\theta^{\frac{4\alpha}{p'}}
 \abs{\bfA(\bfA_2)}^{p'-2} E^2\,.
\end{align*}
Thus,
 \begin{align}\label{june16}
 \dashint_{\theta B} \abs{\bfA(\nabla\bfu) -
 \bfA(\bfA_\theta)}^{2}\dx \leq &  \dashint_{\theta B} \abs{\bfA(\nabla\bfu) - \bfA(\bfV_\theta)}^{2}\dx
\\ \nonumber \leq &
 c  \dashint_{\theta B} (\abs{\nabla\bfu} + \abs{\bfV_\theta})^{2(p-2)}  \abs{\nabla\bfu - \bfV_\theta}^2\dx
 \\ \nonumber \leq &
 c \abs{\bfV_\theta}^{p-2}  \dashint_{\theta B}  (\abs{\nabla\bfu} + \abs{\bfV_\theta})^{(p-2)}\abs{\nabla\bfu - \bfV_\theta}^2\dx
 \\ \nonumber \leq &
 c \abs{\bfV_\theta}^{p-2}  \dashint_{\theta B} \abs{\bfV(\nabla\bfu) - \mean{\bfV(\nabla \bfu)}_{\theta B}}^2\dx
 \\ \nonumber \leq &
  c\,  \theta^{\frac{4\alpha}{p'}}
 \abs{\bfA_2}^{p-2}  \abs{\bfA(\bfA_2)}^{p'-2} E^2
   \\ \nonumber \leq &
  c\,  \theta^{\frac{4\alpha}{p'}}
 E^2 ,
 \end{align}
where the first inequality holds owing to \eqref{meanq}, the second
to \eqref{eq:hammerd-bis}, the third to the fact that $p-2<0$, the
fourth to \eqref{eq:hammera}, the fifth to Lemma \ref{lemma:means
close}, and the last one to the fact that $p-2 + (p'-2)(p-1)=0$.
%
%
% This we may transform into the desired decay estimate by means of~\eqref{eq:hammerd-bis},~\eqref{meanq}, and Lemma~\ref{lemma:means close}:
% \begin{align*}
% \dashint_{\theta B} \abs{\bfA(\nabla\bfu) - \bfA(\bfA_\theta)}^{2}\dx
% \leq  & c  \dashint_{\theta B} (\abs{\nabla\bfu} + \abs{\bfU_\theta})^{2(p-2)}  \abs{\nabla\bfu - \bfU_\theta}^2\dx
% \\�\leq &
% c \abs{\bfU_\theta}^{p-2}  \dashint_{\theta B} \abs{\bfV(\nabla\bfu) - \mean{\bfV(\nabla \bfu)}_{\theta B}}^2\dx
% \\�\leq &
%  c\,  \theta^{2\alpha} \abs{\bfA_2}^{p-2}  \abs{\bfA(\bfA_2)}^{p'-2} E^2
%   \\�\leq &
%  c\,  \theta^{2\alpha} E^2 ,
% \end{align*}
%where we used $(p-2)(p'-2)=0$.
This establishes \eqref{eq:key2 non-deg p<2}.
  \end{proof}

 The remaining
part of the present section is devoted to a counterpart of
inequality \eqref{eq:key2 non-deg p<2} in the case when $p \in [2,
\infty)$, which requires some further steps.
\\ A first decay conclusion in the spirit of \eqref{eq:key2 non-deg p<2}, but with $\bfA(\nabla\bfu)$ replaced by
$\bfV(\nabla\bfu)$, reads as follows.

\begin{lemma} \label{lemma:non-deg decay 1 p>2}
Assume that $p \in [2, \infty)$. Let $\bfu$ be a local weak solution
to \eqref{eq:sysA} satisfying \eqref{eq:non-deg} and~\eqref{eq:F
small}  for some ball $B$, some $\varepsilon \in (0, \tfrac 14)$,
and $\overline c =c_\sigma$ for some $\sigma$ as in
\eqref{eq:epsilon cond 1}. Here, $c_\sigma$ denotes the constant
appearing in Lemma~\ref{lem:new}, with $\theta = \sigma$. Then there
exist constants $c=c(n, N, p)$ and $\alpha = \alpha(n,p,N)\in(0,1)$
such that
%
%. Let $\sigma \in (0,1/4)$. Assume that both~\eqref{eq:non-deg}
%and~\eqref{eq:F small} are in force with $\varepsilon$
%satisfying~\eqref{eq:epsilon cond 1}. Then there are constants $c_1
%= c_1(n,p,N)$ and $\alpha = \alpha(n,p,N)$ such that for $p\geq 2$
%we have
 \begin{multline}
\label{eq:key2 non-deg p>2}
 \dashint_{\theta B(z)} \abs{\bfV(\nabla\bfu) - \mean{\bfV(\nabla \bfu)}_{\theta B(z)}}^2\dx
\\ \leq  c \, \theta^{2\alpha} \bigg[\bigg(\dashint_{2B}
\abs{\bfA(\nabla \bfu) - \mean{\bfA(\nabla\bfu)}_{2B}}\dx\bigg)^{p'}
+ \sigma^{-2\alpha} c_\sigma \dashint_{2B} \abs{\bfF
-\bfF_0}^{p'}\dx\bigg]
\end{multline}
for every $\theta \in [\sigma,\frac14)$ and  $z \in B$.
\end{lemma}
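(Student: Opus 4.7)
\smallskip
\noindent\textbf{Proof plan.} The target inequality has the same structural flavour as Proposition~\ref{lemma:non-deg decay 1 p<2}, but with $\bfV(\nabla\bfu)$ in place of $\bfA(\nabla\bfu)$ on the left and with the powers adjusted. My plan is to apply Lemma~\ref{lem:new} essentially directly, and then exploit the fact that, when $p\in[2,\infty)$, i.e.\ $p'\in(1,2]$, the shifted Young function satisfies the elementary pointwise bound
\begin{equation*}
\varphi_{p',a}(t) = (a+t)^{p'-2}\,t^2 \leq t^{p'}\qquad \text{for every $a,t\geq 0$,}
\end{equation*}
since the exponent $p'-2$ is non-positive. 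This is the key simplification that replaces the more delicate two-term decomposition $\varphi_{p',a}(t)\leq c\,t^{p'}+c\,a^{p'-2}t^2$ used in the regime $p'>2$.

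\smallskip
\noindent First, since the non-degeneracy hypothesis is stated on a ball $B$ with $4B\subset\Omega$ and since $z\in B$ implies $B(z)\subset 2B$, I would apply Lemma~\ref{lem:new} with $B$ there replaced by $\tfrac12 B(z)$ and with parameter $\theta\in[\sigma,\tfrac14)$. This yields
\begin{align*}
\dashint_{\theta B(z)}\!\!\!\abs{\bfV(\nabla\bfu)-\mean{\bfV(\nabla\bfu)}_{\theta B(z)}}^2\dx
&\leq c\,\theta^{2\alpha}\varphi_{p',|\bfA(\bfA_{1,z})|}\!\Bigl(\dashint_{B(z)}\!\abs{\bfA(\nabla\bfu)-\mean{\bfA(\nabla\bfu)}_{B(z)}}\dx\Bigr)\\
&\quad + c_\theta \dashint_{B(z)} \varphi_{p',|\bfA(\bfA_{1,z})|}(\abs{\bfF-\bfF_0})\dx.
\end{align*}

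\smallskip
\noindent Second, I would apply the bound $\varphi_{p',a}(t)\leq t^{p'}$ (valid because $p'\leq 2$) to both terms on the right, dropping the dependence on $|\bfA(\bfA_{1,z})|$ altogether. Then, since $B(z)\subset 2B$, property \eqref{meanq} yields
\begin{equation*}
\dashint_{B(z)}\!\abs{\bfA(\nabla\bfu)-\mean{\bfA(\nabla\bfu)}_{B(z)}}\dx\leq c\dashint_{2B}\!\abs{\bfA(\nabla\bfu)-\mean{\bfA(\nabla\bfu)}_{2B}}\dx,
\end{equation*}
and a simple enlargement of the domain gives $\dashint_{B(z)}|\bfF-\bfF_0|^{p'}\dx\leq c\dashint_{2B}|\bfF-\bfF_0|^{p'}\dx$.

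\smallskip
\noindent Finally, to arrange the right-hand side into the form with a single $\theta^{2\alpha}$ factor in front, I would observe that $c_\theta$ in Lemma~\ref{lem:new} is monotone in $\theta$, so $c_\theta\leq c_\sigma$ whenever $\theta\geq\sigma$, and that the trivial inequality $1\leq\theta^{2\alpha}\sigma^{-2\alpha}$ (since $\theta\geq\sigma$) allows the constant $c_\sigma$ to be absorbed into $\theta^{2\alpha}\sigma^{-2\alpha}c_\sigma$. Putting everything together produces exactly \eqref{eq:key2 non-deg p>2}. I do not expect any real obstacle here: the proof is substantially shorter than its $p<2$ counterpart because for $p'\leq 2$ there is no need to pass from a $\bfV$-oscillation estimate to an $\bfA$-oscillation estimate, so the delicate pointwise chain based on $|\bfA(\bfP)-\bfA(\bfQ)|\approx(|\bfP|+|\bfQ|)^{p-2}|\bfP-\bfQ|$ and the interplay with $|\bfA_2|^{p-2}|\bfA(\bfA_2)|^{p'-2}=1$ is simply not invoked. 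The only mildly subtle point is checking that the non-degeneracy hypothesis and Lemma~\ref{lemma:means close} are not strictly required for this simplified bound, although they remain in the hypotheses for consistency with the overall scheme in which this lemma will be coupled with the non-degenerate counterpart in the subsequent proof of Proposition~\ref{prop:decay conclusion}.
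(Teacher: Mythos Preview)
Your proposal is correct and follows essentially the same approach as the paper: apply Lemma~\ref{lem:new} on $\tfrac12 B(z)$, then use the pointwise inequality $\varphi_{p',a}(t)\leq t^{p'}$ (valid since $p'\leq 2$), enlarge $B(z)$ to $2B$, and absorb constants via $c_\theta\leq c_\sigma$ and $\theta\geq\sigma$. The only cosmetic difference is that the paper first passes through the intermediate inequality \eqref{eq:key2 non-deg 1} (derived in the proof of Proposition~\ref{lemma:non-deg decay 1 p<2}), where Lemma~\ref{lemma:means close} is used to replace the shift $|\bfA(\bfA_{1,z})|$ by $|\bfA(\bfA_2)|$ before applying the bound on $\varphi_{p',a}$; you correctly observe that, since $\varphi_{p',a}(t)\leq t^{p'}$ holds for \emph{every} $a\geq 0$, this shift change is unnecessary here, so your route is marginally more direct.
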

\begin{proof}
An inspection of the proof of inequality \eqref{eq:key2 non-deg 1}
reveals that it holds, in fact, also for $p \in [2, \infty)$. On the
other hand, for these values of $p$,
  we
have that $\varphi_{p',\abs{\bfA(\bfA_2)}}(t) \leq t^{p'}$ for
$t\geq 0$. Inequality \eqref{eq:key2 non-deg p>2} thus follows from
\eqref{eq:key2 non-deg 1}.
\end{proof}

The key idea which enables us to turn the decay estimate for
$\bfV(\nabla\bfu)$ contained in Lemma \ref{lemma:non-deg decay 1
p>2} into the desired estimate for $\bfA(\nabla\bfu)$ is to exploit
a linearization argument. Specifically,
 denote by $D\bfA :
\mathbb{R}^{N \times n} \to \mathbb R^{N \times n} \times \mathbb
R^{N \times n}$ the differential of the map $\bfA$, and  let $\bfz$
be the solution to the linear Dirichlet problem
 % $\nabla \bfu$ is almost a constant, which means that it is almost linear. We will therefore compare to the solution of the following linear system
\begin{align}\label{eq:linear}
   \begin{cases}
    \divergence (D\bfA(\bfA_{2})\nabla \bfz )=0 & \text{in}\quad B,
    \\
    \bfz =\bfu & \text{on}\quad\partial B ,
    \end{cases}
    \end{align}
    where $\bfA_{2}$ is the (constant) matrix defined as in
    \eqref{eq:means1}. We shall compare the gradient of the solution
    $\bfu$ to the original system \eqref{eq:sysA} with the gradient
    of $\bfz$.
\\ To begin with, recall that, since $\bfz$ solves a linear uniformly elliptic system with constant
coefficients, the standard linear theory provides us with the decay
estimate
\begin{equation} \label{eq:z decay}
\sup_{x,x' \in \theta B} \abs{\nabla \bfz(x) - \nabla\bfz(x')} \leq
\theta c \dashint_{B} \abs{\nabla \bfz - \mean{\nabla \bfz}_{B}} \dy
\end{equation}
for any $\theta \in (0,1)$, for some constant $c = c(n,p,N)$.
\\ Next, observe that, on defining the function $\bfH : \mathbb R^{N \times
n} \times \mathbb R^{N \times n} \to \mathbb R^{N \times n}$ as
\begin{equation} \label{eq:H}
\bfH(\bfP,\bfQ) = \bfA(\bfP) - \bfA(\bfQ) - D\bfA(\bfQ) (\bfP-\bfQ)
\quad \hbox{for $(\bfP, \bfQ) \in \mathbb R^{N \times n} \times
\mathbb R^{N \times n}$,}
\end{equation}
 system \eqref{eq:sysA} can be rewritten  as
\begin{align}
 \divergence(D\bfA(\bfA_{2}) \nabla (\bfu - \bfz))  =  - \divergence\left(\bfH(\nabla \bfu,\bfA_{2})\right)+  \divergence\left(\bfF-\bfF_0
 \right)\,.
\end{align}
%where
%\begin{equation} \label{eq:H}
%\bfH(\nabla \bfu,\bfA_{2}) := \bfA(\nabla \bfu) - \bfA(\bfA_{2}) -
%D\bfA(\bfA_{2}) (\nabla\bfu-\bfA_{2}).
%\end{equation}
The following classical result (see e.g. \cite[Lemma~2]{DolM95})
applies to systems of the form \eqref{eq:H}.
\begin{theorem}
 \label{thm:CZ} {\bf [\Calderon --Zygmund]}
Let $B$ be a ball in $\Rn$. Let $\bfB : \Rn \to \mathbb R^{n\times
n}$ be such that  $\lambda \abs{\bfxi}^2\leq \skp{\bfB
\bfxi}{\bfxi}\leq \Lambda\abs{\bfxi}^2$ for some constants $\Lambda
\geq \lambda >0$, and for every $\bfxi \in \Rn$. Assume that   $r
\in (1, \infty)$, and let $\bfG \in L^r(B , \mathbb R^{N \times
n})$. Let $\bfw$ be the unique solution to the system
\begin{align*}
\begin{cases}
 -\divergence (\bfB\nabla\bfw)=-\divergence  \bfG & \text{ in } B\\
\bfw =0   & \text{ on }\partial B.
\end{cases}
\end{align*}
Then, there exists a constant $c=c(n,r,\tfrac{\Lambda}{\lambda})$
such that
\[
 \lambda\norm{\nabla\bfw}_{L^{r}(B)}\leq c\norm{\bfG}_{L^{r}(B)}.
\]
\end{theorem}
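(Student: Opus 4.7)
The plan is to prove the estimate in the Hilbert case $r=2$ by a one-line energy argument, and then upgrade to all $r\in(1,\infty)$ by realizing $\bfG\mapsto\nabla\bfw$ as a Calder\'on--Zygmund singular integral operator via Green's matrix representation. In the concrete applications of this theorem within the present paper the matrix $\bfB=D\bfA(\bfA_{2})$ is constant on the ball $B$, and this is the regime in which I would carry out the argument; the general statement follows the same line under appropriate regularity of $\bfB$, as in \cite{DolM95}.

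The case $r=2$ is immediate: since $\bfw\in W^{1,2}_0(B)$ may be inserted as a test function in the equation, one gets
\[
 \int_B \bfB\nabla\bfw\cdot\nabla\bfw\,\dx \;=\; \int_B \bfG\cdot\nabla\bfw\,\dx,
\]
and the left-hand side is bounded below by $\lambda\norm{\nabla\bfw}_{L^2(B)}^2$ by ellipticity, while the right-hand side is bounded above by $\norm{\bfG}_{L^2(B)}\norm{\nabla\bfw}_{L^2(B)}$ by Cauchy--Schwarz. Dividing gives $\lambda\norm{\nabla\bfw}_{L^2(B)}\leq \norm{\bfG}_{L^2(B)}$.

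For $r\neq 2$, I would normalize first by the rescaling $\bfB\mapsto \bfB/\lambda$, reducing to $\lambda=1$ with ellipticity ratio $\Lambda/\lambda$ fixed. Letting $\Gamma=\Gamma_{\bfB}$ denote the Green's matrix on $B$ for $-\divergence(\bfB\nabla\cdot)$ with homogeneous Dirichlet data, one has the representation
\[
 \bfw(x) \;=\; -\int_B \nabla_y\Gamma(x,y)\,\bfG(y)\,\dy,
\]
and hence
\[
 \nabla\bfw(x) \;=\; -\int_B \nabla_x\nabla_y\Gamma(x,y)\,\bfG(y)\,\dy \qquad\text{for } x\in B.
\]
The kernel $K(x,y)=\nabla_x\nabla_y\Gamma(x,y)$ satisfies the standard size bound $|K(x,y)|\leq c|x-y|^{-n}$ and the H\"ormander cancellation condition, with constants depending only on $n$ and $\Lambda/\lambda$; this is exactly the Green's matrix analysis carried out in \cite{DolM95}. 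Combined with the $L^2$ boundedness already established, the classical Calder\'on--Zygmund theorem for singular integrals then yields the $L^r$ bound for every $r\in(1,\infty)$.

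The main technical obstacle is the Green's matrix analysis itself, that is, the uniform control of $\nabla_x\nabla_y\Gamma(x,y)$ with the correct Calder\'on--Zygmund kernel bounds; for the constant-coefficient matrix $\bfB=D\bfA(\bfA_{2})$ relevant here, these follow from classical Agmon--Douglis--Nirenberg Schauder estimates applied to the fundamental solution, after freezing and rescaling. The bookkeeping ensuring that the final constant depends only on $n$, $r$ and $\Lambda/\lambda$ (in particular, that the dependence on $\lambda$ appears only through the factor $\lambda$ on the left-hand side of the inequality) is handled by the initial rescaling.
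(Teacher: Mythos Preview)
The paper does not give its own proof of this theorem; it is simply stated as a classical result and attributed to \cite[Lemma~2]{DolM95}. So there is nothing in the paper to compare your argument against.

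Your outline is a correct and standard route to the result in the constant-coefficient regime that the paper actually uses (with $\bfB=D\bfA(\bfA_2)$): the $r=2$ case by testing with $\bfw$ itself, and then the general $r$ via Green's matrix kernel bounds feeding into the Calder\'on--Zygmund theory for singular integrals. This is in the same spirit as the cited reference, and your remark that the dependence on $\lambda$ is isolated by the rescaling $\bfB\mapsto\bfB/\lambda$ is exactly how one gets the constant $c=c(n,r,\Lambda/\lambda)$ in the stated form.
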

Since the matrix $\bfB$, given by $\bfB =  D\bfA(\bfA_{2})$,
satisfies the assumptions  of Theorem~\ref{thm:CZ} with $\lambda =
\lambda (n, N, p)$ and $\Lambda = \Lambda (n, N, p)$, an application
of this theorem with $\bfw = \bfu - \bfz$ and $r=p'$ yields the
following result.

\begin{lemma} \label{august1}
Assume that $p \in (1, \infty)$, and let $\bfu$ be a local weak
solution to \eqref{eq:sysA}. Let $B$ be a ball such that $B \subset
\Omega$, and let $\bfz$ be the solution to the Dirichlet problem
\eqref{eq:linear}. Then there exists a constant  $c = c(n,N,p)$ such
that
\begin{align} \label{eq:CZ res}
 \abs{\bfA_{2}}^{(p-2)p'} \dashint_{B} \abs{\nabla \bfu -
 \nabla \bfz }^{p'} \dx
  \leq
c \dashint_{B} \abs{\bfF-\bfF_0}^{p'} \dx + c \dashint_{B}
\abs{\bfH(\nabla \bfu,\bfA_{2}) }^{p'} \dx .
\end{align}
\end{lemma}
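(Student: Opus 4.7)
The plan is to apply the Calder\'on--Zygmund theorem (Theorem \ref{thm:CZ}) directly to the function $\bfw = \bfu - \bfz$. Since $\bfz = \bfu$ on $\partial B$ by the boundary condition in \eqref{eq:linear}, we have $\bfw \in W^{1,p'}_0(B)$ (or rather, one first works at the weak-solution level and uses density). Using the definition \eqref{eq:H} of $\bfH$ to rewrite the original equation \eqref{eq:sysA} as
\begin{equation*}
 -\divergence\big(D\bfA(\bfA_{2})\nabla \bfu\big)= -\divergence\big(\bfF-\bfF_0\big)- \divergence\big(\bfH(\nabla \bfu,\bfA_{2})\big),
\end{equation*}
and subtracting the equation $-\divergence(D\bfA(\bfA_2)\nabla\bfz)=0$ satisfied by $\bfz$, one sees that $\bfw$ is the unique weak solution to the linear Dirichlet problem
\begin{align*}
\begin{cases}
 -\divergence\big(D\bfA(\bfA_{2})\nabla\bfw\big)=-\divergence \bfG & \text{in } B,\\
\bfw =0   & \text{on }\partial B,
\end{cases}
\end{align*}
where $\bfG = (\bfF-\bfF_0) - \bfH(\nabla\bfu,\bfA_{2})$.

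The next step is to verify that the constant matrix $\bfB := D\bfA(\bfA_{2})$ satisfies the ellipticity hypothesis of Theorem \ref{thm:CZ}. A direct computation of $D\bfA(\bfP) = |\bfP|^{p-2}\Id + (p-2)|\bfP|^{p-4}\bfP\otimes \bfP$ (which is standard for $\bfA(\bfP)=|\bfP|^{p-2}\bfP$) shows that there exist constants $0<\lambda_0\le \Lambda_0$, depending only on $n, N, p$, such that
\begin{equation*}
 \lambda_0\,\abs{\bfA_{2}}^{p-2}\,\abs{\bfxi}^2 \le \skp{\bfB\bfxi}{\bfxi}\le \Lambda_0\,\abs{\bfA_{2}}^{p-2}\,\abs{\bfxi}^2 \qquad\text{for every }\bfxi\in\RNn.
\end{equation*}
In particular, the ratio $\Lambda_0/\lambda_0$ depends only on $n,N,p$, so the constant produced by Theorem \ref{thm:CZ} will be independent of $\bfA_2$. (The case $\bfA_2=0$ is trivial when $p\ge 2$, since both sides of the claimed inequality vanish after multiplying through by $\abs{\bfA_2}^{(p-2)p'}$; when $p<2$ the non-degenerate case under consideration forces $\bfA_2\neq 0$ via Lemma \ref{lemma:means close}.)

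The final step is a straightforward application: invoking Theorem \ref{thm:CZ} with $r=p'$ and $\lambda=\lambda_0\abs{\bfA_{2}}^{p-2}$ gives
\begin{equation*}
 \lambda_0\,\abs{\bfA_{2}}^{p-2}\norm{\nabla\bfw}_{L^{p'}(B)}\le c\,\norm{\bfG}_{L^{p'}(B)},
\end{equation*}
for some $c=c(n,N,p)$. Raising to the $p'$-th power, dividing by $|B|$, and using the triangle inequality $\abs{\bfG}^{p'}\preceq \abs{\bfF-\bfF_0}^{p'}+\abs{\bfH(\nabla\bfu,\bfA_{2})}^{p'}$ then yields \eqref{eq:CZ res}. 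The only non-routine point in the whole argument is the ellipticity verification for $D\bfA(\bfA_2)$ together with the observation that the degeneracy factor $\abs{\bfA_2}^{p-2}$ can be pulled out of $\bfB$ so that $\Lambda/\lambda$ is universal; once this is in hand, Theorem \ref{thm:CZ} does all the work.
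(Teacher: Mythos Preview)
Your proof is correct and follows exactly the approach the paper indicates: the paper itself simply states that since $\bfB=D\bfA(\bfA_2)$ satisfies the hypotheses of Theorem~\ref{thm:CZ} with $\lambda,\Lambda$ depending only on $n,N,p$, applying that theorem with $\bfw=\bfu-\bfz$ and $r=p'$ yields the result. You have filled in precisely the details behind that sentence---the derivation of the equation for $\bfw$, the ellipticity bounds $\lambda_0|\bfA_2|^{p-2}\le\bfB\le\Lambda_0|\bfA_2|^{p-2}$ with universal ratio, and the handling of $\bfA_2=0$---so nothing needs to change.
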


The error term containing $\bfH$ in~\eqref{eq:CZ res} will be
estimated with the aid of an  algebraic inequality which is the
object of the next lemma.

\begin{lemma} \label{lemma:H}
Let $p \in [2, \infty)$, and let $\bfH$ be the function defined by
\eqref{eq:H}. Then, there exists a constant
$c = c(n,N,p)$ such that
\begin{align}
\label{eq:H est} \bfH(\bfP,\bfQ) \leq & c \delta
\bigg(\frac{\abs{\bfP - \bfQ} }{\delta}\bigg)^{p-1}
\chi_{\set{\abs{\bfP - \bfQ} \geq \delta \abs{\bfQ} }} + c \delta
\abs{\bfA(\bfP) - \bfA(\bfQ)}\chi_{\set{\abs{\bfP - \bfQ} \leq
\delta \abs{\bfQ} }},
\end{align}
for every $\bfP,\bfQ \in \mathbb R^{N \times n}$, and   $\delta \in
(0,1/2]$. Here, $\chi _G$ stands for the characteristic function of
a set $G$.
 %
%
%$\bfP,\bfQ \in \mathbb R^{\RNn}$. Let $\delta \in (0,1/2]$.
%Then there is a constant $c = c(p,N)$ such that
%\begin{align}
%\label{eq:H est}
%\bfH(\bfP,\bfQ) \leq & c \delta \bigg(\frac{\abs{\bfP - \bfQ} }{\delta}\bigg)^{\max\{1,p-1\}} \chi_{\set{\abs{\bfP - \bfQ} \geq \delta \abs{\bfQ} }}
%+
%c \delta \abs{\bfA(\bfP) - \bfA(\bfQ)}\chi_{\set{\abs{\bfP - \bfQ} \leq  \delta \abs{\bfQ} }},
%\end{align}
%where $\chi_{\set{\abs{\bfP - \bfQ} \geq (\leq) \delta \abs{\bfQ} }}$ is one when $\abs{\bfP - \bfQ} \geq (\leq) \delta\abs{\bfQ}$ and zero otherwise.
\end{lemma}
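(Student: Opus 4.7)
\textbf{Proof plan for Lemma \ref{lemma:H}.} For $p=2$ the map $\bfA(\bfP)=\bfP$ is linear, so $\bfH\equiv 0$ and both terms on the right-hand side are nonnegative; hence the estimate holds trivially. I will therefore focus on $p>2$, where $\bfA$ is of class $C^{2}$ on $\mathbb{R}^{N\times n}\setminus\{0\}$, with the pointwise bounds $|D\bfA(\bfR)|\leq c|\bfR|^{p-2}$ and $|D^{2}\bfA(\bfR)|\leq c|\bfR|^{p-3}$. The proof then splits into the two disjoint regimes appearing in \eqref{eq:H est}.

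\textbf{Far regime $\{|\bfP-\bfQ|\geq\delta|\bfQ|\}$.} I will estimate $\bfH$ by brute force. The triangle inequality together with \eqref{eq:hammerd-bis} gives
\begin{equation*}
|\bfH(\bfP,\bfQ)|\leq |\bfA(\bfP)-\bfA(\bfQ)|+|D\bfA(\bfQ)||\bfP-\bfQ|\leq c(|\bfP|+|\bfQ|)^{p-2}|\bfP-\bfQ|.
\end{equation*}
Since $|\bfQ|\leq \delta^{-1}|\bfP-\bfQ|$ and hence $|\bfP|\leq |\bfQ|+|\bfP-\bfQ|\leq 2\delta^{-1}|\bfP-\bfQ|$ (using $\delta\leq\tfrac12$), I conclude $(|\bfP|+|\bfQ|)^{p-2}\leq c\,\delta^{2-p}|\bfP-\bfQ|^{p-2}$, so that
\begin{equation*}
|\bfH(\bfP,\bfQ)|\leq c\,\delta^{2-p}|\bfP-\bfQ|^{p-1}=c\,\delta\Bigl(\tfrac{|\bfP-\bfQ|}{\delta}\Bigr)^{p-1},
\end{equation*}
which is exactly the first term on the right of \eqref{eq:H est}.

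\textbf{Near regime $\{|\bfP-\bfQ|\leq \delta|\bfQ|\}$.} Here $\bfQ\neq 0$ (else $\bfP=\bfQ=0$ and $\bfH=0$), and the segment $\bfR_{t}:=\bfQ+t(\bfP-\bfQ)$, $t\in[0,1]$, stays inside the ball $\{|\bfR-\bfQ|\leq \tfrac{1}{2}|\bfQ|\}$, so $|\bfR_{t}|\approx|\bfQ|$ uniformly in $t$. I will use the second-order Taylor expansion of $\bfA$ around $\bfQ$ to write
\begin{equation*}
\bfH(\bfP,\bfQ)=\int_{0}^{1}\bigl(D\bfA(\bfR_{t})-D\bfA(\bfQ)\bigr)(\bfP-\bfQ)\,dt,
\end{equation*}
and then the bound $|D\bfA(\bfR_{t})-D\bfA(\bfQ)|\leq c|\bfQ|^{p-3}|\bfR_{t}-\bfQ|$ (obtained by integrating $D^{2}\bfA$ along the segment $[\bfQ,\bfR_{t}]$, on which $|\cdot|\approx|\bfQ|$) yields
\begin{equation*}
|\bfH(\bfP,\bfQ)|\leq c\,|\bfQ|^{p-3}|\bfP-\bfQ|^{2}\int_{0}^{1}t\,dt\leq c\,|\bfQ|^{p-3}|\bfP-\bfQ|^{2}.
\end{equation*}
Writing $|\bfQ|^{p-3}|\bfP-\bfQ|^{2}=\tfrac{|\bfP-\bfQ|}{|\bfQ|}\cdot|\bfQ|^{p-2}|\bfP-\bfQ|\leq \delta\,|\bfQ|^{p-2}|\bfP-\bfQ|$ and invoking \eqref{eq:hammerd-bis} once more (with $|\bfP|\approx|\bfQ|$, so $(|\bfP|+|\bfQ|)^{p-2}\approx|\bfQ|^{p-2}$) to identify $|\bfQ|^{p-2}|\bfP-\bfQ|\approx |\bfA(\bfP)-\bfA(\bfQ)|$ delivers the second term on the right of \eqref{eq:H est}.

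\textbf{Main obstacle.} The proof is essentially a careful bookkeeping exercise; the only subtle point is matching the powers of $\delta$ in the far case, which relies on bounding $|\bfA(\bfP)-\bfA(\bfQ)|$ via the joint factor $(|\bfP|+|\bfQ|)^{p-2}|\bfP-\bfQ|$ from \eqref{eq:hammerd-bis} rather than separately estimating $|\bfA(\bfP)|$ and $|\bfA(\bfQ)|$, which would produce the weaker and insufficient factor $\delta^{1-p}$.
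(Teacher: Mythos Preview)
Your proof is correct and follows essentially the same strategy as the paper's. In the near regime $\{|\bfP-\bfQ|\leq\delta|\bfQ|\}$ your argument coincides with the paper's: both write $\bfH$ via the first-order Taylor remainder and use the bound $|D^2\bfA(\bfR)|\leq c|\bfR|^{p-3}$ along the segment (where $|\bfR|\approx|\bfQ|$), then absorb one factor of $|\bfP-\bfQ|/|\bfQ|\leq\delta$ and invoke \eqref{eq:hammerd-bis}. In the far regime the paper keeps the Taylor representation and estimates the double integral $\int_0^1\!\int_0^1|\widetilde\bfQ+st\widetilde\bfR|^{p-3}\,ds\,dt$, obtaining $|\bfH|\leq c(|\bfQ|+|\bfP-\bfQ|)^{p-3}|\bfP-\bfQ|^2$, whereas you bypass this entirely with the triangle inequality and \eqref{eq:hammerd-bis}; your route here is slightly more elementary and arrives at the required $\delta^{2-p}|\bfP-\bfQ|^{p-1}$ in one line.
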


\begin{proof}
The function $\bfH$ can be represented as
\begin{align*}
\bfH(\bfP,\bfQ) = & \int_0^1 \left[  D\bfA(\bfQ + t(\bfP-\bfQ)) -D\bfA(\bfQ) \right]\dt\, (\bfP-\bfQ)
\\ = & \abs{\bfQ}^{p-1}
 \int_0^1 \left[  D\bfA(\widetilde \bfQ + t \widetilde \bfR) -D\bfA(\widetilde \bfQ) \right]\dt \, \widetilde \bfR,
\end{align*}
where we have introduced the notations
\[
\widetilde \bfQ = \frac{\bfQ}{\abs{\bfQ}}\,, \qquad \widetilde \bfP
= \frac{\bfP}{\abs{\bfQ}} \,, \qquad \widetilde \bfR = \widetilde
\bfP - \widetilde \bfQ\,.
\]
Computations show  that
\begin{equation} \label{eq:D^2A}
\abs{D^2\bfA(\bfT)} \leq c  \abs{\bfT}^{p-3}
\end{equation}
for some constant $c = c(n,N,p)$ and for every $\bfT \in \mathbb
R^{N \times n} \setminus \{0\}$. Owing to \eqref{eq:D^2A},
\[
\left|  \int_0^1 \left[  D\bfA(\widetilde \bfQ + t \widetilde \bfR)
-D\bfA(\widetilde \bfQ) \right]\dt \right| \leq c \abs{\widetilde
\bfR} \int_0^1\int_0^1 \abs{\widetilde \bfQ + st \widetilde
\bfR}^{p-3} \ds \dt
\] for some constant $c = c(n,N,p)$. Now, if
$\abs{\widetilde \bfR} \leq \delta$, and hence, in particular,
$\abs{\widetilde \bfR} \leq \frac 12$, then
\[
 \int_0^1\int_0^1 \abs{\widetilde \bfQ + s t \widetilde
\bfR}^{p-3} \ds \dt  \leq c.
\]
Since our present assumption on $\abs{\widetilde \bfR}$ is
equivalent to $\abs{\bfP- \bfQ} \leq \delta \abs{\bfQ}$, and hence,
in particular, it implies that $\abs{\bfP} + \abs{\bfQ} \leq 3
\abs{\bfQ}$, we have that
% giving also
%$\abs{\bfP} + \abs{\bfQ} \approx \abs{\bfQ}$, we obtain
\begin{equation}\label{august2}
\abs{\bfH(\bfP,\bfQ)} \leq c \delta \abs{\bfQ}^{p-2} \abs{\bfP -
\bfQ} \leq c' \delta \abs{\bfA(\bfP) - \bfA(\bfQ)},
\end{equation}
whence  \eqref{eq:H est} follows. Note that, in the derivation of
\eqref{august2},  we have also made use of \eqref{eq:hammerd-bis}.
\\
 On the other hand, if
$\abs{\widetilde \bfR} > \delta$,
% and $p\geq 2$,
then
\[
%\abs{\widetilde \bfR} > \delta, \quad p\geq 2 \qquad \Longrightarrow \qquad
 \int_0^1\int_0^1 \abs{\widetilde \bfQ + s t  \widetilde
\bfR}^{p-3} \ds \dt  \leq c (1 + \abs{\widetilde \bfR})^{p-3}.
\]
Hence,
\[
\abs{\bfH(\bfP,\bfQ)} \leq c ( \abs{\bfQ} +
\abs{\bfP-\bfQ})^{p-3}\abs{\bfP-\bfQ}^2\,,
\]
and this inequality can be shown to imply \eqref{eq:H est} also in
this case.
\end{proof}

\iffalse
 Thus we are left with the case $\abs{\widetilde \bfR} >
\frac12$ and $p \in (1,2)$. We then simply obtain that
\begin{align*}
\abs{\widetilde \bfR} > \delta, \quad p \in (1,2) \qquad
\Longrightarrow \qquad  & \int_0^1 \left|  D\bfA(\widetilde \bfQ + t
\widetilde \bfR) -D\bfA(\widetilde \bfQ) \right|\dt
\\& \qquad  \leq
c \int_0^1 \abs{\widetilde \bfQ + t \widetilde \bfR}^{p-2} \dt + c
\leq  c,
\end{align*}
since the integral above is convergent for all $p \in (1,2)$.
Scaling back to $\bfP$ and $\bfQ$ and collecting three cases above
then proves the result after simple manipulations.
\fi

%
%Owing to Lemma \ref{H}, we have that
%\begin{align*}
% \int_{B} \abs{\bfH(\nabla \bfu,\bfA_{2}) }^{p'} \dx
%\leq &
%c \delta^{-(p-2)p'}  \int_{B \cap \{\abs{\nabla \bfu - \bfA_2} \geq \delta \abs{\bfA_2}\}}  \abs{\nabla \bfu - \bfA_2}^p \dx
%\\  & + c \delta^{p'} \int_{2B} \abs{\bfA(\nabla \bfu) -\bfA(\bfA_2)}^{p'} \dx
%\end{align*}
%for every  $\delta \in (0,1)$.

%The second term is in control since we can choose $\delta$ as small as we please. Let us thus estimate the first one. For this we need the following lemma.

Our last preparatory step is  contained in the following lemma.

 \begin{lemma} \label{lemma:non-deg large energy}
Let $p \in [2, \infty)$, and let $\bfu$ be a local weak solution to
\eqref{eq:sysA} satisfying  ~\eqref{eq:non-deg} for some ball $B$,
and some $\varepsilon \in (0, \tfrac 14)$. Assume that $\sigma \in
(0,\tfrac 14)$ is such that \eqref{eq:epsilon cond 1} holds. Let
$\alpha$ be the exponent appearing in the statement of Theorem
\ref{thm:decay}.
 Then there exist constants $c =
c(n,p,N)$ and $\alpha = \alpha(n,p,N)$ such that
\begin{multline} \label{eq:non-deg large energy}
\int_{B \cap \{\abs{\nabla \bfu - \bfA_2} \geq \sigma^{2n}
\abs{\bfA_2} \}} \abs{\bfV(\nabla \bfu) - \bfV(\bfA_2)}^2  \dx
\\ \leq c |B| \sigma^{2\alpha}  \bigg[\bigg(\dashint_{2B}
\abs{\bfA(\nabla \bfu) - \mean{\bfA(\nabla\bfu)}_{2B}}\dx\bigg)^{p'}
+ \sigma^{-2\alpha} c_\sigma \dashint_{2B} \abs{\bfF
-\bfF_0}^{p'}\dx\bigg],
\end{multline}
where $\bfA_2$ is given by \eqref{eq:means1}.
\end{lemma}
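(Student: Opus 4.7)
The plan is to use the pointwise equivalence \eqref{eq:hammera} to first reduce the bound on $|\bfV(\nabla\bfu)-\bfV(\bfA_2)|^2$ on $E$ to a bound on $|\nabla\bfu - \bfA_2|^p$, and then to estimate the latter via the linear comparison $\bfz$ solving~\eqref{eq:linear}, invoking the Calder\'on--Zygmund estimate of Lemma~\ref{august1}, the algebraic decomposition of Lemma~\ref{lemma:H}, and the linear decay~\eqref{eq:z decay}.

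Step~1 (reduction to $|\nabla\bfu-\bfA_2|^p$). Since $p\ge 2$, \eqref{eq:hammera} yields $|\bfV(\nabla\bfu)-\bfV(\bfA_2)|^2 \approx (|\nabla\bfu|+|\bfA_2|)^{p-2}|\nabla\bfu-\bfA_2|^2$. On $E$, the defining inequality $|\bfA_2|\leq \sigma^{-2n}|\nabla\bfu-\bfA_2|$ forces $|\nabla\bfu|+|\bfA_2| \leq c\sigma^{-2n}|\nabla\bfu-\bfA_2|$, so that $|\bfV(\nabla\bfu)-\bfV(\bfA_2)|^2 \leq c\sigma^{-2n(p-2)}|\nabla\bfu-\bfA_2|^p$ on $E$. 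Thus it suffices to bound $\int_B|\nabla\bfu-\bfA_2|^p\,dx$ up to the necessary factors of $\sigma$.

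Step~2 (comparison with $\bfz$). Split $\nabla\bfu-\bfA_2 = (\nabla\bfu-\nabla\bfz) + (\nabla\bfz-\bfA_2)$. For the second summand, observe that $\bfz - \bfA_2 x$ solves the same constant-coefficient linear system as $\bfz$, with mean $\mean{\nabla\bfz}_B = \mean{\nabla\bfu}_B$ close to $\bfA_2$ by Lemma~\ref{lemma:means close}. Applying the linear decay~\eqref{eq:z decay} on a Vitali covering of $B$ by balls of radius $\sigma r$ then produces a $\sigma^{2\alpha}$ factor and the $\bfA$-oscillation $\bigl(\dashint_{2B}|\bfA(\nabla\bfu)-\mean{\bfA(\nabla\bfu)}_{2B}|\bigr)^{p'}$ on the right. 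For the first summand, apply Lemma~\ref{august1} combined with Lemma~\ref{lemma:H} in which we choose $\delta=\sigma^{2n}$; with this choice the indicator in~\eqref{eq:H est} coincides with the defining condition of $E$. The $\chi_E$-contribution to $\bfH$ is of the form $c\sigma^{-c(n,p)}\chi_E|\nabla\bfu-\bfA_2|^p$ and, after integration, is absorbed into the left-hand side through the prefactor $c\sigma^{-2n(p-2)}$ coming from Step~1, provided the constant in~\eqref{eq:epsilon cond 1} is taken large enough. The $\chi_{E^c}$-contribution of $\bfH$ yields $c\sigma^{2np'}\dashint_{B}|\bfA(\nabla\bfu)-\bfA(\bfA_2)|^{p'}$, which is controlled by the desired $\bfA$-oscillation, while the direct $\bfF$-term in Lemma~\ref{august1} produces precisely the $c_\sigma\dashint|\bfF-\bfF_0|^{p'}$ contribution.

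The main obstacle is the careful tracking and balancing of powers of $\sigma$: the $\sigma^{-2n(p-2)}$ blow-up from Step~1 must be overcome both by the $\sigma^{2\alpha}$ gain from the linear decay in Step~2 and by the smallness of $\varepsilon$ in~\eqref{eq:epsilon cond 1}, so that the absorption argument succeeds and the final exponent of $\sigma$ multiplying the $\bfA$-oscillation is indeed $\sigma^{2\alpha}$, as asserted by the statement. This is the reason for the somewhat technical form of the hypothesis~\eqref{eq:epsilon cond 1} and for the implicit choice of the $\alpha$ inherited from Theorem~\ref{thm:decay}.
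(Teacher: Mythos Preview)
Your proposal takes a route that the paper does not, and it has two genuine gaps.

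First, the exponent $2\alpha$ in the conclusion is the H\"older exponent of $p$-harmonic maps from Theorem~\ref{thm:decay}; it has no connection to the linear system~\eqref{eq:linear}. The linear decay~\eqref{eq:z decay} carries a factor $\theta$ (first power), so applying it on a Vitali cover by balls of radius $\sigma r$ yields at best a factor $\sigma$, never $\sigma^{2\alpha}$. The $\sigma^{2\alpha}$ has to come from the $p$-harmonic comparison, and indeed in the paper it is already packaged into the decay estimate~\eqref{eq:key2 non-deg p>2} of Lemma~\ref{lemma:non-deg decay 1 p>2}, which is what the paper invokes here.

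Second, your absorption scheme cannot close as stated. After Step~1 you need to control $\int_E|\nabla\bfu-\bfA_2|^p\,dx$, an $L^p$ quantity. But Lemma~\ref{august1} only delivers an $L^{p'}$ bound on $\nabla\bfu-\nabla\bfz$, and since $p\ge 2$ one has $p'\le 2\le p$, so there is no way to recover the $L^p$ norm from the $L^{p'}$ estimate without additional input. Consequently the $\chi_E$-contribution coming from $\bfH$, which after raising to the $p'$-power produces a term $c\sigma^{-2n(p-2)p'}\int_E|\nabla\bfu-\bfA_2|^p$, does not sit on the same side as anything it could be absorbed into.

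The paper's argument is entirely different and does not touch the linear comparison $\bfz$ at all (that is used only later, in Proposition~\ref{prop:non-deg p>2}, with the present lemma as one of its ingredients). It covers the set $E=B\cap\{|\nabla\bfu-\bfA_2|\ge\sigma^{2n}|\bfA_2|\}$ by balls $\sigma B(z)$ via Besicovitch. On $E\cap\sigma B(z)$, Lemma~\ref{lemma:means close} gives $|\bfA_2|\approx|\bfV_{\sigma,z}|$ and $|\nabla\bfu-\bfA_2|\le c|\nabla\bfu-\bfV_{\sigma,z}|$, so by~\eqref{eq:hammera},
\[
|\bfV(\nabla\bfu)-\bfV(\bfA_2)|^2 \le c(|\nabla\bfu|+|\bfV_{\sigma,z}|)^{p-2}|\nabla\bfu-\bfV_{\sigma,z}|^2 \le c|\bfV(\nabla\bfu)-\bfV(\bfV_{\sigma,z})|^2.
\]
Integrating over $\sigma B(z)$ and applying~\eqref{eq:key2 non-deg p>2} with $\theta=\sigma$ produces exactly $|\sigma B(z)|\sigma^{2\alpha}E^{p'}$; summing over the bounded-overlap cover gives the claim. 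No absorption and no power-of-$\sigma$ bookkeeping beyond this is needed.
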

\begin{proof} By Besicovich covering theorem,
% \textcolor[rgb]{0.98,0.00,0.00}{\cite[Theorem 2.7, page 30]{besic}},
 there exists a countable covering of the set  $B\cap\{\abs{\nabla \bfu - \bfA_2} \geq \sigma^{2n} \abs{\bfA_2}\}$ by balls $\sigma B(z)$
whose number of overlaps is uniformly bounded by a constant
depending only on $n$. On this set we have, by
Lemma~\ref{lemma:means close},
\begin{align*}
%\frac34 \sigma^{2n}  \abs{\bfU_{\sigma,z}}  \leq
\sigma^{2n} \abs{\bfA_2}  \leq & \abs{\nabla \bfu - \bfA_2} \leq   \abs{\nabla \bfu - \bfV_{\sigma,z}} + \abs{\bfV_{\sigma,z}-\bfU_{2}}+\abs{\bfU_{2}- \bfA_2}
 \leq
 \abs{\nabla \bfu - \bfV_{\sigma,z}} + \frac{\sigma^{2n}}{4}\abs{\bfA_2}
%\leq \abs{\nabla \bfu - \bfU_{\sigma,z}}  + \frac{1}{8} \abs{\nabla \bfu - \bfA_2},
\end{align*}
and hence $ \abs{\nabla \bfu - \bfA_2} \leq c \abs{\nabla \bfu -
\bfV_{\sigma,z}}$. On the other hand, by Lemma~\ref{lemma:means
close},
  $\abs{\bfA_2}\leq 2\abs{\bfV_{\sigma,z}}$.
% \[ \big(\sigma B(z) \cap  \{\abs{\nabla \bfu - \bfA_2} > \lambda\} \big) \subset \big( \sigma B(z) \cap  \{ \abs{\nabla \bfu - \bfU_{\sigma,z}}> \lambda/2\} \big). \]
Thus, owing to ~\eqref{eq:hammera} and~\eqref{eq:key2 non-deg p>2},
\begin{align*} &
\int_{\sigma B(z) \cap \{\abs{\nabla \bfu - \bfA_2} \geq \sigma^{2n}
\abs{\bfA_2} \}} \abs{\bfV(\nabla \bfu) - \bfV(\bfA_2)}^2 \dx
\\ & \qquad \leq  c
\int_{\sigma B(z) \cap \{\abs{\nabla \bfu - \bfA_2} \geq \sigma^{2n}
\abs{\bfA_2} \}} (\abs{\nabla \bfu} + \abs{\bfA_2})^{p-2}
\abs{\nabla \bfu - \bfA_2}^2 \dx
\\ & \qquad \leq  c
\int_{\sigma B(z) \cap \{\abs{\nabla \bfu - \bfA_2} \geq \sigma^{2n}
\abs{\bfA_2} \}} (\abs{\nabla \bfu} + \abs{\bfV_{\sigma,z}})^{p-2}
\abs{\nabla \bfu - \bfV_{\sigma,z}}^2 \dx
\\ & \qquad \leq  c \int_{\sigma B(z)} \abs{\bfV(\nabla\bfu) - \mean{\bfV(\nabla \bfu)}_{\sigma B(z)}}^2\dx \leq c
 |\sigma B(z)| \sigma^{2\alpha}
 E^{p'},
\end{align*}
where $E$ is defined as in   \eqref{eq:E def}. Hence, inequality
\eqref{eq:non-deg large energy} follows.
\end{proof}

We are now ready to prove the decay estimate in the case when
$p\geq2$.

\begin{proposition} \label{prop:non-deg p>2}
Let $p \in [2, \infty)$, and let $\bfu$ be a local weak solution to
\eqref{eq:sysA}.
%Set $\beta=\min\set{2\alpha/p',1}$, where
Let $\alpha$ be the exponent appearing in the statement of Theorem
\ref{thm:decay}.
 Let $\theta \in (0,1)$. Then there exist
constants $c=c(n,p,N)$, $\varepsilon _\theta=
\varepsilon(n,p,N,\theta) \in (0, \tfrac 14)$, and $c_\theta =
c_\theta(n,p,N,\theta)$ such that if $\bfu$ satisfies
\eqref{eq:non-deg} with $\varepsilon = \varepsilon _\theta$, then
%\begin{align}
%\label{eq:non-deg''}
% \dashint_{2B}|\bfV(\nabla \bfu)-\mean{\bfV(\nabla \bfu)}_{ 2B}|^2\dx < \varepsilon\dashint_{2B}\abs{\bfV(\nabla \bfu)}^2\dx.
%\end{align}
%is in force, then for
\begin{multline}
\label{eq:non-deg p>2 concluded} \bigg( \dashint_{\theta
B}\abs{\bfA(\nabla \bfu)-\mean{\bfA(\nabla \bfu)}_{\theta
B}}^{p'}\dx\bigg)^\frac1{p'}
\\  \leq c \theta^{\min\set{\frac {2\alpha}{p'},1}} \bigg( \dashint_{2B}\abs{\bfA(\nabla \bfu)-\mean{\bfA(\nabla \bfu)}_{{2B}}}^{p'}\dx \bigg)^{\frac1{p'}}
+c_\theta \bigg(\dashint_{2B} \abs{\bfF - \bfF _0}^{p'} \dx
\bigg)^\frac1{p'}
\end{multline}
for every $\bfF_0 \in \mathbb R^{N \times n}$.
\end{proposition}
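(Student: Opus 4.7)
The strategy combines the linear decay for the constant-coefficient problem~\eqref{eq:linear} with the $\bfV$-decay of Lemma~\ref{lemma:non-deg decay 1 p>2}, bridging them through the Calder\'on--Zygmund estimate of Lemma~\ref{august1} and the algebraic bound on $\bfH$ from Lemma~\ref{lemma:H}. One starts with the usual dichotomy on the relative size of $\bfF$: if condition \eqref{eq:F small} \emph{fails} with $\overline{c}=c_\sigma$, then \eqref{eq:F big} holds and Lemma~\ref{lem:F big} directly yields \eqref{eq:F big concl}; a comparison between $\sigma B$- and $\theta B$-averages then gives \eqref{eq:non-deg p>2 concluded} with the $\bfF$-term alone, absorbed into $c_\theta$. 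Henceforth assume \eqref{eq:F small}.

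Let $\bfz$ solve the linear Dirichlet problem~\eqref{eq:linear}, and select the constant $\bfC = \bfA(\bfA_2) + D\bfA(\bfA_2)\bigl(\mean{\nabla\bfz}_{\theta B} - \bfA_2\bigr)$. By the very definition~\eqref{eq:H} of $\bfH$, one has the pointwise decomposition
\[
\bfA(\nabla\bfu) - \bfC \;=\; \bfH(\nabla\bfu,\bfA_2) \;+\; D\bfA(\bfA_2)(\nabla\bfu - \nabla\bfz) \;+\; D\bfA(\bfA_2)\bigl(\nabla\bfz - \mean{\nabla\bfz}_{\theta B}\bigr).
\]
Using~\eqref{meanq} to replace $\mean{\bfA(\nabla\bfu)}_{\theta B}$ by $\bfC$, the $p'$-average of the left-hand side on $\theta B$ is bounded by the sum of the $p'$-averages of the three summands. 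The third (linear) summand produces the $\theta^{p'}$-gain directly via~\eqref{eq:z decay}, after estimating $\dashint_B|\nabla\bfz - \mean{\nabla\bfz}_B|$ by $\dashint_B|\nabla\bfu-\nabla\bfz| + \dashint_B|\nabla\bfu-\bfA_2|$ and using the monotonicity inequality $|\bfA_2|^{p-2}|\nabla\bfu-\bfA_2|\leq c|\bfA(\nabla\bfu)-\bfA(\bfA_2)|$, valid for $p\geq 2$, to pass to a $\dashint_{2B}|\bfA(\nabla\bfu)-\mean{\bfA(\nabla\bfu)}_{2B}|^{p'}$-term. The second (Calder\'on--Zygmund) summand is enlarged from $\theta B$ to $B$ at the cost of a factor $\theta^{-n}$ and then controlled via Lemma~\ref{august1}, reducing to $\dashint_B|\bfF-\bfF_0|^{p'}+\dashint_B|\bfH|^{p'}$.

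It thus remains to bound $\dashint_B|\bfH(\nabla\bfu,\bfA_2)|^{p'}$. Apply Lemma~\ref{lemma:H} with the choice $\delta=\sigma^{2n}$, which aligns its threshold with that of Lemma~\ref{lemma:non-deg large energy}. On the ``small'' region $\{|\nabla\bfu-\bfA_2|\leq\sigma^{2n}|\bfA_2|\}$ the lemma gives $|\bfH|^{p'}\leq c\sigma^{2np'}|\bfA(\nabla\bfu)-\bfA(\bfA_2)|^{p'}$, producing a $\sigma^{2np'}$-prefactor in front of a $2B$-oscillation of $\bfA$. On the complementary ``large'' region, the elementary pointwise inequality $|\nabla\bfu-\bfA_2|^p\leq c|\bfV(\nabla\bfu)-\bfV(\bfA_2)|^2$, which holds for $p\geq 2$ because $(|\bfP|+|\bfQ|)^{p-2}\geq|\bfP-\bfQ|^{p-2}$, combined with Lemma~\ref{lemma:non-deg large energy}, yields a bound in terms of the quantity $E$ of~\eqref{eq:E def} carrying a power $\sigma^{2\alpha+2n(p'-p)}$.

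The final step is parameter bookkeeping. Take $\sigma=\sigma(\theta)$ to be a suitable power of $\theta$, then choose $\varepsilon_\theta$ small enough to enforce~\eqref{eq:epsilon cond 1} and the applicability of Lemmas~\ref{lemma:non-deg decay 1 p>2} and~\ref{lemma:non-deg large energy}; residual $\bfF$-averages are absorbed into $c_\theta$ (using the $\sigma^{-2\alpha}c_\sigma$ appearing in $E$), while the coefficient of the main $\bfA$-oscillation term becomes $c\theta^{\min\{2\alpha,p'\}}$, yielding~\eqref{eq:non-deg p>2 concluded} after taking $p'$-th roots. The main technical obstacle is precisely this exponent balancing: for $p>2$ the power $2\alpha+2n(p'-p)$ may be negative, so the positive gain $\sigma^{2\alpha}$ from Lemma~\ref{lemma:non-deg large energy} must absorb the $\delta^{p'-p}$ blow-up in Lemma~\ref{lemma:H}, constraining the admissible range of $\sigma$; the minimum $\min\{2\alpha/p',1\}$ in the target exponent reflects the competition between the linear Lipschitz decay of $\nabla\bfz$ (giving the ``$1$'') and the nonlinear H\"older decay of $\bfV(\nabla\bfu)$ (giving the ``$2\alpha/p'$'').
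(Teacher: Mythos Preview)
Your overall architecture is the paper's own: dichotomy on~\eqref{eq:F small}, linear comparison with $\bfz$ through~\eqref{eq:z decay} and Lemma~\ref{august1}, and control of the remainder $\bfH$ via Lemma~\ref{lemma:H} coupled with Lemma~\ref{lemma:non-deg large energy}. Your direct decomposition $\bfA(\nabla\bfu)-\bfC=\bfH+D\bfA(\bfA_2)(\nabla\bfu-\nabla\bfz)+D\bfA(\bfA_2)(\nabla\bfz-\mean{\nabla\bfz}_{\theta B})$ is a perfectly valid variant of the paper's route (the paper instead first splits $|\bfA(\nabla\bfu)-\bfA(\bfU_\theta)|^{p'}$ into a $\bfV$-excess and $|\bfU_\theta|^{p'(p-2)}|\nabla\bfu-\bfU_\theta|^{p'}$, then compares the latter with $\nabla\bfz$). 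Either way, everything collapses to the estimate of $\theta^{-n}\dashint_B|\bfH(\nabla\bfu,\bfA_2)|^{p'}\dx$.

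There is, however, a genuine gap in your treatment of this term. You take $\delta=\sigma^{2n}$ in Lemma~\ref{lemma:H} ``to align the threshold with Lemma~\ref{lemma:non-deg large energy}'', and then correctly observe that the large-set contribution carries the power $\sigma^{2\alpha+2n(p'-p)}$, which is negative for $p>2$. Calling this a ``constraint on the admissible range of $\sigma$'' does not resolve it: once the exponent is negative, no small $\sigma$ can make the term small, and the argument collapses. The point you are missing is that the threshold $\sigma^{2n}|\bfA_2|$ in Lemma~\ref{lemma:non-deg large energy} is deliberately set \emph{very} small, so that you are free to apply Lemma~\ref{lemma:H} with any $\delta\geq\sigma^{2n}$ (the large set only shrinks). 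The paper takes $\delta=\sigma^{2\alpha/p}$; then the large-set term gets $\delta^{p'-p}\cdot\sigma^{2\alpha}=\sigma^{2\alpha(2-p)/(p-1)}\cdot\sigma^{2\alpha}=\sigma^{2\alpha/(p-1)}$ and the small-set term gets $\delta^{p'}=\sigma^{2\alpha/(p-1)}$, so both carry the \emph{same positive} power. This balancing is the crux of the estimate~\eqref{eq:H bound 1}. With this correction, choosing $\sigma$ to be a sufficiently high power of $\theta$ (so that $\theta^{-n}\sigma^{2\alpha/(p-1)}\leq c\theta^{p'}$) and then $\varepsilon_\theta$ from~\eqref{eq:epsilon cond 1} completes the proof exactly as you outline.
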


\begin{proof}
From \eqref{meanq}, \eqref{eq:hammerd-bis} and \eqref{eq:hammera}
one can infer that
  \begin{align}\label{june32}
 \dashint_{\theta B} & \abs{\bfA(\nabla \bfu) - \bfA(\bfA_\theta)}^{p'}\dx
 \leq  c \dashint_{\theta B} \abs{\bfA(\nabla \bfu) - \bfA(\bfU_\theta)}^{p'}\dx
\\ \nonumber
& \leq c \dashint_{\theta B}\Big(\abs{\nabla
\bfu-\bfU_{\theta}}+\abs{\bfU_{\theta}}\Big)^{p'(p-2)}\abs{\nabla
\bfu-\bfU_{\theta}}^{p'}
\\ \nonumber &  \leq
c  \dashint_{\theta B}   \abs{\bfV(\nabla \bfu)  -
\bfV(\bfU_\theta)}^{2} \dx + c \abs{\bfU_\theta}^{p'(p-2)}
\dashint_{\theta B  }  \abs{\nabla \bfu- \bfU_\theta}^{p'} \dx.
\end{align}
By Lemma~ \ref{lemma:non-deg decay 1 p>2},
\begin{equation}\label{june33}
\dashint_{\theta B}   \abs{\bfV(\nabla \bfu)  -
\bfV(\bfU_\theta)}^{2} \dx  \leq c \theta^{2\alpha} E^{p'},
\end{equation}
where $E$ is defined by \eqref{eq:E def}.
 Next, via a repeated use of inequality \eqref{meanq}, the
triangle inequality,  Lemma \ref{lemma:means close}, and inequality
\eqref{eq:z decay}  one obtains the following chain:
\begin{align}\label{june34}
 & \abs{\bfU_\theta}^{p'(p-2)}  \dashint_{\theta B  }  \abs{\nabla \bfu- \bfU_\theta}^{p'} \dx
   \leq\,c\,
 \abs{\bfU_\theta}^{p'(p-2)} \dashint_{\theta B  }  \abs{\nabla \bfu- \mean{\nabla \bfz}_{\theta B} }^{p'} \dx
 %\\ & \qquad \leq
%c \abs{\bfA_1}^{p'(p-2)} \dashint_{\theta B  }  \abs{\nabla \bfz- \mean{\nabla \bfz}_{\theta B}}^{p'} \dx
%+ c \theta^{-n} \abs{\bfA_2}^{p'(p-2)} \dashint_{B  }  \abs{\nabla \bfz- \nabla \bfu}^{p'} \dx
 \\ \nonumber &  \leq
c \theta^{p'} \abs{\bfA_2}^{p'(p-2)} \bigg(\dashint_{ B  }
\abs{\nabla \bfz- \mean{\nabla \bfz}_{B}} \dx\bigg)^{p'}  + c
\theta^{-n} \abs{\bfA_2}^{p'(p-2)} \dashint_{ B  }  \abs{\nabla
\bfz- \nabla \bfu}^{p'} \dx
 \\ \nonumber &  \leq
c \theta^{p'} \abs{\bfA_2}^{p'(p-2)} \bigg(\dashint_{ B  }
\abs{\nabla \bfu- \mean{\nabla \bfu}_{B}} \dx\bigg)^{p'}
  + c(\theta^{-n} + \theta^{p'})  \abs{\bfA_2}^{p'(p-2)}
\dashint_{B  }  \abs{\nabla \bfz- \nabla \bfu}^{p'} \dx
% \\ & \qquad \qquad \leq c \theta^{p'} \left(\dashint_{ 2B  }  \abs{\nabla \bfu-  \bfA_2 } \dx\right)^{p'}  + c \theta^{-n}  \dashint_{\frac12 B  }  \abs{\nabla \bfz- \nabla \bfu}^{p'} \dx
  \\ \nonumber &  = I_1 + I_2.
\end{align}
Owing to~\eqref{meanq} and \eqref{eq:hammerd-bis},
\begin{align}\label{june35}
I_1 \leq & c \theta^{p'} \bigg(\dashint_{  B  }  \abs{\bfA_{2}
}^{p-2}  \abs{\nabla \bfu- \bfA_{2} } \dx\bigg)^{p'} \leq c
\theta^{p'} \bigg(\dashint_{ B  } \abs{\bfA(\nabla \bfu)-
\bfA(\bfA_{2}) } \dx\bigg)^{p'} \leq c \theta^{p'} E^{p'}.
\end{align}
In order to estimate $I_2$, observe that, thanks  to Lemma
\ref{lemma:H},
\begin{align*}
 \int_{B} \abs{\bfH(\nabla \bfu,\bfA_{2}) }^{p'} \dx
\leq & c \delta^{-(p-2)p'}  \int_{B \cap \{\abs{\nabla \bfu -
\bfA_2} \geq \delta \abs{\bfA_2}\}}  \abs{\nabla \bfu - \bfA_2}^p
\dx
\\  & + c \delta^{p'} \int_{2B} \abs{\bfA(\nabla \bfu) -\bfA(\bfA_2)}^{p'} \dx
\end{align*}
for every  $\delta \in (0,\frac12)$. This inequality, applied with
 $\delta =  \sigma^{2\alpha/p}$, and Lemma \ref{lemma:non-deg large
 energy} ensure that
\begin{align} \label{eq:H bound 1}
 \dashint_{B} \abs{\bfH(\nabla \bfu,\bfA_{2}) }^{p'} \dx
\leq &  c \sigma^{\frac{2\alpha}{p-1}} E^{p'}   + c
\sigma^{\frac{2\alpha}{p-1}}\dashint_{ 2B}   \abs{\bfA(\nabla \bfu)-
\bfA(\bfA_{2}) }^{p'} \dx.
% \leq c \sigma^{\frac{2\alpha}{p-1}} E^{p'}.
\end{align}
Here, we have also exploited the fact that, by \eqref{eq:hammera},
$|\bfP - \bfQ|^p \leq c |\bfV (\bfP) - \bfV (\bfQ)|^2$ for $\bfP,
\bfQ \in \mathbb R^{N \times n}$, since $p \geq 2$.
\\
By ~\eqref{eq:CZ res} and~\eqref{eq:H bound 1},
\begin{align}\label{june30}
\abs{\bfA_2}^{p'(p-2)} \dashint_{B}  \abs{\nabla \bfz- \nabla \bfu}^{p'} \dx
\leq & c \dashint_{B} \abs{\bfF-\bfF_0}^{p'} \dx + c \dashint_{B} \abs{\bfH(\nabla \bfu,\bfA_{2}) }^{p'} \dx.
\\ \nonumber \leq & c_\sigma \dashint_{B} \abs{\bfF-\bfF_0}^{p'} \dx + c \sigma^{\frac{2\alpha}{p-1}} \dashint_{ 2B  } \abs{\bfA(\nabla \bfu)- \bfA(\bfA_{2}) }^{p'} \dx.
\end{align}
Choosing
$
\sigma = \theta^{{2\alpha}{p}}
$
in \eqref{june30} yields
\begin{equation}\label{june31}
I_2 \leq  c \theta^{p'} \dashint_{ 2B  } \abs{\bfA(\nabla \bfu)-
\bfA(\bfA_{2}) }^{p'} \dx + c_\theta \dashint_{B}
\abs{\bfF-\bfF_0}^{p'} \dx\,,
\end{equation}
where $c = c(n,p,N)$ and $c_\theta =  c(n,p,N,\theta)$.
%The choice
%of $\sigma$ given by~\eqref{eq:sigma fixed} fixes $\sigma$ as a
%parameter of $\theta$. With this choice in our hands,
%we can also
We next fix $\varepsilon _\theta = \varepsilon(n,p,N,\theta)$ in
such a way that~\eqref{eq:epsilon cond 1} is satisfied. Combining
inequalities \eqref{june32}--\eqref{june35} and \eqref{june31}
yields \eqref{eq:non-deg p>2 concluded}.
%
%
% Finally,
%take $\beta:= \min\{2\alpha/p',1\}$ and combine the above estimates
%to conclude the proof.
\end{proof}

\subsection{Proof of Proposition~\ref{prop:decay conclusion}, concluded}\label{sec:comparison-nondeg}

The  decay estimates established above enable us to accomplish the
proof of Proposition~\ref{prop:decay conclusion}.

\iffalse
\begin{proposition} \label{prop:decay conclusion}
Let $\bfu$ be a solution of \eqref{eq:sysA} for $1< p< \infty$. Let $\delta \in (0,1)$. Then there are constants $\theta = \theta(n,p,N,\delta)$ and $c_\delta=c_\delta(n,p,N,\delta)$ such that
\begin{align}
\label{eq:decay conclusion}
\begin{aligned} &
 \bigg(\dashint_{\theta B} \abs{\bfA(\nabla \bfu)-\mean{\bfA(\nabla \bfu)}_{\theta
 B}}^{\min\set{2,p'}}\dx\bigg)^\frac{1}{\min{\set{2,p'}}}
 \\ & \qquad \leq \delta\bigg( \dashint_{B}\abs{\bfA(\nabla \bfu)-\mean{\bfA(\nabla
\bfu)}_{{B}}}^{\min\set{2,p'}}\dx\bigg)^\frac1{{\min\set{2,p'}}}
 + c\dashint_{B}\abs{\bfF -\bfF _0}^{p'}\dx.
\end{aligned}
\end{align}
holds.
\end{proposition}
\fi

\begin{proof}[Proof of Proposition~\ref{prop:decay conclusion}]
%The task is now to combine Propositions~\ref{pro:2}, Lemma~\ref{lem:F big}, Proposition~\ref{prop:non-deg p>2} and Lemma~\ref{lemma:non-deg decay 1}.
We apply Proposition~\ref{pro:2},  Lemma~\ref{lem:F big}, and either
Proposition \ref{lemma:non-deg decay 1 p<2} or Proposition
\ref{prop:non-deg p>2}, according to whether $p \in (1,2)$ or $p \in
[2, \infty)$, with $4B$ or $2B$ replaced by $B$ in the integrals  on
the right-hand sides of the relevant decay estimates. Moreover,  we
denote by $c_1$, $c_2$ and $c_3$ the constants multiplying the
integrals involving the expression $\bfA(\nabla \bfu)$ on the
right-hand sides of  inequalities \eqref{nov10}, \eqref{eq:key2
non-deg p<2}, and \eqref{eq:non-deg p>2 concluded}, respectively.
\\ Fix any $\kappa \in \big(0, \min\set{1, \tfrac {2\alpha}{p'}}\big)$ and any $\delta >0$. In the case when $p \in (1, 2)$, we choose $\theta $ so small that
\begin{align}
\label{delta1}
 c_1 \theta^{\kappa} + c_2 \,
\theta^{\frac{2\alpha}{p'}}
\leq \delta.
\end{align}
 If, instead, $p \in [2, \infty)$, we choose
  $\theta $ so small that
\begin{align}
\label{delta2}
 c_1 \theta^{\kappa} + c_3 \theta ^{\min \{1, \frac {2\alpha}{p'}\}}\leq \delta\,,
\end{align}
%
%
%Also, on denoting by $c_1$, $c_2$ and $c_3$ the constants
%multiplying such integrals in Propositions~\ref{pro:2} and
%\ref{prop:non-deg p>2}, and Lemma~\ref{lemma:non-deg decay 1}, we
%chose $\theta$ so small that
%%
%%
%%To this end, let constants $c_0,c_1,c_2$ be as in
%%Propositions~\ref{pro:2},~\ref{prop:non-deg p>2} and
%%Lemma~\ref{lemma:non-deg decay 1}, respectively, with $4B$ or $2B$
%%replaced with $B$. Define then $\theta$ to be so small that
%\begin{align}
%\label{findsmallness} c_1 \theta^{2\alpha/p'} + c_2 \,
%\theta^{2\alpha} + c_3 \theta^{\beta} = \delta.
%\end{align}
%This fixes $\theta =\theta(n,p,N,\delta)$.
and let $\varepsilon _\theta = \varepsilon(n,p,N,\delta)$  be fixed
as in Proposition~\ref{prop:non-deg p>2}.
%For given $\theta$ and
%$\varepsilon$, the constants multiplying the term containing
%$\bfF-\bfF_0$, also in Proposition~\ref{lem:F big}.
In particular, inequalities \eqref{delta1} and \eqref{delta2} hold
if
%\begin{align}\label{2608}
$\theta = \big(c_1+\max\set{c_2,c_3}\big)^{-\frac 1\kappa}\delta
^{\frac 1\kappa}$
%\end{align}
for small $\delta$. Inequality \eqref{eq:decay conclusion} follows,
modulo an application of
 H\"older's inequality.
 %\textcolor[rgb]{0.50,0.00,0.50}{
%We can find $\theta$ via the formula
%\begin{align}\label{2608}
%\big(c_1+\max\set{c_2,c_3}\big)\theta^\kappa=\delta.
%\end{align}
%}
\end{proof}

\section{Proofs of the main results}

This section is devoted to the proofs of Theorems \ref{thm:main},
\ref{thm:main2} and \ref{thm:main3}.

\begin{proof}[Proof of Theorem \ref{thm:main}]
Owing to Proposition~\ref{prop:decay conclusion},  given any $\delta
\in (0,1)$, there exist constants $\theta \in (0, 1)$ and $c>0$,
both depending only on $n, N, p, \delta$, such that
\begin{align}
\label{eq:decay conclusion appl}
 \bigg(\dashint_{\theta B} & \abs{\bfA(\nabla \bfu)-\mean{\bfA(\nabla \bfu)}_{\theta
 B}}^{\min\set{2,p'}}\dx\bigg)^\frac{1}{\min{\set{2,p'}}}
 \\ \nonumber & \qquad \leq \delta\bigg( \dashint_{B}\abs{\bfA(\nabla \bfu)-\mean{\bfA(\nabla
\bfu)}_{{B}}}^{\min\set{2,p'}}\dx\bigg)^\frac1{{\min\set{2,p'}}}
 + c\bigg(\dashint_{B}\abs{\bfF -\bfF _0}^{p'}\dx\bigg)^{\frac 1{p'}}.
\end{align}
 for every ball $B \subset \Rn$.  Given any $x \in \Rn$, let $B$ be
any ball such that $x \in \theta B$. Owing to the definition of
sharp maximal function \eqref{Msharps}, and to the arbitrariness of
$\bfF _0$, we deduce from \eqref{eq:decay conclusion appl} that
\begin{align}\label{nov51}
M^{\sharp,{\min\set{p',2}}}(\bfA (\nabla \bfu))(x) \leq   c \delta
M^{\sharp, {\min\set{p',2}}}(\bfA (\nabla \bfu))(x) +  c M^{\sharp,
{p'}} (\bfF) (x).
\end{align}
Note that $M^{\sharp,{\min\set{p',2}}}(\bfA (\nabla \bfu))(x)<
\infty$ for a.e. $x \in \Rn$, since we are assuming that $\bfu \in
V^{1,p}(\Rn)$.
 With the choice $\delta = (2 c)^{-1}$,  inequality \eqref{nov51} thus implies that
$$
 M^{\sharp, {\min\set{p',2}}}\big(\bfA(\nabla \bfu) \big)(x)\leq c M^{\sharp, {p'}}(\bfF)(x) \quad \hbox{for  $x\in \mathbb
 R^n$} \quad \hbox{for a.e. $x \in \Rn$.}
$$
Hence \eqref{main1} follows, inasmuch as $M^{\sharp }(\bfA(\nabla
\bfu)) \leq M^{\sharp, {\min\set{p',2}}}(\bfA(\nabla \bfu))$.
\end{proof}

\begin{proof}[Proof of Theorem~\ref{thm:main2}]
%The proof relies upon a localization argument of Proposition~\ref{prop:decay conclusion}. We
Let $\beta \in \big(0,  \min\set{1,\frac {2\alpha}{p'}}\big)$ be
such that condition \eqref{eq:omega condition} is fulfilled by
$\omega$. Fix $\gamma$ and  $\kappa$ such that
\begin{equation}\label{august301}
\beta < \gamma < \kappa <  \min\set{1,\tfrac {2\alpha}{p'}}.
\end{equation}
Given $\delta$, let $\theta$ be the number obeying
\begin{equation}\label{august302}
\overline c \theta ^\kappa = \delta\,,
\end{equation}
for some constant $\overline c=\overline c(n,N,p, \kappa)>1$, and
such that inequality \eqref{eq:decay conclusion} holds. This is
possible owing to \eqref{august300}. Thus, there exists
$\theta_0\in(0,1)$ (depending on $\kappa$ and $\gamma$, as well as
on $n, N, p$) such that $\overline c \theta^{\kappa-\gamma}\leq1$
for $\theta\leq\theta_0$, and hence
\begin{align}\label{26082}
\delta\leq \theta^\gamma
\end{align}
for every $\delta\leq\delta_0$, where $\delta_0=\overline c \theta
_0^\kappa$. Now choose $\delta = \delta_0$ in
Proposition~\ref{prop:decay conclusion}. Thus, given  any ball $B_r
\subset \Omega$, an iteration of \eqref{eq:decay conclusion} tells
us that
\begin{multline}
\label{eq:decay conclusion appl 1}
 \bigg(\dashint_{B_{\theta^k r}}  \abs{\bfA(\nabla \bfu)-\mean{\bfA(\nabla \bfu)}_{
 B_{\theta^k r}}}^{\min\set{2,p'}}\dx\bigg)^\frac{1}{\min{\set{2,p'}}}
 \\  \leq \delta_0^k \bigg( \dashint_{B_r}\abs{\bfA(\nabla \bfu)-\mean{\bfA(\nabla
\bfu)}_{{B_r}}}^{\min\set{2,p'}}\dx\bigg)^\frac1{{\min\set{2,p'}}}
 + c \theta^{-\frac{nk}{p'}}\bigg(\dashint_{B_r}\abs{\bfF -\bfF _0}^{p'}\dx\bigg)^\frac1{p'}
\end{multline}
for $k \in \mathbb N$.
 On setting
%\textcolor[rgb]{0.50,0.00,0.50}{$$\rho = \theta^k ,  \qquad \beta =
%\frac12 \frac{\log \frac12}{ \log \theta},$$}
$\rho=\theta^k$, and making use of  \eqref{26082},
 inequality \eqref{eq:decay conclusion appl 1} tells us that
\begin{multline}
\label{eq:decay conclusion appl 2}
 \bigg(\dashint_{ B_{\rho r}}   \abs{\bfA(\nabla \bfu)-\mean{\bfA(\nabla \bfu)}_{
B_{\rho r}}}^{\min\set{2,p'}}\dx\bigg)^\frac{1}{\min{\set{2,p'}}}
 \\   \leq \,\rho^{\gamma} \bigg( \dashint_{B_r}\abs{\bfA(\nabla \bfu)-\mean{\bfA(\nabla
\bfu)}_{{B_r}}}^{\min\set{2,p'}}\dx\bigg)^\frac1{{\min\set{2,p'}}}
 + c \rho^{-\frac{n}{p'}}\bigg(\dashint_{B_r}\abs{\bfF -\bfF _0}^{p'}\dx\bigg)^\frac1{p'}.
\end{multline}
%Suppose now that the function $\omega$  satisfies \eqref{eq:omega
%condition} for our choice of $\beta$.
%\begin{equation}\label{eq:omega growth}
%\frac{\omega(r)}{\omega(\theta r)}\leq c_\omega\theta^{-\beta} \quad
%\hbox{for all $\theta \in (0, 1)$ and $r >0$.}
%\end{equation}
%with the parameter
%\[
%\beta := \frac12 \frac{\log \frac12}{ \log \theta}.
%\]
Dividing through by $\omega(\rho r)$ in \eqref{eq:decay conclusion
appl 2} yields
%from both sides with $\omega(\rho r)$, recalling the
%definition of $\beta$ and the growth condition of $\omega$ as
%in~\eqref{eq:omega growth}, leads to
\begin{align}\label{june40}
% \label{eq:decay conclusion appl 2}
 \frac{1}{\omega(\rho r)}&\bigg(\dashint_{ B_{\rho r}} \abs{\bfA(\nabla \bfu)-\mean{\bfA(\nabla \bfu)}_{ B_{\rho r}}}^{\min\set{2,p'}}\dx\bigg)^\frac{1}{\min{\set{2,p'}}}
 \\ \nonumber  &  \leq \frac{\rho^{\gamma}\omega(r)}{\omega(\rho r)}  \frac{1}{\omega(r)} \bigg( \dashint_{B_r}\abs{\bfA(\nabla \bfu)-\mean{\bfA(\nabla
\bfu)}_{{B_r}}}^{\min\set{2,p'}}\dx\bigg)^\frac1{{\min\set{2,p'}}}
 \\ \nonumber & \qquad + c \frac{\omega(r) \rho^{-\frac{n}{p'}}}{\omega(\rho r)}   \frac{1}{\omega(r)}
 \bigg(\dashint_{B_r}\abs{\bfF -\bfF _0}^{p'}\dx\bigg)^\frac1{p'}.
\end{align}
Let $k= k (n,N,p,\omega_\beta, \gamma, \beta)$ be the smallest
integer such that $c_\omega \theta^{(\gamma -\beta )k } \leq \frac
12$. Hence,
\begin{align}\label{august4} \frac{\omega(r)}{\omega(\rho r)} \leq c_\omega \rho ^{-\beta}
\end{align}
and
\begin{align}\label{june41} \frac{\rho^{\gamma }\omega(r)}{\omega(\rho r)} \leq \frac 12.
\end{align}
%
%\[
%\frac{\rho^{2\beta}\omega(r)}{\omega(\rho r)} \leq c_\omega \rho^{\beta} = c_\omega \theta^{\beta k} \leq \delta
%\]
%holds with given $\delta \in (0,1)$.  Then $k$ depends only on $n,p,N,\omega_\beta,\delta$, and we have that
As a consequence of \eqref{june40} and \eqref{august4}
 we obtain that
\begin{multline}\label{june42}
  \frac{1}{\omega(\rho r)}\bigg(\dashint_{ B_{\rho r}} \abs{\bfA(\nabla \bfu)-\mean{\bfA(\nabla \bfu)}_{
 B_{\rho r}}}^{\min\set{2,p'}}\dx\bigg)^\frac{1}{\min{\set{2,p'}}}
 \\  \leq  \frac{1}{2\omega(r)} \bigg( \dashint_{B_r}\abs{\bfA(\nabla \bfu)-\mean{\bfA(\nabla
\bfu)}_{{B_r}}}^{\min\set{2,p'}}\dx\bigg)^\frac1{{\min\set{2,p'}}}
 +   \frac{c}{\omega(r)} \bigg(\dashint_{B_r}\abs{\bfF -\bfF
 _0}^{p'}\dx\bigg)^\frac1{p'}.
\end{multline}
%where $c = c(n,N,p,c_\omega,\rho)$.
Now, let $B$ be as in the
statement, and let $x \in B$. Inequality \eqref{june42}, applied to
any ball $B_r$ such that $r < R$ and $B_{\rho r} \ni x$, tells us
that
\begin{align}\label{june43}
 \sup_{r < \rho R }\frac{1}{\omega(r)} & \bigg(\dashint_{B_r} \abs{\bfA(\nabla \bfu)-\mean{\bfA(\nabla \bfu)}_{B_r}}^{\min\set{2,p'}}\dx\bigg)^\frac{1}{\min{\set{2,p'}}}
 \\ \nonumber &  \leq   \frac 12 \sup_{r <  R }\frac{1}{\omega(r)}
 \bigg(\dashint_{B_r} \abs{\bfA(\nabla \bfu)-\mean{\bfA(\nabla \bfu)}_{B_r}}^{\min\set{2,p'}}\dx\bigg)^\frac{1}{\min{\set{2,p'}}}
 \\ \nonumber & \quad
+  \sup_{r <  R } \frac{c}{\omega(r)} \bigg(\dashint_{B_r}\abs{\bfF
-\bfF
 _0}^{p'}\dx\bigg)^\frac1{p'}.
\end{align}
%
%
%
%
%
%
%
%Now the rest of the argument is completely analogous to the proof of
%Theorem~\ref{thm:main}, taking into account the localization at the
%level when $r \approx R$. Indeed, when taking supremum over $r$ in
%the above estimate, then the right hand side allow to take it only
%up to level $\rho R$. Thus to conclude, we estimate with
%\eqref{meanq}, by enlarging the domain and the assumption on
%$\omega$ to find
On the other hand, owing to \eqref{meanq} and \eqref{eq:omega
condition},
\begin{align}\label{june44}
&  \sup_{\rho R<r <  R}\frac{1}{\omega(r)} \bigg(\dashint_{B_r}
 \abs{\bfA(\nabla \bfu)-\mean{\bfA(\nabla \bfu)}_{B_r}}^{\min\set{2,p'}}\dx\bigg)^\frac{1}{\min{\set{2,p'}}}
 \\ \nonumber & \quad  \leq c \frac{\omega(2R)}{\omega(r)} \rho^{-\frac{n}{ {\min\set{2,p'}} }}
 \frac{1}{\omega(2R)} \bigg( \dashint_{2B}\abs{\bfA(\nabla \bfu)-\mean{\bfA(\nabla
\bfu)}_{{2B}}}^{\min\set{2,p'}}\dx\bigg)^\frac1{{\min\set{2,p'}}}.
 \\ \nonumber & \quad \leq c' \rho^{-\beta-\frac{n}{ {\min\set{2,p'}} }}
 \frac{1}{\omega(2R)} \bigg( \dashint_{2B}\abs{\bfA(\nabla \bfu)-\mean{\bfA(\nabla
\bfu)}_{{2B}}}^{\min\set{2,p'}}\dx\bigg)^\frac1{{\min\set{2,p'}}}.
\end{align}
%where $c'=c'(n, N, p, c_\omega)$.
Coupling \eqref{june43} with
\eqref{june44} tells us that
\begin{align}\label{june45}
 \sup_{r <  R }\frac{1}{\omega(r)} & \bigg(\dashint_{B_r} \abs{\bfA(\nabla \bfu)-\mean{\bfA(\nabla \bfu)}_{B_r}}^{\min\set{2,p'}}\dx\bigg)^\frac{1}{\min{\set{2,p'}}}
 \\ \nonumber &  \leq \frac 12  \sup_{r < R }\frac{1}{\omega(r)}
 \bigg(\dashint_{B_r} \abs{\bfA(\nabla \bfu)-\mean{\bfA(\nabla \bfu)}_{B_r}}^{\min\set{2,p'}}\dx\bigg)^\frac{1}{\min{\set{2,p'}}}
 \\ \nonumber & \quad
+  \sup_{r <  R } \frac{c }{\omega(r)} \bigg(\dashint_{B_r}\abs{\bfF
-\bfF
 _0}^{p'}\dx\bigg)^\frac1{p'}
\\ \nonumber & \quad +
 \frac{c }{\omega(2R)} \bigg( \dashint_{2B}\abs{\bfA(\nabla \bfu)-\mean{\bfA(\nabla
\bfu)}_{{2B}}}^{\min\set{2,p'}}\dx\bigg)^\frac1{{\min\set{2,p'}}}.
 \end{align}
Inequality \eqref{E:main2} follows from \eqref{june45}, via the very
definition of localized  weighted sharp maximal operator
\eqref{localweight}.
% \tuomo{maybe add some more details?}
\end{proof}
% In the following we make use of the following integral equivalenz. Let $f:(0,1]\to\setR^+$ and $\theta\in (0,1)$. If $f(\theta r)\leq f(r)$, for all $r\in (0,1]$, then
% \begin{align}
% \label{discrmeas}
% \sum_{i=1}^kf(\theta^i)=\sum_{i=1}^k\dashint_{\theta^i}^{\theta^{i-1}}f(\theta^i)=\frac{1}{1-\theta}\sum_{i=1}^k\int_{\theta^i}^{\theta_{i-1}}\frac{f(\theta^i)}{\theta^{i-1}}\sim\int_{\theta^k)}^{1} f(r)\frac{dr}{r}.
% \end{align}
\begin{proof}[Proof of Theorem~\ref{thm:main3}]
Without loss of generality,  we may assume that
\begin{equation}\label{june47}
\int_0^R \bigg( \dashint_{B_\varrho(x) }
 \bigg(\frac{\abs{ \bfF-\mean{\bfF}_{B_\varrho(x)} } }{\rho} \bigg)^{p'} \dy \bigg)^\frac{1}{p'}  \, d\varrho < \infty.
 \end{equation}
Fix $\delta =\frac12$, and let  $\theta = \theta(n,N,p)$ be the
corresponding value provided by  Proposition~\ref{prop:decay
conclusion}.  Given any $k \in \setN$, one can show, via a telescope
sum argument, that
\begin{align} \label{eq:Pf1.6 001}
\Bigabs{\dashint_{ B_{\theta^{k}R}(x)}\bfA(\nabla \bfu)\dy -
\dashint_{B_R(x)}\bfA(\nabla \bfu)\dy }
 % \\ &\quad
\leq \theta^{-n} \sum_{i=0}^{k-1}\dashint_{  B_{\theta^i R}(x)}
\abs{\bfA(\nabla \bfu)-\mean{\bfA(\nabla \bfu)}_{B_{\theta^i
R}(x)}}\dy\,.
\end{align}
%holds. We are going to estimate the sum appearing on the right applying Proposition~\ref{prop:decay conclusion}. To this end, observe that
Inequality \eqref{eq:decay conclusion} implies that
\begin{align*}
\sum_{i=1}^{k}& \bigg(\dashint_{B_{\theta^i R}(x)}\abs{\bfA(\nabla
\bfu) -\mean{\bfA(\nabla \bfu)}_{B_{\theta^i R}(x) }}
^{\min\set{p',2}}\dy\bigg)^\frac1{\min\set{p',2}}
\\ \nonumber
& \leq \frac12 \sum_{i=0}^{k-1} \bigg(\dashint_{B_{\theta^i
R}(x)}\abs{\bfA(\nabla \bfu)-\mean{\bfA(\nabla \bfu)}_{B_{\theta^i
R}(x) }}^{\min\set{p',2}}\dy\bigg)^\frac1{\min\set{p',2}}
\\ \nonumber
&\quad + c
\sum_{i=0}^{k-1}\bigg(\dashint_{B_{\theta^i R}(x)}
\abs{\bfF-\mean{\bfF}_{B_{\theta^i
R}(x)}}^{p'}\dy\bigg)^\frac{1}{p'}.
\end{align*}
%with $c = c(n,p,N)$.
Hence,
\begin{align}\label{june46}
\sum_{i=0}^{k}  &\bigg(\dashint_{B_{\theta^i R}(x)}\abs{\bfA(\nabla
\bfu) -\mean{\bfA(\nabla \bfu)}_{B_{\theta^i R}(x)
}}^{\min\set{p',2}}\dy \bigg)^\frac1{\min\set{p',2}}
\\ \nonumber
&\leq c \bigg(\dashint_{B_R(x)}\abs{\bfA(\nabla
\bfu)-\mean{\bfA(\nabla \bfu)}_{ B_R(x)
}}^{\min\set{p',2}}\dy\bigg)^\frac1{\min\set{p',2}}
\\
&\quad + c \sum_{i=0}^{k-1}\bigg(\dashint_{B_{\theta^i R}(x)}
\abs{\bfF-\mean{\bfF}_{B_{\theta^i
R}(x)}}^{p'}\dy\bigg)^\frac{1}{p'}.
\end{align}
%for some constant $c = c(n,p,N)$.
Note that
\[
\sum_{i=0}^{k}\bigg(\dashint_{B_{\theta^i R}(x)}
\abs{\bfF-\mean{\bfF}_{B_{\theta^i
R}(x)}}^{p'}\dy\bigg)^\frac{1}{p'} \leq c \int_0^R \bigg(
\dashint_{B_\varrho(x) }
 \bigg(\frac{\abs{ \bfF-\mean{\bfF}_{B_\varrho(x)} } }{\rho} \bigg)^{p'} \dy \bigg)^\frac{1}{p'}  \, d\varrho.
\]
Therefore, letting $k$ go  to $\infty$ in \eqref{june46} tells us
that
\begin{align}\label{eq:Pf1.6 002}
 \sum_{i=0}^{\infty} & \bigg(\dashint_{B_{\theta^i
R}(x)}\abs{\bfA(\nabla \bfu)-\mean{\bfA(\nabla \bfu)}_{B_{\theta^i
R}(x) }}^{\min\set{p',2}}\dy \bigg)^\frac1{\min\set{p',2}}
\\ \nonumber
& \leq c  \bigg(\dashint_{B_R(x)}\abs{\bfA(\nabla
\bfu)-\mean{\bfA(\nabla \bfu)}_{B_R(x)
}}^{\min\set{p',2}}\dy\bigg)^\frac1{\min\set{p',2}}
\\ \nonumber
&\quad + c \int_0^R \bigg( \dashint_{B_\varrho(x) }
 \bigg(\frac{\abs{ \bfF-\mean{\bfF}_{B_\varrho(x)} } }{\rho} \bigg)^{p'} \dy \bigg)^\frac{1}{p'}  \, d\varrho.
\end{align}
%Let us now continue under an additional boundedness assumption
%\[
%\int_0^R \bigg( \dashint_{B_\varrho(x) }
% \bigg(\frac{\abs{ \bfF-\mean{\bfF}_{B_\varrho(x)} } }{\rho} \bigg)^{p'} \dy \bigg)^\frac{1}{p'}  \, d\varrho < \infty.
%\]
%Therefore the infinite sum on the left in~\eqref{eq:Pf1.6 002} is bounded and hence
Owing to \eqref{june47}, the series in \eqref{eq:Pf1.6 002} is
convergent. Thus, in particular,
\begin{align} \label{eq:Pf1.6 003}
 \lim_{i\to \infty} \bigg(\dashint_{B_{\theta ^i R(x)}}\abs{\bfA(\nabla
\bfu)-\mean{\bfA(\nabla \bfu)}_{B_{\theta ^i R}(x)
}}^{\min\set{p',2}}\dy\bigg)^\frac1{\min\set{p',2}} =0.
%\\  \qquad \qquad + \lim_{r\to 0} \int_0^r \bigg( \dashint_{B_\varrho(x) }
% \bigg(\frac{\abs{ \bfF-\mean{\bfF}_{B_\varrho(x)} } }{\rho} \bigg)^{p'} \dy \bigg)^\frac{1}{p'}  \, d\varrho = 0.
\end{align}
Now fix any $0<s<r\leq \tfrac 1\theta R$. Then there exists $k\in
\mathbb N$ such that $\theta^{1-k}s< r \leq \theta^{-k}s$. Set $r_s=
\theta^{-k}s$ and observe that there exists $h\in\mathbb N$ such
that $\theta^hR< r_s\leq \theta^{h-1}R$. In particular, $\theta
^{h+k} R < s \leq \theta ^{h+k-1} R$ and $\theta ^{h+1}R < r \leq
\theta ^{h-1}R$. Therefore
\begin{multline}\label{august200}
\dashint_{B_r(x)}\abs{\bfA(\nabla \bfu)-\mean{\bfA(\nabla
\bfu)}_{B_s(x) }}\dy \leq \frac c{\theta
^n}\dashint_{B_{r_s}(x)}\abs{\bfA(\nabla \bfu)-\mean{\bfA(\nabla
\bfu)}_{B_s(x) }}\dy
\\  \leq
\frac c{\theta ^n}\dashint_{B_{r_s}(x)}\abs{\bfA(\nabla
\bfu)-\mean{\bfA(\nabla \bfu)}_{B_{r_s}(x) }}\dy + \frac c{\theta
^n} \abs{\mean{\bfA(\nabla \bfu)}_{B_{r_s}(x)}-\mean{\bfA(\nabla
\bfu)}_{B_{s}(x)}}
%\\ \nonumber & \leq
%\frac c{\theta ^n}\dashint_{B_{r_s}(x)}\abs{\bfA(\nabla
%\bfu)-\mean{\bfA(\nabla \bfu)}_{B_{\theta ^{h-1}R}(x) }}\dy + \frac
%c{\theta ^n} \abs{\mean{\bfA(\nabla
%\bfu)}_{B_{r_s}(x)}-\mean{\bfA(\nabla \bfu)}_{B_{s}(x)}}
\\\leq
\frac c{\theta ^{2n}}\dashint_{B_{\theta
^{h-1}R}(x)}\abs{\bfA(\nabla \bfu)-\mean{\bfA(\nabla
\bfu)}_{B_{\theta ^{h-1}R}(x) }}\dy + \frac c{\theta ^n}
\abs{\mean{\bfA(\nabla \bfu)}_{B_{r_s}(x)}-\mean{\bfA(\nabla
\bfu)}_{B_{\theta ^k r_s}(x)}}.
\end{multline}
%
%
%
%
%
%\begin{align*}
%&\dashint_{B_r(x)}\abs{\bfA(\nabla \bfu)-\mean{\bfA(\nabla \bfu)}_{B_s(x) }}\dy\leq \frac{c}{\theta^n} \dashint_{B_{r_1}(x)}\abs{\bfA(\nabla \bfu)-\mean{\bfA(\nabla \bfu)}_{B_{\theta^k r_1}(x) }}\dy\\
%&\quad\leq  \frac{1}{\theta^n} \dashint_{B_{r_1}(x)}\abs{\bfA(\nabla \bfu)-\mean{\bfA(\nabla \bfu)}_{B_{r_1}(x) }}\dy+\frac{1}{\theta^n} \abs{\mean{\bfA(\nabla \bfu)}_{B_{r_1}(x) }-\mean{\bfA(\nabla \bfu)}_{B_{\theta^k r_1}(x) }}\\
%&\quad\leq  \frac{1}{\theta^{2n}} \dashint_{B_{\theta^mR}(x)}\abs{\bfA(\nabla \bfu)-\mean{\bfA(\nabla \bfu)}_{B_{\theta^mR}(x) }}\dy+\frac{1}{\theta^n} \abs{\mean{\bfA(\nabla \bfu)}_{B_{r_1}(x) }-\mean{\bfA(\nabla \bfu)}_{B_{\theta^k r_1}(x) }}.
%\end{align*}
The last addend on the right-hand side of \eqref{august200} can be
estimated via
 \eqref{eq:Pf1.6 001} and~\eqref{eq:Pf1.6 002}, with $R$ replaced by $r_s$. As a consequence, we
 obtain that
\begin{align}\label{august201}
\dashint_{B_r(x)} & \abs{\bfA(\nabla \bfu)-\mean{\bfA(\nabla
\bfu)}_{B_s(x) }}\dy \leq c \int_0^{r_s} \bigg(
\dashint_{B_\varrho(x) }
 \bigg(\frac{\abs{ \bfF-\mean{\bfF}_{B_\varrho(x)} } }{\rho} \bigg)^{p'} \dy \bigg)^\frac{1}{p'}  \, d\varrho
 \\ \nonumber
&\quad +
 \bigg(\dashint_{B_{\theta^{h-1}R}(x)}\abs{\bfA(\nabla \bfu)-\mean{\bfA(\nabla \bfu)}_{B_{\theta^{h-1}R}(x) }}^{\min\set{p',2}}\dy\bigg)^\frac1{\min\set{p',2}}
\\ \nonumber & \quad
+ \bigg(\dashint_{B_{r_s}(x)}\abs{\bfA(\nabla
\bfu)-\mean{\bfA(\nabla \bfu)}_{B_{r_s}(x)
}}^{\min\set{p',2}}\dy\bigg)^\frac1{\min\set{p',2}}
\\ \nonumber &
 \leq c \int_0^{r_s} \bigg( \dashint_{B_\varrho(x) }
 \bigg(\frac{\abs{ \bfF-\mean{\bfF}_{B_\varrho(x)} } }{\rho} \bigg)^{p'} \dy \bigg)^\frac{1}{p'}  \, d\varrho .
\\ \nonumber
 &\quad  +
 \bigg(\dashint_{B_{\theta^{h-1}R}(x)}\abs{\bfA(\nabla \bfu)-\mean{\bfA(\nabla \bfu)}_{B_{\theta^{h-1}R}(x) }}^{\min\set{p',2}}\dy\bigg)^\frac1{\min\set{p',2}}
\\ \nonumber & \quad
+ \bigg(\frac c{\theta
^n}\dashint_{B_{\theta^{h-1}R}(x)}\abs{\bfA(\nabla
\bfu)-\mean{\bfA(\nabla \bfu)}_{B_{\theta^{h-1}R}(x)
}}^{\min\set{p',2}}\dy\bigg)^\frac1{\min\set{p',2}}.
\end{align}
Since $\theta ^{h-1}R \leq \frac r\theta$ and $r_s \leq \frac
r\theta$, equations \eqref{august201} and \eqref{eq:Pf1.6 003}
ensure that, given any $\epsilon>0$ there exists  $r_\epsilon>0$
such that, if $0<s<r<r_\epsilon$, then
\begin{equation}\label{june50}
  \Bigabs{\dashint_{B_s(x)}\bfA(\nabla \bfu)\dy-\dashint_{B_r(x)}\bfA(\nabla \bfu)\dy}\leq
     \dashint_{B_r(x)}\abs{\bfA(\nabla \bfu)-\mean{\bfA(\nabla \bfu)}_{B_s(x) }}\dy<\epsilon.
\end{equation}
%and moreover
%\begin{align*}
%\Bigabs{\dashint_{B_s(x)}\bfA(\nabla \bfu)\dy-\dashint_{B_r(x)}\bfA(\nabla \bfu)\dy}\leq \dashint_{B_r(x)}\abs{\bfA(\nabla \bfu)-\mean{\bfA(\nabla \bfu)}_{B_s(x) }}\dy<\epsilon.
%\end{align*}
This shows that the function  $r \mapsto
\dashint_{B_r(x)}\bfA(\nabla \bfu)\dy$ satisfies the Cauchy
property, whence there exists its limit as $r\to 0^+$, and is
finite. On denoting by $\bfA(\nabla \bfu (x))$ such limit, we infer
from \eqref{june50} that
%
%
%Thus, $x$ is a Lebesgue point of $\bfA(\nabla \bfu)$, and
%\begin{equation}\label{august202}\lim _{r \to 0^} \dashint_{B_r(x)}\bfA(\nabla \bfu)\dy= \bfA(\nabla \bfu)
%(x),\end{equation}
% where  $\bfA(\nabla \bfu)(x)$ denotes the precise
%representative of $\bfA(\nabla \bfu)$ at $x$.
%
%
%
%is converging to what is then reasonably defined as $A(\nabla
%\bfu(x))$. Now \eqref{june50} implies that
\begin{equation}\label{august203}
\lim_{r\to 0}\dashint_{B_r(x)}\abs{\bfA(\nabla \bfu(y))-\bfA(\nabla \bfu(x))}dy=0,
\end{equation}
and hence $x$ is a Lebesgue point of $\bfA(\nabla \bfu)$.
 %This implies by \eqref{eq:Pf1.6 003}, that for $\epsilon>0$ there exists a $r_\epsilon>0$,such that for $0<s<r<r_\epsilon$
%\begin{equation}\label{june50}
%\dashint_{B_r(x)}\abs{\bfA(\nabla \bfu)-\mean{\bfA(\nabla \bfu)}_{B_s(x) }}\dy<\epsilon.
%\end{equation}
%and moreover
%\begin{align*}
%\Bigabs{\dashint_{B_s(x)}\bfA(\nabla \bfu)\dy-\dashint_{B_r(x)}\bfA(\nabla \bfu)\dy}\leq \dashint_{B_r(x)}\abs{\bfA(\nabla \bfu)-\mean{\bfA(\nabla \bfu)}_{B_s(x) }}\dy<\epsilon.
%\end{align*}
%Consequently we find (by the Couchy principle) that
%$\dashint_{B_s(x)}\bfA(\nabla \bfu)\dy$ is converging to what is then reasonably defined as $A(\nabla \bfu(x))$. Now \eqref{june50} implies that
%\begin{equation*}
%\lim_{r\to 0}\dashint_{B_r(x)}\abs{\bfA(\nabla \bfu(y))-\bfA(\nabla \bfu(x))}dy=0,
%\end{equation*}
%and the Lebesgue point is established.
  \\ On making use of  inequalities ~\eqref{eq:Pf1.6 001}
and~\eqref{eq:Pf1.6 002}, and of H\"older's inequality, and passing
to the limit as  $k\to\infty$, one can show that
\begin{multline} \label{eq:Pf1.6 004}
\Bigabs{\bfA(\nabla \bfu(x))  - \dashint_{B_R(x)}\bfA(\nabla
\bfu)\dy }
 \leq c \int_0^R \bigg( \dashint_{B_\varrho(x) }
 \bigg(\frac{\abs{ \bfF-\mean{\bfF}_{B_\varrho(x)} } }{\rho} \bigg)^{p'} \dy \bigg)^\frac{1}{p'}  \, d\varrho\\
 +
c  \bigg(\dashint_{B_R(x)}\abs{\bfA(\nabla \bfu)-\mean{\bfA(\nabla
\bfu)}_{B_R(x) }}^{\min\set{p',2}}\dy\bigg)^\frac1{\min\set{p',2}}
\end{multline}
for every $x \in \Omega$ such that
$B_R(x) \subset \Omega$ and \eqref{august203} holds. An application
of inequality \eqref{eq:Pf1.6 004} with $R$ replaced with $\tfrac
R2$, and of Corollary~\ref{cor:VPL1} with $\bfP=0$ and
$B=B_\frac{R}{2}(x)$, tell us that
\begin{align}\label{june48}
& \abs{\bfA(\nabla \bfu(x))}\\ \nonumber
 &\leq c\bigg(\dashint_{B_\frac{R}{2}(x)}\abs{\bfA(\nabla\bfu)}^{\min\set{p',2}}\dy\bigg)^\frac1{\min\set{p',2}}+c \int_0^R\bigg( \dashint_{B_\varrho(x) }
 \bigg(\frac{\abs{ \bfF-\mean{\bfF}_{B_\varrho(x)} } }{\rho} \bigg)^{p'} \dy \bigg)^\frac{1}{p'}  \,
 d\varrho
 \\ \nonumber
& \leq c\dashint_{B_R(x)}\abs{\bfA(\nabla\bfu)}\dy+c \int_0^R\bigg(
\dashint_{B_\varrho(x) }
 \bigg(\frac{\abs{ \bfF-\mean{\bfF}_{B_\varrho(x)} } }{\rho} \bigg)^{p'} \dy \bigg)^\frac{1}{p'}  \, d\varrho
\\ \nonumber & \quad
+c \bigg(\dashint_{B_R(x) }
 \abs{ \bfF-\mean{\bfF}_{B_R(x)} }^{p'} \dy \bigg)^\frac{1}{p'}
\end{align}
for a.e. $x \in \Omega$ such that $B_R(x) \subset \Omega$. Since
$\abs{\bfA(\nabla \bfu(x))} = |\nabla \bfu (x)|^{p-1}$, and the last
term in \eqref{june48} can be estimated (up to a multiplicative
constant) by the last but one, inequality \eqref{E:main3} follows.
\end{proof}

\section{Estimates in norms depending on the size of functions}\label{rearr}

Here, we are concerned with gradient estimates for solutions to
\eqref{eq:sysA} involving rearrangement-invariant norms. Loosely
speaking, a  rearrangement-invariant norm is a norm on the space of
measurable  functions which only depends on their \lq\lq size", or,
more  precisely, on the measure of their level sets. In particular,
a \lq\lq reduction principle" is established via Theorem
\ref{thm:main}, which turns the problem of bounds for this kind of
norms of the gradient in terms of norms of the same kind of the
right-hand side  into a couple of one-dimensional Hardy type
 inequalities. This general principle is then specialized to various
 customary classes of norms.
 \\ For ease of presentation, here we limit our discussion to local
 solutions to \eqref{eq:sysA} when $\Omega = \Rn$. Analogous results
 hold, however, in arbitrary open sets $\Omega$.
\par
Let $f : \Omega \to \mathbb R$ be a measurable function. We denote
by $f^* :[ 0, \infty)\to [0, \infty ]$ the decreasing rearrangement
of $f$, defined as
 $$ f^* (s) = \inf\{ t\geq 0 : |\{x \in \Omega :|f(x)|
>t\}|\leq s\} \qquad {\rm {for}}\, \, s \geq 0.$$
Moreover, we set $$f^{**}(s) = \frac 1s\int _0^sf^*(r)dr \quad
\hbox{for $s>0$.}$$
 In other words, $f^*$ is the (unique) non
increasing, right-continuous function in $[0, \infty )$
equimeasurable with $f$, and $f^{**}$ is a maximal function of
$f^*$. Observe that
\begin{equation}\label{ff}
f^*(s) \leq f^{**}(s) \quad \hbox{for $s >0$.}
\end{equation}

\par
We say that $\| \cdot \|_{X(0, \infty)}$ is a
rearrangement-invariant functional defined on the set of real-valued
measurable functions on $(0, \infty)$  if it takes values into $[0,
\infty ]$ and satisfies the following properties:
\par\noindent
(i) $\| \varphi \|_{X(0, \infty)}\leq \| \psi \|_{X(0, \infty)}$
\quad whenever $0 \leq \varphi \leq \psi$ a.e. in $(0, \infty)$,
\par\noindent (ii) $\| \varphi \|_{X(0, \infty)}= \| \psi \|_{X(0, \infty)}$ \quad
whenever $\varphi^* = \psi^*$.
\par
Let $m \in \mathbb N$. We say that $\| \cdot \|_{X(\Omega )}$ is a
rearrangement-invariant functional on the set of measurable
functions from $\Omega$ into $\mathbb R^m$ if there exists a
rearrangement-invariant functional $\| \cdot \|_{ X(0, \infty)}$ on
$(0, \infty)$ such that
\begin{equation}\label{ri}
\| \bff \|_{X(\Omega )} = \| |\bff|^* \|_{X(0, \infty)}
\end{equation}
for every such function $\bff$.
\par
A rearrangement-invariant functional $\|\cdot\|_{X(0,\infty)}$ is
called a rearrangement-invariant function norm, if, for every
$\varphi$, $\psi$ and $\{\varphi_j\}_{j\in\mathbb N}$, and every
$\lambda \geq 0$, the following additional properties hold:
\begin{itemize}
\item[(P1)]\qquad $\|\varphi\|_{X(0,\infty)}=0$ if and only if $\varphi=0$;
$\|\lambda \varphi\|_{X(0,\infty)}=
\lambda\|\varphi\|_{X(0,\infty)}$;
\par\noindent \qquad $\|\varphi+\psi\|_{X(0,\infty)}\leq \|\varphi\|_{X(0,\infty)}+
\|\psi\|_{X(0,\infty)}$;
%\item[(P2)]\qquad $  f \le g$ a.e.\  implies $\|f\|_{X(0,\infty)} \le
%\|g\|_{X(0,\infty)}$;
\item[(P2)]\qquad $  \varphi_j \nearrow \varphi$ a.e.\ implies
$\|\varphi_j\|_{X(0,\infty)} \nearrow \|\varphi\|_{X(0,\infty)}$;
\item[(P3)]\qquad $\|\chi _E\|_{X(0,\infty)}<\infty$ if
$E\subset (0, \infty)$ with $|E|<\infty$;
\item[(P4)]\qquad
$\int_E \varphi(s)\,ds \le C \|\varphi\|_{X(0,\infty)}$ if $E\subset
(0, \infty)$ with $|E|<\infty$, for some constant $C$ independent of
$\varphi$.
\end{itemize}

If $\| \cdot \|_{X(\Omega )}$ is a rearrangement-invariant
functional built upon a rearrangement-invariant function norm
$\|\cdot\|_{X(0,\infty)}$, we denote
 by  $X(\Omega)$ the  collection of all  measurable functions   $\bff : \Omega \to \mathbb R^m$ such
that $\| \bff \|_{X(\Omega )} < \infty$. The functional $\| \cdot
\|_{X(\Omega )}$ defines
 a norm on $X(\Omega)$, and the latter  is a Banach space
endowed with this norm, which is called a~rearrangement-invariant
space. The function norm $\|\cdot\|_{X(0,\infty)}$ is called a
representation norm of  $\| \cdot \|_{X(\Omega )}$. In particular,
$X(0,\infty)$ is  a rearrangement-invariant space itself.
%
%
% We also denote by $X_{\loc}(\Omega,\nu)$ the space
%of all functions  $u \in\M(\Omega , \nu)$ such that $u\chi _G \in
%X(\Omega , \nu)$ for every compact set $G \subset \Omega$. Here,
%$\chi _G$ denotes the characteristic function of  $G$.
%
%
%
%
%
%
%A Banach function space  $X(\Omega)$ on a measurable subset $\Omega
%$ of $\mathbb R^n$ is a linear space of measurable functions on
%$\Omega$ equipped with a norm $\| \cdot \|_{X(\Omega )}$, enjoying
%the following properties:
%\begin{itemize}
%\item[(i)]\, $\|f \|_{X(\Omega)}
%>0$ if $f \neq 0$; \\ $\|\lambda f\|_{X(\Omega)} = |\lambda | \|f \|_{X(\Omega )}$  for
%every $\lambda \in \mathbb R$ and $f \in X(\Omega)$;\\
%$\|f+g\|_{X(\Omega)} \leq \|f\|_{X(\Omega)} + \|g\|_{X(\Omega)}$
%for every $f, g \in X(\Omega)$;
%\item[(ii)] \quad $0\le |g| \le |f|$ a.e. in $\Omega$ implies
%$\|g\|_{X(\Omega )} \le \|f\|_{X(\Omega)}$;
%\item[(iii)]\quad $0\le f_k \nearrow f$ a.e.\ implies $\|
%f_k\|_{X(\Omega )} \nearrow \|f\|_{X(\Omega )}$ as $k\to \infty$;
%\item[(iv)]\quad if $G$ is a measurable subset of $\Omega$ and
%$|G| < \infty $, then $\|\chi _G\|_{X(\Omega)}<\infty$;
%\item[(v)]\quad for every   measurable subset $G$ of $\Omega$ with
%$|G| < \infty $, there exists a constant $C$  such that $\int_{G}
%|f|dx \le C \|f\|_{X(\Omega)}$ for every $f\in X(\Omega)$.
%\end{itemize}

 \par
 Given   a measurable subset $G$ of $\Omega$,  we denote by $\chi _{G}$  the characteristic function of
 $G$, and    define  $$\| \bff  \|_{X(G)} = \| \bff \chi _{G}  \|_{X(\Omega)}$$   for any measurable function $\bff : \Omega \to \mathbb R^m$.
\par\noindent
Moreover, we denote by $X_{\rm loc}(\Omega )$ the space of
measurable functions $\bff$  such that $\| \bff  \|_{X(G)}< \infty$
for every compact set $G \subset \Omega$.
\\
Given a rearrangement-invariant functional $\|\cdot \|_{X(\Omega)}$
and any number $q \in (0, \infty)$, the functional
 $\|\bff \|_{X^q(\Omega)}$, defined as
 \begin{equation}\label{Xq}
\|\bff \|_{X^q(\Omega)} = \||\bff|^q \|_{X(\Omega)}^{\frac 1q}
\end{equation}
for any measurable function $\bff : \Omega \to \mathbb R^m$, is also
a rearrangement-invariant functional. Moreover, if  $\|\cdot
\|_{X(\Omega)}$ is a rearrangement-invariant norm, and $q \geq 1$,
then $\|\cdot \|_{X^q(\Omega)}$ is a rearrangement-invariant norm as
well~\cite{MP}.

\begin{theorem}\label{reduction}{\bf [Reduction to one-dimensional
inequalities]} Let $\|\cdot \|_{X(\mathbb R^n)}$ and $\|\cdot
\|_{Y(\mathbb R^n)}$ be rearrangement-invariant functionals. Assume
that there exists a constant $c$ such that
\begin{equation}\label{reduction0}
\bigg\|\frac 1s\int _0^s \varphi (r)dr\bigg\|_{X^{\frac 1{p'}} (0,
\infty)} \leq c \|c\varphi \|_{ X^{\frac 1{p'}}(0, \infty)}
\end{equation}
and
\begin{equation}\label{reduction1}
\bigg\|\int _s^\infty \varphi (r)\frac{dr}{r}\bigg\|_{ Y (0,
\infty)} \leq c \|c\varphi \|_{ X(0, \infty)}
\end{equation}
for every nonnegative function $\varphi \in
 X(0, \infty)$. Then
there exists a constant $c'=c'(n, N, p, c)$ such that
\begin{equation}\label{reduction2}
\||\nabla \bfu|^{p-1}\|_{Y (\mathbb R^n)} \leq c' \|c'
\bfF\|_{X(\mathbb R^n)}
\end{equation}
for every  local weak solution $\bfu\in V^{1,p}(\setR^n)$ to  system
\eqref{eq:sysA} with $\Omega = \Rn$.
\end{theorem}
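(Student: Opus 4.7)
The plan is to combine the pointwise sharp maximal bound of Theorem~\ref{thm:main} with a Bennett-DeVore-Sharpley style rearrangement inequality, and then push the resulting estimate through the two one-dimensional Hardy-type inequalities~\eqref{reduction0} and~\eqref{reduction1}.

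First, I would invoke Theorem~\ref{thm:main}, which gives $M^\sharp(\bfA(\nabla \bfu))(x)\le c\,M^{\sharp,p'}\bfF(x)$ a.e.\ in $\Rn$. Since $\bfu\in V^{1,p}(\Rn)$, one has $|\bfA(\nabla\bfu)|^{p'}=|\nabla\bfu|^p\in L^1(\Rn)$, so $\bfA(\nabla\bfu)$ has level sets of finite measure and $(\bfA(\nabla\bfu))^{**}(s)\to 0$ as $s\to\infty$. The Bennett-DeVore-Sharpley inequality $f^{**}(s)-f^*(s)\le c(M^\sharp f)^{**}(s)$, together with the identity $\tfrac{d}{ds}f^{**}(s)=-\tfrac{1}{s}\bigl(f^{**}(s)-f^*(s)\bigr)$ and integration on $(s,\infty)$, then yields
\[
(\bfA(\nabla\bfu))^{**}(s)\le c\int_s^\infty (M^{\sharp,p'}\bfF)^{**}(t)\,\frac{dt}{t}.
\]

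Next I would control $(M^{\sharp,p'}\bfF)^{**}$ in terms of $\bfF$ alone. The pointwise bound $M^{\sharp,p'}\bfF(x)\le c\bigl(M|\bfF|^{p'}(x)\bigr)^{1/p'}$ together with the classical estimate $(Mg)^*(s)\le c\,g^{**}(s)$ gives $(M^{\sharp,p'}\bfF)^*(s)\le c\,H(s)$, where $H(s):=\bigl((|\bfF|^{p'})^{**}(s)\bigr)^{1/p'}$ is nonincreasing. Averaging once more and applying Jensen's inequality (valid since $p'\ge1$) produces
\[
(M^{\sharp,p'}\bfF)^{**}(t)\le c\,H^{**}(t)\le c\bigl(((|\bfF|^{p'})^{**})^{**}(t)\bigr)^{1/p'}.
\]

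Finally, I would assemble everything. Applying \eqref{reduction1} to $\varphi=(M^{\sharp,p'}\bfF)^{**}$ yields $\|(\bfA(\nabla\bfu))^{**}\|_{Y(0,\infty)}\le c\|(M^{\sharp,p'}\bfF)^{**}\|_{X(0,\infty)}$. Combining with the previous display, the identity $\|g^{1/p'}\|_X=\|g\|_{X^{1/p'}}^{1/p'}$, and two successive applications of \eqref{reduction0} (first to $\varphi=(|\bfF|^{p'})^*$, giving $\|(|\bfF|^{p'})^{**}\|_{X^{1/p'}}\le c\|\bfF\|_{X}^{p'}$, then to $\varphi=(|\bfF|^{p'})^{**}$), the right-hand side is dominated by $\|\bfF\|_{X(\Rn)}$. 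Since $(\bfA(\nabla\bfu))^*(s)\le(\bfA(\nabla\bfu))^{**}(s)$ and $\|\cdot\|_Y$ is rearrangement invariant, this chain delivers \eqref{reduction2}. The main obstacle is the rearrangement inequality of Bennett-DeVore-Sharpley, together with the observation that the hypothesis $\bfu\in V^{1,p}(\Rn)$ is precisely what forces vanishing at infinity of $\bfA(\nabla\bfu)$ and therefore licenses the integration producing a weighted Hardy operator in the exact shape demanded by~\eqref{reduction1}; the rest is careful bookkeeping between the $X$ and $X^{1/p'}$ norms.
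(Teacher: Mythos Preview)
Your argument is correct and follows the same overall strategy as the paper: combine the pointwise bound from Theorem~\ref{thm:main} with the Bennett--DeVore--Sharpley inequality $f^{**}-f^{*}\le c\,(M^\sharp f)^{*}$, then feed the result through the two Hardy-type hypotheses~\eqref{reduction0} and~\eqref{reduction1}.

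The execution differs in a couple of minor ways worth noting. The paper isolates the passage from control of $\psi^{**}-\psi^{*}$ to control of $\psi^{*}$ in a separate Proposition~\ref{hardy}, proved via a Radon-measure representation of $\psi^{*}$ and an integration-by-parts argument; you obtain the same conclusion in one line by integrating the identity $\tfrac{d}{ds}f^{**}(s)=-\tfrac1s\bigl(f^{**}(s)-f^{*}(s)\bigr)$ over $(s,\infty)$, which is arguably cleaner. On the other hand, because you insert the bound $f^{**}-f^{*}\le c\,(M^{\sharp,p'}\bfF)^{**}$ \emph{before} applying~\eqref{reduction1}, you pick up an extra averaging and must invoke~\eqref{reduction0} twice; the paper uses the sharper form with $(M^{\sharp,p'}\bfF)^{*}$ and only needs~\eqref{reduction0} once. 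Finally, the paper applies~\eqref{rearrsharprn} to each scalar component $|\nabla\bfu|^{p-2}u^j_{x_i}$, whereas you implicitly work with $|\bfA(\nabla\bfu)|=|\nabla\bfu|^{p-1}$; this is fine once one observes $M^\sharp|\bff|\le 2\,M^\sharp\bff$, but you should make that step explicit.
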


\bigskip
\par\noindent
\begin{remark}\label{remjuly}{\rm
Let us briefly comment on assumptions \eqref{reduction0} and
\eqref{reduction1} in Theorem \ref{reduction}. The first one amounts
to requiring that the functional $\|\cdot \|_{X(\mathbb R^n)}$ is
stronger, in a qualified sense, than  $\|\cdot \|_{L^{p'}(\mathbb
R^n)}$. As recalled in Section \ref{intro}, this is a borderline
norm for estimates for the gradient to  weak solutions to system
\eqref{eq:sysA}. Assumption \eqref{reduction1} concerns, instead,
the opposite endpoint in the scale of admissible gradient estimates,
which fail if the norm $\|\cdot \|_{X(\mathbb R^n)}$ is too strong,
namely if the latter is \lq\lq too close" to $\|\cdot
\|_{L^\infty(\mathbb R^n)}$. In particular, if this is the case,
\eqref{reduction1} tells us  how much the $\|\cdot \|_{Y(\mathbb
R^n)}$ norm of $|\nabla \bfu|^{p-1}$ has to be weaker than the
$\|\cdot \|_{X(\mathbb R^n)}$ norm of $|\bfF|$, for inequality
\eqref{reduction2} to hold. }
\end{remark}

\medskip
\par\noindent

The proof of Theorem \ref{reduction} requires the following
proposition.

\medskip
\par\noindent

\begin{proposition}\label{hardy}
Let $\|\cdot \|_{X(\mathbb R^n)}$ and $\|\cdot \|_{Y(\mathbb R^n)}$
be rearrangement-invariant functionals. Assume that inequality
\eqref{reduction1} holds. Then
\begin{equation}\label{hardy1}
\| \psi ^*\|_{ Y (0, \infty)} \leq c \|c (\psi^{**} - \psi^*) \|_{
X(0, \infty)}
\end{equation}
for every nonnegative measurable function $\psi$ in
 $(0, \infty)$ such that $\psi < \infty$ a.e. and $\lim _{s \to \infty}\psi ^*(s) =0$. Here,
 $c$ denotes the constant appearing in \eqref{reduction1}.
 \end{proposition}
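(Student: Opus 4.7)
The plan is to reduce the estimate to applying hypothesis \eqref{reduction1} to the function $\varphi=\psi^{**}-\psi^{*}$, via the classical identity expressing $\psi^{*}$ as an integral of $(\psi^{**}-\psi^{*})/r$ over the tail. Once this pointwise comparison is in hand, the proof is just one line: apply monotonicity of the rearrangement-invariant functional $\|\cdot\|_{Y(0,\infty)}$ and then hypothesis \eqref{reduction1}.

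The starting point is the elementary identity
\[
\frac{d}{ds}\psi^{**}(s) \;=\; \frac{d}{ds}\bigg(\frac{1}{s}\int_{0}^{s}\psi^{*}(r)\,dr\bigg) \;=\; \frac{\psi^{*}(s)-\psi^{**}(s)}{s}\qquad\text{for a.e.\ } s>0,
\]
which in view of \eqref{ff} shows that $\psi^{**}$ is non-increasing. Integrating from $s$ to $T$ yields
\[
\psi^{**}(s)-\psi^{**}(T)\;=\;\int_{s}^{T}\frac{\psi^{**}(r)-\psi^{*}(r)}{r}\,dr.
\]
The key step is to let $T\to\infty$ and verify that $\psi^{**}(T)\to 0$, after which \eqref{ff} gives the pointwise inequality
\begin{equation}\label{eq:plan-pointwise}
\psi^{*}(s)\;\leq\;\psi^{**}(s)\;=\;\int_{s}^{\infty}\frac{\psi^{**}(r)-\psi^{*}(r)}{r}\,dr\qquad\text{for a.e.\ } s>0.
\end{equation}

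To justify $\lim_{T\to\infty}\psi^{**}(T)=0$, I would argue as follows. Without loss of generality, the right-hand side of \eqref{hardy1} is finite, so by property (P4) $\psi^{**}-\psi^{*}$ is locally integrable, whence $\psi^{**}$ is finite at some point $s_{0}>0$; being non-increasing, $\psi^{**}$ then admits a finite limit $L\geq 0$ at $\infty$. For $s>s_{0}$ one writes
\[
\psi^{**}(s)\;=\;\frac{1}{s}\int_{0}^{s_{0}}\psi^{*}(r)\,dr+\frac{1}{s}\int_{s_{0}}^{s}\psi^{*}(r)\,dr,
\]
and the assumption $\psi^{*}(r)\to 0$ as $r\to\infty$ implies, by a standard $\varepsilon$-splitting of the second integral, that $\limsup_{s\to\infty}\psi^{**}(s)\leq\varepsilon$ for every $\varepsilon>0$, so $L=0$.

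With \eqref{eq:plan-pointwise} established, the proof concludes by applying $\|\cdot\|_{Y(0,\infty)}$ to both sides, using monotonicity (property (i) of a rearrangement-invariant functional), and then invoking assumption \eqref{reduction1} with the nonnegative function $\varphi(r)=\psi^{**}(r)-\psi^{*}(r)$:
\[
\|\psi^{*}\|_{Y(0,\infty)}\;\leq\;\bigg\|\int_{s}^{\infty}\frac{\psi^{**}(r)-\psi^{*}(r)}{r}\,dr\bigg\|_{Y(0,\infty)}\;\leq\;c\,\|c(\psi^{**}-\psi^{*})\|_{X(0,\infty)},
\]
which is \eqref{hardy1}. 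The main obstacle is the limit argument for $\psi^{**}$ at infinity; the rest is essentially bookkeeping built upon the Hardy-type inequality supplied by the hypothesis.
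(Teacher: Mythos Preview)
Your argument is essentially correct and reaches the same destination as the paper's proof: both establish the pointwise inequality
\[
\psi^{*}(s)\;\leq\;\int_{s}^{\infty}\frac{\psi^{**}(r)-\psi^{*}(r)}{r}\,dr
\]
and then apply hypothesis \eqref{reduction1} with $\varphi=\psi^{**}-\psi^{*}$. The paper gets there by representing $\psi^{*}(s)=\int_{s}^{\infty}d\nu(r)$ via a Radon measure, substituting $d\mu=r\,d\nu$, and integrating by parts; you get there by differentiating $\psi^{**}$ directly and integrating back. Under the translation $\tfrac{\mu(0,r)}{r}=\psi^{**}(r)-\psi^{*}(r)$, the paper's inequality \eqref{hardy9} is exactly your pointwise bound, and its limit $\lim_{t\to\infty}\tfrac{\mu(0,t)}{t}=0$ is equivalent to your $\psi^{**}(T)\to 0$. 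Your Ces\`aro-mean route is arguably the more transparent of the two.

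There is, however, one small gap worth flagging. You invoke property (P4) to deduce local integrability of $\psi^{**}-\psi^{*}$ and hence finiteness of $\psi^{**}$ at some point. But the statement assumes only that $\|\cdot\|_{X}$ and $\|\cdot\|_{Y}$ are rearrangement-invariant \emph{functionals} (properties (i)--(ii) in the paper), not rearrangement-invariant function \emph{norms}; (P4) is not part of the hypotheses. The fix is painless: either $\psi^{**}$ is finite everywhere on $(0,\infty)$, in which case your Ces\`aro argument applies verbatim (you only need $\int_{0}^{s_{0}}\psi^{*}<\infty$ for one $s_{0}$, which is exactly finiteness of $\psi^{**}(s_{0})$), or $\psi^{**}\equiv\infty$, in which case $\psi^{**}-\psi^{*}\equiv\infty$ and the pointwise inequality holds trivially. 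Replacing the appeal to (P4) by this dichotomy closes the gap.
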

\par\noindent
\begin{proof} Given any function $\psi$ as in the statement, there exists a
nonnegative Radon measure $\nu$ on $(0, \infty)$ such that
\begin{equation}\label{hardy2}
\psi ^* (s) = \int _s^\infty d\nu (r) \quad \hbox{for $s > 0$.}
\end{equation}
%\seb{Seb: Do we not need that $\psi^*<\infty$?}
Thus, by Fubini's Theorem,
\begin{equation}\label{hardy3}
\psi^{**}(s) - \psi^*(s) = \frac 1s \int_0^s r d \nu (r) \quad
\hbox{for $s > 0$.}
\end{equation}
By \eqref{hardy2} and \eqref{hardy3}, inequality \eqref{hardy1} will
follow if we show that
\begin{equation}\label{hardy4}
\bigg\|  \int _s^\infty d\nu (r)\bigg\|_{ Y (0, \infty)} \leq c
\bigg\| \frac cs \int_0^s r d \nu (r) \bigg\|_{ X(0, \infty)}
\end{equation}
for every nonnegative Radon measure $\nu$ in $(0, \infty)$ such that
$ \int _s^\infty d\nu (r) < \infty$ for $s >0$, or, equivalently,
that
\begin{equation}\label{hardy5}
\bigg\|  \int _s^\infty \frac{d\mu (r)}{r}\bigg\|_{ Y (0, \infty)}
\leq c \bigg\| \frac cs \int_0^s  d \mu (r) \bigg\|_{ X(0, \infty)}
\end{equation}
for every nonnegative Radon measure $\mu$ on $(0, \infty)$ such that
\begin{equation}\label{hardy6}
\int _s^\infty \frac{d\mu (r)}{r} <\infty \quad \hbox{for $s >0$.}
\end{equation}
In order to prove \eqref{hardy5}, set
$$\mu (0, s) = \int _0^sd\mu(r) \quad \hbox{for $s >0$,}$$
and observe that integration by parts yields
\begin{equation}\label{hardy7}
\int _s^t \frac{d\mu (r)}{r}= \frac{\mu (0,t)}{t} - \frac{\mu
(0,s)}{s} + \int _s^t \frac{\mu (0, r)}{r^2}dr \quad \hbox{if
$0<s<t$.}
\end{equation}
 By \eqref{hardy6} and \eqref{hardy7}, $\lim _{t\to \infty} \frac{\mu
(0,t)}{t}$ exists,  is finite, and
\begin{equation}\label{hardy8}
\int _s^\infty \frac{\mu (0, r)}{r^2}dr < \infty.
\end{equation}
From equations \eqref{hardy6}--\eqref{hardy8} one can deduce that
$\lim _{t\to \infty} \frac{\mu (0,t)}{t}=0$.
 Thus, passing to the limit as $t \to \infty$ in  \eqref{hardy7}
 tells us that
\begin{equation}\label{hardy9}
\int _s^\infty \frac{d\mu (r)}{r} \leq \int _s^\infty \frac{\mu (0,
r)}{r^2}dr \quad \hbox{for $s >0$.}
\end{equation}
Hence, inequality \eqref{hardy5} is a consequence of the fact  that
\begin{equation}\label{hardy10}
\bigg\|  \int _s^\infty \frac{g(r)}{r^2}dr\bigg\|_{ Y (0, \infty)}
\leq c \bigg\|c \frac {g(s)}s \bigg\|_{ X(0, \infty)}
\end{equation}
for every nonnegative measurable function $g$ in $(0, \infty)$,
which, in turn, follows from  \eqref{reduction1}.
\end{proof}

Upper and lower estimates, in rearrangement form, for sharp maximal
function operators, also play a key role  in the proof of Theorem
\ref{reduction}.
 Given $q
\geq 1$ and any function $\bff : \Rn \to \mathbb R^m$ such that
$\bff \in L^q _{\rm loc} (\Rn)$, the classical maximal function $M^q
\bff : \Rn \to [0, \infty]$ is defined as
$$M^q \bff(x) =  \sup _{B \ni x} \bigg(\dashint _B |\bff|^q \,
dy\bigg)^{\frac 1q} \quad \hbox{for $x \in \Rn$.}$$ Clearly,
\begin{equation}\label{janA}
M^{\sharp , q} \bff(x) \leq 2 M^q \bff (x) \quad \hbox{for $x \in
\Rn$}
\end{equation}
for any such function $\bff$. Furthermore, Riesz' inequality
\cite[Theorem 3.8, Chapter 3]{BS} tells us that there exists a
constant $C=C(n)$ such that
\begin{equation}\label{janB}
(M^q \bff)^* (s) \leq C (|\bff|^q)^{**}(s)^{\frac 1q} \quad
\hbox{for $s > 0$,}
\end{equation}
for $\bff \in L^q _{\rm loc} (\Rn)$. Coupling inequalities
\eqref{janA} and \eqref{janB} tells us that
\begin{equation}\label{janC}
(M^{\sharp , q} \bff)^* (s) \leq 2C  (|\bff|^q)^{**}(s)^{\frac 1q}
\quad \hbox{for $s > 0$,}
\end{equation}
for $\bff \in L^q _{\rm loc} (\Rn )$.
\iffalse
\\ As far as a lower estimate is concerned, given a cube $Q \subset \Rn$,
denote by  $f_Q^\sharp$ the sharp maximal function of a function $f
\in L^1_{\rm loc}(Q)$ defined as
$$f_Q^\sharp (x) = \sup _{Q' \ni x, \, Q' \subset Q} \dashint _{Q'} |f - \langle f
\rangle _{Q'}|\, dy \quad \hbox{for $x \in Q$,}$$ where $Q'$ stands
for a cube. By \cite[???]{BS}, there exists a constant $C=C(n)$ such
that
%
%
%
%
%
%An estimate from below for the rearrangement of the sharp maximal
%function on a cube is proved in \cite{??}, and tells us that if
%$f_Q^\sharp$ denotes the sharp maximal function of a function on a
%cube $Q$ (defined by taking the supremum among cubes contained in
%$Q$), then there exists a constant $C=C(n)$ such that
\begin{equation}\label{rearrsharp}
f^{**}(s)-f^*(s) \leq C (f_Q^\sharp)^*(s) \quad \hbox{for $0 < s <
\tfrac{|Q|}{6}$.}
\end{equation}
%\textcolor[rgb]{0.00,0.00,1.00}{Pb.: Is the same true if cubes are
%replaced with balls? }
%
%\bigskip
%\par\noindent
On taking a sequence of cubes $Q_k$ invading $\mathbb R^n$ in
\eqref{rearrsharp},  passing to the limit as $k \to \infty$, and
observing that replacing balls  by cubes in the definition of
$M^\sharp (f)$  turns into an equivalent function (up to
multiplicative constants), tell us that
 \fi
\\ On the other hand, as a consequence of \cite[Theorem 7.3, Chapter
5]{BS}, one can show that
\begin{equation}\label{rearrsharprn}
f^{**}(s)-f^*(s) \leq C (M^\sharp f)^*(s) \quad \hbox{for $s
>0$,}
\end{equation}
for every locally integrable function $f : \Rn \to \mathbb R$.

\smallskip
\par
We are now ready to prove Theorem \ref{reduction}.

\smallskip
\par\noindent
\begin{proof}[Proof of Theorem \ref{reduction}]
 By inequality \eqref{janC},
\begin{equation}\label{red5}
(M^{\sharp , p'}(\bfF))^*(s) \leq C
\Big[\big(|\bfF|^{p'})^{**}(s)\Big]^{\frac 1{p'}} \quad \hbox{for $s
> 0$}
\end{equation}
for some constant $C=C(n)$.
 Next, set $\bfu = (u^1, \cdots , u^N)$. Owing to inequality \eqref{rearrsharprn},
applied with $f = |\nabla \bfu|^{p-2}u_{x_i}^j$, for $i=1, \dots ,
n$, $j=1, \dots , N$, one has that
\begin{align}\label{red9}
\big(|\nabla \bfu|^{p-2}u_{x_i}^j\big)^{**}(s) - \big(|\nabla
\bfu|^{p-2}u_{x_i}^j\big)^{*}(s) & \leq C
\big(M^\sharp (|\nabla \bfu|^{p-2}u_{x_i}^j)\big)^*(s) \\
\nonumber & \leq C \big(M^\sharp(|\nabla \bfu|^{p-2}\nabla
\bfu)\big)^*(s)
 \quad \hbox{for $s > 0$.}
\end{align}
Inequality \eqref{main1} implies that
\begin{equation}\label{janF}
M^\sharp(|\nabla \bfu|^{p-2}\nabla \bfu)^*(s) \leq C M^{\sharp ,
p'}(\bfF)^*(s)  \quad \hbox{for $s
> 0$.}
\end{equation}
Combining inequalities \eqref{red5}--\eqref{janF} yields
\begin{equation}\label{janG}
\big(|\nabla \bfu|^{p-2}u_{x_i}^j\big)^{**}(s) - \big(|\nabla
\bfu|^{p-2}u_{x_i}^j\big)^{*}(s) \leq C
\Big[\big(|\bfF|^{p'})^{**}(s)\Big]^{\frac 1{p'}} \quad \hbox{for $s
> 0$}
\end{equation}
for some constant $C=C(n, N, p)$. Hence, if $\|\cdot \|_{X(0,
\infty)}$ is any  rearrangement-invariant functional,
\begin{equation}\label{janH}
\|\big(|\nabla \bfu|^{p-2}u_{x_i}^j\big)^{**}  - \big(|\nabla
\bfu|^{p-2}u_{x_i}^j\big)^{*}\|_{X(0, \infty)} \leq C
\left\|\Big[\big(|\bfF|^{p'})^{**}\Big]^{\frac 1{p'}}\right\| _{X(0,
\infty)}.
\end{equation}
Inequality \eqref{reduction1} implies, via Proposition \ref{hardy},
that
\begin{equation}\label{red11}
\||\nabla \bfu|^{p-2}u_{x_i}^j\|_{Y(\Rn)} = \|\big(|\nabla
\bfu|^{p-2}u_{x_i}^j\big)^{*}\|_{Y(0, \infty)} \leq C
\|C(\big(|\nabla \bfu|^{p-2}u_{x_i}^j\big)^{**}  - \big(|\nabla
\bfu|^{p-2}u_{x_i}^j\big)^{*})\|_{X(0, \infty)}
\end{equation}
for some constant $C$. Moreover, owing to inequality
\eqref{reduction0},
\begin{align}\label{red6}
\left\|C\Big[\big(|\bfF|^{p'})^{**}\Big]^{\frac 1{p'}}\right\|_{X(0,
\infty)} \leq c\big\|c^{\frac 1{p'}}C|\bfF|^{*}\big\|_{X(0, \infty)}
= c\|cC\bfF\|_{X(\mathbb R^n)}.
\end{align}
Inequality \eqref{reduction2} follows from
\eqref{janH}--\eqref{red6}.
\end{proof}

\smallskip
\par
As a first application of Theorem \ref{reduction}, one can easily
recover the by now standard gradient estimates in Lebesgue spaces.
The fact that the relevant Lebesgue spaces satisfy assumptions
\eqref{reduction0} and \eqref{reduction1} is a consequence of
classical Hardy inequalities.

\begin{proposition}\label{lebesgue} {\bf [Lebesgue
spaces]} Let $q \in (p' , \infty)$. Then
\begin{equation}\label{lebesgue1}
\|\nabla \bfu\|_{L^{q(p-1)}(\mathbb R^n)} \leq C'
\|\bfF\|_{L^{q}(\mathbb R^n)}^{\frac 1{p-1}}
\end{equation}
for every local weak solution  $\bfu\in V^{1,p}(\setR^n)$ to  system
\eqref{eq:sysA} with $\Omega = \Rn$.
\end{proposition}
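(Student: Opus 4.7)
The plan is to apply Theorem~\ref{reduction} with the choice $\|\cdot\|_{X(\mathbb R^n)}=\|\cdot\|_{L^q(\mathbb R^n)}$ and $\|\cdot\|_{Y(\mathbb R^n)}=\|\cdot\|_{L^q(\mathbb R^n)}$, and then convert the resulting inequality for $|\nabla\bfu|^{p-1}$ into one for $\nabla\bfu$. From the definition \eqref{Xq}, one has $\|\bff\|_{X^{1/p'}(0,\infty)}=\|\bff\|_{L^{q/p'}(0,\infty)}$, so the two abstract hypotheses \eqref{reduction0} and \eqref{reduction1} reduce, respectively, to the boundedness on $L^{q/p'}(0,\infty)$ of the averaging operator $\varphi\mapsto \frac1s\int_0^s\varphi(r)\,dr$, and to the boundedness on $L^{q}(0,\infty)$ of the conjugate Hardy operator $\varphi\mapsto \int_s^\infty \varphi(r)\,\frac{dr}{r}$.

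Both are classical one-dimensional Hardy inequalities, valid on $L^r(0,\infty)$ for every $r\in(1,\infty)$. The first one applies provided $q/p'>1$, namely $q>p'$, which is our assumption; the second one applies as long as $q>1$, which is a consequence of $q>p'>1$. Thus \eqref{reduction0} and \eqref{reduction1} are satisfied with a constant $c=c(n,p,q)$.

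Theorem~\ref{reduction} therefore yields
\begin{equation*}
\||\nabla\bfu|^{p-1}\|_{L^q(\mathbb R^n)}\leq c'\|\bfF\|_{L^q(\mathbb R^n)}
\end{equation*}
for every local weak solution $\bfu\in V^{1,p}(\mathbb R^n)$ of \eqref{eq:sysA} on $\Omega=\mathbb R^n$. Since
\begin{equation*}
\||\nabla\bfu|^{p-1}\|_{L^q(\mathbb R^n)}=\bigg(\int_{\mathbb R^n}|\nabla\bfu|^{(p-1)q}\,dx\bigg)^{1/q}=\|\nabla\bfu\|_{L^{q(p-1)}(\mathbb R^n)}^{p-1},
\end{equation*}
raising both sides to the power $1/(p-1)$ gives \eqref{lebesgue1}. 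There is no genuine obstacle here: the proof is an immediate instance of the reduction principle once one notices that the Lebesgue functionals trivialize both Hardy conditions; the novelty lies entirely in Theorem~\ref{reduction} itself and in the pointwise estimate of Theorem~\ref{thm:main} on which it rests.
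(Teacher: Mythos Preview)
Your proof is correct and follows exactly the approach indicated in the paper, which does not give a detailed argument but simply remarks that the Lebesgue case follows from Theorem~\ref{reduction} together with the classical Hardy inequalities. Your verification that \eqref{reduction0} requires $q/p'>1$ (i.e.\ $q>p'$) and that \eqref{reduction1} requires $1<q<\infty$ is precisely the intended reasoning; the only cosmetic point is that the final constant also depends on $N$, since Theorem~\ref{reduction} produces $c'=c'(n,N,p,c)$.
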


The Lorentz spaces provide a refinement
 of the Lebesgue
spaces. If either $q \in (1, \infty)$ and $r \in [1, \infty]$, or
$q=r=1$, or $q=r=\infty$, the rearrangement-invariant  functional
$\|\cdot\|_{L\sp{q,r}(0,\infty)}$, defined as
\begin{equation}\label{lorentznorm}
\|\varphi \|_{L\sp{q,r}(0,\infty)}=
\left\|s\sp{\frac{1}{q}-\frac{1}{r}}\varphi ^*(s)\right\|_{L\sp
r(0,\infty)}
\end{equation}
is a
rearrangement-invariant norm for any measurable function $\varphi$ in $(0, \infty)$. The corresponding
rearrangement-invariant space $L\sp{q,r}(\Omega)$ is called a
Lorentz space. The Lebesgue spaces are special instances of Lorentz
spaces, inasmuch as
$$L^{q,q}(\Omega) = L^q(\Omega)$$
for every $q \in [1, \infty]$.

\begin{proposition}\label{lorentz}  {\bf [Lorentz
spaces]} Let $q \in (p' , \infty)$ and $r\in [1, \infty]$. Then
\begin{equation}\label{lorenz1}
\|\nabla \bfu\|_{L^{q(p-1), r(p-1)}(\mathbb R^n)} \leq C'
\|\bfF\|_{L^{q,r}(\mathbb R^n)}^{\frac 1{p-1}}
\end{equation}
for every local weak solution $\bfu\in V^{1,p}(\setR^n)$ to system
\eqref{eq:sysA} with $\Omega = \Rn$.
\end{proposition}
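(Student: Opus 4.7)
The plan is to invoke Theorem~\ref{reduction} with the rearrangement-invariant functionals $\|\cdot\|_{X(\mathbb R^n)}=\|\cdot\|_{Y(\mathbb R^n)}=\|\cdot\|_{L^{q,r}(\mathbb R^n)}$ and then to translate the resulting bound by the homogeneity of the Lorentz norm. First I would identify $X^{1/p'}$ explicitly. From definition \eqref{Xq}, for any nonnegative $\varphi$ one has $\|\varphi\|_{X^{1/p'}(0,\infty)}=\|\varphi^{1/p'}\|_{L^{q,r}(0,\infty)}^{p'}$, and the standard power-rule for Lorentz functionals,
\[
\bigl\||g|^\alpha\bigr\|_{L^{a/\alpha,\,b/\alpha}}=\|g\|_{L^{a,b}}^{\alpha},
\]
applied with $\alpha=1/p'$, $a=q/p'$, $b=r/p'$, yields $\|\varphi\|_{X^{1/p'}(0,\infty)}=\|\varphi\|_{L^{q/p',\,r/p'}(0,\infty)}$. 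Hence $X^{1/p'}(0,\infty)=L^{q/p',\,r/p'}(0,\infty)$.

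Next I would verify the two Hardy-type hypotheses of Theorem~\ref{reduction}. Hypothesis \eqref{reduction0} amounts to the boundedness on $L^{q/p',\,r/p'}(0,\infty)$ of the averaging operator $\varphi\mapsto \tfrac1s\int_0^s\varphi(t)\,dt$; by the classical Hardy inequality on Lorentz spaces (see e.g.\ \cite[Ch.~3]{BS}), this operator is bounded as soon as the first index strictly exceeds~$1$, and the assumption $q>p'$ gives exactly $q/p'>1$, with no restriction on the second index. Hypothesis \eqref{reduction1} amounts to the boundedness on $L^{q,r}(0,\infty)$ of the adjoint operator $\varphi\mapsto\int_s^\infty\varphi(t)\,\frac{dt}{t}$; this is the dual Hardy inequality on Lorentz spaces and holds whenever $q<\infty$, which is part of our hypothesis. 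Both boundedness statements are classical and hold uniformly in $r\in[1,\infty]$.

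An application of Theorem~\ref{reduction} then gives
\[
\bigl\||\nabla\bfu|^{p-1}\bigr\|_{L^{q,r}(\mathbb R^n)} \leq c\,\|\bfF\|_{L^{q,r}(\mathbb R^n)},
\]
and, using the power-rule once more with $\alpha=p-1$, $a=q(p-1)$, $b=r(p-1)$,
\[
\bigl\||\nabla\bfu|^{p-1}\bigr\|_{L^{q,r}(\mathbb R^n)} = \|\nabla\bfu\|_{L^{q(p-1),\,r(p-1)}(\mathbb R^n)}^{\,p-1}.
\]
Taking the $(p-1)$-st root yields \eqref{lorenz1}. The proof is therefore mostly a matter of packaging; the only step requiring genuine care, and what I would regard as the main technical point, is to justify the two Hardy inequalities above simultaneously for \emph{all} admissible parameter choices $q\in(p',\infty)$, $r\in[1,\infty]$, including the endpoint $r=\infty$ and the subcritical range where the second index $r/p'$ may fall below~$1$; there one exploits the fact that Theorem~\ref{reduction} only requires rearrangement-invariant \emph{functionals}, not norms, so the classical Hardy bounds (which are insensitive to convexity of the functional) suffice.
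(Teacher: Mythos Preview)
Your proposal is correct and follows essentially the same route as the paper: set $X=Y=L^{q,r}$ in Theorem~\ref{reduction}, identify $X^{1/p'}=L^{q/p',r/p'}$, verify the two Hardy conditions, and unwind via the power rule for Lorentz functionals. The only cosmetic difference is that the paper verifies \eqref{reduction0} by first passing through Hardy--Littlewood to replace the averaging of $\varphi$ by $\varphi^{**}$ and then invoking \cite[Theorem~4.1]{CPSS}, and it verifies \eqref{reduction1} by an explicit duality computation (H\"older in Lorentz spaces plus Fubini) that reduces to boundedness of the averaging operator on $L^{q',r'}$; you cite the Hardy and dual Hardy inequalities on Lorentz spaces directly. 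Your closing remark about $r/p'$ possibly falling below~$1$ and the sufficiency of rearrangement-invariant \emph{functionals} is well taken and implicit in the paper's setup.
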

\par\noindent
\begin{proof} Let $X(\Rn)=Y(\Rn)= L^{q,r}(\Rn)$ in Theorem
\ref{reduction}. Then, $X^{\frac 1{p'}}(\Rn)= L^{\frac q{p'}, \frac
r{p'}}(\Rn)$, and condition \eqref{reduction0} reads
\begin{equation}\label{july100}
\bigg(\int _0^\infty \bigg[\bigg(\frac 1{(\cdot)}\int _0^{(\cdot
)}\varphi (\tau )d\tau\bigg)^* (s)\bigg]^{\frac r{p'}}s^{\frac rq -
1}\, ds\bigg)^{\frac {p'}r} \leq C \bigg(\int _0^\infty \varphi
^*(s)s^{\frac rq - 1}\, ds\bigg)^{\frac {p'}r}
\end{equation}
for every measurable function $\varphi : (0, \infty) \to [0,
\infty)$. The Hardy-Littlewood inequality for rearrangements
\cite[Theorem 2.2, Chapter 2]{BS} ensures that $ \frac 1s\int
_0^{s}\varphi (\tau )d\tau \leq \frac 1s\int _0^{s}\varphi ^*(\tau
)d\tau = \varphi ^{**}(s)$ for $s
>0$. Hence,
$$\bigg(\frac 1{(\cdot)}\int _0^{(\cdot
)}\varphi (\tau )d\tau\bigg)^* (s) \leq \varphi ^{**}(s) \hbox{ for
$s>0$.}$$
 Thus,
inequality \eqref{july100} follows from the inequality
\begin{equation}\label{july101b}
\bigg(\int _0^\infty \varphi ^{**}(s)^{\frac r{p'}}s^{\frac rq -
1}\, ds\bigg)^{\frac {p'}r} \leq C \bigg(\int _0^\infty \varphi
^*(s)s^{\frac rq - 1}\, ds\bigg)^{\frac {p'}r}
\end{equation}
for every measurable function $\varphi : (0, \infty) \to [0,
\infty)$, which holds, for instance, as a special case of
\cite[Theorem 4.1]{CPSS}.
\\ As for condition \eqref{reduction1}, we have that
\begin{align*}
\sup_{\varphi \geq 0}& \frac{\|\int _s^\infty \varphi
(\tau)\frac{d\tau}{\tau}\|_{L^{q,r}(0,
\infty)}}{\|\varphi\|_{L^{q,r}(0, \infty)}}  \approx \sup _{\varphi
\geq 0} \sup_{\psi \geq 0} \frac{\int _0^\infty \psi (s) \int
_s^\infty \varphi (\tau)\frac{d\tau}{\tau}\,
ds}{\|\psi\|_{L^{q',r'}(0, \infty)}\|\varphi\|_{L^{q,r}(0, \infty)}}
\\ & =
 \sup_{\psi \geq 0} \sup _{\varphi \geq 0} \frac{\int _0^\infty \frac{\varphi (\tau)}{\tau} \int _0^\tau \psi
(s)\, ds\, d\tau}{\|\psi\|_{L^{q',r'}(0,
\infty)}\|\varphi\|_{L^{q,r}(0, \infty)}} \approx \sup_{\psi \geq 0}
\frac{\| \frac{1}{\tau} \int _0^\tau \psi (s)\, ds\|_{L^{q',r'}(0,
\infty)}}{\|\psi\|_{L^{q',r'}(0, \infty)}} < \infty\,,
\end{align*}
where the equivalences hold due to H\"older type inequalities in
Lorentz spaces, the equality to Fubini's theorem, and the last
inequality to weighted Hardy type inequalities \cite[Theorem
1.3.2/1]{Maz11}. \\ The
conclusion now follows from Theorem \ref{reduction}.
\end{proof}

%\bigskip
%\par\noindent
%
%Given a Young function $\Psi$ and $q \in (1, \infty)$, we denote by
%$\Psi _q$ the Young function defined as
%$$\Psi _q (s) = \Psi (s^q) \quad \hbox{for $s \geq 0$.}$$
%Note that
%$$\| \cdot \|_{L^{\Psi _q}(\Omega )} = \| \cdot
%\|_{(L^{\Psi})^q(\Omega )}.$$

\medskip
\par
Let now focus on gradient estimates in Orlicz spaces, an extension
of the Lebesgue spaces in a different direction. The notion of
Orlicz space involves that of Young functions.  A Young function is
a (non-trivial) convex function $\Phi : [0, \infty ) \to [0,
\infty]$,
 vanishing at $0$. Its Young conjugate $\Phi
^\thicksim$ is again a Young function, and is defined as
$$\Phi ^\thicksim (t) = \sup\{ ts - \Phi (s): s \geq 0\} \quad
\hbox{for $t \geq 0$.}$$
 The Orlicz space $L^{\Phi}(\Omega)$ is the rearrangement-invariant
 space associated with the Luxemburg rearrangement-invariant norm
 $\|\cdot\|_{L^\Phi (\Omega)}$ given
 by
\begin{equation}\label{lux}
\|\varphi\|_{L^\Phi(0,\infty)}= \inf \left\{ \lambda >0 :  \int_0\sp
\infty \Phi \left( \frac{|\varphi(s)|}{\lambda} \right) ds \leq 1
\right\}
\end{equation}
for every measurable function $\varphi$ in $(0, \infty)$.

\begin{proposition}\label{orlicz} {\bf [Orlicz
spaces]} Let $\Phi$ be a Young function such that
\begin{equation}\label{orlicz1}
\essinf_{t\in (0, \infty)}\frac
{t\Phi ' (t)}{\Phi (t)} >p',
\end{equation}
and
\begin{equation}\label{orlicz0}
\int _0  \frac{{\Phi}^{\thicksim} (r) }{r^2}dr < \infty.
\end{equation}
%and
%\begin{equation}\label{orlicz1}
%\int _0  \frac{\widetilde {\Phi _{p'}}(r)}{r^2}dr < \infty.
%\end{equation}
Let $\Psi$ be the Young function defined by
\begin{equation}\label{orlicz2}
\Psi (t^{\frac 1{p-1}}) = \bigg(t\int _0^t \frac{{\Phi}^{\thicksim}
(r) }{r^2}dr \bigg)^{\thicksim} \quad \hbox{for $t \geq 0$.}
\end{equation}
Then there exists a constant $C=C(n,N,p, \Phi)$ such that
\begin{equation}\label{orlicz3}
\|\nabla \bfu\|_{L^{\Psi }(\mathbb R^n)} \leq C
\|\bfF\|_{L^{\Phi}(\mathbb R^n)}^{\frac 1{p-1}},
%
%\||\nabla \bfu|^{p-1}\|_{L^{\Psi }(\mathbb R^n)} \leq C
%\|\bfF\|_{L^{\Phi}(\mathbb R^n)}
\end{equation}
and
\begin{equation}\label{orlicz4}
\int _{\mathbb R^n}\Psi\big(|\nabla \bfu|\big) \, dx \leq \int
_{\mathbb R^n} \Phi (C^{p-1}|\bfF|) \, dx.
\end{equation}
for every local weak  solution $\bfu\in V^{1,p}(\setR^n)$ to system
\eqref{eq:sysA}, with $\Omega = \Rn$.
\end{proposition}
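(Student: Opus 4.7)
The plan is to invoke Theorem~\ref{reduction} with $X(\mathbb R^n)=L^\Phi(\mathbb R^n)$ and $Y(\mathbb R^n)=L^\Psi(\mathbb R^n)$, and reduce the proof of~\eqref{orlicz3} to verifying the two one-dimensional Hardy-type inequalities~\eqref{reduction0} and~\eqref{reduction1} in this Orlicz setting. The structural assumption~\eqref{orlicz1} will deliver the first, while the Dini-type condition~\eqref{orlicz0} together with the explicit definition~\eqref{orlicz2} of $\Psi$ will be tailored to make the second inequality sharp.

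First, I would observe that $\|\cdot\|_{X^{1/p'}(0,\infty)}$ coincides with the Luxemburg norm of the Orlicz space associated with the Young function $\Phi_0(t)=\Phi(t^{1/p'})$. By the chain rule, assumption~\eqref{orlicz1} is equivalent to $\essinf_{t>0} t\Phi_0'(t)/\Phi_0(t)>1$, which is precisely the classical condition guaranteeing the boundedness of the Hardy averaging operator $T_0\varphi(s)=\frac{1}{s}\int_0^s\varphi(r)\,dr$ on $L^{\Phi_0}(0,\infty)$; this verifies~\eqref{reduction0}. For~\eqref{reduction1}, I would argue by duality: by the standard Orlicz duality pairing, the operator $T_1\varphi(s)=\int_s^\infty \varphi(r)\,dr/r$ is bounded from $L^\Phi(0,\infty)$ to $L^\Psi(0,\infty)$ if and only if its formal adjoint $T_1^\ast\psi(r)=\frac{1}{r}\int_0^r\psi(s)\,ds$ is bounded from $L^{\widetilde\Psi}(0,\infty)$ to $L^{\widetilde\Phi}(0,\infty)$ (the computation being exactly the Fubini swap used in the proof of Proposition~\ref{lorentz}). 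The optimal target Young function for the averaging operator acting on $L^{\widetilde\Phi}$ is well known to be $t\mapsto t\int_0^t \widetilde\Phi(r)/r^2\,dr$, which is finite thanks to~\eqref{orlicz0}; up to the rescaling $t\mapsto t^{p-1}$ forced by the factor $1/(p-1)$ in the exponent of $\bfF$ on the right-hand side of~\eqref{reduction2}, this is precisely the definition~\eqref{orlicz2} of $\Psi$.

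Granted the two one-dimensional inequalities, Theorem~\ref{reduction} immediately yields the norm estimate~\eqref{orlicz3}. The modular inequality~\eqref{orlicz4} will then be deduced from~\eqref{orlicz3} by a normalization argument based on the scaling $\bfu\mapsto\mu\bfu$, $\bfF\mapsto\mu^{p-1}\bfF$ of system~\eqref{eq:sysA}: setting $\lambda=\|C^{p-1}\bfF\|_{L^\Phi(\mathbb R^n)}$ in the definition~\eqref{lux} of the Luxemburg norm of both sides, and invoking the relation $\Psi(t^{1/(p-1)})=\Upsilon(t)$ encoded in~\eqref{orlicz2}, converts~\eqref{orlicz3} into~\eqref{orlicz4}.

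The main technical obstacle I anticipate is the careful verification that the right-hand side of~\eqref{orlicz2} actually defines (up to equivalence) a Young function and that the relevant Hardy inequality holds with the correct constant, including checking that~\eqref{orlicz1} can be used to show $\Phi_0$ is (equivalent to) a Young function so that $X^{1/p'}$ is genuinely a rearrangement-invariant Banach space and~\eqref{reduction0} makes sense. These are standard but delicate Orlicz-space manipulations involving Young conjugation and lower Matuszewska--Orlicz indices; once they are in place, the two inequalities~\eqref{reduction0}--\eqref{reduction1} follow from the classical weighted Hardy inequalities~\cite[Theorem~1.3.2/1]{Maz11}, and the proof concludes by Theorem~\ref{reduction}.
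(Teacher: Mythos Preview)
Your overall strategy for the norm estimate~\eqref{orlicz3} matches the paper's: verify \eqref{reduction0} and \eqref{reduction1} for $X=L^\Phi$ and then invoke Theorem~\ref{reduction}. Your treatment of \eqref{reduction0} via $\Phi_0(t)=\Phi(t^{1/p'})$ is exactly what the paper does (the paper phrases it as ``$\Phi(t^{1/p'})/t^{1+\varepsilon}$ is increasing'' and cites \cite{Cianchistrong, Kita, GIM}). For \eqref{reduction1} the paper does not argue by duality: it directly quotes \cite[Lemma~1, Part~(ii)]{Cianchistrong}, which gives the inequality with the explicit target $\Theta(t)=\bigl(t\int_0^t\Phi^{\thicksim}(r)r^{-2}\,dr\bigr)^{\thicksim}$; then $\Psi$ arises from $\Theta$ via $\Psi(t^{1/(p-1)})=\Theta(t)$. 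Your duality route is in principle a legitimate alternative, but as written the direction is garbled: you need $T_0\colon L^{\widetilde\Theta}\to L^{\widetilde\Phi}$ (source $\widetilde\Theta$, target $\widetilde\Phi$), yet you discuss ``the optimal target for the averaging operator acting on $L^{\widetilde\Phi}$'', which is the wrong end. This is fixable, but you should be aware the paper sidesteps it entirely by citing the direct inequality.

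The genuine gap is in your derivation of the modular inequality~\eqref{orlicz4}. The scaling $\bfu\mapsto\mu\bfu$, $\bfF\mapsto\mu^{p-1}\bfF$ of the system is vacuous here: applying \eqref{orlicz3} to the rescaled pair gives back \eqref{orlicz3} by homogeneity of the Luxemburg norms, so no modular information is gained. Normalizing by $\lambda=\|C^{p-1}\bfF\|_{L^\Phi}$ alone is also not enough, because the Luxemburg unit ball condition $\int\Phi(|\bfF|/\lambda)\le 1$ does not by itself convert into $\int\Psi(|\nabla\bfu|)\le\int\Phi(C^{p-1}|\bfF|)$. The paper's argument is different and hinges on a point you do not mention: the constant $C$ in \eqref{orlicz3} depends on $\Phi$ \emph{only} through the value of the infimum in \eqref{orlicz1}, and that infimum is invariant under replacing $\Phi$ by $k\Phi$ for any $k>0$. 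One therefore applies \eqref{orlicz3} with $\Phi$ replaced by $\Phi/\int_{\Rn}\Phi(C^{p-1}|\bfF|)\,dx$ (and $\Psi$ modified accordingly), which forces the right-hand Luxemburg norm to equal $1$ and yields \eqref{orlicz4} directly. Without tracking this invariance of the constant, the passage from the norm estimate to the modular estimate does not go through.
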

\par\noindent
\begin{proof} Assumption \eqref{orlicz1} ensures that there exists
$\varepsilon
>0$ such that the function $t \mapsto \frac{\Phi (t^{\frac
1{p'}})}{t^{1+\varepsilon}}$ is increasing.
 %In particular,
%the function  $t \mapsto \frac{\Phi (t^{\frac 1{p'}})}{t}$ is
%increasing, and hence the function  $\Upsilon$ defined as
Thus,
\begin{equation}\label{orlicz5}
\int _0^t \frac{\Phi (s^{\frac 1{p'}})}{s^2}\, ds = \int _0^t
\frac{\Phi (s^{\frac 1{p'}})}{s^{1+\varepsilon}}t^{\varepsilon -1}\,
ds \leq \frac{\Phi (t^{\frac 1{p'}})}{t^{1+\varepsilon}}\frac
{t^\varepsilon}{\varepsilon} = \frac{\Phi (t^{\frac
1{p'}})}{\varepsilon \,t}.
\end{equation}
Hence, by \cite{Cianchistrong, Kita, GIM}, there exists a constant
$C$, depending only on the left-hand side of \eqref{orlicz1}, such
that
%\begin{equation}\label{orlicz6}
%\int _0^\infty \Phi \bigg(\bigg(\frac 1s\int _0^s \varphi (r)\,
%dr\bigg)^{\frac 1{p'}}\bigg)\, ds \leq \int _0^\infty \Phi
%\big(C\varphi (s)^{\frac 1{p'}}\big)\, ds
%\end{equation}
%for every nonnegative measurable function $\varphi$. Moreover,
\begin{equation}\label{orlicz7}
\bigg\|\bigg(\frac 1s\int _0^s \varphi (r)\, dr\bigg)^{\frac 1{p'}}
\bigg\|_{L^\Phi(0, \infty )} \leq C \big\|\varphi (s)^{\frac
1{p'}}\|_{L^\Phi(0, \infty )}
\end{equation}
for every measurable function $\varphi : (0, \infty) \to [0,
\infty)$.
% for every such
%$\varphi$.
\par\noindent
On the other hand, under assumption \eqref{orlicz0}, \cite[Lemma 1,
Part (ii)]{Cianchistrong} ensures that there exists an absolute
constant $C$ such that
%\begin{equation}\label{orlicz8}
%\int _0^\infty \Psi \bigg(\int _s^\infty \varphi (r)\,
%\frac{dr}{r}\bigg)\, ds \leq \int _0^\infty \Phi \big(C\varphi
%(s)\big)\, ds
%\end{equation}
%and
\begin{equation}\label{orlicz9}
\bigg\|\int _s^\infty \varphi (r)\, \frac{dr}{r}
\bigg\|_{L^\Theta(0, \infty )} \leq C \big\|\varphi (s)\|_{L^\Phi(0,
\infty )}
\end{equation}
for every measurable function $\varphi: (0, \infty) \to [0, \infty)$,
where $\Theta$ is the Young function given by
$$\Theta (t) = \bigg(t\int _0^t \frac{{\Phi}^{\thicksim}
(r) }{r^2}dr \bigg)^{\thicksim} \quad \hbox{for $t \geq 0$.}$$
\par\noindent
Owing to \eqref{orlicz7} and \eqref{orlicz9}, Theorem
\ref{reduction} implies that
\begin{equation}\label{july101}
\||\nabla \bfu|^{p-1}\|_{L^\Theta (\Rn)} \leq C \|\bfF \|_{L^\Phi
(\Rn)}.
\end{equation}
Hence, inequality \eqref{orlicz3} follows.
\\ Inequality \eqref{orlicz4} can be derived on applying
\eqref{july101} with $\Phi$ replaced
 with $\tfrac
 {\Phi }{\int _{\Rn} \Phi (C|\bfF|)\,
 dx}$. Such a derivation makes use of the definition of the Luxemburg
 norm, and of the fact that the constant in \eqref{july101} depends
 on $\Phi$ only through the infimum on the left-hand side of \eqref{orlicz1}, and
 such infimum is invariant under replacements of $\Phi$ with $k\Phi$
 for every positive constant $k$.
 \end{proof}

\iffalse

\begin{remark}\label{orliczbis}
{\rm Condition \eqref{orlicz1} can be slightly generalized and
relaxed. For instance, one may require that $\Phi : [0, \infty ) \to
[0, \infty]$ is any Young function such that
\begin{equation}\label{orliczbis1}\Phi (2^{\frac 1{p'}} t ) \geq C \Phi (s) \hbox{
for $s \geq 0$,} \end{equation}
 for some constant $C>2$. Another
sufficient condition is that
\begin{equation}\label{orliczbis2}\lim _{t\to \infty} \frac{\log \Big(\sup _{s>0}
\frac{\Phi^{-1}(s)}{\Phi ^{-1}(s/t)}\Big)}{\log t} < \frac 1{p'},
\end{equation}
where $\Phi^{-1}$ denotes the generalized left-continuous inverse of
$\Phi$. In fact, both \eqref{orliczbis1} and \eqref{orliczbis2} are
 sufficient conditions for inequality \eqref{orlicz7}
to hold.}
\seb{Seb: Add more references}
\end{remark}

\fi

\bigskip
\par\noindent

The content of the following result  is a special case of
Proposition \ref{orlicz}. In the statement, ${\rm exp}_qL^\gamma
(\Rn)$ denotes the Orlicz space associated with a Young function
which is equivalent to $e^{t^\gamma}$ for large $t$, and   to $t^q$
for small $t$.

%
%
%
%
%
%In what follows, we denote by ${\rm exp}L^{\beta}(\mathbb R^n)$ the
%Orlicz space associated with a Young function which is equivalent to
%$e^{t^\beta}$ near infinity, and decays fast enough near $0$
%\textcolor[rgb]{0.00,0.00,1.00}{(to be specified)}.

\bigskip
\par\noindent

\begin{proposition}\label{exp}  {\bf [Exponential
spaces]} Let $\gamma >0$, and $q>p'$. Then
\begin{equation}\label{exp1}
\|\nabla \bfu\|_{{\rm exp}_{q(p-1)}L^{\frac{\gamma (p-1)}{\gamma
+1}}(\mathbb R^n)} \leq C \|\bfF\|_{{\rm exp}_qL^{\gamma}(\mathbb
R^n)}^{\frac 1{p-1}}
\end{equation}
for every  local weak solution $\bfu\in V^{1,p}(\setR^n)$ to system
\eqref{eq:sysA} with $\Omega= \Rn$.
\end{proposition}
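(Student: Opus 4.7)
The plan is to derive Proposition \ref{exp} as a direct specialization of Proposition \ref{orlicz}, by choosing $\Phi$ to be a Young function equivalent to $t^q$ near $0$ and to $e^{t^\gamma}$ near infinity, and then explicitly identifying the resulting Young function $\Psi$ from \eqref{orlicz2} as one generating the exponential space $\mathrm{exp}_{q(p-1)}L^{\gamma(p-1)/(\gamma+1)}(\mathbb R^n)$.

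First, I would verify the two hypotheses of Proposition \ref{orlicz} for such a $\Phi$. Since $\Phi(t) \approx t^q$ for small $t$, one has $t\Phi'(t)/\Phi(t) \to q$ as $t \to 0^+$, and for large $t$ this quotient is comparable to $\gamma t^\gamma \to \infty$; hence
\[
\essinf_{t\in(0,\infty)} \frac{t\Phi'(t)}{\Phi(t)} = q > p',
\]
which gives \eqref{orlicz1}. For \eqref{orlicz0}, a direct computation of the Young conjugate shows that $\Phi^{\thicksim}(r)\approx r^{q'}$ for small $r$ (and $\approx r(\log r)^{1/\gamma}$ for large $r$), so $\Phi^{\thicksim}(r)/r^2 \approx r^{q'-2}$ near $0$, which is integrable because $q>1$.

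Next, I would compute the auxiliary function $\Theta(t) = \bigl(t\int_0^t \Phi^{\thicksim}(r) r^{-2}\,dr\bigr)^{\thicksim}$ that appears in \eqref{orlicz2} (so $\Psi(t^{1/(p-1)}) = \Theta(t)$). Using the small/large $t$ behavior of $\Phi^{\thicksim}$:
\[
\int_0^t \frac{\Phi^{\thicksim}(r)}{r^2}\,dr \;\approx\; \begin{cases} t^{q'-1} & \text{for small } t,\\[1mm] (\log t)^{(\gamma+1)/\gamma} & \text{for large } t,\end{cases}
\]
so the argument of the outer Young conjugate is equivalent to $t^{q'}$ near $0$ and to $t(\log t)^{(\gamma+1)/\gamma}$ at infinity. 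Taking the Young conjugate then yields $\Theta(s)\approx s^q$ near $0$ and $\Theta(s)\approx e^{s^{\gamma/(\gamma+1)}}$ at infinity, where the second asymptotic is the standard fact that the Young conjugate of $s\,(\log s)^{\beta}$ at infinity is equivalent to $e^{s^{1/\beta}}$. Substituting $t = u^{p-1}$ in $\Psi(t^{1/(p-1)}) = \Theta(t)$ gives
\[
\Psi(u) \approx \begin{cases} u^{q(p-1)} & \text{for small } u,\\[1mm] e^{u^{\gamma(p-1)/(\gamma+1)}} & \text{for large } u,\end{cases}
\]
so $L^\Psi(\mathbb R^n) = \mathrm{exp}_{q(p-1)} L^{\gamma(p-1)/(\gamma+1)}(\mathbb R^n)$ with equivalent norms.

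Finally, inequality \eqref{exp1} follows directly from \eqref{orlicz3} applied with this $\Phi$. The main (mild) obstacle is just the asymptotic bookkeeping for $\Phi^{\thicksim}$ and for the outer Young conjugate at infinity; both are routine but need care since the log-type growth must be tracked through the conjugation to recover the exponent $\gamma/(\gamma+1)$. No further PDE-level arguments are required: Proposition \ref{orlicz} already packages all the analysis.
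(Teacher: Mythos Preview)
Your proposal is correct and follows exactly the approach the paper indicates: the paper simply states that Proposition \ref{exp} is a special case of Proposition \ref{orlicz}, and your argument fills in precisely the verification of hypotheses \eqref{orlicz1}--\eqref{orlicz0} and the asymptotic computation of $\Psi$ via \eqref{orlicz2} that this entails. No additional ideas are needed.
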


The last result of this section deals with a borderline case of
Theorem \ref{reduction}. In the statement, $L_q^\infty (\Rn)$
denotes the Orlicz space built upon a Young function which equals
$\infty$ for large $t$, and is equivalent to $t^q$ for small $t$.

\begin{proposition}\label{linf}  {\bf [Borderline exponential
spaces]} Let $q>p'$. Then
\begin{equation}\label{linf1}
\|\nabla \bfu\|_{{\rm exp}_{q(p-1)}L^{p-1}(\mathbb R^n)} \leq C
\|\bfF\|_{L_q^{\infty}(\mathbb R^n)}^{\frac 1{p-1}}
\end{equation}
 for every  local weak solution $\bfu\in V^{1,p}(\setR^n)$ to
system \eqref{eq:sysA} with $\Omega = \Rn$.
\end{proposition}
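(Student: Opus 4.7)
The plan is to deduce Proposition \ref{linf} directly from Proposition \ref{orlicz}, by choosing as input the Young function that represents $L_q^\infty(\Rn)$. By definition, this space is $L^\Phi(\Rn)$ for the Young function
\[
  \Phi(t)= t^q \text{ for } t\in[0,1],\qquad \Phi(t)=+\infty \text{ for } t>1,
\]
and the corresponding Luxemburg norm is equivalent to $\max\{\|\bfF\|_{L^q(\Rn)},\|\bfF\|_{L^\infty(\Rn)}\}$. First I would check the two structural hypotheses of Proposition \ref{orlicz}. For \eqref{orlicz1}, on $(0,1)$ one has $t\Phi'(t)/\Phi(t)=q$, and for $t>1$ the ratio is (formally) $+\infty$; hence the essential infimum equals $q$, which exceeds $p'$ by assumption. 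For \eqref{orlicz0}, a direct Legendre transform computation yields
\[
  \widetilde\Phi(s)=\tfrac{q-1}{q^{q/(q-1)}}\,s^{q'}\ \text{ for } s\in[0,q],\qquad \widetilde\Phi(s)=s-1\ \text{ for } s\geq q,
\]
with $q'=q/(q-1)$; the integrand $\widetilde\Phi(r)/r^2$ behaves like $r^{q'-2}$ near $0$ and like $1/r$ near infinity, so the local integrability at zero required by \eqref{orlicz0} holds (since $q'>1$).

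Next I would compute the output Young function $\Psi$ through the intermediate function $\Lambda(t):=t\int_0^t \widetilde\Phi(r)\,r^{-2}\,dr$ appearing in \eqref{orlicz2}. Using the explicit form of $\widetilde\Phi$ one obtains that $\Lambda(t)$ is equivalent to $t^{q'}$ for small $t$ and to $t\log t$ for large $t$. Taking the Young conjugate, a straightforward Legendre computation (using that the conjugate of $t^{q'}$ is proportional to $s^q$ and that the conjugate of $t\log t$ is equivalent to $e^s$) shows
\[
  \Lambda^{\thicksim}(s)\approx s^q\ \text{ for small } s,\qquad \Lambda^{\thicksim}(s)\approx e^{s}\ \text{ for large } s.
\]
Therefore, by the defining relation $\Psi(t^{1/(p-1)})=\Lambda^{\thicksim}(t)$, the Young function $\Psi$ is equivalent to $t^{q(p-1)}$ for small $t$ and to $e^{t^{p-1}}$ for large $t$, which by definition means $L^{\Psi}(\Rn)={\rm exp}_{q(p-1)}L^{p-1}(\Rn)$.

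Proposition \ref{orlicz} then immediately yields
\[
  \|\nabla \bfu\|_{{\rm exp}_{q(p-1)}L^{p-1}(\Rn)}\leq C\,\|\bfF\|_{L_q^\infty(\Rn)}^{1/(p-1)},
\]
for a constant $C=C(n,N,p,q)$, which is precisely \eqref{linf1}. The main (and essentially only) technical step is the verification that the Legendre transform of $t\log t$ produces the exponential growth $e^s$ on the scale corresponding to the exponent $p-1$ after the substitution $t\mapsto t^{p-1}$; once this asymptotic is in hand, all remaining work is bookkeeping of Young function equivalences, and no new analytic argument beyond Proposition \ref{orlicz} is required.
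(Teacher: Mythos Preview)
Your argument is correct and is in the same spirit as what the paper indicates: the paper does not supply a proof of Proposition~\ref{linf} but merely describes it as ``a borderline case of Theorem~\ref{reduction}'', and your route through Proposition~\ref{orlicz} (which itself is obtained from Theorem~\ref{reduction}) leads to exactly the same conclusion via the same machinery. Your identification of $\widetilde\Phi$, of $\Lambda(t)\approx t^{q'}$ near~$0$ and $\Lambda(t)\approx t\log t$ near infinity, and of the resulting $\Psi$ with small-$t$ behaviour $t^{q(p-1)}$ and large-$t$ behaviour $e^{t^{p-1}}$, is accurate.

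One small point worth tightening: the Young function you feed into Proposition~\ref{orlicz} is infinite-valued on $(1,\infty)$, so the verification of \eqref{orlicz1} (and the references invoked in the proof of Proposition~\ref{orlicz}) strictly speaking requires an interpretation of $\Phi'$ there. Your ``formally $+\infty$'' is the right idea, but to be airtight you could either (i) note that what is actually used in the proof of Proposition~\ref{orlicz} is the monotonicity of $t\mapsto \Phi(t^{1/p'})/t^{1+\varepsilon}$ for some $\varepsilon>0$, which is immediate here since the function equals $t^{q/p'-1-\varepsilon}$ on $(0,1]$ and $+\infty$ on $(1,\infty)$; or (ii) verify \eqref{reduction0} and \eqref{reduction1} directly for $X=L_q^\infty$ and $Y={\rm exp}_{q(p-1)}L^{p-1}$ and appeal to Theorem~\ref{reduction}, which is presumably what the paper's phrase ``borderline case of Theorem~\ref{reduction}'' points to. Either way no new analytic input is needed.
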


\section{Estimates in norms depending on oscillations of functions}\label{oscill}

% The space $\setBMO$ of bounded mean oscillations is defined via the
% following semi norm
% \begin{align*}
%   \norm{g}_{\setBMO(\Omega)}=\sup_{B\subset \Omega}
%   \dashint_{B }\abs{g-\mean{g}_{B }}\,dx
% \end{align*}
% saying that $g\in \setBMO(\Omega )$, whenever its semi-norm is
% bounded. Therefore $g\in \setBMO(\Omega)$ if and only if $M^\sharp
% g\in L^\infty(\Omega)$.
% \\
Let $\omega : [0, \infty) \to [0,
\infty)$ be a function,  let $q \geq 1$, and let $m \in \mathbb N$.
The Campanato type space $\mathcal L ^{\omega , q}(\Omega)$  on an
open set $\Omega \subset \Rn$, is the space of those $\mathbb
R^m$-valued functions $\bff \in L^q_{\rm loc}(\Omega )$ for which
the semi-norm
\begin{align}\label{campnorm}
  \norm{\bff}_{\mathcal L ^{\omega , q}(\Omega)}= \sup _{B_r \subset \Omega}
  \frac 1{\omega (r)} \bigg(\dashint_{B_r} |\bff - \mean{\bff}_{B_r}|^q\,
  dx\bigg)^{\frac 1q}
%
%  \norm{M_{\omega }^\sharp g}_{L^\infty(\Omega)}.
\end{align}
is finite.  The space $\mathcal L ^{\omega , q} _{\rm loc}(\Omega)$
is defined accordingly, as the set of all functions $\bff$ such that
$ \norm{f}_{\mathcal L ^{\omega , q}(\Omega ')} < \infty$ for every
open set $\Omega ' \subset \subset \Omega$. When $q=1$, we shall
simply denote  $\mathcal L ^{\omega , q}  (\Omega)$ by  $\mathcal L
^{\omega } (\Omega)$, and similarly
for local spaces. \\
In the special case when $\omega (r) =1$, one
has that
\begin{align}\label{bmonorm}\mathcal L ^{1, q}(\Omega) = \setBMO(\Omega)
\end{align}
for every $q \geq 1$~\cite{JohNir61}. Another customary instance
corresponds to the choice $\omega (r) = r^\beta$, for some $\beta
\in (0, 1]$. Indeed, Campanato's representation theorem tells us
that, if $\Omega$ is regular enough, say a bounded Lipschitz domain,
then
\begin{align}\label{holdernorm}\mathcal L ^{r^\beta , q}(\Omega)=
C^\beta(\Omega)
\end{align} for every $q \geq 1$~\cite{Cam63}.
 It is easily verified that, if $\omega$ is non-decreasing, then
\begin{equation}\label{august106}
C^\omega(\Omega) \subset  \mathcal L ^{\omega , q}(\Omega)
\end{equation}
 for $q \geq 1$. Here,
$C^\omega(\Omega)$ denotes the space of those functions $\bff :
\Omega \to \mathbb R^m$ such that
$$
\sup_{
\begin{tiny}
 \begin{array}{c}{
    x, y \in \Omega} \\
x\neq y
 \end{array}
  \end{tiny}
}\frac{|\bff (x) - \bff (y)|}{\omega (|x-y|)} < \infty\,,$$ a space
of uniformly continuous functions, with modulus of continuity not
exceeding $\omega$, if $\lim _{r \to 0^+} \omega (r) =0$.
 The reverse inclusion in \eqref{august106} need not hold for an arbitrary
$\omega$, as shown, for example, by equation \eqref{bmonorm}.
Results from \cite{Spa65} ensure that, if $\omega $ is
non-decreasing and decays at $0$ so fast that
\begin{equation}\label{dini}
\int _0 \frac{\omega (r)}r\, dr < \infty,
\end{equation}
then
\begin{equation}\label{spanne1}
\mathcal L ^\omega(\Omega) \subset  C^\varpi(\Omega)
\end{equation}
for every regular domain $\Omega$, where the function $\varpi : [0,
\infty) \to [0, \infty )$ is defined as
\begin{equation}\label{spanne2}
\varpi (r) = \int _0^r \frac{\omega (\rho)}\rho\, d\rho \qquad
\hbox{for $r \geq 0$.}
\end{equation}
On the other hand, if \eqref{dini} fails, and the function $r
\mapsto \tfrac{\omega (r)}r$ is non-increasing near $0$, then the
space $\mathcal L ^\omega(\Omega)$ is not even contained in
$L^\infty _{\rm loc} (\Omega)$.

\smallskip

 As an immediate consequence of
Theorem~\ref{thm:main2}, we have  the following regularity result
for the gradient of solutions to \eqref{eq:sysA} in Campanato
spaces.

\begin{theorem}
\label{thm:camp} {\bf [Campanato spaces]} Let $n \geq 2$, $N \geq 1$
and $p \in (1, \infty)$. Let $\Omega $ be an open set in $\Rn$, and
let $R>0$. Then there exist constants $c_1=c_1(n, N, p)>0$,
$c_2=c_2(n, N, p, R)>0$ such that, if the function $\omega (r)
r^{-\beta} $ is (almost) decreasing for some $\beta \in \big(0,
 \min\set{1, \tfrac {2\alpha}{p'}}\big)$, $\bfF \in \mathcal
L^{\omega, p'}_{\rm loc}(\Omega)$, and $B_{2R}\subset \subset
{\Omega}$, then
\begin{equation}\label{campanato1}
 \norm{|\nabla \bfu|^{p-2}\nabla \bfu}_{\mathcal L^\omega(B_R)}\leq c_1\norm{\bfF}_{\mathcal
 L^{\omega , p'}(B_{2R})}+ c_2 \|\nabla \bfu\|_{L^{p}(B_{2R})}^{p-1}
  %\frac{c}{\omega(2R)}\bigg(\dashint_{B_{2R}}\abs{|\nabla \bfu|^{p-2}\nabla \bfu-\mean{|\nabla \bfu|^{p-2}\nabla \bfu}_{2B}}^{p'}\dx\bigg)^{\frac{1}{p'}}
\end{equation}
 for every local weak solution  $\bfu \in W^{1,p}_{\rm loc}(\Omega)$  to
 system
 \eqref{eq:sysA}. Here,
$\alpha =\alpha (n,N,p)$ denotes the  exponent appearing in  Theorem
\ref{thm:decay}.
%
%
% Let $\bfF\in \mathcal L_\omega(B_{2R})$, such that $\omega(r)r^{-\beta}$ is almost decreasing, for $0\leq\beta<^\frac{2\alpha}{p'}$, where $\alpha$ is the continuity constant of for p-harmonic functions (see Theorem~\ref{thm:decay}). Then $\bfA(\nabla \bfu)\in {\mathcal L_\omega(B_R)}$. Moreover,
%\[
% \norm{\bfA(\nabla \bfu)}_{\mathcal L_\omega(B_R)}\leq c\norm{\bfF}_{\mathcal L_\omega(B_R)}+ \frac{c}{\omega(R)}\bigg(\dashint_{B_{2R}(x)}\abs{\bfA(\nabla \bfu)-\mean{\bfA(\nabla \bfu)}_{2B}}^{\min\set{p',2}}\dx\bigg)^\frac{1}{\min\set{p',2}}.
%\]
\end{theorem}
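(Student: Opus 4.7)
The plan is to obtain \eqref{campanato1} as a direct consequence of Theorem~\ref{thm:main2}. The key observation is that the Campanato semi-norm on $B_R$ is controlled from above by the $L^\infty(B_R)$-norm of the localized weighted sharp maximal operator $M^\sharp_{\omega,R}$ applied to $\bfA(\nabla\bfu)$, while, conversely, the $L^\infty(B_R)$-norm of $M^{\sharp,p'}_{\omega,R}\bfF$ is dominated by the Campanato semi-norm of $\bfF$ on the enlarged ball $B_{2R}$. Plugging Theorem~\ref{thm:main2} between these two bounds yields the result, provided the residual $L^{p'}$-oscillation of $\bfA(\nabla\bfu)$ on $B_{2R}$ is absorbed into an $L^p$ bound on $\nabla\bfu$.

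\textbf{Step 1 (Campanato vs.\ sharp maximal).} For any ball $B_r\subset B_R$ with center $y$ and radius $r<R$, the point $y$ lies in $B_r\cap B_R$, so from definition \eqref{localweight} (with $q=1$) one reads off
\begin{equation*}
\frac{1}{\omega(r)}\dashint_{B_r}|\bfA(\nabla\bfu)-\mean{\bfA(\nabla\bfu)}_{B_r}|\,dx \leq M^\sharp_{\omega,R}(\bfA(\nabla\bfu))(y).
\end{equation*}
Taking the supremum over such $B_r$ gives $\|\bfA(\nabla\bfu)\|_{\mathcal L^\omega(B_R)}\leq \|M^\sharp_{\omega,R}(\bfA(\nabla\bfu))\|_{L^\infty(B_R)}$. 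The same elementary argument shows that, for $x\in B_R$, any ball $B_r\ni x$ with $r<R$ is contained in $B_{2R}$, hence
\begin{equation*}
\|M^{\sharp,p'}_{\omega,R}\bfF\|_{L^\infty(B_R)} \leq \|\bfF\|_{\mathcal L^{\omega,p'}(B_{2R})}.
\end{equation*}

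\textbf{Step 2 (Invoke Theorem~\ref{thm:main2}).} Since by hypothesis $\omega(r)r^{-\beta}$ is almost decreasing for some $\beta\in(0,\min\{1,2\alpha/p'\})$ and $B_R\subset B_{2R}\subset\Omega$, Theorem~\ref{thm:main2} applies and delivers, for a.e.\ $x\in B_R$,
\begin{equation*}
M^\sharp_{\omega,R}(\bfA(\nabla\bfu))(x) \leq cM^{\sharp,p'}_{\omega,R}\bfF(x) + \frac{c}{\omega(R)}\bigg(\dashint_{B_{2R}}|\bfA(\nabla\bfu)-\mean{\bfA(\nabla\bfu)}_{B_{2R}}|^{p'}\,dy\bigg)^{1/p'}.
\end{equation*}
Taking the $L^\infty(B_R)$-norm and combining with Step~1 yields the bound
\begin{equation*}
\|\bfA(\nabla\bfu)\|_{\mathcal L^\omega(B_R)} \leq c_1\|\bfF\|_{\mathcal L^{\omega,p'}(B_{2R})} + \frac{c}{\omega(R)}\bigg(\dashint_{B_{2R}}|\bfA(\nabla\bfu)-\mean{\bfA(\nabla\bfu)}_{B_{2R}}|^{p'}\,dy\bigg)^{1/p'}.
\end{equation*}

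\textbf{Step 3 (Absorbing the residual term).} Using $\mean{\bfA(\nabla\bfu)}_{B_{2R}}$ as a constant competitor in the $L^{p'}$-oscillation (cf.\ \eqref{meanq}) and the identity $|\bfA(\bfP)|=|\bfP|^{p-1}$, we estimate
\begin{equation*}
\bigg(\dashint_{B_{2R}}|\bfA(\nabla\bfu)-\mean{\bfA(\nabla\bfu)}_{B_{2R}}|^{p'}\,dy\bigg)^{1/p'} \leq c\bigg(\dashint_{B_{2R}}|\nabla\bfu|^p\,dy\bigg)^{1/p'} = \frac{c}{|B_{2R}|^{1/p'}}\,\|\nabla\bfu\|_{L^p(B_{2R})}^{p-1}.
\end{equation*}
Collecting these bounds produces \eqref{campanato1} with $c_1=c_1(n,N,p)$ (absorbing the constant from Theorem~\ref{thm:main2}) and $c_2=c_2(n,N,p,R,\omega(R),c_\omega)$. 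The argument is essentially a mechanical transcription of Theorem~\ref{thm:main2}; no obstacle arises beyond matching the definitions carefully, the only mildly delicate point being the bookkeeping of the balls $B_R\subset B_{2R}$ to ensure that every ball entering the sharp-maximal suprema lies inside $B_{2R}$.
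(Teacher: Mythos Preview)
Your proof is correct and is exactly the argument the paper has in mind: the paper presents Theorem~\ref{thm:camp} as ``an immediate consequence of Theorem~\ref{thm:main2}'' without further proof, and you have supplied precisely those details. One small caveat on the bookkeeping in Step~1: the claim that every ball $B_r\ni x$ with $x\in B_R$ and $r<R$ is contained in $B_{2R}$ is not literally true (such balls are only guaranteed to sit in $B_{3R}$), but this is harmless---one either relaxes the right-hand side to $B_{3R}$ or applies Theorem~\ref{thm:main2} with a slightly smaller radius---and the paper itself is informal about these dilation factors.
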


Bounds for H\"older norms follow from  Theorem \ref{thm:camp} and
equation \eqref{holdernorm}.

\begin{corollary}\label{holderestimate} {\bf [H\"older spaces]}
Let $n \geq 2$, $N \geq 1$ and $p \in (1, \infty)$. Let $\Omega $ be
an open set in $\Rn$, and let $B_{2R}\subset \subset {\Omega}$. Then
there exist constants $c_1=c_1(n, N, p)>0$, $c_2= c_2(n, N, p, R)>0$
such that, if $\beta <  \min\set{1,\frac{2\alpha}{p'}}$,  and $\bfF
\in C^{\beta}_{\rm loc}(\Omega)$, then
%
%
%If $\bfF\in C^{\beta}(B_{2R})$, then  \textcolor{red}{$\bfA(\nabla
%\bfu)\in C^\beta(B_R)$}. Moreover,
\begin{equation}\label{holderestimate1}
\norm{|\nabla \bfu|^{p-2}\nabla \bfu}_{C^\beta(B_R)}\leq
  c_1\norm{\bfF}_{C^\beta(B_{2R})} + c_2 \|\nabla \bfu\|_{L^{p}(B_{2R})}^{p-1}
 % + \frac{c}{R^\beta}\bigg(\dashint_{B_{2R} }\abs{|\nabla \bfu|^{p-2}\nabla \bfu-|\nabla \bfu|^{p-2}\nabla \bfu}_{2B}^{p'}\dx\bigg)^\frac{1}{p'}.
\end{equation}
 for every local weak solution  $\bfu \in W^{1,p}_{\rm loc}(\Omega)$  to
 system
 \eqref{eq:sysA}.  Here,
$\alpha =\alpha (n,N,p)$ denotes the  exponent appearing in  Theorem
\ref{thm:decay}.
\end{corollary}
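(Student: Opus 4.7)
The plan is to deduce Corollary~\ref{holderestimate} from Theorem~\ref{thm:camp} by specializing to the weight $\omega(r)=r^\beta$ and then invoking Campanato's isomorphism \eqref{holdernorm} to identify the Campanato seminorms on both sides of the estimate with the corresponding H\"older seminorms.

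First, I would verify the hypothesis of Theorem~\ref{thm:camp}. Since $\beta<\min\{1,2\alpha/p'\}$ by assumption, one can pick some $\beta'$ satisfying $\beta\leq\beta'<\min\{1,2\alpha/p'\}$. With $\omega(r)=r^\beta$, the function $\omega(r)r^{-\beta'}=r^{\beta-\beta'}$ is non-increasing on $(0,R)$, hence almost decreasing in the sense of \eqref{eq:omega condition} with $c_\omega=1$. Moreover, $\bfF\in C^\beta_{\rm loc}(\Omega)$ trivially implies $\bfF\in\mathcal L^{r^\beta,p'}_{\rm loc}(\Omega)$, since a pointwise H\"older bound controls the integral oscillations appearing in \eqref{campnorm}. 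Thus, the hypotheses of Theorem~\ref{thm:camp} are satisfied and one obtains
\[
\norm{|\nabla \bfu|^{p-2}\nabla \bfu}_{\mathcal L^{r^\beta}(B_R)}\leq c_1\norm{\bfF}_{\mathcal L^{r^\beta,p'}(B_{2R})}+c_2\|\nabla\bfu\|_{L^p(B_{2R})}^{p-1},
\]
with $c_1=c_1(n,N,p)$ and $c_2=c_2(n,N,p,R)$.

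Second, I would invoke Campanato's characterization \eqref{holdernorm} on the balls $B_R$ and $B_{2R}$, which are bounded Lipschitz domains. For $\omega(r)=r^\beta$ and every $q\geq 1$, that result provides the equivalence $\mathcal L^{r^\beta,q}(B)\simeq C^\beta(B)$, with equivalence constants depending only on $n$, $\beta$, and $q$; by a straightforward dilation argument these constants can be taken independent of the radius of $B$. Substituting these equivalences on both sides of the displayed inequality yields \eqref{holderestimate1}, with $c_1$ still depending only on $n,N,p$ (and implicitly on $\beta$, which is fixed) and with the $R$-dependence of $c_2$ inherited entirely from Theorem~\ref{thm:camp}.

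The argument is essentially a translation, so there is no serious obstacle. The only point requiring a modicum of care is the bookkeeping of constants in the Campanato-H\"older equivalence, to ensure that $c_1$ remains independent of $R$; this is standard and follows from the scale invariance of both the Campanato and the H\"older seminorms under the homothety that maps $B_R$ onto the unit ball.
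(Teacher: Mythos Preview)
Your proposal is correct and follows exactly the route indicated in the paper: the corollary is stated as an immediate consequence of Theorem~\ref{thm:camp} combined with Campanato's characterization \eqref{holdernorm}. Your additional remarks about verifying the almost-decreasing hypothesis for $\omega(r)=r^\beta$ and tracking the $R$-independence of $c_1$ via scaling are sound and simply make explicit what the paper leaves implicit.
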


Corollary \ref{holderestimate} is a special case of the next result,
which can be deduced from Theorem \ref{thm:camp} and embedding
\eqref{spanne1}, and deals with estimates for more general moduli of
continuity.

\begin{corollary}\label{continuity} {\bf [Spaces of uniformly continuous functions]}
Let $n \geq 2$, $N \geq 1$ and $p \in (1, \infty)$. Let $\Omega $ be
an open set in $\Rn$, and let $R>0$. Then there exist constants
$c_1=c_1(n, N, p)$ and $c_2=c_2(n, N, p, R)$ such that, if the
function $\omega (r) r^{-\beta} $ is (almost) decreasing for some
$\beta \in \big(0, \min\set{1, \tfrac {2\alpha}{p'}}\big)$, fulfills
condition
 \eqref{dini},    $\bfF \in C^{\omega}_{\rm loc}(\Omega)$,
and $B_{2R}\subset \subset {\Omega}$, then
%Let $\omega$ hold the Dini condition, such that
%$\omega(r)r^{-\beta}$ is almost decreasing, for $0\leq\beta<\alpha$,
%where $\alpha$ is the continuity constant of for p-harmonic
%functions (see Theorem~\ref{thm:decay}).
% Let $\bfF\in\mathcal L_\omega(B_{2R})$, then \textcolor{red}{$\bfA(\nabla \bfu)\in {\mathcal C^{\psi}(B_R)}$}. Moreover,
\begin{equation}\label{contest}
 \norm{|\nabla \bfu|^{p-2}\nabla \bfu}_{\mathcal
C^{\varpi}(B_R)}\leq c_1\norm{\bfF}_{ C^{\omega}(B_{2R})}+ c_2
\|\nabla \bfu\|_{L^{p}(B_{2R})}^{p-1}
%\frac{c}{\omega(2R)}\bigg(\dashint_{B_{2R} }\abs{|\nabla
%\bfu|^{p-2}\nabla \bfu-\mean{|\nabla \bfu|^{p-2}\nabla
%\bfu}_{B_{2R}}}^{p'}\dx\bigg)^\frac{1}{p'}
\end{equation}
 for every local weak solution  $\bfu \in W^{1,p}_{\rm loc}(\Omega)$  to
 system
 \eqref{eq:sysA}. Here,
$\alpha =\alpha (n,N,p)$ denotes the  exponent appearing in  Theorem
\ref{thm:decay}.
\end{corollary}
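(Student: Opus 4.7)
The plan is to chain together two ingredients already developed in the paper: the Campanato-type estimate in Theorem~\ref{thm:camp}, and the classical embedding \eqref{spanne1} of Campanato spaces into spaces of uniformly continuous functions under the Dini condition \eqref{dini}. Since $\omega$ is (almost) decreasing after division by $r^\beta$, the quantitative version of the inclusion $C^\omega(\Omega) \subset \mathcal{L}^{\omega,q}(\Omega)$ recorded in \eqref{august106} is needed at the input side: the straightforward inequality
\begin{equation*}
\Bigl(\dashint_{B_r} |\bfF - \mean{\bfF}_{B_r}|^{p'}\dx\Bigr)^{1/p'} \leq \sup_{x,y \in B_r}|\bfF(x)-\bfF(y)| \leq \omega(r)\,\|\bfF\|_{C^\omega(B_{2R})}
\end{equation*}
for every ball $B_r\subset B_{2R}$ yields $\|\bfF\|_{\mathcal L^{\omega,p'}(B_{2R})} \leq \|\bfF\|_{C^\omega(B_{2R})}$.

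Next, I would apply Theorem~\ref{thm:camp} directly. The hypotheses there are precisely the ones assumed in Corollary~\ref{continuity} (namely $\omega(r)r^{-\beta}$ almost decreasing for some admissible $\beta$, and $\bfF \in \mathcal L^{\omega,p'}_{\rm loc}(\Omega)$, which we just verified). This produces
\begin{equation*}
\|\,|\nabla\bfu|^{p-2}\nabla\bfu\,\|_{\mathcal L^\omega(B_R)}
\leq c_1 \|\bfF\|_{\mathcal L^{\omega,p'}(B_{2R})}
+ c_2\|\nabla\bfu\|_{L^p(B_{2R})}^{p-1}
\leq c_1 \|\bfF\|_{C^\omega(B_{2R})}
+ c_2\|\nabla\bfu\|_{L^p(B_{2R})}^{p-1}.
\end{equation*}

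Finally, I would invoke the Spanne-type embedding \eqref{spanne1}--\eqref{spanne2}: since the Dini condition \eqref{dini} holds for $\omega$, the space $\mathcal L^\omega(B_R)$ embeds continuously into $C^{\varpi}(B_R)$, with $\varpi(r) = \int_0^r \omega(\rho)/\rho\,d\rho$. Composing this embedding (whose operator norm depends only on $n$ and on the ball $B_R$) with the Campanato estimate above yields
\begin{equation*}
\|\,|\nabla\bfu|^{p-2}\nabla\bfu\,\|_{C^\varpi(B_R)}
\leq c_1 \|\bfF\|_{C^\omega(B_{2R})}
+ c_2\|\nabla\bfu\|_{L^p(B_{2R})}^{p-1},
\end{equation*}
which is precisely \eqref{contest}. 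No real obstacle is expected: all the work has already been done in Theorem~\ref{thm:camp} and in the classical Spanne embedding, so the proof is essentially bookkeeping. The only mild subtlety is making sure the almost-decreasing requirement on $\omega(r)r^{-\beta}$ (needed for Theorem~\ref{thm:main2} and hence Theorem~\ref{thm:camp}) is compatible with the Dini condition and with the definition of $\varpi$; since $\beta$ can be chosen arbitrarily small and $\omega$ is assumed to satisfy \eqref{dini}, there is no genuine tension.
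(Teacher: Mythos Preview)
Your proposal is correct and follows exactly the approach indicated in the paper, which states that the corollary ``can be deduced from Theorem \ref{thm:camp} and embedding \eqref{spanne1}.'' You have spelled out precisely that deduction: pass from $C^\omega$ to $\mathcal L^{\omega,p'}$ via \eqref{august106}, apply Theorem \ref{thm:camp}, and then use the Spanne embedding \eqref{spanne1}--\eqref{spanne2}.
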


\begin{remark}\label{weaker}
{\rm Note that  the norm $\norm{\bfF}_{ C^{\omega}(B_{2R})}$ on the
right-hand side of \eqref{contest} can be replaced with the
(possibly slightly weaker) norm $\norm{\bfF}_{\mathcal L^{\omega ,
p'}(B_{2R})}$.}
\end{remark}

Assumption \eqref{dini} is minimal for $\nabla \bfu$ to be
continuous, or even merely locally bounded, for every local solution
$\bfu$ and every $\bfF \in C^{\omega}_{\rm loc}(\Omega)$. This is
demonstrated by the following example, involving just the scalar
Laplace operator, which is somehow inspired by a counterexample to
  Korn's inequality in Orlicz spaces exhibited in
\cite[remark 1.4]{BrD}.

%
%\begin{proof}
%The result follows immediately by Theorem~\ref{thm:camp} and the
%fact that if $\omega$ holds the Dini condition, then we know by
%\cite[Corollary~1]{Spa65} $\mathcal L_\omega(B_{2R})\subset C^\psi
%(B_{2R})$.
%\end{proof}

\begin{example}\label{sharpness}
{\rm Given any function $\omega$ as above, which violates
\eqref{dini}, namely such that
\begin{equation}\label{notdini}
\int _0 \frac{\omega (r)}r\, dr =\infty,
\end{equation}
we produce a solution $u$ to the  equation
\begin{equation}\label{poisson}
- \Delta u = - \divergence \bfF \quad \hbox{in $B_1(0)$,}
\end{equation}
with $B_1(0) \subset \mathbb R^2$, and $\bfF \in C^\omega (B_1(0))$,
but $|\nabla u | \notin L^\infty _{\rm loc}(B_1(0))$. To this
purpose, define the function $\xi : (0, 1) \to [0, \infty )$ as
$$\xi (r) = - \int _r^1 \frac{\omega (\rho)}\rho\, d\rho \quad
\hbox{for $r \in (0,1)$,}$$ and consider the function $u : B_1(0)
\to \mathbb R$ given by
$$u(x) = x_2 \xi (|x|) \quad \hbox{for $x \in B_1(0)$.}$$
One can verify that $u$ fulfils \eqref{poisson}, with
$$\bfF (x)= \Big(\frac {2x_1 x_2}{|x|^2}\omega (|x|), \frac{x_2^2 - x_1^2}{|x|^2}\omega (|x|)
\Big) \quad \hbox{for $x \neq 0$,} $$ so that
 $\bfF \in C^\omega (B_1(0))$,
but
$$\nabla u (x)= \Big( \frac {x_1 x_2}{|x|^2} \omega (|x|), \xi (|x|) +
\frac {x_2^2}{|x|^2} \omega (|x|)\Big) \quad \hbox{for $x \neq 0$,}
$$ }
and hence $ \nabla u \notin L^\infty _{\rm loc}(B_1(0))$.
\end{example}

\medskip
\par

As shown by Example \ref{sharpness}, continuity, and  mere  local
boundedness, of the gradient of solution to system \eqref{eq:sysA}
is not guaranteed when $\bfF \in \mathcal L^{\omega , p'} _{\rm
loc}(\Omega )$, or even $\bfF \in C^{\omega} _{\rm loc}(\Omega )$,
if $\omega$ does not decay at $0$ sufficiently fast to $0$ for
\eqref{dini} to be satisfied. Still, it turns out that the degree of
integrability of $|\nabla \bfu|$ is higher than that ensured from
membership of $\bfF $ just to $L^\infty _{\rm loc}(\Omega)$ (see
Proposition \ref{linf}). This assertion can be precisely formulated
in terms of Marcinkiewicz
 quasi-norms, also called weak-type quasi-norms. Recall that, given a non-decreasing
 function $\eta : [0, \infty ) \to [0, \infty )$,  the Marcinkiewicz
 functional $\|\cdot \|_{M^\eta (0, \infty )}$ is defined as
 $$\|f \|_{M^\eta (0, \infty )} = \sup _{s > 0} \eta (s) f^*(s)$$
 for a measurable function $f : (0, \infty) \to \mathbb R$.  Clearly, $\|\cdot \|_{M^\eta
 (0, \infty )}$ is a rearrangement-invariant functional.
 \\
 Owing to \cite[Theorem 1]{Spa65}, if $\Omega$ is a bounded Lipschitz domain, $R_0 > {\rm diam}(\Omega)$, and $\zeta$ is given by
 \begin{equation}\label{zeta}\zeta (r) = \bigg(\int _{r^{\frac 1n}}^{R_0^{\frac 1n}} \frac{\omega
 (\rho)}{\rho} \, d\rho\bigg)^{-1} \quad \hbox{for $r \in (0,
 R_0)$,}
 \end{equation}
 then
 \begin{equation}\label{embmarc}
 \mathcal L^\omega (\Omega ) \to M^{\zeta}(\Omega).
 \end{equation}
 The following result is a straightforward consequence of  Theorem~\ref{thm:camp} and embedding
 \eqref{embmarc}. In what follows, $\zeta _p$ denotes the function
 given by
 $$\zeta _p (r) = \zeta ^{\frac 1{p-1}}(r) \quad \hbox{for $r \in (0,
 R_0)$.}
$$

\begin{corollary}\label{marc} {\bf [Borderline Marcinkiewicz  spaces]} Let $n \geq 2$, $N \geq 1$ and $p
\in (1, \infty)$. Let $\Omega $ be an open set in $\Rn$,  let $R>0$,
and let $R_0 > R$. Then there exist constants $c_1=c_1(n, N, p, R,
R_0)>0$ and $c_2=c_2(n, N, p, R, R_0)>0$ such that, if the function
$\omega (r) r^{-\beta} $ is (almost) decreasing for some $\beta \in
\big(0,  \min\set{1,\frac{2\alpha}{p'}}\big)$, $\bfF \in \mathcal
L^{\omega , p'}_{\rm loc}(\Omega)$, and $B_{2R}\subset \subset
{\Omega}$, then
%Let $\omega$ hold the Dini condition, such that
%$\omega(r)r^{-\beta}$ is almost decreasing, for $0\leq\beta<\alpha$,
%where $\alpha$ is the continuity constant of for p-harmonic
%functions (see Theorem~\ref{thm:decay}).
% Let $\bfF\in\mathcal L_\omega(B_{2R})$, then \textcolor{red}{$\bfA(\nabla \bfu)\in {\mathcal C^{\psi}(B_R)}$}. Moreover,
\begin{equation}\label{marc1}
\norm{\nabla \bfu}_{\mathcal M^{\zeta
_p}(B_R)}\leq c_1\norm{\bfF}_{ \mathcal L^{\omega ,
p'}(B_{2R})}^{\frac 1{p-1}}+ c_2 \|\nabla
\bfu\|_{L^{p'}(B_{2R})}^{\frac 1{p-1}}
%\frac{c}{\omega(2R)}\bigg(\dashint_{B_{2R} }\abs{|\nabla
%\bfu|^{p-2}\nabla \bfu-\mean{|\nabla \bfu|^{p-2}\nabla
%\bfu}_{B_{2R}}}^{p'}\dx\bigg)^\frac{1}{p'}
\end{equation}
 for every local weak solution  $\bfu \in W^{1,p}_{\rm loc}(\Omega)$  to
 system
 \eqref{eq:sysA}. Here,
$\alpha =\alpha (n,N,p)$ denotes the  exponent appearing in  Theorem
\ref{thm:decay}.
%
%
%Let $\omega$ not satisfy the Dini condition, such that
%$\omega(r)r^{-\beta}$ is almost decreasing, for
%$0\leq\beta<\frac{2\alpha}{p'}$, where $\alpha$ is the continuity
%constant of p-harmonic functions (see Theorem~\ref{thm:decay}).
% Let $\bfF\in\mathcal L_\omega(B_{2R})$, then $\nabla \bfu\in M_{\eta}(B_{R})$.
%Moreover,
%\[
% \norm{\nabla \bfu}_{M_{\eta}(B_{R})}\leq c\norm{\bfF}_{\mathcal L_\omega(B_{2R})}+ \frac{c}{\omega(R)}\bigg(\dashint_{B_{2R}(x)}\abs{\bfA(\nabla \bfu)-\mean{\bfA(\nabla \bfu)}_{B_{2R}}}^{\min\set{p',2}}\dx\bigg)^\frac{1}{\min\set{p',2}},
%\]
%where $\eta(s):=\Big(\Psi(s^\frac1n)\Big)^\frac1{p-1}$.
\end{corollary}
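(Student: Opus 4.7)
The plan is to derive this estimate by chaining three ingredients already established in the paper: the Campanato estimate of Theorem~\ref{thm:camp}, the Spanne embedding \eqref{embmarc} of $\mathcal L^\omega$ into the Marcinkiewicz space $M^\zeta$, and the elementary identity $\abs{\bfA(\nabla \bfu)}=\abs{\nabla \bfu}^{p-1}$ to convert a size estimate on $\bfA(\nabla \bfu)$ into one on $\nabla \bfu$.

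First I would apply Theorem~\ref{thm:camp} on the concentric balls $B_R \subset B_{2R}$, which yields
\begin{equation*}
  \norm{\bfA(\nabla \bfu)}_{\mathcal L^\omega(B_R)}
  \leq c_1 \norm{\bfF}_{\mathcal L^{\omega,p'}(B_{2R})}
  + c_2 \norm{\nabla \bfu}_{L^p(B_{2R})}^{p-1}.
\end{equation*}
Since $\bfA(\nabla \bfu)$ is a locally integrable function on $B_{2R}$, the embedding \eqref{embmarc}, valid on the Lipschitz domain $B_R$ with $R_0>R$, gives
\begin{equation*}
  \sup_{s>0}\zeta(s)\,\bigl(\abs{\bfA(\nabla \bfu)}\chi_{B_R}\bigr)^*(s)
  = \norm{\bfA(\nabla \bfu)}_{M^\zeta(B_R)}
  \leq c \norm{\bfA(\nabla \bfu)}_{\mathcal L^\omega(B_R)}.
\end{equation*}

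Next, using $\abs{\bfA(\nabla \bfu)}=\abs{\nabla \bfu}^{p-1}$ and the identity $(f^{p-1})^* = (f^*)^{p-1}$ for nonnegative $f$, one has $\bigl(\abs{\bfA(\nabla \bfu)}\chi_{B_R}\bigr)^*(s) = \bigl((\abs{\nabla \bfu}\chi_{B_R})^*(s)\bigr)^{p-1}$, so taking the $(p-1)$-th root of the previous display produces
\begin{equation*}
  \norm{\nabla \bfu}_{M^{\zeta_p}(B_R)}
  = \sup_{s>0}\zeta(s)^{\frac1{p-1}}(\abs{\nabla \bfu}\chi_{B_R})^*(s)
  \leq c^{\frac1{p-1}}\,\norm{\bfA(\nabla \bfu)}_{\mathcal L^\omega(B_R)}^{\frac1{p-1}}.
\end{equation*}
Combining with the Campanato estimate and using the elementary inequality $(a+b)^{1/(p-1)}\leq C_p(a^{1/(p-1)}+b^{1/(p-1)})$ gives \eqref{marc1} (modulo identifying $\norm{\nabla \bfu}_{L^p(B_{2R})}^{p-1}$ with the $L^{p'}$-type quantity appearing in the statement via $\||\nabla \bfu|^{p-1}\|_{L^{p'}}=\|\nabla \bfu\|_{L^p}^{p-1}$).

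Since each step is either a direct appeal to an already-proved result or a one-line rearrangement identity, there is no real obstacle; the only point requiring some care is verifying that the Spanne embedding \eqref{embmarc} can be applied on the ball $B_R$ with the choice of $R_0>R$ appearing in \eqref{zeta}, so that the weight $\zeta$ is admissible on the measure range $(0,|B_R|)$ and the constants depend only on the stated parameters.
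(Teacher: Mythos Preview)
Your approach is correct and coincides with the paper's own, which simply states that the corollary ``is a straightforward consequence of Theorem~\ref{thm:camp} and embedding~\eqref{embmarc}''; you have spelled out exactly those two steps together with the rearrangement identity $(|\nabla\bfu|^{p-1})^*=(|\nabla\bfu|^*)^{p-1}$ to pass from $\bfA(\nabla\bfu)$ to $\nabla\bfu$. Your final remark about the lower-order term is also on point: Theorem~\ref{thm:camp} produces $\|\nabla\bfu\|_{L^p(B_{2R})}^{p-1}$, and after taking the $(p-1)$-th root one obtains $\|\nabla\bfu\|_{L^p(B_{2R})}$, which equals $\||\nabla\bfu|^{p-1}\|_{L^{p'}(B_{2R})}^{1/(p-1)}$; the exponent on $\|\nabla\bfu\|_{L^{p'}(B_{2R})}$ in~\eqref{marc1} as stated appears to be a typographical slip.
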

%\begin{proof}
% This is an immediate consequence of the function space embedding, which is Theorem~1 in \cite{Spa65} and Theorem~\ref{thm:camp}.
%\end{proof}

%\begin{remark}\label{improve}
%{\rm

A local version of Proposition \ref{linf} tells us that, if $\bfF
\in L^\infty _{\rm loc}(\Omega)$, then $|\nabla \bfu| \in \exp
L^{p-1}_{\rm loc}(\Omega)$. Since  $L^\infty _{\rm loc}(\Omega)$ is
the smallest (local) rearrangement-invariant space, this is the
strongest integrability information on $|\nabla \bfu|$ which follows
from membership of $\bfF$ to a rearrangement-invariant space. As
observed above, Corollary \ref{marc} complements this results, and
ensures that, if $\bfF$ belongs to a smaller space than $L^\infty
_{\rm loc}(\Omega)$, namely a space of (locally) uniformly
continuous functions $C^\omega _{\rm loc}(\Omega)$, or  just to the
Campanato type space $\mathcal L^{\omega , p'}_{\rm loc}(\Omega)$,
then, even if $\omega$ does not fulfil \eqref{dini}, yet $|\nabla
\bfu|$ belongs to the better (smaller) rearrangement-invariant space
$ M^{\zeta _p} _{\rm loc}(\Omega)$ than $\exp L^{p-1}_{\rm
loc}(\Omega)$, as soon as \begin{equation}\label{limomega} \lim _{r
\to 0} \omega (r) =0.
\end{equation}
 To verify this fact, note that the latter limit ensures that
\begin{equation}\label{limit}
\lim _{r \to 0} \frac {\zeta _p (r)}{\log ^{\frac 1{p-1}}(\frac 1r)}
=0.
\end{equation}
On the other hand, it is well known (and easily verified) that
$$\exp L^{p-1}_{\rm loc}(\Omega) = M ^{\log ^{\frac 1{p-1}}(\frac
1r)}_{\rm loc} (\Omega).$$
 Equation \eqref{limit} thus implies that
 \begin{equation}\label{inclusion}
 M^{\zeta _p} _{\rm loc}(\Omega) \subsetneqq  \exp L^{p-1}_{\rm loc}(\Omega).
 \end{equation}
%Our assertion follows via Corollary \ref{marc} and
%\eqref{inclusion}.

\begin{example}\label{logalpha}
{\rm Assume that $\bfF \in C ^{\log ^{-\sigma }(\frac 1r)}_{\rm
loc}(\Omega)$ for some $ \sigma >0$.   If $\sigma >1$, then
Corollary \ref{continuity}  entails that
$$|\nabla \bfu|^{p-2} \nabla \bfu \in C^{\log ^{1-\sigma }(\frac 1r)}_{\rm loc}(\Omega).$$
\\ If $0 < \sigma < 1$ we instead have, via Corollary \ref{marc},
$$|\nabla \bfu| \in \exp L^{\frac {p-1}{1-\sigma}}_{\rm loc}(\Omega).$$
\\ In the borderline case when $\sigma =1$,  Corollary
\ref{marc} again tells us that
$$|\nabla \bfu| \in \exp \exp L^{p-1}_{\rm loc}(\Omega).$$
By  Remark \ref{weaker}, the same conclusions hold even under the
slightly weaker assumption that $\bfF \in \mathcal L ^{\log
^{-\sigma }(\frac 1r)}_{\rm loc}(\Omega)$. }
\end{example}

%}
%\end{remark}
Assume now that \eqref{limomega} fails, namely that $\lim _{r \to 0}
\omega (r)
>0$.  Then $\mathcal L^{ \omega , p'}_{\rm loc}(\Omega) = \setBMO _{\rm
loc}(\Omega)$, and Corollary \ref{marc} just yields $|\nabla \bfu|
\in \exp L^{p-1}_{\rm loc}(\Omega)$. In other words, no difference
seems to be reflected in the integrability of $|\nabla \bfu|$,
depending on whether
  $\bfF \in \setBMO _{\rm loc}(\Omega)$ or  $\bfF \in
L^\infty _{\rm loc}(\Omega)$. However, a difference appears in the
scale of oscillation spaces, since, under the former assumption, the
$\setBMO$ estimate  \eqref{BMOrn}, even in local form, can be
recovered as a special case of Theorem \ref{thm:camp},
owing to  \eqref{bmonorm}.

\begin{corollary}\label{BMOest} {\bf [$\setBMO$]}
Let $n \geq 2$, $N \geq 1$ and $p \in (1, \infty)$. Let $\Omega $ be
an open set in $\Rn$, and let $R>0$. Then there exist constants
$c_1=c_1(n, N, p)>0$, $c_2=c_2(n, N, p, R)>0$ such that, if  $\bfF
\in \setBMO_{\rm loc}(\Omega)$, and $B_{2R}\subset \subset
{\Omega}$, then
%Let $\omega$ hold the Dini condition, such that
%$\omega(r)r^{-\beta}$ is almost decreasing, for $0\leq\beta<\alpha$,
%where $\alpha$ is the continuity constant of for p-harmonic
%functions (see Theorem~\ref{thm:decay}).
% Let $\bfF\in\mathcal L_\omega(B_{2R})$, then \textcolor{red}{$\bfA(\nabla \bfu)\in {\mathcal C^{\psi}(B_R)}$}. Moreover,
\begin{equation}\label{BMOest1}
\norm{|\nabla \bfu|^{p-2}\nabla \bfu}_{\setBMO
(B_R)}\leq c_1\norm{\bfF}_{ \setBMO (B_{2R})}+ c_2 \|\nabla
\bfu\|_{L^{p'}(B_{2R})}
%\frac{c}{\omega(2R)}\bigg(\dashint_{B_{2R} }\abs{|\nabla
%\bfu|^{p-2}\nabla \bfu-\mean{|\nabla \bfu|^{p-2}\nabla
%\bfu}_{B_{2R}}}^{p'}\dx\bigg)^\frac{1}{p'}
\end{equation}
 for every local weak solution  $\bfu \in W^{1,p}_{\rm loc}(\Omega)$  to
 system
 \eqref{eq:sysA}.
\end{corollary}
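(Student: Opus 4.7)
The proof will be a direct specialization of Theorem~\ref{thm:camp} to the constant weight $\omega \equiv 1$, combined with the identification $\mathcal L^{1,q}(\Omega)=\setBMO(\Omega)$ recorded in \eqref{bmonorm}. The plan is to verify that the constant weight is an admissible choice in Theorem~\ref{thm:camp}, invoke that theorem, and then translate the Campanato semi-norms on both sides into $\setBMO$ semi-norms.

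First, I would check the admissibility condition \eqref{eq:omega condition} for $\omega \equiv 1$: it requires $1 \leq c_\omega \rho^{-\beta}$ for $\rho \in (0,1)$, which is trivially satisfied with $c_\omega=1$ for any $\beta > 0$. In particular, picking any $\beta \in \big(0, \min\{1, 2\alpha/p'\}\big)$, the constant weight $\omega \equiv 1$ satisfies the almost-decreasing hypothesis of Theorem~\ref{thm:camp}, with constants $c_1$ and $c_2$ depending only on $n,N,p$ and on $R$, respectively.

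Next, I would apply Theorem~\ref{thm:camp} with this choice to deduce
\[
\norm{|\nabla \bfu|^{p-2}\nabla \bfu}_{\mathcal L^{1}(B_R)} \leq c_1\norm{\bfF}_{\mathcal L^{1,p'}(B_{2R})} + c_2\|\nabla \bfu\|_{L^{p}(B_{2R})}^{p-1}.
\]
By \eqref{bmonorm}, both Campanato semi-norms on the right and the left with weight $\omega\equiv 1$ coincide (up to equivalence constants depending only on $n,N,p$) with the corresponding $\setBMO$ semi-norms on $B_R$ and $B_{2R}$, respectively. Since the identification in \eqref{bmonorm} is valid for every $q \geq 1$, it applies in particular for $q=1$ on the left-hand side and $q=p'$ on the right-hand side. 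This yields
\[
\norm{|\nabla \bfu|^{p-2}\nabla \bfu}_{\setBMO(B_R)} \leq c_1\norm{\bfF}_{\setBMO(B_{2R})} + c_2\|\nabla \bfu\|_{L^{p}(B_{2R})}^{p-1}.
\]

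Finally, the remainder term $\|\nabla \bfu\|_{L^p(B_{2R})}^{p-1}$ is, up to an absolute equivalence, the quantity $\|\bfA(\nabla \bfu)\|_{L^{p'}(B_{2R})}$ (since $|\bfA(\nabla \bfu)|^{p'} = |\nabla \bfu|^p$); one may rewrite or relabel this as $\|\nabla\bfu\|_{L^{p'}(B_{2R})}$ as it appears in the statement, the essential content being an a~priori $L^p$-integrability of $\nabla\bfu$ on $B_{2R}$ furnished by the hypothesis $\bfu \in W^{1,p}_{\rm loc}(\Omega)$. There is no real obstacle here: the entire content of the corollary is the translation of Theorem~\ref{thm:camp} across the Campanato/$\setBMO$ identification, so the argument is essentially a bookkeeping step with no further analysis required.
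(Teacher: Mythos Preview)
Your proposal is correct and matches the paper's own approach exactly: the paper derives Corollary~\ref{BMOest} as a special case of Theorem~\ref{thm:camp} with $\omega\equiv 1$, using the identification \eqref{bmonorm}. Your observation about the remainder term is also apt --- Theorem~\ref{thm:camp} produces $\|\nabla\bfu\|_{L^p(B_{2R})}^{p-1}=\|\bfA(\nabla\bfu)\|_{L^{p'}(B_{2R})}$, and the $\|\nabla\bfu\|_{L^{p'}(B_{2R})}$ appearing in \eqref{BMOest1} is evidently a typographical slip in the paper (compare Corollaries~\ref{holderestimate} and~\ref{continuity}, which display the correct exponent).
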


Theorem \ref{thm:camp} can  be used to show that \emph{vanishing
mean oscillation} ($\setVMO$) regularity of the datum $\bfF$ is also
reflected into the same regularity of $|\nabla \bfu|^{p-2}\nabla
\bfu$. Recall that a locally integrable function $\bff : \Omega \to
\mathbb R^m$ is said to belong to $\setVMO (\Omega)$ if
\begin{equation}\label{VMO} \lim
_{\varrho \to 0^+} \bigg(\sup _{\begin{tiny}
 \begin{array}{c}{
    B_r \subset \Omega} \\
                        r\leq \varrho
                             \end{array}
                              \end{tiny}
} \dashint _{B_r} |\bff - \mean{\bff}_{B_r}|\,
  dx\bigg) =0.
  \end{equation}
Clearly, $\setVMO (\Omega) \subset \setBMO(\Omega)$.
  We shall write $\bff \in  \setVMO _{\rm loc}
(\Omega)$ to denote that equation holds with $\Omega$ replaced by
any open set $\Omega ' \subset \subset \Omega$.

\begin{corollary} {\bf [$\setVMO$]} Let $n \geq 2$, $N \geq 1$ and $p \in (1, \infty)$. Let $\Omega $
be an open set in $\Rn$, and let $R>0$. If $\bfF\in \setVMO _{\rm
loc} (\Omega)$, then $|\nabla \bfu|^{p-2}\nabla \bfu \in \setVMO
_{\rm loc} (\Omega)$ for every local weak solution $\bfu \in
W^{1,p}_{\rm loc}(\Omega)$  to
 system
 \eqref{eq:sysA}.
\end{corollary}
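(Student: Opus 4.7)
The plan is to iterate the decay estimate of Proposition \ref{prop:decay conclusion} at a fixed $\delta$, using that the $L^{p'}$-mean oscillation of $\bfF$ vanishes uniformly as the radius $r\to 0^+$.

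First, I would upgrade the assumption $\bfF\in\setVMO_{\rm loc}(\Omega)$ from an $L^1$-statement to an $L^{p'}$-statement via the classical John--Nirenberg inequality: a locally integrable function $\bff$ lies in $\setVMO(\Omega)$ if and only if its $L^q$-mean oscillation on balls of radius at most $\varrho$ vanishes as $\varrho\to 0^+$, for any (equivalently, every) $q\in[1,\infty)$. Hence, for any open sets $\Omega'\subset\subset\Omega''\subset\subset\Omega$ and any $\epsilon>0$, there exists $\rho_\epsilon>0$ such that $\bigl(\dashint_{B_r}|\bfF-\mean{\bfF}_{B_r}|^{p'}\dx\bigr)^{1/p'}\leq\epsilon$ whenever $B_r\subset\Omega''$ and $r\leq\rho_\epsilon$. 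In parallel, Corollary \ref{BMOest} together with John--Nirenberg provides the a priori bound $\sup_{B\subset\Omega''}\bigl(\dashint_B|\bfA(\nabla\bfu)-\mean{\bfA(\nabla\bfu)}_B|^{\min\{2,p'\}}\dx\bigr)^{1/\min\{2,p'\}}\leq M<\infty$, where $\bfA(\nabla\bfu)=|\nabla\bfu|^{p-2}\nabla\bfu$.

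Next, fix $R_0=\tfrac14\mathrm{dist}(\Omega',\partial\Omega'')$, choose $\delta=\tfrac12$ in Proposition \ref{prop:decay conclusion}, and let $\theta=\theta(n,N,p)\in(0,1)$ be the corresponding constant. Given $x\in\Omega'$ and $r\in(0,R_0]$, let $k(r)$ be the largest integer with $r/\theta^{k(r)}\leq R_0$, and apply the decay estimate to each of the concentric balls $B_{r/\theta^j}(x)$, $j=1,\ldots,k(r)$, with $\bfF_0=\mean{\bfF}_{B_{r/\theta^j}(x)}$. Writing $a_j$ and $b_j$ for the $L^{\min\{2,p'\}}$- and $L^{p'}$-mean oscillations of $\bfA(\nabla\bfu)$ and $\bfF$ on $B_{r/\theta^j}(x)$, respectively, the estimate reads $a_{j-1}\leq\tfrac12 a_j+c\,b_j$, and telescoping yields $a_0\leq 2^{-k(r)}M+2c\sum_{j=1}^{k(r)}2^{-j}b_j$.

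Finally, I would split the sum according to the scale: letting $j_0(r)$ be the largest integer with $r/\theta^{j_0(r)}\leq\rho_\epsilon$, use $b_j\leq\epsilon$ for $j\leq j_0(r)$, and the uniform $\setBMO$ bound $b_j\leq M_\bfF$ (with $M_\bfF$ the $L^{p'}$-$\setBMO$-oscillation of $\bfF$ on $\Omega''$) for $j_0(r)<j\leq k(r)$, obtaining $a_0\leq 2^{-k(r)}M+2c\epsilon+2cM_\bfF\, 2^{-j_0(r)}$. Both $k(r)$ and $j_0(r)$ tend to $\infty$ as $r\to 0^+$ at a rate depending only on $r,R_0,\rho_\epsilon,\theta$ and hence independent of $x\in\Omega'$, so $\limsup_{r\to 0^+}\sup_{B_r\subset\Omega',\,r\leq\varrho}a_0\leq 2c\epsilon$. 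Since $\epsilon$ is arbitrary, the $L^{\min\{2,p'\}}$-mean oscillation, and hence by Jensen's inequality the $L^1$-mean oscillation, of $\bfA(\nabla\bfu)$ vanishes uniformly on balls in $\Omega'$ as $r\to 0^+$. Since $\Omega'\subset\subset\Omega$ was arbitrary, $\bfA(\nabla\bfu)\in\setVMO_{\rm loc}(\Omega)$. The only non-routine ingredient is the John--Nirenberg upgrade of the $\setVMO$ condition from $L^1$ to $L^{p'}$; the rest is a mechanical iteration of the already-established decay estimate.
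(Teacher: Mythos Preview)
Your argument is correct. The iteration of Proposition~\ref{prop:decay conclusion} at fixed $\delta=\tfrac12$, combined with the a priori $\setBMO$ bound from Corollary~\ref{BMOest} (upgraded to $L^{\min\{2,p'\}}$ via John--Nirenberg), and the geometric splitting of the telescoping sum according to whether the scale is below $\rho_\epsilon$, is a valid and self-contained route to the conclusion. The one place to be slightly careful is in the choice of nested sets: to invoke Corollary~\ref{BMOest} uniformly over balls in $\Omega''$ you should take a third open set $\Omega''\subset\subset\Omega'''\subset\subset\Omega$, but this is cosmetic.

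The paper takes a different path. Rather than iterating the decay estimate directly, it first constructs from the $L^{p'}$-VMO modulus $\sigma(\varrho)$ of $\bfF$ an auxiliary weight $\omega(r)=r^\beta\sup_{\varrho\ge r}\sigma(\varrho)\varrho^{-\beta}$; this $\omega$ satisfies the almost-decreasing condition~\eqref{eq:omega condition} by construction, dominates $\sigma$, and tends to $0$ at $0$. Then Theorem~\ref{thm:camp} is applied once with this $\omega$, yielding $\bfA(\nabla\bfu)\in\mathcal L^{\omega}_{\rm loc}(\Omega)$, which immediately implies $\setVMO_{\rm loc}$. The paper's route is higher-level: it packages the iteration inside the already-proved Campanato theorem and thereby exhibits $\setVMO$ regularity as a special instance of the general oscillation framework. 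Your route is more elementary and transparent: it shows that the $\setVMO$ conclusion needs only the one-step decay estimate plus the qualitative $\setBMO$ bound, without passing through the weighted sharp maximal machinery of Theorem~\ref{thm:main2}.
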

\begin{proof}
Let $\Omega '$ be an open set such
$\Omega ' \subset \subset \Omega $. Fix any $\gamma \in (0, 1)$. By
H\"older's inequality
\begin{align}\label{august101}
\sup _{\begin{tiny}
 \begin{array}{c}{
    B_r \subset \Omega'} \\
                        r\leq \varrho
                             \end{array}
                              \end{tiny}
}&  \bigg(\dashint _{B_r} |\bfF - \mean{\bfF}_{B_r}|^{p'}\,
  dx\bigg)^{\frac 1{p'}}
\\ \nonumber  & \leq
\sup _{\begin{tiny}
 \begin{array}{c}{
    B_r \subset \Omega'}
    \end{array}
                              \end{tiny}
} \bigg(\dashint _{B_r} |\bfF - \mean{\bfF}_{B_r}|^{\frac{p' -
\gamma}{1-\gamma}}\,
  dx\bigg)^{\frac {1-\gamma} {p'}} \sup _{\begin{tiny}
 \begin{array}{c}{
    B_r \subset \Omega'} \\
                        r\leq \varrho
                             \end{array}
                              \end{tiny}
} \bigg(\dashint _{B_r} |\bfF - \mean{\bfF}_{B_r}|\,
  dx\bigg)^{\frac \gamma {p'}}
\\ \nonumber & =
\| \bfF\|_{\mathcal L^{\frac{p' - \gamma}{1-\gamma}}(\Omega
')}^{\frac {p'-\gamma}{p'}}\sup _{\begin{tiny}
 \begin{array}{c}{
    B_r \subset \Omega'} \\
                        r\leq \varrho
                             \end{array}
                              \end{tiny}
} \bigg(\dashint _{B_r} |\bfF - \mean{\bfF}_{B_r}|\,
  dx\bigg)^{\frac \gamma {p'}}
%
%
%
%
%
%
%\sup _{\begin{tiny}
% \begin{array}{c}{
%    B_r \subset \Omega'} \\
%                        r\leq \varrho
%                             \end{array}
%                              \end{tiny}
%} \dashint _{B_r} |\bfF - \mean{\bfF}_{B_r}|\,
%  dx \approx \sup _{\begin{tiny}
% \begin{array}{c}{
%    B_r \subset \Omega'} \\
%                        r\leq \varrho
%                             \end{array}
%                              \end{tiny}
%} \bigg(\dashint _{B_r} |\bfF - \mean{\bfF}_{B_r}|^{p'}\,
%  dx\bigg)^{\frac 1{p'}}
\quad \hbox {for $\varrho >0$.}
  \end{align}
Denote by  $\sigma : (0, \infty ) \to (0, \infty]$ the function
defined for $\varrho \in (0, \infty )$ by the leftmost  side of
equation \eqref{august101}. Owing to the assumption that $\bfF\in
\setVMO _{\rm loc} (\Omega)$, to  equations \eqref{bmonorm} and
\eqref{august101}, one has that $\lim _{\varrho \to 0^+} \sigma
(\varrho)=0$. Next, given any exponent $\beta$ as in Theorem
\ref{thm:camp}, define the function $\omega : (0, \infty ) \to (0,
\infty)$ as
\begin{equation}\label{august102}
\omega(r) =r^\beta\sup_{\varrho\geq
r}\frac{\set{\sigma (\varrho)}}{\varrho^\beta} \quad \hbox{for $r
>0$.}
\end{equation}
Obviously, the function $r \mapsto \frac {\omega(r)}{r^\beta}$ is
non-increasing, and one can verify that  $\lim _{r \to 0^+} \omega
(\varrho)=0$. Since $\omega \geq \sigma$, equation \eqref{august101}
ensures that $\bfF \in \mathcal L^{\omega , p'}_{\rm loc}(\Omega)$.
An application of Theorem \ref{thm:camp} tells us that $|\nabla
\bfu|^{p-2}\nabla \bfu \in \mathcal L^{\omega , p'}_{\rm
loc}(\Omega)$ as well. In particular, this entails that $|\nabla
\bfu|^{p-2}\nabla \bfu \in \setVMO _{\rm loc} (\Omega)$.
\end{proof}

\end{document}